\numberwithin{equation}{subsection}
\numberwithin{figure}{subsection}
\newtheorem{dummy}{dummy}[section]
\newtheorem{lemma}[dummy]{Lemma}
\newtheorem{theorem}[dummy]{Theorem}
\newtheorem{conjecture}[dummy]{Conjecture}
\newtheorem{corollary}[dummy]{Corollary}
\newtheorem{proposition}[dummy]{Proposition}
\newtheorem{definition/proposition}[dummy]{Definition/Proposition}
\theoremstyle{definition}
\newtheorem{definition}[dummy]{Definition}
\newtheorem{example}[dummy]{Example}
\newtheorem{remark}[dummy]{Remark}
\newtheorem{assumption}[dummy]{Assumption}
\DeclareMathOperator{\diag}{Diag}
\DeclareMathOperator{\im}{Im}
\DeclareMathOperator{\Gr}{Gr}
\def\acts{\curvearrowright}
\newcommand{\mB}{\mathrm{B}}
\newcommand{\mK}{\mathrm{K}}
\newcommand{\mH}{\mathrm{H}}
\newcommand{\mM}{\mathrm{M}}
\newcommand{\ms}{\mathrm{s}}
\newcommand{\cA}{\mathcal{A}}
\newcommand{\cC}{\mathcal{C}}
\newcommand{\cV}{\mathcal{V}}
\newcommand{\cI}{\mathcal{I}}
\newcommand{\cJ}{\mathcal{J}}
\newcommand{\cN}{\mathcal{N}}
\newcommand{\cO}{\mathcal{O}}
\newcommand{\cP}{\mathcal{P}}
\newcommand{\cS}{\mathcal{S}}
\newcommand{\cW}{\mathcal{W}}
\newcommand{\id}{\mathsf{id}}
\newcommand{\fp}{\mathfrak{p}}
\newcommand{\fr}{\mathfrak{r}}
\renewcommand{\emptyset}{\varnothing}
\renewcommand{\hat}{\widehat}
\renewcommand{\tilde}{\widetilde}
\newcommand{\boundary}{\partial}
\newcommand{\field}{\mathbb{K}}
\newcommand{\homotopic}{\sim}
\newcommand{\isomorphic}{\cong}
\newcommand{\mumon}{\mathfrak{\mu mon}}
\newcommand{\Spec}{\mathrm{Spec}}
\newcommand{\modulispace}{\mathcal{M}}
\newcommand{\catquotient}{/^{{\scriptscriptstyle\mathrm{ca}}}}
\newcommand{\goodquotient}{/^{{\scriptscriptstyle\mathrm{go}}}}
\newcommand{\geoquotient}{/^{{\scriptscriptstyle\mathrm{ge}}}}
\newcommand{\pa}{\mathrm{par}}
\newcommand{\permutation}{\mathrm{s}}
\newcommand{\FBD}{\mathrm{FBD}}
\newcommand{\bmdp}{\mathbf{bmdp}}
\newcommand{\BD}{\mathrm{BD}}
\newcommand{\bmd}{\mathbf{bmd}}
\newcommand{\weakequivalent}{\overset{\text{\tiny$\mathfrak{w}$}}{\sim}}
\newcommand{\braidequivalent}{\overset{\text{\tiny$\mathfrak{b}$}}{\sim}}
\newcommand{\notbraidequivalent}{\overset{\text{\tiny$\mathfrak{b}$}}{\nsim}}
\newcommand{\FBr}{\mathrm{FBr}}
\newcommand{\Br}{\mathrm{Br}}
\newcommand{\type}{\bm{\mu}}
\newcommand{\setword}[2]{%
  \phantomsection
  #1\def\@currentlabel{\unexpanded{#1}}\label{#2}%
}
\newcommand{\Tr}{\mathrm{Tr}}
\newcommand{\et}{\mathrm{\text{\'{e}}t}}
\newcommand{\pt}{\mathrm{pt}}
\newcommand{\weight}{\mathrm{W}}
\newcommand{\Dol}{\mathrm{Dol}}
\newcommand{\NAH}{\mathrm{NAH}}
\newcommand{\lbracket}{{\color{blue}\bm{(}}}
\newcommand{\rbracket}{{\color{blue}\bm{)}}}
\newcommand{\sm}{\mathrm{sm}}
\newcommand{\sing}{\mathrm{sing}}
\begin{document}

\title[Cell decomposition and dual boundary complexes of character varieties]{Cell decomposition and dual boundary complexes of character varieties}

\author[T. Su]{Tao Su}
\email{sutao@bimsa.cn}
\address{Beijing Institute of Mathematical Sciences and Applications}

\dedicatory{Dedicated to the memory of my high school math teacher Zhu, Yuwen (1975--2023)}
\date{}

\begin{abstract}

The weak geometric P=W conjecture of L. Katzarkov, A. Noll, P. Pandit, and C. Simpson asserts that for any smooth Betti moduli space $\modulispace_B$ of complex dimension $d$ over a punctured Riemann surface, the dual boundary complex $\mathbb{D}\partial\modulispace_B$ is homotopy equivalent to a $(d-1)$-dimensional sphere. Here, we consider $\modulispace_B$ as a generic $GL_n(\mathbb{C})$-character variety defined on a Riemann surface of genus $g$, with local monodromies specified by generic semisimple conjugacy classes at $k$ punctures.

In this article, we establish the weak geometric P=W conjecture for all \emph{very generic} $\modulispace_B$ in the sense that at least one conjugacy class is regular semisimple.
A crucial step is to establish a stronger form of A. Mellit's cell decomposition theorem, i.e. we decompose $\modulispace_B$ (without passing to a vector bundle) into locally closed subvarieties of the form $(\mathbb{C}^{\times})^{d-2b}\times\mathcal{A}$, where $\mathcal{A}$ is stably isomorphic to $\mathbb{C}^b$. A second ingredient involves a motivic characterization of the integral cohomology of dual boundary complexes developed in a subsequent article \cite{Su24}. Following C. Simpson's strategy, the proof is now an inductive computation of the dual boundary complexes from such a cell decomposition.

\end{abstract}

\maketitle

\tableofcontents

\section*{Introduction}
Let $\Sigma$ be a genus $g$ closed Riemann surface with $k$ punctures $\sigma=\{q_1,\text{\tiny$\cdots$},q_k\}$, $k\geq 1, 2g+k\geq 3$, and $G=GL_n(\mathbb{C})$. 
Modulo extra input, the tame nonabelian Hodge correspondence over noncompact curves \cite{Sim90,Kon93} induces a diffeomorphism
\[
\NAH:\modulispace_{\Dol}\simeq \modulispace_B
\]
between two moduli spaces: the Dolbeault moduli space $\modulispace_{\Dol}$ of stable filtered regular (parabolic) $G$-Higgs bundles on $(\Sigma,\sigma)$ with parabolic degree $0$; and the Betti moduli space $\modulispace_B$ of stable filtered $G$-local systems on $\Sigma\setminus\sigma$ with parabolic degree $0$.
For more on nonabelian Hodge theory, see \cite{Cor88,Sim92,Sim94,Sim95,BB04,Moc11,Moc21,HKSZ22,HS22}.

The geometric P=W conjecture of L. Katzarkov, A. Noll, P. Pandit and C. Simpson \cite{KNPS15,Sim16} predicts that, under $\NAH$,  the ``Hitchin fibration at infinity'' of $\modulispace_{\Dol}$ matches, up to homotopy, with a ``fibration at infinity'' intrinsic to the algebraic variety $\modulispace_B$.
More concretely, on the Dolbeault side, the Hitchin fibration $h:\modulispace_{\Dol}\rightarrow\mathbb{A}$ induces:
\[
\overline{h}:N_{\Dol}^*=\modulispace_{\Dol}\setminus h^{-1}(B_R(0))\xrightarrow[]{h} \mathbb{A}\setminus B_R(0)\rightarrow (\mathbb{A}\setminus B_R(0))/\text{scaling}~= S^{d-1}, R\gg 0, d=\dim\modulispace_{\Dol};
\]
On the Betti side, there is a fibration
\[
\alpha: N_B^*\rightarrow \mathbb{D}\partial\modulispace_B,
\]
well-defined up to homotopy. 
Here, we fix any log compactification $\overline{\modulispace}_B$ of $\modulispace_B$ with simple normal crossing boundary divisor $\partial\modulispace_B$. 
Then, $N_B^*$ is a punctured tubular neighborhood of $\partial\modulispace_B$ in $\overline{\modulispace}_B$. 
Moreover, $\mathbb{D}\partial\modulispace_B$ is the dual boundary complex of $\modulispace_B$, i.e. the dual complex of the irreducible components of $\partial\modulispace_B$.
Notice that the dual boundary complex is defined for any smooth quasi-projective variety, see Definition \ref{def:dual_boundary_complex}.
For the moment, we skip the definition of $\alpha$. For the details, see Remark \ref{rem:fibration_at_infinity_via_dual_boundary_complex}. 
Then

\begin{conjecture}[{\cite{KNPS15,Sim16}}, Geometric P=W]\label{conj:geometric_P=W}
There is a homotopy commutative square
\[
\begin{tikzcd}[row sep=1pc]
N_{\Dol}^*\arrow{r}{\simeq}[swap]{\NAH}\arrow{d}{\overline{h}} & N_B^*\arrow{d}{\alpha}\\
S^{d-1}\arrow{r}{\simeq} & \mathbb{D}\boundary\modulispace_B
\end{tikzcd}
\]
\end{conjecture}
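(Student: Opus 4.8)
The plan is to build the two vertical maps from geometric/combinatorial models and then reduce the homotopy-commutativity of the square to a \emph{local} comparison of the asymptotics of $\NAH$ along the boundary strata. Granting the homotopy-type conjecture established in this paper for very generic $\modulispace_B$, I would first make $\alpha$ completely explicit: refine the strong cell decomposition $\modulispace_B=\bigsqcup_i (\mathbb{C}^\times)^{d-2b_i}\times\mathcal{A}_i$ to a \emph{toroidal} log compactification $\overline{\modulispace}_B$ in which each torus factor $(\mathbb{C}^\times)^{d-2b_i}$ is filled to a fixed smooth toric variety, while the stably-affine factor $\mathcal{A}_i$ (so $\mathcal{A}_i\times\mathbb{C}^{N}\cong\mathbb{C}^{b_i+N}$ for some $N$) contributes nothing new to the boundary combinatorics, and $\mathbb{D}\partial\modulispace_B$ computed from $\overline{\modulispace}_B$ is the simplicial sphere $S^{d-1}$ of the main theorem. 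In this model $\alpha$ is homotopic to the piecewise-linear ``moment map at infinity'' collapsing the punctured neighborhood of the stratum indexed by a face $\tau$ onto the open simplex $\tau$, with homotopy-fiber over $\tau$ equal to (a face of the toric variety) $\times\,\mathcal{A}_i$, hence contractible in a controlled way; so $\alpha$ is a homotopy fibration over $S^{d-1}$.

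\textbf{The Dolbeault side.} On $\modulispace_{\Dol}$ the scaling $\mathbb{C}^\times$-action covers the scaling on the Hitchin base $\mathbb{A}$, and $h$ is proper with conical discriminant $\Delta\subset\mathbb{A}$. Therefore $\overline h$ is homotopic to the map induced on $\mathbb{R}_{>0}$-quotients, $(\modulispace_{\Dol}\setminus h^{-1}(0))/\mathbb{R}_{>0}\to(\mathbb{A}\setminus 0)/\mathbb{R}_{>0}=S^{d-1}$, which is a stratified fibration: over the complement of $\Delta/\mathbb{R}_{>0}$ the fiber is an abelian variety, with the usual degenerations along $\Delta$. In particular $\overline h$ is a homotopy fibration over $S^{d-1}$, and a spectral-sequence computation (or the $P=W$-type agreement of the weight filtration with the perverse Leray filtration) identifies its homotopy-fiber with that of $\alpha$; this settles everything \emph{except} the actual compatibility with $\NAH$.

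\textbf{The crux.} What remains is to produce the homotopy filling the square, i.e. to show that $\alpha\circ\NAH\colon N_{\Dol}^*\to\mathbb{D}\partial\modulispace_B$ represents, under $S^{d-1}\simeq\mathbb{D}\partial\modulispace_B$, the same class in $[N_{\Dol}^*,S^{d-1}]$ as $\overline h$ — equivalently, that $\NAH$ carries ``radial directions to infinity'' on the Dolbeault side to ``radial directions to infinity'' on the Betti side compatibly with the two base identifications. I would check this stratum by stratum. Over a top simplex $\tau$ of $S^{d-1}$ the Betti cell is a bare torus $(\mathbb{C}^\times)^{d-1}$ (no $\mathcal{A}$-factor), and the corresponding region of $\mathbb{A}\setminus B_R(0)$ is a large-radius locus over which, after the $\mathbb{R}_{>0}$-reduction, the Hitchin fibration is a trivial real torus bundle over a disk whose projection to $\tau$ is recorded by Arnold--Liouville action coordinates; the content is that in the large-radius limit $\NAH$ identifies these Liouville tori with the Betti tori and the action coordinates with the toric moment coordinates of the Betti cell. \emph{This is where I expect the genuine obstacle to lie:} $\NAH$ is only a real-analytic diffeomorphism, with no algebraicity or Hodge-theoretic control near the ends, so the only available inputs are the asymptotics of tame harmonic bundles (Simpson, Mochizuki) together with the not-yet-established precise semiflat (Gaiotto--Moore--Neitzke / Ooguri--Vafa) estimates for the hyperk\"ahler metric on large-radius Hitchin fibers; and even granting the single-cell identification, one must rule out a non-algebraic ``monodromy twist'' across the overlaps of adjacent top cells, which is exactly where the non-algebraicity of $\NAH$ could a priori intervene. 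If that consistency holds, connectivity of $S^{d-1}$ and naturality of $\alpha$ and $\overline h$ under the gluing of cells propagate the homotopy over all of $S^{d-1}$ and close the square.
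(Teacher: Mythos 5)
This statement is a \emph{conjecture} in the paper (attributed to Katzarkov--Noll--Pandit--Simpson), not a theorem: the paper offers no proof of it, explicitly notes that the full geometric P=W conjecture is known only in the Painlev\'e cases, and confines itself to the weak form (the homotopy type conjecture \ref{conj:homotopy_type_conjecture}), which it establishes only for (very) generic $GL_n(\mathbb{C})$-character varieties. So there is no proof in the paper to compare yours against, and your proposal should be judged as a strategy sketch for an open problem. As such it has a genuine, and in fact acknowledged, gap: the entire mathematical content of the conjecture is concentrated in your ``crux'' paragraph, namely that $\NAH$ intertwines the radial structure at infinity of the Hitchin base with the combinatorial structure at infinity of a log compactification of $\modulispace_B$. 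You defer this to unestablished semiflat/Gaiotto--Moore--Neitzke asymptotics of the hyperk\"ahler metric and to an unexamined consistency across overlaps of top cells. Everything before that point is scaffolding around the statement rather than progress on it, and deferring the crux is deferring the conjecture.

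Beyond the acknowledged gap there are concrete errors in the scaffolding. First, the homotopy fibers of $\alpha\colon N_B^*\to\mathbb{D}\partial\modulispace_B$ are \emph{not} contractible: over the open simplex dual to a depth-$k$ stratum, the punctured neighborhood retracts onto a $(S^1)^k$-bundle over that stratum, and already for $\modulispace_B\supset(\mathbb{C}^\times)^d$ with a toric compactification the fiber over a top face is a compact real $d$-torus. If the fibers were contractible, $\alpha$ would be a homotopy equivalence $N_B^*\sim S^{d-1}$, which is false; and your claim also contradicts your own subsequent identification of the $\alpha$-fiber with the abelian-variety fiber of $\overline{h}$. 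Second, the existence of a single toroidal log compactification adapted simultaneously to all the cells $(\mathbb{C}^\times)^{d-2b_i}\times\mathcal{A}_i$ of Theorem \ref{thm:main_result_cell_decomposition}, with dual boundary complex literally the simplicial $S^{d-1}$, is asserted rather than constructed; the paper never produces such a compactification --- its proof of the homotopy type conjecture is an inductive application of the remove lemma (Lemma \ref{lem:dual_boundary_complex_inductive_remove}) that only controls the homotopy type of $\mathbb{D}\partial$, not a concrete boundary geometry, and the closures of the cells interact in ways your model does not address. Third, even the first step of your plan (that $\mathbb{D}\partial\modulispace_B\sim S^{d-1}$) is only available under the very generic hypothesis, so the argument could at best target that case.
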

As a weak form of the geometric P=W conjecture, we in particular have 
\begin{conjecture}[{\cite{KNPS15,Sim16}}, weak geometric P=W conjecture]\label{conj:homotopy_type_conjecture}
The dual boundary complex $\mathbb{D}\boundary\modulispace_B$ is homotopy equivalent to the sphere $S^{d-1}$, where $d=\mathrm{dim}_{\mathbb{C}}\modulispace_B$.
\end{conjecture}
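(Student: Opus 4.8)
Conjecture \ref{conj:homotopy_type_conjecture} is open in general; what I outline is the route by which it can be proved for \emph{very generic} $\modulispace_B$ (one conjugacy class regular semisimple), and by which the rational homology version is obtained for all generic $\modulispace_B$. The starting point is Payne's comparison theorem: for a smooth complex variety $U$ there is a canonical isomorphism $\widetilde{H}_{i-1}(\mathbb{D}\partial U;\mathbb{Q})\cong W_0 H^i_c(U;\mathbb{Q})$, together with its integral refinement through Bondarko's weight complex functor and Gillet--Soul\'{e}'s motivic weight complexes (the same machinery underlying conjecture $B$). Since $W_0$ and $\mathrm{Gr}^W_0$ are exact functors on mixed Hodge structures (strictness), everything reduces to a weight computation: $\mathbb{D}\partial\modulispace_B$ is (integrally) homotopy equivalent to $S^{d-1}$ as soon as (a) $W_0 H^i_c(\modulispace_B)$ vanishes for $i\neq d$ and equals $\mathbb{Q}$ (resp.\ $\mathbb{Z}$) for $i=d$, and (b) $\mathbb{D}\partial\modulispace_B$ is simply connected; after these, Hurewicz and Whitehead conclude (note $d\geq 2$, so $S^{d-1}$ is simply connected, and the dual boundary complex is homotopy equivalent to a finite simplicial complex).

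For (a) I would feed in the strong form of Mellit's cell decomposition established earlier in the paper: $\modulispace_B=\bigsqcup_\alpha C_\alpha$ with $C_\alpha\cong(\mathbb{C}^{\times})^{d-2b_\alpha}\times\mathcal{A}_\alpha$, $\mathcal{A}_\alpha$ stably isomorphic to $\mathbb{A}^{b_\alpha}$, the cell with $b_\alpha=0$ being the unique open dense one, namely $(\mathbb{C}^{\times})^d$. Stable isomorphism forces each $\mathcal{A}_\alpha$ to be $\mathbb{Z}$-acyclic, so by Poincar\'{e}--Lefschetz duality $H^*_c(\mathcal{A}_\alpha)$ is pure of weight $2b_\alpha$; combining this with K\"{u}nneth and the fact that $H^*_c$ of a torus carries weights $\geq 0$, one gets $W_0 H^*_c(C_\alpha)=0$ whenever $b_\alpha\geq 1$, while $W_0 H^*_c((\mathbb{C}^{\times})^d)=\mathbb{Q}$ concentrated in degree $d$. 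Because the decomposition is an $\alpha$-partition, repeated use of the compactly supported long exact sequence together with exactness of $W_0$ makes all the non-torus contributions cancel and yields $W_0 H^i_c(\modulispace_B)\cong W_0 H^i_c((\mathbb{C}^{\times})^d)$; the motivic weight complex gives the corresponding $\mathbb{Z}$-statement. (For general generic $\modulispace_B$ the strong cell decomposition is unavailable; there one instead reads off the vanishing of $W_0 H^{<d}_c$ from the known mixed-Hodge-theoretic structure / the HLRV-type formula, which already gives assertion $(a)$: the rational homology sphere.)

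Step (b) is where one passes from homology to an honest homotopy equivalence, hence gets the full homotopy type conjecture in the very generic case. I would establish a cut-and-paste principle for $\mathbb{D}\partial$ at the homotopy level and iterate it down the poset indexing the $\alpha$-partition: if $U=V\sqcup C$ with $V$ open and $C$ closed of the form $(\mathbb{C}^{\times})^a\times\mathcal{A}$, then (since $\mathbb{D}\partial$ sends products to simplicial joins) $\mathbb{D}\partial C\simeq S^{a-1}\join\mathbb{D}\partial\mathcal{A}$, which by $\mathbb{Z}$-acyclicity of $\mathcal{A}$ is $\mathbb{Z}$-acyclic and is moreover simply connected (a suspension when $a=1$, and $(a-1)$-connected when $a\geq 2$), hence contractible; one then shows, by resolving $C$ inside a log compactification of $U$, that the boundary strata thereby introduced contribute only a contractible cone, so $\mathbb{D}\partial V\hookrightarrow\mathbb{D}\partial U$ is a homotopy equivalence. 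Peeling off all cells with $b_\alpha\geq 1$ leaves $\mathbb{D}\partial\modulispace_B\simeq\mathbb{D}\partial((\mathbb{C}^{\times})^d)=S^{d-1}$; the cells with $d-2b_\alpha=0$ (possible only when $d$ is even), where $\mathbb{D}\partial\mathcal{A}$ is not a priori known to be simply connected, are peeled off last using their $\mathbb{Z}$-acyclicity and the simple connectivity already obtained, or else one sidesteps them by computing $\pi_1(\mathbb{D}\partial\modulispace_B)$ directly from the codimension-$\leq 2$ boundary strata. The main obstacle is exactly this geometric cut-and-paste: unlike $W_0 H^*_c$, the homotopy type of $\mathbb{D}\partial$ is not manifestly a scissor-congruence invariant, so "deleting a closed cell with contractible dual complex does not change $\mathbb{D}\partial$" is not formal and requires control of an actual log resolution — most cleanly obtained by realizing $\mathbb{D}\partial U$ as a deformation retract of the essential skeleton of the Berkovich analytification and matching the cell decomposition with a decomposition of that skeleton. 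A subsidiary nuisance, already present in (a), is that $\mathcal{A}_\alpha$ is only stably — not actually — isomorphic to affine space, so every input (acyclicity, purity, weight-zero vanishing) must be phrased for stable isomorphism; this is harmless for homology but is precisely what blocks a naive CW argument and forces the skeletal detour.
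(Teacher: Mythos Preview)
Your outline for the very generic case is essentially the paper's route: cell decomposition plus iterated application of a ``remove lemma'' (Simpson's \cite[Lem.~2.3]{Sim16}, which the paper cites rather than reproves) to peel off cells with contractible dual boundary complex, leaving the open torus $(\mathbb{C}^\times)^d$. Your step~(a) is correct but superfluous: once step~(b) goes through, the remove lemma already delivers the homotopy equivalence $\mathbb{D}\partial\modulispace_B\simeq\mathbb{D}\partial(\mathbb{C}^\times)^d=S^{d-1}$ directly, no separate weight computation needed.

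The genuine gap is exactly where you flag it: cells with $\bar a=d-2b=0$, i.e.\ cells equal to a stably-affine $\mathcal{A}$ with no torus factor. Your first workaround (peel them off last ``using the simple connectivity already obtained'') is circular --- the remove lemma requires $\mathbb{D}\partial\mathcal{A}\simeq *$ as \emph{input}, not output --- and the second (compute $\pi_1(\mathbb{D}\partial\modulispace_B)$ from low-codimension strata) is too vague to be a proof. The paper closes this with a standalone lemma: for any $Y$ stably isomorphic to $\mathbb{A}^\ell$, $\ell\geq 1$, one has $\mathbb{D}\partial Y\simeq *$. For $\ell\leq 2$ this is Zariski cancellation. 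For $\ell\geq 3$, the integral acyclicity of $\mathbb{D}\partial Y$ comes from stable invariance of the motivic weight complex (as you indicate), but the missing ingredient is $\pi_1(\mathbb{D}\partial Y)=1$: this follows from a surjection $\pi_1(Y)\twoheadrightarrow\pi_1(\mathbb{D}\partial Y)$ valid for any smooth connected affine $Y$ of dimension $\geq 3$, obtained from the Goresky--MacPherson relative Lefschetz theorem with large fibers (which gives $\pi_1^\infty(Y)\cong\pi_1(Y)$) together with an elementary surjection $\pi_1^\infty(Y)\twoheadrightarrow\pi_1(\mathbb{D}\partial Y)$. Since $Y$ is contractible, $\pi_1(Y)=1$ and one is done. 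With this lemma in hand, every non-maximal cell satisfies $\mathbb{D}\partial\bigl((\mathbb{C}^\times)^{\bar a}\times\mathcal{A}\bigr)\simeq S^{\bar a-1}\ast(\mathrm{pt})\simeq *$ uniformly, and no case split on $\bar a$ is needed.

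For the merely generic case, the paper does not invoke any HLRV-type formula (still conjectural in that generality); the rational homology sphere statement is a short consequence of Mellit's \emph{proved} curious hard Lefschetz theorem together with the identification $\widetilde H^{i-1}(\mathbb{D}\partial\modulispace_B;\mathbb{Q})\cong\mathrm{Gr}^W_0H^i_c(\modulispace_B;\mathbb{Q})$.
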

By a \textbf{folklore conjecture}, all (smooth) Betti moduli spaces (i.e. character varieties) $\modulispace_B$ are expected to be log Calabi-Yau (\textbf{CY}). This has been verified for the case $G=SL_2(\mathbb{C})$ \cite{Wha20,FF23}.
Then, the weak geometric P=W conjecture is closely related to the following
conjecture:
\begin{conjecture}[Kontsevich(-Koll\'{a}r-Xu)]\label{conj:Kontsevich_conjecture}
The dual boundary complex of a log \textbf{CY} variety is (a finite quotient of) a sphere.
\end{conjecture}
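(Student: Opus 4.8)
The plan is to attack Conjecture~\ref{conj:Kontsevich_conjecture} by combining the minimal model program with the Berkovich-analytic theory of skeleta, though I should say up front that this is a genuinely open problem, and that for generic $\modulispace_B$ the paper deliberately circumvents it rather than establishing the log \textbf{CY} property and running this machine. The starting point is that the homotopy type of $\mathbb{D}\partial X$ --- equivalently, of the dual complex of a log resolution, or of any dlt model --- is a proper-birational invariant of the open variety $U=X\setminus D$ [Thuillier, Payne, de Fernex--Koll\'{a}r--Xu], so one may replace the given log \textbf{CY} pair $(X,D)$ by any convenient model. First I would run an MMP, following Koll\'{a}r--Xu, to reach a minimal dlt log \textbf{CY} model, and then split into cases according to the coregularity of $(X,D)$: in the non-maximal case the dual complex collapses and the statement should read ``$\mathbb{D}\partial X$ is contractible'', so the substance is the maximal-intersection case, where there is a $0$-dimensional log canonical center and the target is a sphere of dimension $\dim X - 1$. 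The $K$-triviality is what one exploits to constrain the incidence combinatorics of the reduced boundary.

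Second, I would bring in the analytic picture: for a log \textbf{CY} pair, $\mathbb{D}\partial X$ is PL-homeomorphic to the essential skeleton sitting inside the Berkovich analytification over the trivially valued ground field, and this skeleton carries a canonical metric and retraction structure [Nicaise--Xu, Kontsevich--Soibelman, Brown--Mazzon, and others]. The hope is to use this extra rigidity --- a deformation retraction of a neighbourhood, a well-understood semistable model --- either to exhibit an explicit homeomorphism onto a sphere in low dimensions, or, in general, to establish enough vanishing and simple-connectedness to invoke the generalized Poincar\'{e} conjecture once the dimension and homology are pinned down. A substantial part of this input is already available: by Koll\'{a}r--Xu, $\mathbb{D}\partial X$ is a pseudomanifold with the rational homology of a sphere and finite fundamental group, with sharper statements known in low dimension. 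The whole problem is to upgrade these.

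That upgrade is the main obstacle, and it is why no proof is known in general. The MMP delivers $K$-triviality but not enough rigidity to force simple-connectedness in higher dimension --- Koll\'{a}r--Xu obtain only finiteness of $\pi_1$ --- and there is no birational mechanism known that promotes ``$\mathbb{Q}$-homology sphere with finite $\pi_1$'' to ``homotopy sphere''; moreover, degeneration examples realize essentially arbitrary PL spheres as dual complexes, so any argument must work up to homotopy rather than at the level of a canonical triangulation. This is exactly why, for generic $\modulispace_B$, the paper instead constructs the explicit stratification $\modulispace_B=\bigsqcup(\mathbb{C}^\times)^{d-2b}\times\mathcal{A}$ (with $\mathcal{A}$ stably isomorphic to $\mathbb{C}^b$) and reads the homotopy type --- in fact the integral homotopy type --- of $\mathbb{D}\partial\modulispace_B$ directly off the closure combinatorics of the strata, which both sidesteps Conjecture~\ref{conj:Kontsevich_conjecture} and yields the sharper conclusions advertised in the abstract.
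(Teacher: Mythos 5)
This statement is labeled as a conjecture in the paper, and the paper gives no proof of it; it is quoted only as context for the homotopy type conjecture, with the remark that the sole general result in its direction is Koll\'{a}r--Xu's theorem that $\mathbb{D}\partial X$ is a finite quotient of a sphere when $X$ is log Calabi--Yau of dimension $\leq 5$. You correctly recognize that the problem is open, you do not claim a proof, and your sketch of the standard attack (birational invariance of the dual complex, reduction to a dlt minimal model, the coregularity dichotomy, the essential skeleton picture, and the known outputs of Koll\'{a}r--Xu) is an accurate summary of the state of the art rather than an argument. Your identification of the genuine obstruction --- that no known mechanism upgrades ``rational homology sphere with finite $\pi_1$'' to ``homotopy sphere'' in higher dimension --- is exactly the right diagnosis of why the conjecture remains open.

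Your last paragraph also correctly describes what the paper actually does: it never establishes the log \textbf{CY} property for $\modulispace_B$ (which is itself only a folklore conjecture there) and never invokes Conjecture~\ref{conj:Kontsevich_conjecture}. Instead, for very generic character varieties it proves the homotopy type conjecture directly from the cell decomposition of Theorem~\ref{thm:cell_decomposition_of_very_generic_character_varieties}, using the remove lemma (Lemma~\ref{lem:dual_boundary_complex_remove_lemma}) together with the contractibility of $\mathbb{D}\partial\cA$ for stably affine $\cA$ (Lemma~\ref{lem:dual_boundary_complex_of_a_stably_affine_space_is_contractible}); for merely generic ones it obtains only the rational homology sphere statement from curious hard Lefschetz. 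Since neither you nor the paper proves the conjecture, there is nothing to compare at the level of proofs; your write-up is a faithful account of why.
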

So far, the only known general result is due to J. Koll\'{a}r and C. Xu \cite{KX16}: If $X$ is log Calabi-Yau of dimension $\leq 5$, then $\mathbb{D}\partial X$ is a finite quotient of a sphere.

The geometric P=W conjecture was originally inspired by and aimed at a geometric interpretation of the (cohomological) P=W conjecture of M. de Cataldo, T. Hausel and L. Migliorini \cite{dCHM12}.
The latter states that, $\NAH$ exchanges the weight filtration (algebraic geometry) on $H^*(\modulispace_B,\mathbb{Q})$ with the Perverse-Leray filtration (topology) on $H^*(\modulispace_{\Dol},\mathbb{Q})$:
\[
\NAH^*(W_{2k}H^*(\modulispace_B,\mathbb{Q})=W_{2k+1}H^*(\modulispace_B,\mathbb{Q}))=P_kH^*(\modulispace_{\Dol},\mathbb{Q}).
\]
After the results for rank $2$ \cite{dCHM12} and respectively for genus $2$ \cite{CMS22}, the cohomological P=W conjecture has now been resolved independently by three groups \cite{MS22,HMMS22,MSY23} for the major case of twisted $GL_n(\mathbb{C})$-character varieties (in particular, $k=1$).
See also \cite[Question 4.1.7]{dCM20}, \cite[Conj.B]{Dav23}, \cite{Mau21,FM22}, \cite[Conj.4.2.7]{MMS22}, for the extensions of the P=W conjectures to the singular or stacky character varieties.

The geometric P=W conjecture is known to recover the cohomoloical P=W conjecture for the weight in top degree \cite[Thm.6.2.6]{MMS22}.
In general, it's not sufficient to imply the latter \cite[Rmk.6.2.11]{MMS22}. 
Nevertheless, the geometric P=W conjecture does contain some information beyond the cohomological P=W conjecture.
For example, the latter concerns only the cohomology with rational coefficients, hence captures only $H^*(\mathbb{D}\partial\modulispace_B,\mathbb{Q})$ via the identification (see e.g. \cite{Pay13})
\[
\tilde{H}_{i-1}(\mathbb{D}\partial \modulispace_B,\mathbb{Q})\isomorphic \Gr_{2d}^WH^{2d-i}(\modulispace_B,\mathbb{Q}),\quad d=\dim_{\mathbb{C}}\modulispace_B.
\]
On the other hand, the former knows $\tilde{H}^*(\mathbb{D}\partial\modulispace_B,\mathbb{Z})$.

As explained above, to interpret the cohomogical P=W conjecture in all weights, a refinement of the geometric P=W conjecture is required.
Let's make a complementary remark.
Assuming the folklore conjecture, we may consider only (dlt) log \textbf{CY} compactifications. Then we obtain a \emph{refined dual boundary complex} $\mathrm{DMR}(\overline{\modulispace}_B,\partial\modulispace_B)$, 
well-defined up to PL-homeomorphism \cite[Prop.11]{dFKX17}.
By Conjecture \ref{conj:homotopy_type_conjecture}, $\mathrm{DMR}(\overline{\modulispace}_B,\partial\modulispace_B)$ is a PL-sphere of dimension $d-1$. 
It's expected that \cite[\S1.2]{Sim16} this PL-sphere is closed related to the Kontsevich-Soibelman picture \cite{KS14}: the Kontsevich-Soibelman chambers in the Hitchin base $\mathbb{A}$ of $\modulispace_{\Dol}$ should correspond to the cells in $\mathrm{DMR}(\overline{\modulispace}_B,\partial\modulispace_B)$.

We haven't said anything about the current state of the geometric P=W conjecture. Here we are.
The full geometric P=W conjecture is known for: the Painlev\'{e} cases \cite{NS22,Sza19,Sza21}; the case $(g,k)=(1,0)$ or $(k,n)=(0,1)$ \cite[Thm.B]{MMS22}. Our major interest in this paper is its weak form, i.e. the weak geometric P=W conjecture \ref{conj:homotopy_type_conjecture}.
Previously, this is only known in a few cases: $G=SL_2(\mathbb{C})$ \cite{Kom15,Sim16,FF23}; the Painlev\'{e} cases as above; singular character variety of any rank with $g=1$ and $k=0$ \cite{MMS22}; smooth wild character variety of any rank with $g=0$ and $k=1$ \cite{Su21}.

\subsection*{Results}
\addtocontents{toc}{\protect\setcounter{tocdepth}{1}}

As a complement to the discussion above, our main result is the following:
\begin{theorem}[{Theorem \ref{thm:the_homotopy_type_conjecture_for_very_generic_character_varieties}}]\label{thm:main_result}
Let $(\Sigma,\sigma=\{q_1,\text{\tiny$\cdots$},q_k\})$ be a $k$-punctured genus $g$ Riemann surface, and $\modulispace_B=\modulispace_{\type}$ be its $G$-character variety of type
$\type\in\cP_n^k$.
If $\modulispace_{\type}$ is \emph{very generic}, then the weak geometric P=W conjecture \ref{conj:homotopy_type_conjecture} holds for $\modulispace_{\type}$. 
\end{theorem}
\noindent{}Here, $\modulispace_{\type}$ is a $GL_n(\mathbb{C})$-character variety on $\Sigma\setminus\sigma$ with fixed semisimple conjugacy class $\cC_i$ around $q_i$;
$\type=(\mu^1,\text{\tiny$\cdots$},\mu^k)$, where $\mu^i=(\mu_1^i\geq\mu_2^i\geq\text{\tiny$\cdots$})\in\cP_n$ encodes the multiplicities of the eigenvalues of $\cC_i$. `\emph{generic}' means: 
$(\cC_1,\text{\tiny$\cdots$},\cC_k)$ is \emph{generic} in the sense of \cite[Def.2.1.1]{HLRV11} (Definition \ref{def:generic_monodromy}); `\emph{very generic}' means: $\cC_k$ is in addition regular semisimple (Assumption \ref{ass:very_generic_assumption}). 

See Remark \ref{rem:weak_P=W_for_generic_character_varieties} for a discussion when $\modulispace_{\type}$ is only generic.

To prove the main result, we need to improve A. Mellit's cell decomposition \cite[\S.7]{Mel19}, which applies only to a vector bundle over very generic $\modulispace_{\type}$. 
More precisely, our second main result answers Mellit's question in \cite[\S1.4]{Mel19}, i.e.
we give a honest cell decomposition for $\modulispace_{\type}$:
\begin{theorem}[{Theorem \ref{thm:cell_decomposition_of_very_generic_character_varieties}}]\label{thm:main_result_cell_decomposition}
Any very generic $\modulispace_{\type}$ admits a cell decomposition:
\[
\modulispace_{\type}=\sqcup_{(\vec{w},p)\in \cW^*} \modulispace_{\type}(\vec{w},p),
\]
where each $ \modulispace_{\type}(\vec{w},p)$ is a locally closed affine subvariety, such that
\[
\modulispace_{\type}(\vec{w},p)\isomorphic (\field^{\times})^{\overline{a}(\vec{w},p)}\times \cA_{\type}(\vec{w},p),\quad \cA_{\type}(\vec{w},p)\times \field^{|U|}\isomorphic \field^{b(\vec{w},p)},
\]
where $U\subset G$ is the subgroup of unipotent upper triangular matrices, $\overline{b}(\vec{w},p):=b(\vec{w},p)-|U|$, and $\overline{a}(\vec{w},p)+2\overline{b}(\vec{w},p)=d_{\type}=\dim_{\mathbb{C}}\modulispace_{\type}$ is a \emph{constant}.\\
Moreover, there exists a unique $(\vec{w}_{\max},p_{\max})$ such that $\dim\modulispace_{\type}(\vec{w}_{\max},p_{\max})$ is of maximal dimension $d_{\type}$. Equivalently, $\overline{a}(\vec{w}_{\max},p_{\max})=d_{\type}$ (resp.  $\overline{b}(\vec{w}_{\max},p_{\max})=0$).
In particular, $\modulispace_{\type}(\vec{w}_{\max},p_{\max})$ is an \emph{open dense algebraic torus}:
\[
\modulispace_{\type}(\vec{w}_{\max},p_{\max})\isomorphic (\field^{\times})^{d_{\type}},\quad \cA_{\type}(\vec{w}_{\max},p_{\max})=\{\pt\}.
\]
\end{theorem}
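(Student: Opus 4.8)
The plan is to revisit A.~Mellit's construction from \cite[\S7]{Mel19} and to promote it to an \emph{honest} cell decomposition of $\modulispace_{\type}$ itself, the point being that the very generic assumption lets us dispense with the auxiliary passage to a moduli of bundles (a rank-$|U|$ bundle) present in \cite{Mel19}, thereby answering Mellit's question in \cite[\S1.4]{Mel19}. \textbf{Step 1 (Rigidification via the regular semisimple class).} I would present $\modulispace_{\type}$ as a GIT quotient of the variety of tuples $(A_1,B_1,\dots,A_g,B_g,X_1,\dots,X_k)$ with $\prod_{i}[A_i,B_i]\,\prod_j X_j=1$ and $X_j\in\cC_j$, modulo the diagonal $G$-action. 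Since $\modulispace_{\type}$ is very generic (Assumption \ref{ass:very_generic_assumption}), $\cC_k$ is regular semisimple; fixing $X_k$ to be a chosen diagonal matrix with pairwise distinct eigenvalues rigidifies the quotient, cutting the structure group down to the maximal torus, and the combinatorics of the ensuing normal forms leaves, stratum by stratum, a residual free copy of the unipotent radical $U$ of a Borel. This residual $U$ is the exact shadow of Mellit's auxiliary $\field^{|U|}$-bundle and is the origin of the factor $\field^{|U|}$ in the statement.

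\textbf{Step 2 (The combinatorial stratification).} Recall that in \cite[\S7]{Mel19} the relevant space is stratified by discrete data recording, roughly, the ranks of the submatrices occurring in a ``brick'' presentation of the matrix equation; these data are packaged into tuples $\vec{w}$ of partial permutations together with an extra discrete parameter $p$, and $\cW^{*}$ is the resulting index set. I would run the analogous stratification directly on the rigidified model of Step~1, and verify that each stratum $\modulispace_{\type}(\vec{w},p)$ is a locally closed affine subvariety and that the strata cover $\modulispace_{\type}$.

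\textbf{Step 3 (Normal form on a single stratum).} On a fixed stratum, carry out the explicit sequence of elementary row/column operations dictated by $\vec{w}$ to bring the matrix equation to a normal form. This exhibits $\modulispace_{\type}(\vec{w},p)$ as an iterated fibration whose fibre at each stage is either $\field^{\times}$ or $\field$; collecting the $\field^{\times}$-fibres yields the factor $(\field^{\times})^{\overline{a}(\vec{w},p)}$, and the remaining affine fibration, together with the residual $U$-freedom of Step~1, produces $\cA_{\type}(\vec{w},p)$ with $\cA_{\type}(\vec{w},p)\times\field^{|U|}\isomorphic\field^{b(\vec{w},p)}$. I expect the main obstacle to lie here: Mellit's argument yields this product structure only after adjoining the rank-$|U|$ bundle, equivalently only at the level of classes in the Grothendieck ring, so upgrading it to a genuine geometric splitting requires carrying the $U$-torsor structure through every elementary step and checking that the ``leftover'' $\cA_{\type}(\vec{w},p)$ — which is only \emph{stably} isomorphic to affine space, and which we expect is genuinely not $\field^{b}$ for $b\geq 3$ — is cut out compatibly with the $U$-action.

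\textbf{Step 4 (Numerics and the distinguished cell).} The identity $\overline{a}(\vec{w},p)+2\overline{b}(\vec{w},p)=d_{\type}$ is then a bookkeeping statement, checked by tracking the exponents through the construction of Step~3 step by step; it is moreover consistent with the palindromicity of the $E$-polynomial implied by the curious hard Lefschetz property \cite{Mel19} once one matches $\sum_{(\vec{w},p)\in\cW^{*}}(q-1)^{\overline{a}}q^{\overline{b}}$ with that polynomial. Finally, the distinguished pair $(\vec{w}_{\max},p_{\max})$ is the stratum ``in general position'', with $\vec{w}_{\max}$ consisting of longest Weyl elements and $p_{\max}$ the corresponding generic value; by construction it is the unique stratum with $\overline{b}=0$, hence (by Step~3, since its dimension is $\overline{a}+\overline{b}$) the unique stratum of dimension $d_{\type}$, it is open and dense in $\modulispace_{\type}$, and Step~3 evaluates it to the open algebraic torus $(\field^{\times})^{d_{\type}}$ with $\cA_{\type}(\vec{w}_{\max},p_{\max})=\{\pt\}$.
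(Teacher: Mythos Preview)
Your high-level outline follows the same route as the paper --- rigidify via the regular semisimple class to reduce the gauge group to a Borel, stratify by Bruhat/walk data $(\vec{w},p)$ to land in braid varieties, and then quotient --- but the places you flag as ``obstacles'' are exactly where the paper supplies specific ingredients that your plan does not name, and without them Step~3 does not close.

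First, in Step~1 the rigidification cuts the structure group to the Borel $B=TU$, not directly to the torus; one obtains $\modulispace_{\type}(\vec{w},p)=M_B''(\vec{w},p)/PB$ with $M_B''(\vec{w},p)\isomorphic(\field^{\times})^{a}\times\field^{b}$ $B$-equivariantly. The geometric splitting you seek then comes in two stages. For the quotient by $U$, the key fact is that any connected algebraic group action on an algebraic torus is by a homomorphism into the torus (Lemma~\ref{lem:algebraic_group_action_on_algebraic_torus}), so a unipotent $U$ acts trivially on the $(\field^{\times})^{a}$ factor; hence $U$ acts only on $\field^{b}$, and the resulting principal $U$-bundle $\field^{b}\to\cA_{\type}(\vec{w},p)$ over an \emph{affine} base is trivial because $H^1_{\et}(-,\mathbb{G}_a)$ vanishes on affines (Proposition~\ref{prop:principal_bundle_for_unipotent_algebraic_group_over_affine_variety_is_trivial}). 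This is precisely what gives $\cA_{\type}\times\field^{|U|}\isomorphic\field^{b}$; it is not obtained by ``carrying the $U$-torsor through every elementary step''. For the subsequent $PT$-quotient, one needs that $PT$ acts freely on the torus factor $\tilde{T}_p(\beta)\isomorphic(\field^{\times})^{a}$; this is a nontrivial combinatorial statement (Lemma~\ref{lem:free_torus_action_on_twisted_braid_variety}) equating freeness with surjectivity of the map $\overline{\phi}_{\vec{w},p}$ onto $T_1$, and it is what singles out the admissible walks $\cW^{*}$ and yields the clean split $(\field^{\times})^{\overline{a}}\times\cA_{\type}$.

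Second, your uniqueness argument in Step~4 is not correct as written: ``by construction it is the unique stratum with $\overline{b}=0$'' does not follow from the normal-form procedure alone (nothing in Step~3 rules out several walks $p$ achieving $\overline{b}=0$). The paper instead uses that $\modulispace_{\type}$ is connected smooth of dimension $d_{\type}$ (Lemma~\ref{lem:generic_character_varieties}), so a decomposition into locally closed pieces has a \emph{unique} piece of dimension $d_{\type}$, and since $\dim\modulispace_{\type}(\vec{w},p)=\overline{a}+\overline{b}\le\overline{a}+2\overline{b}=d_{\type}$ with equality iff $\overline{b}=0$, this forces both uniqueness and $\modulispace_{\type}(\vec{w}_{\max},p_{\max})\isomorphic(\field^{\times})^{d_{\type}}$.
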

\noindent{}\textbf{Note}: $\cA_{\type}(\vec{w},p)$ is stably isomorphic to $\mathbb{A}^{\overline{b}(\vec{w},p)}$. We expect that, $\cA_{\type}(\vec{w},p)$ is in general not isomorphic to 
$\mathbb{A}^{\overline{b}(\vec{w},p)}$, hence gives a counterexample to the Zariski cancellation problem for dimension $b=\overline{b}(\vec{w},p)\geq 3$ in characteristic zero.
See Remark \ref{rem:Zariski_cancellation_problem} for a further discussion.

Similar to \cite[\S.7]{Mel19}, Theorem \ref{thm:main_result_cell_decomposition} is proved via a connection to braid varieties. 
However, a more careful analysis is required. For the sake of clarity, we will give a self-contained proof. In fact, we use a somewhat different language (diagram calculus of matrices).
To prove Theorem \ref{thm:main_result}.(2) via Theorem \ref{thm:main_result_cell_decomposition}, the idea is to apply a \textbf{remove/reduction lemma} (Lemma \ref{lem:dual_boundary_complex_remove_lemma}): 
If $X$ is a connected smooth quasi-projective variety, and $Z\subset X$ is a smooth irreducible closed subvariety with nonempty open complement $U$, such that $\mathbb{D}\partial Z$ is contractible, then we have a homotopy equivalence $\mathbb{D}\partial X\sim\mathbb{D}\partial U$.
We do can apply the remove lemma because of a key property \cite[Cor.0.3]{Su24}: if $\cA_{\type}$ is stably isomorphic to $\mathbb{A}^b$ for some $b\geq 1$, then $\mathbb{D}\partial \cA_{\type}$ is contractible. This property is proved via a motivic characterization of the integral cohomology of dual boundary complexes (recalled in Proposition \ref{prop:dual_boundary_complex_is_motivic}).

As a final remark, we mention that the same strategy in this article can be applied to wild character varieties \cite{Boa14,BY15}. 
This will be pursued elsewhere.

\subsection*{Organization}
\addtocontents{toc}{\protect\setcounter{tocdepth}{1}}

We have already explained the main ideas above. Now, we sketch the organization.

In Section \ref{sec:set_up}, we set up the basic notions related to character varieties. To clarify the computations in Section \ref{sec:cell_decomposition_of_character_varieties}, we introduce some diagram calculus of matrices in Section \ref{subsec:diagram_calculus_of_matrices}.
Morally, it's about braid matrix diagrams generalizing braid matrices associated to positive braids. In Section \ref{subsec:braid_varieties}, we review braid varieties, complemented by Appendix \ref{sec:cell_decomposition_of_braid_varieties}.

In Section \ref{sec:cell_decomposition_of_character_varieties}, we prove a strong form of the cell decomposition for $\modulispace_{\type}$ (Theorem \ref{thm:cell_decomposition_of_very_generic_character_varieties}). This involves some routine diagram calculus in Sections \ref{subsec:diagram_calculus_for_punctures}-\ref{subsec:diagram_calculus_for_genera}. 
In Sections \ref{subsec:connection_to_braid_varieties}-\ref{subsec:the_cell_decomposition}, we have borrowed statements on quotients of varieties from Appendix \ref{sec:quotients_of_varieties}. 
In Section \ref{subsec:examples_of_cell_decomposition_of_very_generic_character_varieties}, we illustrate Theorem \ref{thm:cell_decomposition_of_very_generic_character_varieties} by two examples.

In Section \ref{sec:dual_boundary_complexes_of_character_varieties}, we study dual boundary complexes of character varieties.
Section \ref{subsec:dual_boundary_complexes} reviews the basics on dual boundary complexes. 
In Section \ref{subsec:dual_boundary_complexes_of_very_generic_character_varieties}, we treat the weak geometric P=W conjecture (Theorem \ref{thm:the_homotopy_type_conjecture_for_very_generic_character_varieties}). 
To end this article, we add a few remarks on some further directions in Section \ref{subsec:further_directions}.

\section{Setup}\label{sec:set_up}

Fix the base field $\field$ to be algebraically closed of characteristic $0$. For simplicity, $\field=\mathbb{C}$.
A \emph{$\field$-variety} means a \emph{reduced separated scheme of finite type} over $\field$.

\noindent{}\textbf{Convention} \setword{{\color{blue}$1$}}{convention:quotients}: Let $H$ be a linear algebraic group acting on a variety $X$ over $\field$, then
\begin{itemize}[wide,labelwidth=!,labelindent=0pt]
\item
A principal $H$-bundle (or a fiber bundle) means so in the \'{e}tale topology, unless stated otherwise.

\item
For $H$ reductive and $X$ affine, $X//H=\Spec~\cO_X(X)^H$ denotes the affine GIT quotient.

\item
$[X/H]$ denotes the quotient stack of $X$ by $H$.

\item
If $H$ acts freely on $X$, and $\pi:X\rightarrow Y$ is principal $H$-bundle over a $\field$-variety, so also a geometric quotient (see \cite[Def.3.27]{Hos16}, Proposition \ref{prop:associated_fiber_bundles}). Then denote $X/H:=Y$.\\
Equivalently, this says that the quotient stack $[X/H]$ is representable by $Y$. Thus, we may also use the identification $[X/H]=X/H$.
\end{itemize}

We refer to Appendix \ref{sec:quotients_of_varieties} for more background on various quotients of varieties.

\subsection{Generic character varieties}\label{subsec:generic_character_varieties}

Recall that, $(\Sigma,\sigma=\{q_1,\text{\tiny$\cdots$},q_k\})$ is a $k$-punctured genus $g$ Riemann surface, $k\geq 1, 2g+k\geq 3$. 
Let $T\subset G=GL_n(\field)$ be the diagonal maximal torus.
Let $(C_1,\text{\tiny$\cdots$}, C_k)\in T^k$ be semisimple elements of type $\type:=(\mu^1,\text{\tiny$\cdots$},\mu^k)\in\cP_n^k$.
That is, the multiplicities of eigenvalues of $C_i$ define a partition of $n$: $\mu^i=(\mu_1^i,\text{\tiny$\cdots$},\mu_{r_i}^i)\in\cP_n$.

Let $\modulispace_{\type}=\modulispace_B=\modulispace_B(\Sigma,\sigma,G;C_1,\text{\tiny$\cdots$},C_k)$ 
be the character variety of $G$-local systems on $\Sigma$ whose local monodromy around $q_i$ is conjugate to $C_i$. 
More precisely, define the affine $\field$-variety
\[
M_B=M_B(\Sigma,\sigma,G;C_1,\text{\tiny$\cdots$},C_k):=
\{(A_j)_{j=1}^{2g},x_1,\text{\tiny$\cdots$},x_k)\in G^{2g+k}: \text{\tiny$\prod_{j=1}^g$}\lbracket A_{2j-1}, A_{2j} \rbracket \text{\tiny$\prod_{i=1}^k$} x_iC_ix_i^{-1}=\id\},
\]
where $\lbracket -,- \rbracket$ stands for the multiplicative commutator. It has an action of the reductive group
\[
G_{\pa}:=G\times \text{\tiny$\prod_{i=1}^k$} Z(C_i),\quad Z(C_i)= \text{ the centralizer of } C_i,
\]
with the action given by:
\[
(h_0,h_1,\text{\tiny$\cdots$},h_k)\cdot(A_1,\text{\tiny$\cdots$},A_{2j},x_1,\text{\tiny$\cdots$},x_k):=(h_0A_1h_0^{-1},\text{\tiny$\cdots$},h_0A_{2j}h_0^{-1},h_0x_1h_1^{-1},\text{\tiny$\cdots$},h_0x_kh_k^{-1}).
\]
\noindent{}Then, the diagonal $\field^{\times}$ acts trivially on $M_B$, and $\modulispace_{\type}:=M_B//G_{\pa}$ is the affine GIT quotient.

\begin{definition}[{\cite[Def.4.6.1]{Mel19}}]\label{def:generic_monodromy}
$(C_1,\text{\tiny$\cdots$},C_k)\in T^k$ is \emph{generic} if:
$\prod_{i=1}^k\det C_i=1$, and for any $1\leq n' <n$, take any $n'$ eigenvalues $\alpha_{i,1},\text{\tiny$\cdots$},\alpha_{i,n'}$ of each $C_i$, have
\[
\text{\tiny$\prod_{i=1}^k\prod_{j=1}^{n'}$}\alpha_{i,j}\neq 1.
\]
\end{definition}
\noindent{}In this case, $\modulispace_{\type}$ is called a \emph{generic character variety}.

\begin{lemma}[{\cite[Thm.2.1.5]{HLRV11}, \cite[Thm.5.1.1]{HLRV13}}]\label{lem:generic_character_varieties}
If $(C_1,\text{\tiny$\cdots$},C_k)$ is generic of type $\type$, then $\modulispace_{\type}=M_B/PG_{\pa}$ (if nonempty) is a connected smooth affine $\field$-variety of dimension
\begin{equation}
d_{\type}:=n^2(2g-2+k)-\sum_{i,j}(\mu_j^i)^2+2,
\end{equation}
and the quotient map $\pi:M_B\rightarrow \modulispace_{\type}=M_B/PG_{\pa}$ is a principal $PG_{\pa}$-bundle. 
\end{lemma}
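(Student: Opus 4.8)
The plan is to reduce the statement to three facts about the affine variety $M_B$: that the $PG_{\pa}$-action on it is free; that $M_B$ is smooth of pure dimension $(2g+k-1)n^2+1$; and that $M_B$ is connected. Granting these, everything else is formal. Freeness makes every point of $M_B$ a stable point with trivial stabilizer for the reductive group $G_{\pa}$ (the central $\field^{\times}\subset G_{\pa}$ acting trivially), so by Mumford's GIT together with Luna's \'etale slice theorem the affine GIT quotient $M_B//G_{\pa}$ is simultaneously a geometric quotient and the orbit space $M_B/PG_{\pa}$; it is a smooth affine $\field$-variety and $\pi$ is a principal $PG_{\pa}$-bundle in the \'etale topology. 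Connectedness of $M_B$ together with connectedness of the group $PG_{\pa}$ forces $\modulispace_{\type}$ to be connected. Finally the dimension drops by $\dim PG_{\pa}=n^2+\sum_{i,j}(\mu_j^i)^2-1$, so $\dim\modulispace_{\type}=(2g+k)n^2-(n^2-1)-\bigl(n^2+\sum_{i,j}(\mu_j^i)^2-1\bigr)=n^2(2g-2+k)-\sum_{i,j}(\mu_j^i)^2+2=d_{\type}$.

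Freeness comes from genericity via irreducibility. If some point $((A_j),(x_i))\in M_B$ gave a local system on $\Sigma\setminus\sigma$ with a proper nonzero subrepresentation of rank $n'$, $1\le n'\le n-1$, then the local monodromy around each $q_i$ would preserve it and hence restrict to an operator whose eigenvalues form a sub-multiset $\alpha_{i,1},\dots,\alpha_{i,n'}$ of those of $C_i$; restricting the defining relation to the subrepresentation and taking determinants (commutators restrict to determinant-one operators on any invariant subspace) yields $\prod_{i=1}^k\prod_{j=1}^{n'}\alpha_{i,j}=1$, contradicting Definition \ref{def:generic_monodromy}. Thus every point of $M_B$ is irreducible, and Schur's lemma pins down the stabilizer in $G_{\pa}$: if $(h_0,\dots,h_k)$ fixes $((A_j),(x_i))$ then $h_i=x_i^{-1}h_0x_i$ and $h_0$ centralizes all $A_j$ and all $x_iC_ix_i^{-1}$, hence is a scalar; so the stabilizer is exactly the central $\field^{\times}$ and $PG_{\pa}=G_{\pa}/\field^{\times}$ acts freely.

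Smoothness and the dimension of $M_B$ I would get by looking at the map $\mu\colon G^{2g+k}\to SL_n$, $\mu=\prod_{j=1}^g\lbracket A_{2j-1},A_{2j}\rbracket\prod_{i=1}^k x_iC_ix_i^{-1}$ --- it lands in $SL_n$ since $\prod_i\det C_i=1$, and $M_B=\mu^{-1}(\id)$. A standard computation in the style of Goldman identifies $\coker(d\mu)$ at a point with the coinvariants of $\mathfrak{sl}_n$ under the adjoint action of the corresponding local system; by the irreducibility just established these vanish, so $d\mu$ is a submersion at every point of $M_B$. Hence $M_B$ is smooth of pure dimension $\dim G^{2g+k}-\dim SL_n=(2g+k)n^2-(n^2-1)$. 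Pure-dimensionality here is what feeds the Poincar\'e-duality argument below, and it passes to $\modulispace_{\type}$ through the principal bundle.

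The genuine obstacle is connectedness of $M_B$ --- equivalently, since $M_B$ is smooth, its irreducibility. Here I would follow the arithmetic strategy of Hausel--Letellier--Rodriguez-Villegas: spread the whole configuration out over a finitely generated $\ring$-subalgebra of $\field$, and use the Frobenius mass formula together with the character theory of $GL_n(\mathbb F_q)$ to show that $|\modulispace_{\type}(\mathbb F_q)|$ is given by a polynomial $P_{\type}(q)\in\ring[q]$ whose leading term is $q^{d_{\type}}$ with coefficient exactly $1$. Since $\modulispace_{\type}$ is smooth of pure dimension $d_{\type}$, Poincar\'e duality for $\ell$-adic cohomology with compact support identifies $\dim_{\mathbb Q_\ell}H^{2d_{\type}}_c$ with the number of geometric connected components, while Deligne's weight bounds show that only $H^{2d_{\type}}_c$ can contribute to the $q^{d_{\type}}$-coefficient of $P_{\type}$; hence that coefficient equals the number of components, which is therefore $1$. (Alternatively one may pass to Crawley-Boevey's multiplicative quiver variety picture and invoke connectedness there at a positive root.) This is the step carrying all the weight, and I would ultimately just cite \cite{HLRV11,HLRV13}; everything above it is routine once irreducibility of the local systems is in hand.
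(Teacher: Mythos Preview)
Your proposal is correct and matches the paper's treatment. The paper does not supply its own proof of this lemma: it is stated with citations to \cite{HLRV11,HLRV13}, followed only by the remark that ``the connectedness of $\modulispace_{\type}$ is proved by a computation of the E-polynomial'' --- precisely the arithmetic point-counting strategy you outline for the hard step. Your reductions (irreducibility from genericity via the determinant trick, freeness via Schur, smoothness via surjectivity of $d\mu$, and the principal-bundle structure via Luna) are the standard ones and are exactly what the cited references do.
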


\begin{definition}\label{def:ordered_nicely}
$C_i\in T$ is \emph{ordered nicely} if it's of the form 
$\diag(\lambda_{i,1}I_{\mu_1^i},\text{\tiny$\cdots$}, \lambda_{i,r_i}I_{\mu_{r_i}^i})$.
\end{definition}
Without loss of generality, we may assume that each $C_i\in T$ is \emph{ordered nicely},
so that $Z(C_i)\subset G$ is the Levi subgroup of block-diagonal matrices of type $\mu^i$.

\subsection{Very generic character varieties}\label{subsec:very_generic_character_varieties}

Let $B\subset G$ be the standard Borel subgroup of upper triangular elements, with unipotent radical $U\subset B$.
We make the \emph{very generic} assumption:
\begin{assumption}\label{ass:very_generic_assumption}
$C_k$ is regular semisimple. So, $Z(C_k)=T$ and $\mu^k=(1^n)\in\cP_n$.
\end{assumption}
\noindent{}Then, $\modulispace_{\type}$ is called a \emph{very generic character variety}. 
The nice feature in this case is a cell decomposition \cite{Mel19}, and an enhanced version will be proved in Theorem \ref{thm:cell_decomposition_of_very_generic_character_varieties}.

The first step is as follows: 
Taking the diagonal gives the quotient morphism 
\[
D:B\twoheadrightarrow B/U\isomorphic T.
\]
Define a closed (affine) subvariety of $M_B$ by
\begin{equation}
M_B':=M_B\cap (G^{2g+k-1}\times U),
\end{equation}
and a closed subgroup of $G_{\pa}$ by
\begin{equation}
B_{\pa}=B\times \text{\tiny$\prod_{i=1}^{k-1}$} Z(C_i)\hookrightarrow G_{\pa}: (b,h_1,\text{\tiny$\cdots$},h_{k-1})\mapsto (b,h_1,\text{\tiny$\cdots$},h_{k-1},D(b)).
\end{equation}
Denote $PB_{\pa}:=B_{\pa}/\field^{\times}$.
We have mutually inverse isomorphisms of $G_{\pa}$-varieties
\begin{eqnarray*}
&&PG_{\pa}\times^{PB_{\pa}}M_B' := (PG_{\pa}\times M_B')/PB_{\pa} \xrightarrow[]{\simeq} M_B: [g_{\pa},m'_B=(A_1,\text{\tiny$\cdots$},x_k)]\mapsto g_{\pa}\cdot m_B',\\
&&M_B\rightarrow PG_{\pa}\times^{PB_{\pa}}M_B': (A_1,\text{\tiny$\cdots$},x_k)\mapsto [(x_k,\id,\text{\tiny$\cdots$},\id),((x_k^{-1}A_jx_k)_{j=1}^{2g},(x_k^{-1}x_i)_{i=1}^{k})],
\end{eqnarray*}
\noindent{}where $PG_{\pa}\times^{PB_{\pa}}M_B'$ is well-defined by Proposition \ref{prop:associated_fiber_bundles}.
Then by Proposition \ref{prop:reduction_of_principal_bundles} and Proposition \ref{prop:associated_fiber_bundles}, we obtain a natural isomorphism of $\field$-varieties:
\[
M_B'/PB_{\pa} \isomorphic (PG_{\pa}\times^{PB_{\pa}}M_B')/PG_{\pa} \isomorphic M_B/PG_{\pa}=\modulispace_{\type},
\]
and the quotient $\pi':M_B'\rightarrow\modulispace_{\type}=M_B'/PB_{\pa}$ is a principal $PB_{\pa}$-bundle.
Observe that we have a quotient group $PB_{\pa}/U\isomorphic PT_{\pa}:=(T\times\prod_{i=1}^{k-1}Z(C_i))/\field^{\times}$.

Take the coordinate change
\begin{equation}
U\rightarrow U: x_k\mapsto u_k:=x_kC_kx_k^{-1}C_k^{-1}.
\end{equation}
We can re-write
\begin{equation}\label{eqn:reduction_of_character_varieties}
M_B'=\{(A_1,\text{\tiny$\cdots$},x_{k-1},u_k)\in G^{2g+k-1}\times U: (\text{\tiny$\prod_{j=1}^g$} \lbracket A_{2j-1}, A_{2j} \rbracket) (\text{\tiny$\prod_{i=1}^{k-1}$} x_iC_ix_i^{-1})u_kC_k=\id\}
\end{equation}
with the action of $B_{\pa}$ given by:
\begin{equation}\label{eqn:action_on_M_B'}
(b,h_1,\text{\tiny$\text{\tiny$\cdots$}$},h_{k-1})\cdot (A_1,\text{\tiny$\text{\tiny$\cdots$}$},x_{k-1},u_k):=(bA_1b^{-1},\text{\tiny$\text{\tiny$\cdots$}$},bx_{k-1}h_{k-1}^{-1},bu_k(b^{C_k})^{-1}),~~b^{C_k}:=C_kbC_k^{-1}.
\end{equation}

Next, for any \emph{nicely ordered} $C\in T$, take the parabolic subgroup $P=BZ(C)\subset G$. Denote the Weyl groups $W\isomorphic S_n\subset G$, $W(C):=W(Z(C))$.
Recall the Bruhat cell decomposition
\[
G= \sqcup_{w\in W/W(C)} B\dot{w}P,
\]
where $\dot{w}\in W$ denotes the \emph{shortest} representative of $w\in W/W(C)$. 

In our setting, $P_i:=BZ(C_i)\subset G$. 
For each sequence 
\begin{equation}
\vec{w}=(\tau_1,\text{\tiny$\cdots$},\tau_{2g},w_1,\text{\tiny$\cdots$},w_{k-1})\in W^{2g}\times \text{\tiny$\prod_{i=1}^{k-1}$} W/W(C_i),
\end{equation}
we obtain a locally closed affine $B_{\pa}$-subvariety of $M_B'$:
\begin{equation}\label{eqn:Equivariant_Bruhat_cell_of_character_variety}
M_B'(\vec{w}):=M_B'\cap(\text{\tiny$\prod_{j=1}^{2g}$} B\tau_jB\times \text{\tiny$\prod_{i=1}^{k-1}$} B\dot{w}_iP_i \times U)
=M_B\cap (\text{\tiny$\prod_{j=1}^{2g}$} B\tau_jB\times \text{\tiny$\prod_{i=1}^{k-1}$} B\dot{w}_iP_i \times U).
\end{equation}
Define
\begin{equation}\label{eqn:Bruhat_cell_of_character_variety}
\modulispace_{\type}(\vec{w}):=\pi'(M_B'(\vec{w}))\hookrightarrow \modulispace_{\type}=M_B'/PB_{\pa}.
\end{equation}
By Corollary \ref{cor:base_change_of_principal_bundles}, $\modulispace_{\type}(\vec{w})\subset\modulispace_{\type}$ is a locally closed $\field$-subvariety, and the quotient map
\[
\pi_{\vec{w}}':=\pi'|_{M_B'(\vec{w})}:M_B'(\vec{w})\rightarrow \modulispace_{\type}(\vec{w})=M_B'(\vec{w})/PB_{\pa}
\]
is a principal $PB_{\pa}$ bundle.

To obtain a cell decomposition for $\modulispace_{\type}$, the idea is to decompose $M_B'(\vec{w})$ further, via a connection to the so-called \emph{braid varieties}.
We will come back to this point after some preparations.

\subsection{Diagram calculus of matrices}\label{subsec:diagram_calculus_of_matrices}

To relate character and braid varieties, we introduce some diagram calculus of matrices. 
Denote
\begin{eqnarray*}
&&\FBr_n^+:=\langle\sigma_1,\text{\tiny$\cdots$},\sigma_{n-1}\rangle;~~(\text{free monoid of $n$-strand (positive) braid presentations})\\
&&\Br_n^+:=\FBr_n^+/(\sigma_i\sigma_{i+1}\sigma_i = \sigma_{i+1}\sigma_i\sigma_{i+1}, \forall i;~\sigma_i\sigma_j =\sigma_j\sigma_i, \forall |i-j|>1);~~(\text{$n$-strand positive braids})\\
&&\permutation:\Br_n^+\rightarrow \Br_n^+/(\sigma_i^2=1,\forall i)\isomorphic S_n: \sigma_k\mapsto \ms_k:=\permutation(\sigma_k)=(k~k+1).~~(\text{underlying permuations})
\end{eqnarray*}

\noindent{}\textbf{Convention} \setword{{\color{blue}$2$}}{convention:braid_diagrams}: 
As in Figure \ref{fig:braid_diagram}, any $\beta=\sigma_{i_{\ell}}\text{\tiny$\cdots$}\sigma_{i_1}\in\FBr_n^+$ is represented by the \emph{braid diagram} $[\sigma_{i_1}|\text{\tiny$\cdots$}|\sigma_{i_{\ell}}]$ going \emph{from left to right}, where $[\beta_1|\beta_2]$ is the concatenation of $\beta_1$ with $\beta_2$. Label the left (resp. right) ends \emph{from bottom to top} by $1,2,\text{\tiny$\cdots$},n$. Denote $[n]:=\{1,2,\text{\tiny$\cdots$},n\}$.

\begin{figure}[!htbp]
\begin{center}
\begin{tikzpicture}[baseline=-.5ex,scale=0.5]
\begin{scope}

\draw[thin] (0,0) to [out=0,in=180] (2,1) to [out=0,in=180] (4,2);

\draw[white,fill=white] (1,0.5) circle(0.1);
\draw[white,fill=white] (3,1.5) circle(0.1);

\draw[thin] (0,1) to [out=0,in=180] (2,0)--(4,0);

\draw[thin] (0,2)--(2,2) to [out=0,in=180] (4,1);

\draw (0,0) node[left] {\text{\tiny$1$}};
\draw (0,1) node[left] {\text{\tiny$2$}};
\draw (0,2) node[left] {\text{\tiny$3$}};

\draw (4,0) node[right] {\text{\tiny$1$}};
\draw (4,1) node[right] {\text{\tiny$2$}};
\draw (4,2) node[right] {\text{\tiny$3$}};

\draw (2,-0.5) node[below] {\text{\tiny$[\sigma_1|\sigma_2]=\sigma_2\sigma_1$}};

\end{scope}
\end{tikzpicture}
\end{center}
\caption{Braid diagram for a positive braid: $n=3$.}
\label{fig:braid_diagram}
\end{figure}
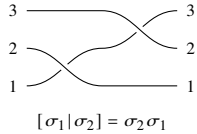

\begin{definition}\label{def:elementary_braid_matrix_diagrams}
Let $e_{i,j}\in M_{n\times n}(\field)$ be the matrix so that $(e_{i,j})_{a,b}=\delta_{a,i}\delta_{b,j}$.
Define
\begin{eqnarray*}
&&\mK_k(\epsilon):=\sum_{i\neq k}e_{i,i} + \epsilon e_{k,k}\in G,~1\leq k\leq n, \epsilon\in\field^{\times};
\quad [\mK_k(\epsilon)]:= \text{ Figure \ref{fig:elementary_braid_matrix_diagrams} (left)}.\\
&&\mH_{i,j}(\epsilon):=I_n +\epsilon e_{i,j} \in G,~1\leq i< j\leq n, \epsilon\in\field;\quad [\mH_{i,j}(\epsilon)]:= \text{ Figure \ref{fig:elementary_braid_matrix_diagrams} (middle)}.
\end{eqnarray*}
Denote $\mH_k(\epsilon):=\mH_{k,k+1}(\epsilon)$.
Define $[\ms_k]:=\sigma_k= \text{ Figure \ref{fig:elementary_braid_matrix_diagrams} (right)}$, for $1\leq k\leq n-1$. Each of $[\mK_k(\epsilon)]$, $[\mH_{i,j}(\epsilon)]$, $[\ms_k]$ is called an \emph{elementary braid matrix diagram} of rank $n$ over $\field$.
\end{definition}

\begin{figure}[!htbp]
\begin{center}
\begin{tikzcd}[row sep=0.5pc,column sep=1pc,ampersand replacement=\&]
\begin{tikzpicture}[baseline=-.5ex,scale=0.3]
\begin{scope}

\draw[thin] (0,0)--(4,0);
\draw[thin] (0,2.5)--(4,2.5);
\draw[thin] (0,5)--(4,5);
\draw (2,1.25) node[right] {\text{\tiny$\vdots$}};
\draw (2,3.75) node[right] {\text{\tiny$\vdots$}};

\draw (0,0) node[left] {\text{\tiny$1$}};
\draw (0,2.5) node[left] {\text{\tiny$k$}};
\draw (0,5) node[left] {\text{\tiny$n$}};

\draw (4,0) node[right] {\text{\tiny$1$}};
\draw (4,2.5) node[right] {\text{\tiny$k$}};
\draw (4,5) node[right] {\text{\tiny$n$}};

\draw (1.5,3.3) node[below] {$\ast$};
\draw (1.5,2.5) node[above] {$\epsilon$};

\draw (2,-0.5) node[below] {\text{\tiny$[\mK_k(\epsilon)]$}};

\end{scope}
\end{tikzpicture}
\&
\begin{tikzpicture}[baseline=-.5ex,scale=0.3]
\begin{scope}

\draw[thin] (0,0)--(4,0);
\draw[thin] (0,1.5)--(4,1.5);
\draw[thin] (0,3.5)--(4,3.5);
\draw[thin] (0,5)--(4,5);
\draw (2,0.65) node[right] {\text{\tiny$\vdots$}};
\draw (2,2.45) node[right] {\text{\tiny$\vdots$}};
\draw (2,4.2) node[right] {\text{\tiny$\vdots$}};

\draw (0,0) node[left] {\text{\tiny$1$}};
\draw (0,1.5) node[left] {\text{\tiny$i$}};
\draw (0,3.5) node[left] {\text{\tiny$j$}};
\draw (0,5) node[left] {\text{\tiny$n$}};

\draw (4,0) node[right] {\text{\tiny$1$}};
\draw (4,1.5) node[right] {\text{\tiny$i$}};
\draw (4,3.5) node[right] {\text{\tiny$j$}};
\draw (4,5) node[right] {\text{\tiny$n$}};

\draw[line width=0.5mm] (1.5,1.5)--(1.5,3.5);
\draw (1.5,2.5) node[left] {$\epsilon$};

\draw (2,-0.5) node[below] {\text{\tiny$[\mH_{i,j}(\epsilon)]$}};

\end{scope}
\end{tikzpicture}
\&
\begin{tikzpicture}[baseline=-.5ex,scale=0.3]
\begin{scope}

\draw[thin] (0,0)--(4,0);
\draw[thin] (0,2)--(1,2) to [out=0,in=180] (3,3)--(4,3);
\draw[white,fill=white] (2,2.5) circle (0.2);
\draw[thin] (0,3)--(1,3) to[out=0,in=180] (3,2)--(4,2);
\draw[thin] (0,5)--(4,5);
\draw (1.6,1) node[right] {\text{\tiny$\vdots$}};
\draw (1.6,4) node[right] {\text{\tiny$\vdots$}};

\draw (0,0) node[left] {\text{\tiny$1$}};
\draw (0,2) node[left] {\text{\tiny$k$}};
\draw (0,3) node[left] {\text{\tiny$k+1$}};
\draw (0,5) node[left] {\text{\tiny$m$}};

\draw (4,0) node[right] {\text{\tiny$1$}};
\draw (4,2) node[right] {\text{\tiny$k$}};
\draw (4,3) node[right] {\text{\tiny$k+1$}};
\draw (4,5) node[right] {\text{\tiny$n$}};

\draw (2,-0.5) node[below] {\text{\tiny$[s_k]=\sigma_k$}};

\end{scope}
\end{tikzpicture}

\end{tikzcd}
\end{center}
\caption{Elementary braid matrix diagrams representing elementary matrices: $[\mK_k(\epsilon)]$ (\emph{scaling}), $[\mH_{i,j}(\epsilon)]$ for $i<j$ (\emph{handleslide}), $[\ms_k]$ (\emph{transposition}).} 
\label{fig:elementary_braid_matrix_diagrams}
\end{figure}
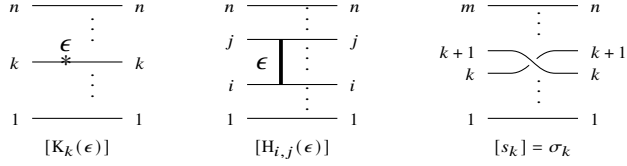

\begin{definition}[Braid matrix diagram presentations]~\label{def:braid_matrix_diagram_presentations}
\begin{enumerate}[wide,labelwidth=!,labelindent=0pt]
\item
The monoid of \emph{braid matrix diagram presentations} ($\bmdp$) of rank $n$ over $\field$ is:
\[
\FBD_n:=\langle \sigma_1,\text{\tiny$\cdots$},\sigma_{n-1}, [\mK_k(\epsilon)], [\mH_{i,j}(\epsilon')] \rangle/([\mK_k(1)]=\id_n=[\mH_{i,j}(0)]),\quad \Gamma_2\circ\Gamma_1:=[\Gamma_1|\Gamma_2].
\]
So, a $\bmdp$ can be viewd as a finite concatenation of elementary braid matrix diagrams.

\item
Define a morphism of monoids 
\[
g_{-}:\FBD_n\rightarrow G: \sigma_k\mapsto \ms_k,~~[\mK_k(\epsilon)]\mapsto \mK_k(\epsilon),~~[\mH_{i,j}(\epsilon')]\mapsto \mH_{i,j}(\epsilon').
\]
Two $\bmdp$'s $\Gamma_1,\Gamma_2\in\FBD_n$ are \emph{weakly equivalent} if $g_{\Gamma_1}=g_{\Gamma_2}$, denoted as $\Gamma_1\weakequivalent \Gamma_2$.
\end{enumerate}
\end{definition}

\begin{lemma}[\emph{Elementary moves} of $\bmdp$'s]~\label{lem:elementary_moves}
Denote $i':=\ms_k(i)$, then
\begingroup
\setlength{\tabcolsep}{2pt} 

\begin{tabular}{ll}
\minipage{0.42\linewidth}
\[
\mK_k(\epsilon_1)\text{\tiny$\circ$}\mK_{\ell}(\epsilon_2)=\left\{\begin{array}{ll}
\mK_k(\epsilon_1\epsilon_2) & \text{$k=\ell$},\\
\mK_{\ell}(\epsilon_2)\text{\tiny$\circ$} \mK_k(\epsilon_1) & \text{$k\neq\ell$}.
\end{array}\right.
\]
\endminipage
&
\minipage{0.53\linewidth}
\[
\mH_{i,j}(\epsilon_1)\text{\tiny$\circ$}\mK_k(\epsilon_2)=\left\{\begin{array}{ll}
\mK_i(\epsilon_2)\text{\tiny$\circ$}\mH_{i,j}(\epsilon_2^{-1}\epsilon_1) & \text{$k=i$},\\
\mK_i(\epsilon_2)\text{\tiny$\circ$}\mH_{i,j}(\epsilon_1\epsilon_2) & \text{$k=j$},\\
\mK_i(\epsilon_2)\text{\tiny$\circ$}\mH_{i,j}(\epsilon_1) & \text{$k\neq i,j$}.
\end{array}\right.
\]
\endminipage
\end{tabular}
\endgroup


\begingroup
\setlength{\tabcolsep}{2pt} 
\begin{tabular}{ll}
\minipage{0.27\textwidth}
\[
\left.\begin{array}{l}
\ms_k\text{\tiny$\circ$}\mK_i(\epsilon)=\mK_{i'}(\epsilon)\text{\tiny$\circ$}\ms_k;\\
\ms_k\text{\tiny$\circ$}\mH_{i,j}(\epsilon)=\mH_{i',j'}(\epsilon)\text{\tiny$\circ$}\ms_k.
\end{array}\right.
\]
\endminipage\hfill
&
\minipage{0.68\textwidth}
\[
\mH_{i,j}(\epsilon_1)\text{\tiny$\circ$}\mH_{k,\ell}(\epsilon_2)=\left\{\begin{array}{ll}
\mH_{i,j}(\epsilon_1\text{\tiny$+$}\epsilon_2) & \text{$(k,\ell)=(i,j)$},\\
\mH_{j,\ell}(\epsilon_2)\text{\tiny$\circ$}\mH_{i,\ell}(\epsilon_1\epsilon_2)\text{\tiny$\circ$}\mH_{i,j}(\epsilon_1) & \text{$k=j$},\\
\mH_{k,i}(\epsilon_2)\text{\tiny$\circ$}\mH_{k,j}(\text{\tiny$-$}\epsilon_2\epsilon_1)\text{\tiny$\circ$}\mH_{i,j}(\epsilon_1) & \text{$\ell=i$},\\
\mH_{k,\ell}(\epsilon_2)\text{\tiny$\circ$}\mH_{i,j}(\epsilon_1) & \text{else}.
\end{array}\right.
\]
\endminipage\hfill
\end{tabular}
\endgroup\\
Each identity is either trivial (commutative), or represented by an (\emph{elementary}) move in Figure \ref{fig:elementary_moves}.
\end{lemma}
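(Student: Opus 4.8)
The plan is to translate each asserted weak equivalence into a single equality of matrices in $GL_n(\field)$ and check it by a one-line computation in $M_n(\field)$; the whole statement is a diagrammatic repackaging of the standard commutation relations among the elementary matrices (scalings, transvections, transpositions). Indeed, by Definition \ref{def:braid_matrix_diagram_presentations}, $\Gamma_1\weakequivalent\Gamma_2$ means exactly $g_{\Gamma_1}=g_{\Gamma_2}$, where $g_-$ is the monoid homomorphism sending $\mK_k(\epsilon)\mapsto\sum_{i\neq k}e_{i,i}+\epsilon e_{k,k}$, $\mH_{i,j}(\epsilon)\mapsto\id+\epsilon e_{i,j}$, $\sigma_k\mapsto\ms_k$ (the permutation matrix of $(k\ k{+}1)$), and $g_{\Gamma_2\circ\Gamma_1}=g_{\Gamma_2}g_{\Gamma_1}$. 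So I would first record the three computational facts used throughout: (i) $e_{i,j}e_{k,\ell}=\delta_{j,k}\,e_{i,\ell}$; (ii) since $\mK_a(\epsilon)$ is diagonal, $\mK_a(\epsilon)\,e_{i,j}=\epsilon^{\delta_{a,i}}e_{i,j}$ and $e_{i,j}\,\mK_a(\epsilon)=\epsilon^{\delta_{a,j}}e_{i,j}$; (iii) $\ms_k^{-1}=\ms_k$ and $\ms_k\,e_{i,j}\,\ms_k=e_{\ms_k(i),\ms_k(j)}$.

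With these in hand the verification is routine, case by case. The $\mK\mK$ identities and the $\mK$-with-disjoint-$\mK$ commutation are immediate from (ii). For $\ms_k$ against $\mK_i(\epsilon)$ or $\mH_{i,j}(\epsilon)$, write $\ms_k\mK_i(\epsilon)=(\ms_k\mK_i(\epsilon)\ms_k)\ms_k$ and apply (iii); the only subtlety is that the moved indices $i'=\ms_k(i)$, $j'=\ms_k(j)$ must again satisfy $i'<j'$ to land in the generating set, which holds unless $\{i,j\}=\{k,k{+}1\}$, the overlap case, which is not part of the move. For the $\mH\mK$ identities, expand $\mH_{i,j}(\epsilon_1)\mK_k(\epsilon_2)=\mK_k(\epsilon_2)+\epsilon_1\,e_{i,j}\mK_k(\epsilon_2)=\mK_k(\epsilon_2)+\epsilon_1\epsilon_2^{\delta_{j,k}}\,e_{i,j}$ by (ii) and read off the right-hand side as the displayed product of a diagonal and a transvection. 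For the $\mH\mH$ identities, the entire content is
\[
(\id+\epsilon_1 e_{i,j})(\id+\epsilon_2 e_{k,\ell})=\id+\epsilon_1 e_{i,j}+\epsilon_2 e_{k,\ell}+\epsilon_1\epsilon_2\,\delta_{j,k}\,e_{i,\ell}
\]
together with $e_{k,i}e_{i,j}=e_{k,j}$. When $(k,\ell)=(i,j)$ the last term vanishes (as $i<j$), giving $\mH_{i,j}(\epsilon_1{+}\epsilon_2)$; when $k=j$ one checks, again via (i), that $\mH_{j,\ell}(\epsilon_2)\,\mH_{i,\ell}(\epsilon_1\epsilon_2)\,\mH_{i,j}(\epsilon_1)$ equals $\id+\epsilon_1 e_{i,j}+\epsilon_2 e_{j,\ell}+\epsilon_1\epsilon_2 e_{i,\ell}$; when $\ell=i$ one checks that the sign $-\epsilon_2\epsilon_1$ in $\mH_{k,i}(\epsilon_2)\,\mH_{k,j}(-\epsilon_2\epsilon_1)\,\mH_{i,j}(\epsilon_1)$ is precisely what cancels the $e_{k,j}$ created by reordering; and in every remaining configuration $\delta_{j,k}=\delta_{\ell,i}=0$ with $\{i,j\}\neq\{k,\ell\}$, so the two generators commute and the identity is the trivial one.

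It then remains to upgrade each nontrivial matrix equality to an \emph{elementary move} of $\bmdp$'s, i.e.\ to an entry of Figure \ref{fig:elementary_moves}. The mechanism is \emph{locality}: in each identity, every elementary diagram other than a crossing or an identity strand is supported on the bounded set of strands indexed by the indices that occur ($\{i,j\}$, $\{k,\ell\}$, or $\{k,k{+}1\}$), and the two $\bmdp$'s coincide literally outside a bounded horizontal window. Hence each weak equivalence may be spliced into the interior of an arbitrary $\bmdp$ without disturbing anything else, which is exactly what an elementary move is. Concretely I would, for each line of the Lemma, draw the two local diagrams — a scaling box, a handleslide, or a crossing being slid past another such piece — and these drawings are the entries of Figure \ref{fig:elementary_moves}.

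I do not expect a genuine obstacle here: this is a bookkeeping lemma. The one place that truly requires care, and which I would fix before anything else, is the \emph{orientation bookkeeping} — the precise rule by which $g_-$ turns a concatenation of diagrams into a product of matrices (we take $g_{\Gamma_2\circ\Gamma_1}=g_{\Gamma_2}g_{\Gamma_1}$, forced by the crossing convention of Figure \ref{fig:braid_diagram} and Definition \ref{def:elementary_braid_matrix_diagrams}). This convention then dictates: which of the positions $i$ or $j$ the scalar $\epsilon_2$ acts on in the $\mH\mK$ relations; the identity of the generator created ($\mH_{i,\ell}$ versus $\mH_{k,j}$) when an $\mH$ is moved past another $\mH$; and the sign in the $\ell=i$ relation. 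Once it is fixed and checked against the sanity identity $\ms_k\mK_i(\epsilon)=\mK_{\ms_k(i)}(\epsilon)\ms_k$, every scalar and index in the table is determined, and nothing deeper enters.
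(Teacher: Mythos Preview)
Your proposal is correct and takes essentially the same approach as the paper: both reduce each weak equivalence to the corresponding matrix identity in $GL_n(\field)$ and verify it directly (the paper phrases this as ``apply both sides to each standard basis vector $e_i$'', which is equivalent to your computation via the relations $e_{i,j}e_{k,\ell}=\delta_{j,k}e_{i,\ell}$). Your extra care about the composition convention $g_{\Gamma_2\circ\Gamma_1}=g_{\Gamma_2}g_{\Gamma_1}$ and the excluded overlap case $\{i,j\}=\{k,k{+}1\}$ in the $\ms_k$--$\mH_{i,j}$ move is a useful sanity check that the paper leaves implicit.
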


\begin{proof}
Let $e_1,\text{\tiny$\cdots$},e_n$ be the \emph{standard basis} for $\field^n$. It's direct to check that, both sides of each identity applied to every $e_i$, give the same results. See also Figure \ref{fig:elementary_moves} for an illustration.
\end{proof}

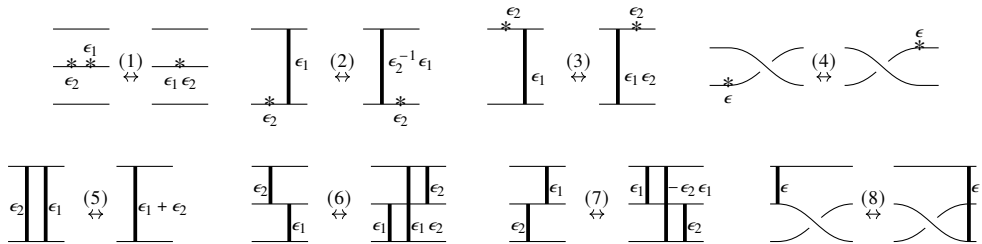
\begin{figure}[!htbp]
\begin{center}
\begin{tikzcd}[row sep=0.5pc,column sep=0.5pc,ampersand replacement=\&]

\begin{tikzpicture}[baseline=-.5ex,scale=0.5]
\begin{scope}

\draw[thin] (0,0)--(1.5,0);
\draw[thin] (0,1)--(1.5,1);
\draw[thin] (0,2)--(1.5,2);

\draw (0.5,1.5) node[below] {$\ast$};
\draw (0.5,1) node[below] {\text{\tiny$\epsilon_2$}};

\draw (1,1.5) node[below] {$\ast$};
\draw (1,1) node[above] {\text{\tiny$\epsilon_1$}};

\end{scope}
\end{tikzpicture}

\arrow[r,leftrightarrow,line width=0.1mm,shift left=4,"{(1)}"]\&

\begin{tikzpicture}[baseline=-.5ex,scale=0.5]
\begin{scope}

\draw[thin] (0,0)--(1.5,0);
\draw[thin] (0,1)--(1.5,1);
\draw[thin] (0,2)--(1.5,2);

\draw (0.75,1.5) node[below] {$\ast$};
\draw (0.75,1) node[below] {\text{\tiny$\epsilon_1\epsilon_2$}};

\end{scope}
\end{tikzpicture}

\&

\begin{tikzpicture}[baseline=-.5ex,scale=0.5]
\begin{scope}

\draw[thin] (0,0)--(1.5,0);
\draw[thin] (0,2)--(1.5,2);

\draw (0.5,0.5) node[below] {$\ast$};
\draw (0.5,0) node[below] {\text{\tiny$\epsilon_2$}};

\draw[line width=0.5mm] (1,0)--(1,2);
\draw (0.8,1.2) node[right] {\text{\tiny$\epsilon_1$}};

\end{scope}
\end{tikzpicture}

\arrow[r,leftrightarrow,line width=0.1mm,shift left=4,"{(2)}"]\&

\begin{tikzpicture}[baseline=-.5ex,scale=0.5]
\begin{scope}

\draw[thin] (0,0)--(1.5,0);
\draw[thin] (0,2)--(1.5,2);

\draw (1,0.5) node[below] {$\ast$};
\draw (1,0) node[below] {\text{\tiny$\epsilon_2$}};

\draw[line width=0.5mm] (0.5,0)--(0.5,2);
\draw (0.3,1.2) node[right] {\text{\tiny$\epsilon_2^{-1}\epsilon_1$}};

\end{scope}
\end{tikzpicture}

\&

\begin{tikzpicture}[baseline=-.5ex,scale=0.5]
\begin{scope}

\draw[thin] (0,0)--(1.5,0);
\draw[thin] (0,2)--(1.5,2);

\draw (0.5,2.5) node[below] {$\ast$}; 
\draw (0.7,2) node[above] {\text{\tiny$\epsilon_2$}};

\draw[line width=0.5mm] (1,0)--(1,2);
\draw (0.8,0.8) node[right] {\text{\tiny$\epsilon_1$}};

\end{scope}
\end{tikzpicture}

\arrow[r,leftrightarrow,line width=0.1mm,shift left=4,"{(3)}"]\&

\begin{tikzpicture}[baseline=-.5ex,scale=0.5]
\begin{scope}

\draw[thin] (0,0)--(1.5,0);
\draw[thin] (0,2)--(1.5,2);

\draw (1,2.5) node[below] {$\ast$}; 
\draw (1,2) node[above] {\text{\tiny$\epsilon_2$}};

\draw[line width=0.5mm] (0.5,0)--(0.5,2);
\draw (0.3,0.8) node[right] {\text{\tiny$\epsilon_1\epsilon_2$}};

\end{scope}
\end{tikzpicture}

\&

\begin{tikzpicture}[baseline=-.5ex,scale=0.5]
\begin{scope}

\draw[thin] (0,0.5)--(0.5,0.5) to [out=0,in=180] (2.5,1.5);
\draw[white,fill=white] (1.5,1) circle (0.2);
\draw[thin] (0,1.5)--(0.5,1.5) to[out=0,in=180] (2.5,0.5);

\draw (0.5,1) node[below] {$\ast$}; 
\draw (0.5,0.5) node[below] {\text{\tiny$\epsilon$}};

\end{scope}
\end{tikzpicture}

\arrow[r,leftrightarrow,line width=0.1mm,shift left=4,"{(4)}"]\&

\begin{tikzpicture}[baseline=-.5ex,scale=0.5]
\begin{scope}

\draw[thin] (0,0.5) to [out=0,in=180] (2,1.5)--(2.5,1.5);
\draw[white,fill=white] (1,1) circle (0.2);
\draw[thin] (0,1.5) to [out=0,in=180] (2,0.5)--(2.5,0.5);

\draw (2,2) node[below] {$\ast$}; 
\draw (2,1.5) node[above] {\text{\tiny$\epsilon$}};

\end{scope}
\end{tikzpicture}

\end{tikzcd}
\end{center}

\begin{center}
\begin{tikzcd}[row sep=0.5pc,column sep=0.5pc,ampersand replacement=\&]

\begin{tikzpicture}[baseline=-.5ex,scale=0.5]
\begin{scope}

\draw[thin] (0,0)--(1.5,0);
\draw[thin] (0,2)--(1.5,2);

\draw[line width=0.5mm] (0.5,0)--(0.5,2);
\draw (0.8,1) node[left] {\text{\tiny$\epsilon_2$}};

\draw[line width=0.5mm] (1,0)--(1,2);
\draw (0.7,1) node[right] {\text{\tiny$\epsilon_1$}};

\end{scope}
\end{tikzpicture}

\arrow[r,leftrightarrow,line width=0.1mm,shift left=4,"{(5)}"]\&

\begin{tikzpicture}[baseline=-.5ex,scale=0.5]
\begin{scope}

\draw[thin] (0,0)--(1.5,0);
\draw[thin] (0,2)--(1.5,2);

\draw[line width=0.5mm] (0.5,0)--(0.5,2);
\draw (0.2,1) node[right] {\text{\tiny$\epsilon_1+\epsilon_2$}};

\end{scope}
\end{tikzpicture}

\&

\begin{tikzpicture}[baseline=-.5ex,scale=0.5]
\begin{scope}

\draw[thin] (0,0)--(1.5,0);
\draw[thin] (0,1)--(1.5,1);
\draw[thin] (0,2)--(1.5,2);

\draw[line width=0.5mm] (1,0)--(1,1);
\draw (0.7,0.5) node[right] {\text{\tiny$\epsilon_1$}};

\draw[line width=0.5mm] (0.5,1)--(0.5,2);
\draw (0.8,1.5) node[left] {\text{\tiny$\epsilon_2$}};

\end{scope}
\end{tikzpicture}

\arrow[r,leftrightarrow,line width=0.1mm,shift left=4,"{(6)}"]\&

\begin{tikzpicture}[baseline=-.5ex,scale=0.5]
\begin{scope}

\draw[thin] (0,0)--(2,0);
\draw[thin] (0,1)--(2,1);
\draw[thin] (0,2)--(2,2);

\draw[line width=0.5mm] (0.5,0)--(0.5,1);
\draw (0.8,0.5) node[left] {\text{\tiny$\epsilon_1$}};

\draw[line width=0.5mm] (1.5,1)--(1.5,2);
\draw (1.2,1.5) node[right] {\text{\tiny$\epsilon_2$}};

\draw[line width=0.5mm] (1,0)--(1,2);
\draw (0.7,0.5) node[right] {\text{\tiny$\epsilon_1\epsilon_2$}};

\end{scope}
\end{tikzpicture}

\&

\begin{tikzpicture}[baseline=-.5ex,scale=0.5]
\begin{scope}

\draw[thin] (0,0)--(1.5,0);
\draw[thin] (0,1)--(1.5,1);
\draw[thin] (0,2)--(1.5,2);

\draw[line width=0.5mm] (1,1)--(1,2);
\draw (0.7,1.5) node[right] {\text{\tiny$\epsilon_1$}};

\draw[line width=0.5mm] (0.5,0)--(0.5,1);
\draw (0.8,0.5) node[left] {\text{\tiny$\epsilon_2$}};

\end{scope}
\end{tikzpicture}

\arrow[r,leftrightarrow,line width=0.1mm,shift left=4,"{(7)}"]\&

\begin{tikzpicture}[baseline=-.5ex,scale=0.5]
\begin{scope}

\draw[thin] (0,0)--(2,0);
\draw[thin] (0,1)--(2,1);
\draw[thin] (0,2)--(2,2);

\draw[line width=0.5mm] (0.5,1)--(0.5,2);
\draw (0.8,1.5) node[left] {\text{\tiny$\epsilon_1$}};

\draw[line width=0.5mm] (1.5,0)--(1.5,1);
\draw (1.2,0.5) node[right] {\text{\tiny$\epsilon_2$}};

\draw[line width=0.5mm] (1,0)--(1,2);
\draw (0.7,1.5) node[right] {\text{\tiny$-\epsilon_2\epsilon_1$}};

\end{scope}
\end{tikzpicture}

\&

\begin{tikzpicture}[baseline=-.5ex,scale=0.5]
\begin{scope}

\draw[thin] (0.3,0)--(0.5,0) to [out=0,in=180] (2.5,1);
\draw[white,fill=white] (1.5,0.5) circle (0.2);
\draw[thin] (0.3,1)--(0.5,1) to[out=0,in=180] (2.5,0);
\draw[thin] (0.3,2)--(2.5,2);

\draw[line width=0.5mm] (0.5,1)--(0.5,2);
\draw (0.2,1.5) node[right] {\text{\tiny$\epsilon$}};

\end{scope}
\end{tikzpicture}

\arrow[r,leftrightarrow,line width=0.1mm,shift left=4,"{(8)}"]\&

\begin{tikzpicture}[baseline=-.5ex,scale=0.5]
\begin{scope}

\draw[thin] (0,0) to [out=0,in=180] (2,1)--(2.2,1);
\draw[white,fill=white] (1,0.5) circle (0.2);
\draw[thin] (0,1) to [out=0,in=180] (2,0)--(2.2,0);
\draw[thin] (0,2)--(2.2,2);

\draw[line width=0.5mm] (2,0)--(2,2);
\draw (1.7,1.5) node[right] {\text{\tiny$\epsilon$}};

\end{scope}
\end{tikzpicture}

\end{tikzcd}
\end{center}
\caption{\emph{Elementary moves} for $\bmdp$'s: The trivial ones are skipped.
Each move is a weak equivalence in $\FBD_n$ representing a composition identity in Lemma \ref{lem:elementary_moves}, and vice versa.
The composition goes from left to right: $\Gamma_2\circ\Gamma_1=[\Gamma_1|\Gamma_2]$.}
\label{fig:elementary_moves}
\end{figure}

\begin{definition}[Braid matrix diagrams]\label{def:braid_matrix_diagrams}~
\begin{enumerate}[wide,labelwidth=!,labelindent=0pt]
\item
Let $\underline{\FBD}_n$ be the quotient of $\FBD_n$ by elementary moves. Then $\exists$ a morphism of monoids 
\[
\beta_{-}:\underline{\FBD}_n\rightarrow \FBr_n^+: \sigma_k\mapsto \sigma_k,~~[\mK_k(\epsilon)]\mapsto \id_n,~~[\mH_{i,j}(\epsilon')]\mapsto \id_n.
\]

\item
The monoid $\BD_n$ of rank $n$ \emph{braid matrix diagrams} ($\bmd$) is the quotient of $\underline{\FBD}_n$ by usual \emph{braid relations/moves}.
So, $\BD_n=\FBD_n/\braidequivalent$, with $\braidequivalent$\footnote{Term $\braidequivalent$ as \emph{braid equivalence}. Clearly, $\braidequivalent$ implies $\weakequivalent$. On the other hand, $\sigma_k^2\weakequivalent\id_n$, but $\sigma_k^2\notbraidequivalent\id_n$.} generated by elementary and braid moves.

\item
$\beta_{-}:\underline{\FBD}_n\rightarrow \FBr_n^+$ and $g_{-}:\FBD_n\rightarrow G$ induce morphisms of monoids:
\[
\beta_{-}:\BD_n\rightarrow \Br_n^+,\quad g_{-}:\BD_n\rightarrow G.
\]
\end{enumerate}
\end{definition}

\begin{remark}\label{rem:positive_braids_vs_braid_matrix_diagrams}
We have natural morphisms of monoids
\[
i:\FBr_n^+\rightarrow \underline{\FBD}_n: \sigma_k\mapsto \sigma_k,\quad\rightsquigarrow\quad i:\Br_n^+\rightarrow \BD_n: \sigma_k\mapsto \sigma_k.
\]
$\beta_{-}\circ i=\id$, so $i$ induces embeddings $\FBr_n^+\hookrightarrow \underline{\FBD}_n$ and $\Br_n^+\hookrightarrow \BD_n$. This morally explains the terminology.
Altogether, we get a commutative diagram of monoids:
\[
\begin{tikzcd}[row sep=1pc,column sep=1.5pc]
&\FBr_n^+\arrow[d,hookrightarrow,swap,"{i}"]\arrow[dl,hookrightarrow]\arrow[r,twoheadrightarrow] & \Br_n^+\arrow[d,hookrightarrow,swap,"{i}"]\arrow[r,twoheadrightarrow,"{\permutation}"] & S_n\arrow[d,hookrightarrow]\\
\FBD_n\arrow[r,twoheadrightarrow] & \underline{\FBD}_n\arrow[d,twoheadrightarrow,"{\beta_{-}}"]\arrow[r,twoheadrightarrow] & \BD_n\arrow[d,twoheadrightarrow,"{\beta_{-}}"]\arrow[r,twoheadrightarrow,"{g_{-}}"] & G\\
&\FBr_n^+\arrow[r,twoheadrightarrow] & \Br_n^+ & \\
\end{tikzcd}
\]
\end{remark}

Next, $\ms:\Br_n^+\rightarrow S_n$ admits a \emph{canonical section} (as a map of sets) characterized by:
\begin{equation}\label{eqn:positive_braid_lifting_a_permutation}
[-]:S_n\rightarrow \Br_n^+\subset\BD_n: w\mapsto [w], \text{ with } \ms([w])=w, \ell([w])=\ell(w).
\end{equation}
As we have seen, $[\ms_k]=\sigma_k$. Up to a choice, we may assume $[w]\in\FBr_n^+$. By definition,
\begin{equation}\label{eqn:composition_of_positive_braids_lifting_permutations}
[w_1]\circ[w_2]=[w_1w_2]\in\mathrm{Br}_n^+ \Leftrightarrow \ell(w_1w_2)=\ell(w_1)+\ell(w_2).
\end{equation}
Also, for $B\subset G$, there is a canonical \emph{morphism of monoids} extending Definition \ref{def:elementary_braid_matrix_diagrams}:
\begin{equation}\label{eqn:braid_matrix_diagrams_lifting_Borel_elements}
[-]:B\rightarrow \underline{\FBD}_n: b\mapsto [b] \text{ with } g_{[b]}=b.
\end{equation}

\begin{proposition}\label{prop:from_matrices_to_braid_matrix_diagrams}
The map $g_{-}:\BD_n\rightarrow G$ has a \emph{canonical section} (as a map of sets)
\[
[-]:G=\sqcup_{w\in W}BwB \rightarrow \BD_n: x=b_1wb_2\mapsto [x]:=[b_1]\circ[w]\circ[b_2].
\]
Up to a choice $[w]\in\FBr_n^+$, $[x]\in\underline{\FBD}_n$. Moreover, for any other $x'=b_1'w'b_2'\in Bw'B$,
\[
[x]\circ[x']=[xx']\in\BD_n \Leftrightarrow \ell(ww')=\ell(w)+\ell(w') \Leftrightarrow \mathrm{inv}(ww')=w'^{-1}(\mathrm{inv}(w))\sqcup\mathrm{inv}(w').
\]
In this case, we obtain \emph{unique decompositions} for $xx'\in Bww'B=Bww'U_{ww'}^-$:
\begin{equation}
Bww'U_{ww'}^-=BwU_w^-w'U_{w'}^-=U_{w^{-1}}^-wBw'U_{w'}^-=U_{w^{-1}}^-wU_{w'^{-1}}^-w'B=U_{w'^{-1}w^{-1}}^-ww'B.
\end{equation}
\end{proposition}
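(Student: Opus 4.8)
\medskip
\noindent\textbf{Proof plan.}
The idea is to combine standard Bruhat-theoretic bookkeeping with the diagram calculus of Lemma~\ref{lem:elementary_moves}: the elementary moves are exactly what is needed to drag a Borel element across a braid word $[w]$, \emph{provided} the roots that appear along the way stay positive. Keeping track of this positivity --- via the combinatorics of reduced expressions --- is the crux; the subtlest point, where length-additivity is genuinely used, is the interplay in Step~3 between pushing one piece of a Borel element leftward through $[w]$ and another piece rightward through $[w']$.

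\smallskip
\noindent\emph{Step 1 (Bruhat preliminaries).}
First I would recall $G=\sqcup_{w\in W}BwB$ and, writing $U_w^-:=U\cap w^{-1}U^-w$ (so $\dim U_w^-=\ell(w)$), the \emph{unique} Bruhat decompositions: the multiplication maps $B\times U_w^-\to BwB$ and $U_{w^{-1}}^-\times B\to BwB$ are isomorphisms of $\field$-varieties. Then I would record the equivalence of: (i) $\ell(ww')=\ell(w)+\ell(w')$; (ii) $\mathrm{inv}(ww')=w'^{-1}(\mathrm{inv}(w))\sqcup\mathrm{inv}(w')$, with $\mathrm{inv}(v):=\{\alpha>0:v\alpha<0\}$; (iii) $BwBw'B=Bww'B$. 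Here (i)$\Leftrightarrow$(ii) is the inversion-set cocycle identity, and (i)$\Leftrightarrow$(iii) is the usual $BN$-pair computation of $BwBw'B$ as a union of Bruhat cells with $ww'$ its unique maximal term. Granting (iii), the final displayed chain $Bww'U_{ww'}^-=BwU_w^-w'U_{w'}^-=U_{w^{-1}}^-wBw'U_{w'}^-=U_{w^{-1}}^-wU_{w'^{-1}}^-w'B=U_{w'^{-1}w^{-1}}^-ww'B$ follows because each term equals $BwBw'B$ once one slides the unipotent factors to the appropriate side ($wU_w^-=(U\cap wU^-w^{-1})w\subset Bw$, and similarly for the rest), the uniqueness of the individual decompositions upgrading these set equalities to bijections of the corresponding product varieties; thus the ``unique decompositions'' assertion of the Proposition is Step~1 applied to $ww'$.

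\smallskip
\noindent\emph{Step 2 (the section is well defined).}
Fix once and for all a reduced word $[w]\in\FBr_n^+$ for each $w\in W$. For $x\in BwB$ choose $b_1,b_2\in B$ with $x=b_1wb_2$ and put $[x]:=[b_1]\circ[w]\circ[b_2]$, using that $[-]\colon B\to\underline{\FBD}_n$ in \eqref{eqn:braid_matrix_diagrams_lifting_Borel_elements} is a morphism of monoids; applying $g_{-}$ shows $[x]$ is a set-theoretic section of $g_{-}$. To see that $[x]$ depends only on $x$, suppose $b_1wb_2=b_1'wb_2'$; then $c:=b_1'^{-1}b_1$ satisfies $w^{-1}cw=b_2'b_2^{-1}\in B$, i.e.\ $c\in B\cap wBw^{-1}$, and it suffices to prove $[w]\circ[w^{-1}cw]=[c]\circ[w]$. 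Write $c=tu$ with $t\in T$ and $u\in U\cap wUw^{-1}=\prod_{\alpha>0,\,w^{-1}\alpha>0}U_\alpha$. The torus factor $t$ crosses each letter of $[w]=[\sigma_{i_1}|\cdots|\sigma_{i_\ell}]$ by the identity $\ms_k\circ\mK_i(\epsilon)=\mK_{\ms_k(i)}(\epsilon)\circ\ms_k$ of Lemma~\ref{lem:elementary_moves}, emerging as $[w^{-1}tw]$. For $u$, push each factor $U_\alpha$ across one crossing at a time: passing $\sigma_{i_j}$ replaces the current root $\rho$ by $s_{i_j}\rho$ and is realized by the elementary move $\ms_{i_j}\circ\mH_{p,q}(\epsilon)=\mH_{\ms_{i_j}(p),\ms_{i_j}(q)}(\epsilon)\circ\ms_{i_j}$ (Figure~\ref{fig:elementary_moves}, move $(8)$ and its trivial variants), which is legitimate precisely when $\rho$ is a positive root $\neq\alpha_{i_j}$. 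That this holds throughout is the standard fact about a reduced expression $v=r_m\cdots r_1$: if $\alpha>0$ and $v\alpha>0$, then $r_j\cdots r_1(\alpha)>0$ for all $j$ (were $j$ maximal with $r_j\cdots r_1(\alpha)<0$, one would get $r_{j-1}\cdots r_1(\alpha)=\alpha_{r_j}$, and reducedness of the suffix $r_m\cdots r_j$ would force $v\alpha<0$); apply it to $v=w^{-1}$, and note a root equal to $\alpha_{i_j}$ at a crossing would make the next root negative, so it never occurs. Collecting the pieces and reordering via moves $(5)$--$(7)$, $u$ emerges as $w^{-1}uw$, giving the claim. As braid relations hold in $\BD_n$, the class $[x]$ there does not depend on the chosen reduced words either, so $[-]\colon G\to\BD_n$, $x=b_1wb_2\mapsto[b_1]\circ[w]\circ[b_2]$, is canonical.

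\smallskip
\noindent\emph{Step 3 (the multiplication rule).}
The equivalence (i)$\Leftrightarrow$(ii) being Step~1, it remains to identify them with ``$[x]\circ[x']=[xx']$ for all $x\in BwB,\,x'\in Bw'B$''. For ``$\Rightarrow$'' I would apply $\beta_{-}\colon\BD_n\to\Br_n^+$: it kills all scalings and handleslides, so $\beta_{[x]}=[w]$, $\beta_{[x']}=[w']$, whence $\beta_{[x]\circ[x']}=[w][w']$ has length $\ell(w)+\ell(w')$; if $xx'\in Bw''B$ then $\beta_{[xx']}=[w'']$ has length $\ell(w'')$, and $w''\le ww'$ always, so $\ell(w)+\ell(w')=\ell(w'')\le\ell(ww')\le\ell(w)+\ell(w')$ forces (i) (and $w''=ww'$). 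For ``$\Leftarrow$'', assume (i)/(ii). By Step~1 write $x=u_1w\eta_1$, $x'=\eta_2w'u_2$ with $u_1\in U_{w^{-1}}^-$, $u_2\in U_{w'}^-$, $\eta_i\in B$, so $[x]\circ[x']=[u_1]\circ[w]\circ[\eta]\circ[w']\circ[u_2]$ with $\eta:=\eta_1\eta_2\in B$. Factor $\eta=\eta'\eta''$, $\eta'\in B\cap w^{-1}Bw$, $\eta''\in U\cap w^{-1}U^-w$ (so the roots of $\eta''$ lie in $\mathrm{inv}(w)$). As in Step~2, $\eta'$ slides \emph{left} through $[w]$ --- its roots $\alpha$ satisfy $w\alpha>0$ --- into $w\eta'w^{-1}\in B$; and since (ii) forces $w'^{-1}(\mathrm{inv}(w))\subset\mathrm{inv}(ww')\subset\Phi^+$, the piece $\eta''$ slides \emph{right} through $[w']$ into $w'^{-1}\eta''w'\in B$, with all intermediate roots positive by the reduced-expression fact applied to $w$ and to $w'$. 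Using $[w]\circ[w']=[ww']$ --- valid under (i) by \eqref{eqn:composition_of_positive_braids_lifting_permutations} and the embedding $\Br_n^+\hookrightarrow\BD_n$ of Remark~\ref{rem:positive_braids_vs_braid_matrix_diagrams} --- we obtain $[x]\circ[x']=[\,u_1(w\eta'w^{-1})\,]\circ[ww']\circ[\,(w'^{-1}\eta''w')u_2\,]$; since $xx'=\bigl(u_1(w\eta'w^{-1})\bigr)(ww')\bigl((w'^{-1}\eta''w')u_2\bigr)\in B(ww')B$ by (iii), the well-definedness from Step~2 identifies the right-hand side with $[xx']$. The only real obstacle throughout is the positivity bookkeeping just used: ensuring that every intermediate handleslide produced by dragging a Borel factor through $[w]$ or $[w']$ remains an upper-triangular $\mH_{p,q}(\epsilon)$ with $p<q$, never an $\ms_k^2$ nor a lower-triangular transvection.
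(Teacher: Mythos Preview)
Your proof is correct and follows essentially the same strategy as the paper: use $\beta_{-}$ for one implication, and push Borel elements through reduced words via elementary moves for the other. Two differences worth noting. First, you spell out the well-definedness of $[-]$ carefully (your Step~2), whereas the paper simply writes ``Clearly, $[-]$ is well-defined''; your explicit positivity bookkeeping is exactly what underlies that clause. Second, in the key step ``$\Leftarrow$'' the paper normalizes to $b_2\in U_w^-$, $b_1'\in U_{w'^{-1}}^-$ and uses the surjection $U_w^+\times U_{w'^{-1}}^+\twoheadrightarrow U$ (which already encodes $\mathrm{inv}(w)\cap\mathrm{inv}(w'^{-1})=\emptyset$) to write $b_2b_1'=u_1u_2$ and then push $u_1$ left, $u_2$ right; you instead use the always-valid splitting $\eta=\eta'\eta''\in (TU_w^+)\cdot U_w^-$ and invoke (ii) in the form $w'^{-1}(\mathrm{inv}(w))\subset\Phi^+$ to justify pushing $\eta''$ right through $[w']$. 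These are equivalent packagings of the same condition, so neither gains real leverage over the other. One small cleanup: in your ``$\Rightarrow$'' you can drop the detour through $w''\le ww'$, since applying $\permutation$ to $[w][w']=[w'']$ already gives $w''=ww'$ directly, and then the length equality follows from comparing braid lengths, exactly as the paper does.
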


\noindent{}\textbf{Convention} \setword{{\color{blue}$3$}}{convention:notation_for_braid_matrix_diagrams}: Often, for $[A]\in\BD_n$, we use the same notation for its lift in $\FBD_n$ or $\underline{\FBD}_n$.

Before giving the proof, we discuss some linear algebra. 
Denote
\[
\cI:=\{(i,j):1\leq i<j\leq n\}.
\]
Then $U=I_n+\sum_{(i,j)\in \cI}\field e_{i,j}\subset B\subset G$.
We say a subset $\cJ\subset \cI$ is \emph{multiplicative}, if 
\[
(i,j), (j,k)\in \cJ \Rightarrow (i,k)\in \cJ.
\]
For each $m\geq 1$,  denote $\cJ_1:=\cJ$ and in general:
\[
\cJ_m:=\{(j_0,\text{\tiny$\cdots$},j_m):(j_k,j_{k+1})\in\cJ,\forall 0\leq k\leq m-1\}.
\]

\begin{lemma}\label{lem:closed_subgroups_of_U_via_handleslides}
Let $\cJ\subset \cI$ be multiplicative. Denote
$U_{\cJ}:=I_n+\sum_{(i,j)\in \cJ}\field e_{i,j}\subset U$.
Then
\begin{enumerate}
\item
 $U_{\cJ}\subset U$ is a closed subgroup.

\item
Any fixed total order $\preceq$ on $\cJ$ induces an isomorphism of $\field$-varieties
\[
\phi_{\preceq}:\mathbb{A}^{|\cJ|}\rightarrow U_{\cJ}: (\epsilon_{i,j})_{(i,j)\in \cI} \mapsto \text{\tiny$\prod_{(i,j)\in \cJ}$} \mH_{i,j}(\epsilon_{i,j})
\]
\end{enumerate}
\end{lemma}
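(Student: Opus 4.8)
The plan is to prove both parts by direct matrix computation, organized around the \emph{height} function $h(i,j) := j-i$ on $\cI$ and the elementary combinatorial fact that a "chain" through a multiplicative set stays inside it.

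For \emph{(1)}, I would first observe that, identifying $U$ with $\mathbb{A}^{|\cI|}$ via the strictly‑upper‑triangular entries, $U_{\cJ}$ is the coordinate subspace cut out by $x_{i,j}=0$ for $(i,j)\in\cI\setminus\cJ$; in particular it is a closed subvariety, isomorphic as a $\field$-variety to $\mathbb{A}^{|\cJ|}$. To see it is a subgroup, I would write elements of $U_{\cJ}$ as $I_n+A$ with $A=\sum_{(i,j)\in\cJ} a_{i,j} e_{i,j}$ and compute $(I_n+A)(I_n+B)=I_n+(A+B+AB)$: the matrix $A+B$ is supported on $\cJ$, and a nonzero $(i,k)$-entry of $AB$ requires some $j$ with $(i,j),(j,k)\in\cJ$, which by multiplicativity forces $(i,k)\in\cJ$. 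For inverses, $(I_n+A)^{-1}=I_n+\sum_{m\geq 1}(-A)^m$ is a finite sum ($A$ nilpotent), and the same path argument iterated shows every $A^m$ is again supported on $\cJ$. Hence $U_{\cJ}$ is a closed subgroup of $U$.

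For \emph{(2)}, fix the total order $\preceq$ on $\cJ$. Since each $\mH_{i,j}(\epsilon)=I_n+\epsilon e_{i,j}$ with $(i,j)\in\cJ$ already lies in $U_{\cJ}$, part (1) immediately gives that $\phi_{\preceq}$ is a morphism $\mathbb{A}^{|\cJ|}\to U_{\cJ}$; the real content is that it is an isomorphism. I would expand the product $\prod_{(i,j)\in\cJ}\mH_{i,j}(\epsilon_{i,j})$ and track contributions to the $(i,j)$-entry: a product of matrix units $e_{a_1,b_1}\cdots e_{a_m,b_m}$ survives only along a chain $b_\ell=a_{\ell+1}$, where it equals $e_{a_1,b_m}$ and $\sum_\ell (b_\ell-a_\ell)=b_m-a_1$. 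The length‑one chain contributes exactly $\epsilon_{i,j}$ (each factor occurs once), while every chain of length $m\geq 2$ forces each $b_\ell-a_\ell\leq h(i,j)-1$, so those monomials involve only the variables $\epsilon_{a,b}$ with $h(a,b)<h(i,j)$. Ordering the coordinate functions of $U_{\cJ}$ by increasing height, $\phi_{\preceq}$ therefore has the triangular shape $\epsilon_{i,j}\mapsto \epsilon_{i,j}+(\text{a polynomial in the lower-height }\epsilon_{a,b})$, which is a polynomial automorphism of $\mathbb{A}^{|\cJ|}$ with an explicit recursive polynomial inverse; hence $\phi_{\preceq}$ is an isomorphism of $\field$-varieties onto $U_{\cJ}$.

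I do not expect a genuine obstacle: everything is elementary. The only things requiring care are the two pieces of bookkeeping — that multiplicativity keeps all paths and chains inside $\cJ$ (used both for closure under product/inverse in (1) and for pinning down the image in (2)), and that extracting the height‑$h(i,j)$ part of the product isolates precisely the linear term $\epsilon_{i,j}$. An alternative would be to reorganize the product using the handleslide elementary moves of Lemma \ref{lem:elementary_moves}, but the height‑filtration argument above is the most economical and has the advantage of making the inverse of $\phi_{\preceq}$ explicit.
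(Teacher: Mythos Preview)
Your proposal is correct and follows essentially the same approach as the paper: both expand the product $\prod_{(i,j)\in\cJ}\mH_{i,j}(\epsilon_{i,j})$, read off that the $(i,j)$-entry equals $\epsilon_{i,j}$ plus a polynomial in the $\epsilon_{a,b}$ with $b-a<j-i$, and invert recursively by increasing height $h=j-i$. The paper is terser (it says ``(1) is similar'' and jumps straight to the chain expansion for (2)), while you spell out the closure of $U_{\cJ}$ under product and inverse explicitly, but the substance is identical.
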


\begin{proof}
We prove $(2)$. $(1)$ is similar. Say, $\cJ=\{(i_1,j_1)<\text{\tiny$\cdots$}<(i_N,j_N)\}$. Then
\[
\text{\tiny$\prod_{(i,j)\in\cJ}$} \mH_{i,j}(\epsilon_{i,j})= \text{\tiny$\prod_{\ell=1}^N$} (I_n+\epsilon_{i_{\ell},j_{\ell}}e_{i_{\ell},j_{\ell}})
=I_n+ \text{\tiny$\sum_{m\geq 1} \sum_{(k_0,k_1)<\text{\tiny$\cdots$} <(k_{m-1},k_m)\text{ in $\cJ$}}$} \epsilon_{k_0,k_1}\text{\tiny$\cdots$}\epsilon_{k_{m-1},k_m} e_{k_0,k_m}.
\]
The equation $I_n+ \text{\tiny$\sum_{(i,j)\in\cJ}$} a_{ij}e_{i,j}= \text{\tiny$\prod_{(i,j)\in\cJ}$} \mH_{i,j}(\epsilon_{i,j})$ becomes
\[
a_{i,j}=\epsilon_{i,j} + \text{\tiny$\sum_{m\geq 2}  \sum_{(i,k_1)<\text{\tiny$\cdots$} <(k_{m-1},j)\text{ in $\cJ$}}$} \epsilon_{i,k_1}\text{\tiny$\cdots$}\epsilon_{k_{m-1},j},\quad\forall  (i,j)\in \cJ.
\]
This uniquely determines the $\epsilon_{i,j}$'s inductively, in the increasing (partial) order on $|j-i|$.
\end{proof}

For any permuation $w\in W=S_n$, denote
\[
\mathrm{inv}(w):=\{(i,j)\in\cI:i<j, w(i)>w(j)\};\quad\mathrm{noinv}(w):=\{(i,j)\in\cI:i<j, w(i)<w(j)\}.
\]
Then $\ell(w)=|\mathrm{inv}(w)|$, and $\mathrm{inv}(w)$, $\mathrm{noinv}(w)$ are multiplicative subsets of $\cI$.
So,
\[
U_w^+:=U_{\mathrm{noinv}(w)}=\id+ \text{\tiny$\sum_{(i,j)\in\mathrm{noinv}(w)}$} \field e_{i,j};\quad U_w^-:=U_{\mathrm{inv}(w)}=\id+ \text{\tiny$\sum_{(i,j)\in\mathrm{inv}(w)}$} \field e_{i,j},
\]
are closed subgroups of $U$, to which Lemma \ref{lem:closed_subgroups_of_U_via_handleslides} apply.
Alternatively, we have
\begin{equation}
U_w^+=U\cap w^{-1}Uw=w^{-1}U_{w^{-1}}^+w,\quad U_w^{-}=U\cap w^{-1}U^-w,
\end{equation}
with $U^-\subset G$ the opposite unipotent subgroup.
By Lemma \ref{lem:closed_subgroups_of_U_via_handleslides} (2), we have decompositions:
\begin{equation}\label{eqn:decompose_U}
U=U_w^+U_w^-=U_w^-U_w^+;\quad BwB=U_{w^{-1}}^-wB = BwU_w^-.
\end{equation}

\begin{proof}[Proof of Proposition \ref{prop:from_matrices_to_braid_matrix_diagrams}]
Clearly, $[-]$ is well-defined, $g_{-}\circ[-]=\id$.
To show equivalences.

\noindent{}``$\text{LHS}\Rightarrow\text{Middle}$'': Say, $xx'\in B\tilde{w}B$, $\tilde{w}\in S_n$, so $\beta_{[xx']}=[\tilde{w}]$.
If $[x]\circ[x']=[xx']$, then
\[
[\tilde{w}]=\beta_{[xx']}=\beta_{[x]}\circ\beta_{[x']}=[w]\circ[w'],\quad \tilde{w}=\ms(\beta_{[xx']})=\ms(\beta_{[x]})\circ\ms(\beta_{[x']})=ww'.
\]
Thus, 
$\ell(ww')=\ell([ww'])=\ell([w]\circ[w'])=\ell([w])+\ell([w'])=\ell(w)+\ell(w')$, as desired.

\noindent{}``$\text{Middle}\Rightarrow\text{RHS}$'':
Denote 
\begin{eqnarray*}
&&(a',b'):=(w'(a),w'(b)),\quad (a'',b''):=(w(a'),w(b')),\quad 1\leq a<b\leq n;\\
&&I_{+-}:=\{(a,b)\in \cI: a'<b', a''>b''\};\quad I_{-+}:=\{(a,b)\in \cI: a'>b', a''>b''\}.
\end{eqnarray*}
Observe that $\mathrm{inv}(ww')=I_{+-}\sqcup I_{-+}$ and $w'$ induces bijections
\begin{eqnarray*}
&&w':I_{+-}\xrightarrow[]{\simeq} \mathrm{noinv}(w'^{-1})\cap\mathrm{inv}(w);~~R\circ w':I_{-+}\xrightarrow[]{\simeq} \mathrm{inv}(w'^{-1})\cap\mathrm{noinv}(w),~~R(i,j):=(j,i);\\
&&w'\sqcup R\circ w':\mathrm{inv}(ww')=I_{+-}\sqcup I_{-+}\xrightarrow[]{\simeq} \mathrm{noinv}(w'^{-1})\cap\mathrm{inv}(w)\sqcup \mathrm{inv}(w'^{-1})\cap\mathrm{noinv}(w).
\end{eqnarray*}
Hence,
\[
\ell(ww')=|\mathrm{noinv}(w'^{-1})\cap\mathrm{inv}(w)|+|\mathrm{inv}(w'^{-1})\cap\mathrm{noinv}(w)|\leq |\mathrm{inv}(w)| + |\mathrm{inv}(w'^{-1})| =\ell(w) +\ell(w'),
\]
\noindent{}with equality holds if and only if $\mathrm{inv}(w)\cap \mathrm{inv}(w'^{-1})=\emptyset$.
Then, $I_{+-}=w'^{-1}(\mathrm{inv}(w))$, and $I_{-+}=(R\circ w')^{-1}(\mathrm{inv}(w'^{-1}))=\mathrm{inv}(w')$.
Hence,
$\mathrm{inv}(ww')=w'^{-1}(\mathrm{inv}(w))\sqcup\mathrm{inv}(w')$,
as desired.

\noindent{}``$\text{RHS}\Rightarrow\text{Middle}$'' is clear, so $\text{RHS}\Leftrightarrow\text{Middle}$.

\noindent{}``$\text{Middle}\Rightarrow\text{LHS}$'': 
By above, $\mathrm{noinv}(w)\cup\mathrm{noinv}(w'^{-1})=\cI$. 
By Lemma \ref{lem:closed_subgroups_of_U_via_handleslides}, we get a surjection
\[
m:U_w^+\times U_{w'^{-1}}^+\rightarrow U: (u_1,u_2)\mapsto u_1u_2.
\]
By (\ref{eqn:decompose_U}), can assume $b_2\in U_w^-$, $b_1'\in U_{w'^{-1}}^-$. By above, we may write
$b_2b_1'=u_1u_2,~u_1\in U_w^+, u_2\in U_{w'^{-1}}^+$.
So, $xx'=(b_1wu_1w^{-1})ww'(w'^{-1}u_2w'b_2')\in Bww'B$, and
\begin{eqnarray*}
[xx']&=&[b_1wu_1w^{-1}]\circ[ww']\circ[w'^{-1}u_2w'b_2']\in\BD_n \quad(\text{by definition})\\
&=&[b_1]\circ[wu_1w^{-1}]\circ[w]\circ[w']\circ[w'^{-1}u_2w']\circ[b_2'] \quad(\text{by (\ref{eqn:composition_of_positive_braids_lifting_permutations}), (\ref{eqn:braid_matrix_diagrams_lifting_Borel_elements})})\\
&=&[b_1]\circ([w]\circ[u_1])\circ([u_2]\circ[w'])\circ[b_2'] \quad(\text{by elementary moves as in Figure \ref{fig:elementary_moves}}.(8))\\
&=&[b_1]\circ[w]\circ[b_2]\circ[b_1']\circ[w']\circ[b_2']=[x]\circ [x']. \quad(\text{by (\ref{eqn:braid_matrix_diagrams_lifting_Borel_elements})})
\end{eqnarray*}

\noindent{}It remains to show the decompositions. By Lemma \ref{lem:closed_subgroups_of_U_via_handleslides}, we get an isomorphism
\[
m:U_{w'^{-1}(\mathrm{inv}(w))}\times U_{\mathrm{inv}(w')}= w'^{-1}U_w^-w'\times U_{w'}^- \rightarrow U_{\mathrm{inv}(ww')}=U_{ww'}^-:(u_1,u_2)\mapsto u_1u_2
\]
So is $m:U_{w'}^-\times w'^{-1}U_w^-w'\rightarrow U_{ww'}^-$. Then, $U_{ww'}^-=(w'^{-1}U_w^-w')U_{w'}^-=U_{w'}^-(w'^{-1}U_w^-w')$.
Also, the same result applies to $(ww')^{-1}=w'^{-1}w^{-1}$.
Altogether, we get unique decompositions
\[
Bww'B=Bww'U_{ww'}^-=BwU_w^-w'U_{w'}^-=U_{w^{-1}}^-wBw'U_{w'}^-=U_{w^{-1}}^-wU_{w'^{-1}}^-w'B=U_{w'^{-1}w^{-1}}^-ww'B.
\]
\end{proof}

Back to character varieties. 
Let's reinterpret $M_B'(\vec{w})$ (see (\ref{eqn:Equivariant_Bruhat_cell_of_character_variety})) via braid matrix diagrams.

First, we set up some notations. We have seen (unique) decompositions:
\begin{eqnarray}\label{eqn:decomposition_for_U}
&&U=U_w^+U_w^-: u=L_w^+(u) L_w^-(u),\quad B=TU=(TU_w^+)U_w^-: b=D(b)b_R=L_w^+(b)L_w^-(b);\\
&&U=U_w^-U_w^+: u=R_w^-(u)R_w^+(u),\quad B=UT=(U_w^-)(U_w^+T): b=b_LD(b)=R_w^{-}(b)R_w^+(b).\nonumber
\end{eqnarray}
Similarly, each parabolic $P_i\subset G$ has decompositions
\begin{equation}
P_i=N_iZ(C_i)=Z(C_i)N_i: p_i=L_i(p_i)D_i(p_i)=D_i(p_i)R_i(p_i),\quad N_i:=P_i\cap U.
\end{equation}
As the shortest representative of $w_i\in W/W(C_i)$, $\dot{w}_i$ gives
\[
U_{\dot{w}_i}^-\subset N_i,\quad Z(C_i)\cap U\subset U_{\dot{w}_i}^+.
\] 
Thus, by Lemma \ref{lem:closed_subgroups_of_U_via_handleslides}, we obtain decompositions
\begin{equation}\label{eqn:decomposition_for_N_i}
N_i=U_{\dot{w}_i}^-(U_{\dot{w}_i}^+\cap N_i)=(U_{\dot{w}_i}^+\cap N_i)U_{\dot{w}_i}^-;~~U_{\dot{w}_i}^+=(U_{\dot{w}_i}^+\cap N_i)(Z(C_i)\cap U)=(Z(C_i)\cap U)(U_{\dot{w}_i}^+\cap N_i).
\end{equation}

Now, we reinterpret $M_B'(\vec{w})$. To begin with, we have a canonical isomorphism
\begin{equation}\label{eqn:decomposition_for_Bw_iP_i}
U_{\dot{w}_i^{-1}}^-\times N_i\times Z(C_i)\isomorphic U_{\dot{w}_i^{-1}}^-\dot{w}_iP_i = B\dot{w}_iP_i: (\nu_i,n_i,z_i)\mapsto x_i=\nu_i\dot{w}_in_iz_i.
\end{equation}
Then, $x_iC_ix_i^{-1}=\nu_i\dot{w}_in_iC_in_i^{-1}\dot{w}_i^{-1}\nu_i^{-1}=\nu_i\dot{w}_in_i'C_i\dot{w}_i^{-1}\nu_i^{-1}$. Here, we use the isomorphism
\begin{equation}
N_i\xrightarrow[]{\isomorphic} N_i: n_i\mapsto n_i'=n_iC_in_i^{-1}C_i^{-1}.
\end{equation}

Recall that for $b\in B$, $[b]\in\underline{\FBD}_n$ (see (\ref{eqn:braid_matrix_diagrams_lifting_Borel_elements})), $\beta_{[b]}=\id_n\in\Br_n^+$ and $g_{[b]}=b$. 
Define
\begin{equation}\label{eqn:braid_matrix_diagram_for_a_puncture}
[x_iC_ix_i^{-1}]':=[\nu_i]\circ[\dot{w}_i]\circ[n_i']\circ[C_i]\circ[\dot{w}_i^{-1}]\circ[\nu_i^{-1}]=[\nu_i^{-1}|\dot{w}_i^{-1}|C_i|n_i'|\dot{w}_i|\nu_i]\in\underline{\FBD}_n.
\end{equation}
Recall that $A_j\in B\tau_j B$, with Proposition \ref{prop:from_matrices_to_braid_matrix_diagrams} in mind, define $[\mM_{\vec{w}}]=[\mM_{\vec{w}}((A_j)_j,(x_i)_i,u_k)]$ by
\begin{equation}
[\mM_{\vec{w}}]:=\text{\tiny$\prod_{j=1}^g$} ([A_{2j-1}]\circ[A_{2j}]\circ [A_{2j-1}^{-1}]\circ[A_{2j}^{-1}])\circ \text{\tiny$\prod_{i=1}^{k-1}$} ([x_iC_ix_i^{-1}]') \circ [u_kC_k]\in \underline{\FBD}_n.
\end{equation}
Then by (\ref{eqn:Equivariant_Bruhat_cell_of_character_variety}) and (\ref{eqn:reduction_of_character_varieties}), the defining equation for $M_B'(\vec{w})$ reads
\begin{equation}\label{eqn:Bruhat_cell_of_character_variety_via_braid_matrix_diagrams}
[\mM_{\vec{w}}]=[\mM_{\vec{w}}((A_j)_{j=1}^{2g},(x_i)_{i=1}^{k-1},u_k)] \weakequivalent \id_n \in\underline{\FBD}_n,
\end{equation}
where $[\mM_{\vec{w}}]$ is a $\bmdp$ (with varying coefficients) but with fixed \emph{shape}:
\begin{equation}\label{eqn:shape}
\beta(\vec{w}):=\beta_{[\mM_{\vec{w}}]}= \text{\tiny$\prod_{j=1}^g$} ([\tau_{2j-1}]\circ[\tau_{2j}]\circ[\tau_{2j-1}^{-1}]\circ[\tau_{2j}])\circ \text{\tiny$\prod_{i=1}^{k-1}$} ([\dot{w}_i]\circ[\dot{w}_i^{-1}])\in\FBr_n^+.
\end{equation}
The next idea is to canonicalize $[\mM_{\vec{w}}]$ by diagram calculus: push every handleslide or scaling in 
$[\mM_{\vec{w}}]=[C_k|u_k|[x_{k-1}C_{k-1}x_{k-1}^{-1}]'|\text{\tiny$\cdots$}|A_2^{-1}|A_1^{-1}|A_2|A_1]$ via elementary moves,
to the right as far as possible. Later on, we'll see this leads to braid varieties.

\subsection{Braid varieties}\label{subsec:braid_varieties}

As already mentioned above, we now review some basics on braid varieties.

\begin{definition}\label{def:braid_matrices}
The \emph{braid matrix (resp. $\bmdp$) with coefficient $\epsilon\in\field$} associated to $\sigma_k$ is
\begin{equation}
\mB_k(\epsilon):=\ms_k\mH_k(\epsilon)\in G;\quad [\mB_k(\epsilon)]=\sigma_k\circ[\mH_k(\epsilon)]\in\FBD_n~~(\text{Figure \ref{fig:braid_matrix_diagram_with_coeffficient_for_a_single_crossing}}).
\end{equation}
For $\beta=\sigma_{i_{\ell}}\cdots\sigma_{i_1}\in\FBr_n^+$, and $\vec{\epsilon}=(\epsilon_i)_{i={\ell}}^1\in\mathbb{A}^{\ell}$, define
\begin{equation}
\mB_{\beta}(\vec{\epsilon}):=\mB_{i_{\ell}}(\epsilon_{\ell}) \text{\tiny$\cdots$} \mB_{i_1}(\epsilon_1) \in G;\quad
[\mB_{\beta}(\vec{\epsilon})]':=[\mB_{i_{\ell}}(\epsilon_{\ell})]\circ \text{\tiny$\cdots$} \circ [\mB_{i_1}(\epsilon_1)]\in\FBD_n.
\end{equation}
\end{definition}

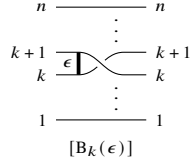
\begin{figure}[!htbp]
\begin{center}
\begin{tikzpicture}[baseline=-.5ex,scale=0.3]
\begin{scope}
\draw[thin] (0,0)--(4,0);
\draw[thin] (0,2)--(1,2) to [out=0,in=180] (3,3)--(4,3);
\draw[white,fill=white] (2,2.5) circle (0.2);
\draw[thin] (0,3)--(1,3) to[out=0,in=180] (3,2)--(4,2);
\draw[thin] (0,5)--(4,5);

\draw[line width=0.5mm] (1,2)--(1,3);
\draw (1.2,2.5) node[left] {\text{\tiny$\epsilon$}};

\draw (3.2,1.3) node[left] {\text{\tiny$\vdots$}};
\draw (3.2,4.3) node[left] {\text{\tiny$\vdots$}};

\draw (0,0) node[left] {\text{\tiny$1$}};
\draw (0,2) node[left] {\text{\tiny$k$}};
\draw (0,3) node[left] {\text{\tiny$k+1$}};
\draw (0,5) node[left] {\text{\tiny$n$}};

\draw (4,0) node[right] {\text{\tiny$1$}};
\draw (4,2) node[right] {\text{\tiny$k$}};
\draw (4,3) node[right] {\text{\tiny$k+1$}};
\draw (4,5) node[right] {\text{\tiny$n$}};

\draw (2,-0.5) node[below] {\text{\tiny$[\mB_k(\epsilon)]$}};

\end{scope}
\end{tikzpicture}
\end{center}
\caption{The braid matrix diagram with coefficient $\epsilon$ associated to $\sigma_k$.}
\label{fig:braid_matrix_diagram_with_coeffficient_for_a_single_crossing}
\end{figure}

As an easy application of diagram calculus of matrices, we obtain the following:
\begin{lemma}[Braid relations for braid matrices]\label{lem:braid_relations_for_braid_matrices}~
We have
\begin{eqnarray*}
(a)&&[\mB_i(\epsilon_1)]\circ [\mB_j(\epsilon_2)] = [\mB_j(\epsilon_2)]\circ [\mB_i(\epsilon_1)] \in\BD_n,~~|i-j|>1, \epsilon_{\bullet}\in\field;\\
(b)&&[\mB_i(\epsilon_1)]\circ [\mB_{i+1}(\epsilon_2)]\circ [\mB_i(\epsilon_3)] = [\mB_{i+1}(\epsilon_3)]
\circ [\mB_i(\epsilon_2-\epsilon_3\epsilon_1)]\circ [\mB_{i+1}(\epsilon_1)] \in\BD_n, \epsilon_{\bullet}\in\field.
\end{eqnarray*}
\end{lemma}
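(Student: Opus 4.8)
\emph{Proof plan.} The plan is to establish both identities by pure diagram calculus in $\BD_n$: expand each braid matrix diagram via $[\mB_k(\epsilon)]=\sigma_k\circ[\mH_k(\epsilon)]$ and then transport handleslides across crossings and past one another using the elementary moves of Lemma \ref{lem:elementary_moves} (Figure \ref{fig:elementary_moves}), invoking exactly one genuine braid move in the middle of $(b)$. Recall the convention $[\Gamma_1|\Gamma_2]=\Gamma_2\circ\Gamma_1$, so that a composite $A\circ B\circ C$ is the diagram $[C|B|A]$ read from left to right.

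For $(a)$: when $|i-j|>1$ the diagrams $[\mB_i(\epsilon_1)]=[\,[\mH_i(\epsilon_1)]\,|\,\sigma_i\,]$ and $[\mB_j(\epsilon_2)]=[\,[\mH_j(\epsilon_2)]\,|\,\sigma_j\,]$ are supported on the disjoint strand pairs $\{i,i+1\}$ and $\{j,j+1\}$, so $\sigma_i$ commutes with $\sigma_j$ by the distant braid relation and each handleslide commutes past everything supported on the other pair (the trivial, disjoint-index cases of Lemma \ref{lem:elementary_moves}). Applying these commuting moves carries both composites to one and the same diagram, e.g. $[\,[\mH_i(\epsilon_1)]\,|\,[\mH_j(\epsilon_2)]\,|\,\sigma_i\,|\,\sigma_j\,]$; hence they are equal in $\BD_n$.

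For $(b)$: expanding, the left-hand side is the diagram
\[
[\,[\mH_i(\epsilon_3)]\,|\,\sigma_i\,|\,[\mH_{i+1}(\epsilon_2)]\,|\,\sigma_{i+1}\,|\,[\mH_i(\epsilon_1)]\,|\,\sigma_i\,],
\]
while the right-hand side is
\[
[\,[\mH_{i+1}(\epsilon_1)]\,|\,\sigma_{i+1}\,|\,[\mH_i(\delta)]\,|\,\sigma_i\,|\,[\mH_{i+1}(\epsilon_3)]\,|\,\sigma_{i+1}\,],\qquad \delta:=\epsilon_2-\epsilon_3\epsilon_1.
\]
The steps I would carry out: \textbf{(i)} slide all three handleslides of the left-hand diagram leftward past the crossings, using only the moves that commute $\ms_k$ past a handleslide (Lemma \ref{lem:elementary_moves}, Figure \ref{fig:elementary_moves}(8)); here only the index pair of each handleslide changes (by the appropriate $\ms_k$), the coefficients remaining $\epsilon_1,\epsilon_2,\epsilon_3$, since no handleslide-handleslide move is needed in this step, so the diagram becomes $[\,H\,|\,\sigma_i\,|\,\sigma_{i+1}\,|\,\sigma_i\,]$ for an explicit triple product $H$ of handleslides. \textbf{(ii)} Apply the braid move $\sigma_i\sigma_{i+1}\sigma_i=\sigma_{i+1}\sigma_i\sigma_{i+1}$. \textbf{(iii)} Slide the handleslides of $H$ back rightward through the new crossings $\sigma_{i+1},\sigma_i,\sigma_{i+1}$ and recombine them via the handleslide-composition moves of Lemma \ref{lem:elementary_moves}; it is exactly in the ``$\ell=i$''/``$k=j$'' case of those moves that a handleslide with coefficient $-\epsilon_3\epsilon_1$ is created, which then merges with the $\epsilon_2$-handleslide (the additive ``$(k,\ell)=(i,j)$'' case) to give $\delta=\epsilon_2-\epsilon_3\epsilon_1$; re-attaching each handleslide to its adjacent crossing reconstitutes $[\mB_{i+1}(\epsilon_1)]$, $[\mB_i(\delta)]$, $[\mB_{i+1}(\epsilon_3)]$ in that order, i.e. the right-hand side. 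Since every move used is an elementary move or a braid move, the resulting identity holds in $\BD_n$.

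The only real work is the coefficient-and-index bookkeeping in steps (i) and (iii) — in particular getting the sign and the product in the cross-term correct; this is the step I expect to be the main (if mild) obstacle. As an independent check I would apply $g_{-}$ and verify the $3\times 3$ matrix identity on the strands $\{i,i+1,i+2\}$, namely $\ms_i\mH_i(\epsilon_1)\,\ms_{i+1}\mH_{i+1}(\epsilon_2)\,\ms_i\mH_i(\epsilon_3)=\ms_{i+1}\mH_{i+1}(\epsilon_3)\,\ms_i\mH_i(\delta)\,\ms_{i+1}\mH_{i+1}(\epsilon_1)$, which by itself pins down $\delta=\epsilon_2-\epsilon_3\epsilon_1$; but since $\braidequivalent$ is strictly finer than weak equivalence $\weakequivalent$, this matrix computation is only a consistency check, and it is the explicit sequence of moves above that proves the lemma.
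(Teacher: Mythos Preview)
Your proposal is correct and follows essentially the same route as the paper's proof: expand $[\mB_k(\epsilon)]=\sigma_k\circ[\mH_k(\epsilon)]$, push handleslides past crossings via move~(8), use one braid move, and produce the cross-term $-\epsilon_3\epsilon_1$ via the ``$\ell=i$'' handleslide--handleslide move before merging it with $\epsilon_2$ via move~(5). The only difference is cosmetic ordering: the paper performs the recombination (moves~(7) and~(5)) \emph{before} the braid move and then slides everything right, whereas you perform the braid move first and recombine during the rightward slide; both sequences are valid in $\BD_n$ and involve the same elementary moves. Your remark that step~(i) needs no nontrivial handleslide--handleslide interaction is accurate provided you push $\epsilon_2$ past $\sigma_i$ before pushing $\epsilon_1$ past it (otherwise a trivial commuting swap is needed, as the paper notes).
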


\begin{proof}
$(a)$ is clear. We prove $(b)$ by Figure \ref{fig:braid_relation_for_braid_matrices}: $(i)$ is a composition of elementary moves Figure \ref{fig:elementary_moves} $(8)$ and a trivial move switching $\epsilon_1,\epsilon_2$; $(ii)$ is a composition of an elementary move Figure \ref{fig:elementary_moves} $(7)$, a trivial move switching $-\epsilon_3\epsilon_1,\epsilon_3$, and an elementary move Figure \ref{fig:elementary_moves} $(5)$; $(iii)$ is a braid move; $(iv)$ is a composition of elementary moves Figure \ref{fig:elementary_moves} $(8)$.
\end{proof}

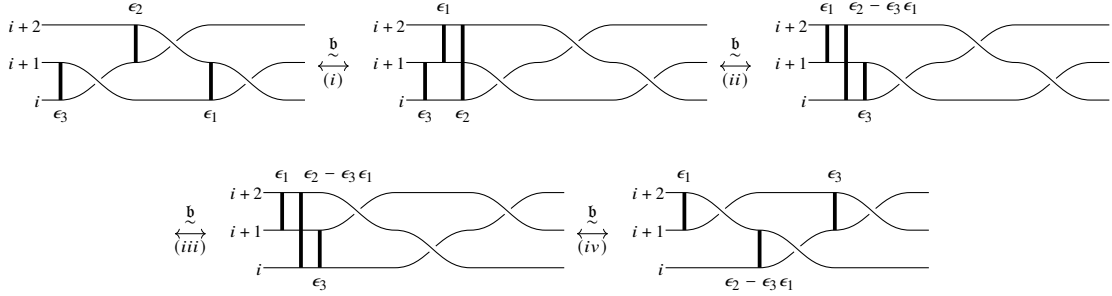
\begin{figure}[!htbp]
\begin{center}
\begin{tikzcd}[row sep=0.5pc,column sep=1pc,ampersand replacement=\&]
\begin{tikzpicture}[baseline=-.5ex,scale=0.5]
\begin{scope}
\draw[thin] (0,0)--(0.5,0) to [out=0,in=180] (2.5,1) to [out=0,in=180] (4.5,2)--(7,2);

\draw[white,fill=white] (1.5,0.5) circle(0.1);
\draw[white,fill=white] (3.5,1.5) circle(0.1);

\draw[thin] (0,1)--(0.5,1) to [out=0,in=180] (2.5,0)--(4.5,0) to [out=0,in=180] (6.5,1)--(7,1);

\draw[white,fill=white] (5.5,0.5) circle(0.1);

\draw[thin] (0,2)--(2.5,2) to [out=0,in=180] (4.5,1) to [out=0,in=180] (6.5,0)--(7,0);

\draw (0.3,0) node[left] {\text{\tiny$i$}};
\draw (0.3,1) node[left] {\text{\tiny$i+1$}};
\draw (0.3,2) node[left] {\text{\tiny$i+2$}};

\draw[line width=0.5mm] (0.5,0)--(0.5,1);
\draw (0.5,0) node[below] {\text{\tiny$\epsilon_3$}};

\draw[line width=0.5mm] (2.5,1)--(2.5,2);
\draw (2.5,2) node[above] {\text{\tiny$\epsilon_2$}};

\draw[line width=0.5mm] (4.5,0)--(4.5,1);
\draw (4.5,0) node[below] {\text{\tiny$\epsilon_1$}};

\end{scope}
\end{tikzpicture}

\arrow[r,leftrightarrow,line width=0.1mm,shift left=5,"{\braidequivalent}","{\text{\tiny$(i)$}}"']\&

\begin{tikzpicture}[baseline=-.5ex,scale=0.5]
\begin{scope}

\draw[thin] (-1,0)--(0.5,0) to [out=0,in=180] (2.5,1) to [out=0,in=180] (4.5,2)--(7,2);

\draw[white,fill=white] (1.5,0.5) circle(0.1);
\draw[white,fill=white] (3.5,1.5) circle(0.1);

\draw[thin] (-1,1)--(0.5,1) to [out=0,in=180] (2.5,0)--(4.5,0) to [out=0,in=180] (6.5,1)--(7,1);

\draw[white,fill=white] (5.5,0.5) circle(0.1);

\draw[thin] (-1,2)--(2.5,2) to [out=0,in=180] (4.5,1) to [out=0,in=180] (6.5,0)--(7,0);

\draw (-0.7,0) node[left] {\text{\tiny$i$}};
\draw (-0.7,1) node[left] {\text{\tiny$i+1$}};
\draw (-0.7,2) node[left] {\text{\tiny$i+2$}};

\draw[line width=0.5mm] (-0.5,0)--(-0.5,1);
\draw (-0.5,0) node[below] {\text{\tiny$\epsilon_3$}};

\draw[line width=0.5mm] (0,1)--(0,2);
\draw (0,2) node[above] {\text{\tiny$\epsilon_1$}};

\draw[line width=0.5mm] (0.5,0)--(0.5,2);
\draw (0.5,0) node[below] {\text{\tiny$\epsilon_2$}};

\end{scope}
\end{tikzpicture}

\arrow[r,leftrightarrow,line width=0.1mm,shift left=5,"{\braidequivalent}","{\text{\tiny$(ii)$}}"']\&

\begin{tikzpicture}[baseline=-.5ex,scale=0.5]
\begin{scope}

\draw[thin] (-1,0)--(0.5,0) to [out=0,in=180] (2.5,1) to [out=0,in=180] (4.5,2)--(7,2);

\draw[white,fill=white] (1.5,0.5) circle(0.1);
\draw[white,fill=white] (3.5,1.5) circle(0.1);

\draw[thin] (-1,1)--(0.5,1) to [out=0,in=180] (2.5,0)--(4.5,0) to [out=0,in=180] (6.5,1)--(7,1);

\draw[white,fill=white] (5.5,0.5) circle(0.1);

\draw[thin] (-1,2)--(2.5,2) to [out=0,in=180] (4.5,1) to [out=0,in=180] (6.5,0)--(7,0);

\draw (-0.7,0) node[left] {\text{\tiny$i$}};
\draw (-0.7,1) node[left] {\text{\tiny$i+1$}};
\draw (-0.7,2) node[left] {\text{\tiny$i+2$}};

\draw[line width=0.5mm] (-0.5,1)--(-0.5,2);
\draw (-0.5,2) node[above] {\text{\tiny$\epsilon_1$}};

\draw[line width=0.5mm] (0,0)--(0,2);
\draw (1,2) node[above] {\text{\tiny$\epsilon_2-\epsilon_3\epsilon_1$}};

\draw[line width=0.5mm] (0.5,0)--(0.5,1);
\draw (0.5,0) node[below] {\text{\tiny$\epsilon_3$}};

\end{scope}
\end{tikzpicture}
\end{tikzcd}
\end{center}


\begin{center}
\begin{tikzcd}[row sep=0.5pc,column sep=1pc,ampersand replacement=\&]

\arrow[r,leftrightarrow,line width=0.1mm,shift left=5,"{\braidequivalent}","{\text{\tiny$(iii)$}}"']\&

\begin{tikzpicture}[baseline=-.5ex,scale=0.5]
\begin{scope}

\draw[thin] (-1,0)--(2.5,0) to [out=0,in=180] (4.5,1) to [out=0,in=180] (6.5,2)--(7,2);

\draw[white,fill=white] (3.5,0.5) circle(0.1);
\draw[white,fill=white] (5.5,1.5) circle(0.1);

\draw[thin] (-1,1)--(0.5,1) to [out=0,in=180] (2.5,2)--(4.5,2) to [out=0,in=180] (6.5,1)--(7,1);

\draw[white,fill=white] (1.5,1.5) circle(0.1);

\draw[thin] (-1,2)--(0.5,2) to [out=0,in=180] (2.5,1) to [out=0,in=180] (4.5,0)--(7,0);

\draw (-0.7,0) node[left] {\text{\tiny$i$}};
\draw (-0.7,1) node[left] {\text{\tiny$i+1$}};
\draw (-0.7,2) node[left] {\text{\tiny$i+2$}};

\draw[line width=0.5mm] (-0.5,1)--(-0.5,2);
\draw (-0.5,2) node[above] {\text{\tiny$\epsilon_1$}};

\draw[line width=0.5mm] (0,0)--(0,2);
\draw (1,2) node[above] {\text{\tiny$\epsilon_2-\epsilon_3\epsilon_1$}};

\draw[line width=0.5mm] (0.5,0)--(0.5,1);
\draw (0.5,0) node[below] {\text{\tiny$\epsilon_3$}};

\end{scope}
\end{tikzpicture}

\arrow[r,leftrightarrow,line width=0.1mm,shift left=5,"{\braidequivalent}","{\text{\tiny$(iv)$}}"']\&

\begin{tikzpicture}[baseline=-.5ex,scale=0.5]
\begin{scope}

\draw[thin] (0,0)--(2.5,0) to [out=0,in=180] (4.5,1) to [out=0,in=180] (6.5,2)--(7,2);

\draw[white,fill=white] (3.5,0.5) circle(0.1);
\draw[white,fill=white] (5.5,1.5) circle(0.1);

\draw[thin] (0,1)--(0.5,1) to [out=0,in=180] (2.5,2)--(4.5,2) to [out=0,in=180] (6.5,1)--(7,1);

\draw[white,fill=white] (1.5,1.5) circle(0.1);

\draw[thin] (0,2)--(0.5,2) to [out=0,in=180] (2.5,1) to [out=0,in=180] (4.5,0)--(7,0);

\draw (0.3,0) node[left] {\text{\tiny$i$}};
\draw (0.3,1) node[left] {\text{\tiny$i+1$}};
\draw (0.3,2) node[left] {\text{\tiny$i+2$}};

\draw[line width=0.5mm] (4.5,1)--(4.5,2);
\draw (4.5,2) node[above] {\text{\tiny$\epsilon_3$}};

\draw[line width=0.5mm] (2.5,0)--(2.5,1);
\draw (2.5,0) node[below] {\text{\tiny$\epsilon_2-\epsilon_3\epsilon_1$}};

\draw[line width=0.5mm] (0.5,1)--(0.5,2);
\draw (0.5,2) node[above] {\text{\tiny$\epsilon_1$}};

\end{scope}
\end{tikzpicture}
\end{tikzcd}
\end{center}
\caption{Braid relation for braid matrix diagrams with coefficients.}
\label{fig:braid_relation_for_braid_matrices}
\end{figure}

\begin{definition}[Braid varieties]\label{def:braid_varieties}
Let $\beta=\sigma_{i_{\ell}}\cdots\sigma_{i_1}\in\FBr_n^+$. For each $1\leq j\leq \ell$, denote
\begin{equation}\label{eqn:canonical_morphism_for_braid_matrix_with_coefficients}
f_j:\mathbb{A}^j \rightarrow G: (\epsilon_j,\cdots,\epsilon_1)\mapsto \mB_{i_j}(\epsilon_j)\cdots\mB_{i_1}(\epsilon_1).
\end{equation}

\begin{enumerate}[wide,labelwidth=!,labelindent=0pt]
\item
The \emph{(resp. restricted) braid variety} associated to $\beta$ is a closed subvariety of $\mathbb{A}^{\ell}$:
\begin{equation}
X(\beta):=f_{\ell}^{-1}(B);\quad \text{resp. } X(\beta,C):=\mumon^{-1}(C),C\in T,~~\mumon:=D\circ f_{\ell}:X(\beta)\rightarrow B\rightarrow T.
\end{equation}
By Lemma \ref{lem:braid_relations_for_braid_matrices}, $X(\beta)$ (resp. $X(\beta,C)$) depends only on $\beta\in\mathrm{Br}_n^+$, up to a canonical isomorphism.  

\item
$b\in B$ \emph{acts} on $X(\beta)\in\vec{\epsilon}$ as follows: $\hat{\vec{\epsilon}}=(\hat{\epsilon}_{\ell},\text{\tiny$\cdots$},\hat{\epsilon}_1):=b\cdot\vec{\epsilon}$ is uniquely determined by
\begin{equation}
[\mB_{i_{\ell}}(\epsilon_{\ell})]\circ\text{\tiny$\cdots$}\circ [\mB_{i_1}(\epsilon_1)]\circ[b^{-1}]=[\tilde{b}_{\ell}]\circ[\mB_{i_{\ell}}(\hat{\epsilon}_{\ell})]\circ\text{\tiny$\cdots$}\circ [\mB_{i_1}(\hat{\epsilon}_1)]\in\underline{\FBD}_n, \tilde{b}_{\ell}\in B.
\end{equation}
That is, in $[b^{-1}|\mB_{i_1}(\epsilon_1)|\text{\tiny$\cdots$}|\mB_{i_{\ell}}(\epsilon_{\ell})]$, push $[b^{-1}]$ by elementary moves (Figure \ref{fig:elementary_moves}) to the right as far as possible, the outcome is $[\mB_{i_1}(\hat{\epsilon}_1)|\text{\tiny$\cdots$}|\mB_{i_{\ell}}(\hat{\epsilon}_{\ell})|\tilde{b}_{\ell}]$. 
\end{enumerate}
\end{definition}

\noindent{}\textbf{Convention} \setword{{\color{blue}$4$}}{convention:group_action}: If the context is clear, for a group action of $h\in H$ on any variety $Y\ni y$, denote
\begin{equation}\label{eqn:convention_for_an_action}
\hat{y}:=h\cdot y.
\end{equation}

Next, we recall the cell decomposition of braid varieties. 
Fix $\beta=\sigma_{i_{\ell}}\text{\tiny$\cdots$}\sigma_{i_1}\in\FBr_n^+$. Define
\begin{equation}
p:\mathbb{A}^{\ell}\rightarrow W^{\ell+1}:\vec{\epsilon}\mapsto p(\vec{\epsilon})=(p_{\ell},\text{\tiny$\cdots$},p_0),\quad \mB_{i_m}(\epsilon_m)\text{\tiny$\cdots$} \mB_{i_1}(\epsilon_1)\in Bp_mB.
\end{equation}
Alternatively by Proposition \ref{prop:from_matrices_to_braid_matrix_diagrams}: $[\mB_{i_m}(\epsilon_m)\text{\tiny$\cdots$}\mB_{i_1}(\epsilon_1)]\in [B]\circ[p_m]\circ[B]\subset \BD_n$.

\begin{definition}[{\cite[\S.5.4]{Mel19}}]\label{def:walks}
Let $p=(p_{\ell},\text{\tiny$\cdots$},p_0)\in W^{\ell+1}$. If for any \emph{position} $1\leq m\leq \ell$:
\begin{eqnarray*}
p_m=\left\{\begin{array}{cc}
\ms_{i_m}p_{m-1}~(\text{go-up})& \text{if } \ms_{i_m}p_{m-1}>p_{m-1},\\
\ms_{i_m}p_{m-1}~(\text{go-down}) \text{ or } p_{m-1}~(\text{stay}) & \text{ if } \ms_{i_m}p_{m-1}<p_{m-1},
\end{array}\right.
\end{eqnarray*}
and $p_0=p_{\ell}=\id$, we say $p$ is a \emph{walk} of $\beta$. Denote:
\begin{equation*}
U_p:=\{\text{go-up's}\},~~S_p:=\{\text{stays}\},~~D_p:=\{\text{go-down's}\}. \Rightarrow [\ell]=\{1,\text{\tiny$\cdots$},\ell\}= U_p\sqcup D_p\sqcup S_p.
\end{equation*}
By a length count: $|U_p|-|D_p|=\ell(p_{\ell})-\ell(p_0)=0$. Denote $\cW(\beta):=\{\text{walks of } \beta\}$.
\end{definition}

For any $1\leq m\leq \ell=\ell(\beta)$, denote
\begin{equation}
\ms_{<m}(\beta):=\text{\tiny$\prod_{q=m-1}^1$} \ms_{i_q},\quad \ms_{>m}(\beta):= \text{\tiny$\prod_{q=\ell}^{m+1}$} \ms_{i_q}.
\end{equation}
We use \textbf{Convention} \ref{convention:group_action}. Recall that $x^y=yxy^{-1}$ in $G$, and we write $t=\diag(t_1,\text{\tiny$\cdots$},t_n)\in T$. 
\begin{proposition}\label{prop:cell_decomposition_of_braid_varieties}
We have \emph{$B$-equivariant} decompositions into locally closed subvarieties:
\begin{eqnarray}\label{eqn:cell_decomposition_of_braid_varieties}
&&X(\beta)=\sqcup_{p\in\cW(\beta)} X_p(\beta),\quad \varphi:X_p(\beta):=X_p^{\ell}(\beta)\isomorphic (\field^{\times})^{|S_p|}\times \field^{|U_p|}:\vec{\epsilon}\mapsto (\epsilon_m')_{m\in S_p\sqcup U_p},\\
&&X(\beta,C)=\sqcup_{p\in\cW(\beta)} X_p(\beta,C),\quad X_p(\beta,C):=X_p(\beta)\cap X(\beta,C),\nonumber
\end{eqnarray}
such that
\begin{enumerate}[wide,labelwidth=!,labelindent=0pt]
\item
The inherited action of $b\in B$ on $(\epsilon_m')_{m\in S_p\sqcup U_p}\in (\field^{\times})^{|S_p|}\times \field^{|U_p|}$ satisfies:
\begin{enumerate}
\item
If $b=u\in U\subset B$, then
$
\hat{\epsilon}_m'=\epsilon_m', \forall m\in S_p.
$

\item
If $b=t\in T\subset B$, then
\begin{equation}
\hat{\epsilon}_m'=(t^{p_{m-1}})_{i_m}(t^{p_{m-1}})_{i_m+1}^{-1}\epsilon_m', \forall m\in S_p\sqcup U_p.
\end{equation}
\end{enumerate}

\item
$\mumon:X_p(\beta)\rightarrow T$ is identified with
\begin{equation}\label{eqn:formula_for_mumon}
\mumon((\epsilon_m')_{m\in S_p\sqcup U_p})=\mumon((\epsilon_m')_{m\in S_p})= \text{\tiny$\prod_{m\in S_p}$} (\mK_{i_m}(-\epsilon_m'^{-1})\mK_{i_m+1}(\epsilon_m'))^{\ms_{>m}(\beta)}.
\end{equation}
In particular, $\det(\mumon((\epsilon_m')_{m\in S_p\sqcup U_p}))=(-1)^{|S_p|}$.
\end{enumerate}
\end{proposition}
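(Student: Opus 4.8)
The plan is to process the crossings one position at a time, by induction on the number of crossings $\ell = \ell(\beta)$ of the fixed presentation $\beta = \sigma_{i_\ell}\cdots\sigma_{i_1}$. For $1\le m\le\ell$ write $g_m(\vec{\epsilon}) := \mB_{i_m}(\epsilon_m)\cdots\mB_{i_1}(\epsilon_1)\in G$, and for a \emph{partial walk} $p^{(m)} = (p_m,\dots,p_0)$ — meaning $p_0=\id$ together with the go-up/stay/go-down dichotomy of Definition \ref{def:walks} imposed only at positions $\le m$ — set $Y_{p^{(m)}} := \{\vec{\epsilon}\in\mathbb{A}^m : g_q(\vec{\epsilon})\in Bp_qB \text{ for all }q\le m\}$. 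The Bruhat decomposition of $G$, together with the standard fact that $\ms_{i_m}Bp_{m-1}B$ equals $B(\ms_{i_m}p_{m-1})B$ when $\ell(\ms_{i_m}p_{m-1})>\ell(p_{m-1})$ and $B(\ms_{i_m}p_{m-1})B\cup Bp_{m-1}B$ otherwise, shows that every $\vec{\epsilon}\in\mathbb{A}^m$ lies in a unique $Y_{p^{(m)}}$ for a partial walk $p^{(m)}$; since $g_\ell(\vec{\epsilon})\in B$ iff $p_\ell=\id$, the first displayed decomposition follows with $X_p(\beta)=Y_p$ for $p\in\cW(\beta)$. Moreover each $Y_{p^{(m)}}$ is $B$-stable: by Definition \ref{def:braid_varieties}.(2), pushing $[b^{-1}]$ through only the first $m$ crossings gives $g_m(b\cdot\vec{\epsilon}) = \tilde b_m^{-1}\,g_m(\vec{\epsilon})\,b^{-1}$ with $\tilde b_m\in B$, so membership in $Bp_qB$ is preserved. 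Hence everything reduces to an inductive statement about the $Y_{p^{(m)}}$.

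The invariant carried along the induction on $m$ is: on $Y_{p^{(m)}}$ there is a \emph{triangular} change of coordinates $(\epsilon_q)_{q\le m}\rightsquigarrow(\epsilon_q')_{q\in S^{(m)}\sqcup U^{(m)}}$ — triangular meaning each $\epsilon_q'$ equals $\epsilon_q$ modulo a regular function of the earlier $\epsilon'$'s, while for $q\in D^{(m)}$ the coordinate $\epsilon_q$ is itself such a regular function — which is an isomorphism $Y_{p^{(m)}}\isomorphic(\field^\times)^{|S^{(m)}|}\times\field^{|U^{(m)}|}$; and in these coordinates the $\bmdp$ $[g_m(\vec{\epsilon})]$ can be brought, by elementary moves (Lemma \ref{lem:elementary_moves}) together with the braid relations of Lemma \ref{lem:braid_relations_for_braid_matrices}, to the normal form $[\nu_m]\circ[p_m]\circ[b_m]$ with $\nu_m\in U_{p_m^{-1}}^-$, $[p_m]\in\FBr_n^+$ the canonical lift, and $b_m\in B$ an explicit product of handleslides together with one conjugated pair of scalings per element of $S^{(m)}$, as on the right-hand side of (\ref{eqn:formula_for_mumon}).

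The inductive step is the heart of the argument: multiply the normal form of $g_{m-1}$ on the left by $\mB_{i_m}(\epsilon_m)=\sigma_{i_m}\circ[\mH_{i_m}(\epsilon_m)]$, commute $[\mH_{i_m}(\epsilon_m)]$ past $[\nu_{m-1}]$ by Lemma \ref{lem:elementary_moves}, and read off the $\alpha_{i_m}$-coefficient $c$, an affine function $c=\epsilon_m + (\text{function of the }\epsilon'_{<m})$. Exactly the three outcomes of Definition \ref{def:walks} arise. If $\ms_{i_m}p_{m-1}>p_{m-1}$ (go-up), the permutation $\sigma_{i_m}$ is absorbed into $[p_m]$, $\epsilon_m':=c$ becomes a new free coordinate producing one more factor of $\nu_m\in U_{p_m^{-1}}^-$, and no new constraint appears. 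If $\ms_{i_m}p_{m-1}<p_{m-1}$, the rank-$1$ identity for $\sigma_{i_m}\circ[\mH_{i_m}(c)]$ splits into: $c\ne 0$ (stay, $p_m=p_{m-1}$), where $\epsilon_m':=c\in\field^\times$ and a torus factor $\mK_{i_m}(-\epsilon_m'^{-1})\mK_{i_m+1}(\epsilon_m')$ is created and pushed rightward, being conjugated by $\ms_{i_q}$ at each later position $q$ — this is the source of the $\ms_{>m}(\beta)$ twist in (\ref{eqn:formula_for_mumon}); and $c=0$ (go-down, $p_m=\ms_{i_m}p_{m-1}$), where the single equation $c=0$ cuts $Y_{p^{(m)}}$ out of $Y_{p^{(m-1)}}\times\mathbb{A}^1_{\epsilon_m}$ and eliminates $\epsilon_m$. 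Assembling these over $m=1,\dots,\ell$ yields the coordinates and normal form; for $p\in\cW(\beta)$ one has $p_\ell=\id$, so $[p_\ell]=\id_n$ and $U_{p_\ell^{-1}}^-=\{I_n\}$, whence $g_\ell=b_\ell\in B$ and $\mumon(\vec{\epsilon})=D(g_\ell)=D(b_\ell)$, which is precisely (\ref{eqn:formula_for_mumon}); the determinant statement is then immediate since $\det\bigl(\mK_{i_m}(-\epsilon_m'^{-1})\mK_{i_m+1}(\epsilon_m')\bigr)=-1$ and conjugation preserves determinants. For the $B$-action, one pushes $[b^{-1}]$ through the normal form: a torus $t\in T$ only conjugates the pieces, and having crossed positions $1,\dots,m-1$ its diagonal has become $t^{p_{m-1}}$ before meeting the $m$-th crossing, giving $\hat{\epsilon}_m'=(t^{p_{m-1}})_{i_m}(t^{p_{m-1}})_{i_m+1}^{-1}\epsilon_m'$; a unipotent $u\in U$ can be absorbed into handleslides to the right of each stay without disturbing $\epsilon_m'$ for $m\in S_p$, giving $\hat{\epsilon}_m'=\epsilon_m'$ there.

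\textbf{The main obstacle} is the go-down case: when $c=0$ eliminates $\epsilon_m$, one must verify that the renormalized diagram still has $\nu_m\in U_{p_m^{-1}}^-$ and, crucially, that no torus factor is created — so that go-downs genuinely contribute nothing to $\mumon$ — while checking that the composite change of coordinates remains triangular and bijective onto the stated product of tori and affine lines. Keeping the normal form canonical, independent of the order in which the handleslides are collected and of the chosen reduced word for each $p_m$, is a secondary but ever-present subtlety, handled by the multiplicativity in Lemma \ref{lem:closed_subgroups_of_U_via_handleslides} and by Lemma \ref{lem:braid_relations_for_braid_matrices}.
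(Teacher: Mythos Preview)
Your approach is essentially the paper's: induct on the crossing index $m$, maintain a Bruhat normal form for $g_m = \mB_{i_m}(\epsilon_m)\cdots\mB_{i_1}(\epsilon_1)$, split each step into go-up/stay/go-down via the rank-$2$ identity for $\ms_{i_m}\mH_{i_m}(c)\ms_{i_m}$, and assemble $\mumon$ from the torus factors created at stays. The one point that does not quite cohere is your placement of the Borel piece on the \emph{right} in the normal form $\nu_m\,p_m\,b_m$: later crossings multiply on the left and never touch $b_m$, so your phrase ``conjugated by $\ms_{i_q}$ at each later position $q$'' has no mechanism in that setup. The paper instead writes $g_m = b_m\,p_m\,L_{p_m}^-$ with $b_m\in B$ on the \emph{left} and $L_{p_m}^-\in U_{p_m}^-$, which gives the clean recursion $D(b_m) = (\text{stay factor if }m\in S_p)\cdot D(b_{m-1})^{\ms_{i_m}}$ and hence the $\ms_{>m}(\beta)$-twist in (\ref{eqn:formula_for_mumon}) by iteration; since both normal forms collapse to $g_\ell = b_\ell$ once $p_\ell=\id$, the final formula is unaffected, and the issue is only with the intermediate bookkeeping. (Also, the braid relations of Lemma~\ref{lem:braid_relations_for_braid_matrices} are not needed here --- the argument is carried out entirely with the fixed presentation.)
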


\begin{proof}
By diagram calculus, the proof is straightforward. See Appendix \ref{sec:cell_decomposition_of_braid_varieties} for the details.
\end{proof}

\section{Cell decomposition of character varieties}\label{sec:cell_decomposition_of_character_varieties}

In this section, we prove a strong form (Theorem \ref{thm:cell_decomposition_of_very_generic_character_varieties}) of A. Mellit's cell decomposition for very generic character varieties \cite{Mel19}.
This will be used to prove our main theorem \ref{thm:the_homotopy_type_conjecture_for_very_generic_character_varieties}.

Recall that we have obtained a decomposition (see (\ref{eqn:Bruhat_cell_of_character_variety}),(\ref{eqn:Equivariant_Bruhat_cell_of_character_variety})):
\[
\modulispace_{\type}=\text{\tiny$\sqcup_{\vec{w}\in W^{2g}\times\prod_{i=1}^{k-1}W/W(C_i)}$} \modulispace_{\type}(\vec{w}),\quad \modulispace_{\type}(\vec{w})=M_B'(\vec{w})/PB_{\pa},
\]
and the defining equation of $M_B'(\vec{w})$ has been interpreted via braid matrix diagrams as (\ref{eqn:Bruhat_cell_of_character_variety_via_braid_matrix_diagrams}):
\[
[\mM_{\vec{w}}]=\text{\tiny$\prod_{j=1}^g$}([A_{2j-1}]\circ[A_{2j}]\circ [A_{2j-1}^{-1}]\circ[A_{2j}^{-1}])\circ\text{\tiny$\prod_{i=1}^{k-1}$}([x_iC_ix_i^{-1}]') \circ [u_k]\circ[C_k]\weakequivalent \id_n \in\underline{\FBD}_n.
\]
To obtain the actual cell decomposition of $\modulispace_{\type}$, we would like to decompose $\modulispace_{\type}(\vec{w})$ further. This amounts to decomposing $M_B'(\vec{w})$ equivariantly.
For that, as mentioned in the end of Section \ref{subsec:diagram_calculus_of_matrices}, the next step is to canonicalize $[\mM_{\vec{w}}]$ by diagram calculus. 
This will be done in the next three subsections: Section \ref{subsec:diagram_calculus_for_punctures} and Section \ref{subsec:diagram_calculus_for_genera} do the puncture and genus calculations respectively; Section \ref{subsec:connection_to_braid_varieties} combines these local calculations to describe $M_B'(\vec{w})$ via braid varieties. 
Finally, in Section \ref{subsec:the_cell_decomposition}, we prove a strong form (Theorem \ref{thm:cell_decomposition_of_very_generic_character_varieties}) of the cell decomposition for $\modulispace_{\type}$.

To simplify our computations, we do a \textbf{trick} as follows.
\begin{itemize}
\item
Fix an embedding $S_n\hookrightarrow \FBr_n^+\subset \FBD_n$ lifting $[-]:S_n\rightarrow\Br_n^+$ (as a map of sets).
\item 
Via Lemma \ref{lem:closed_subgroups_of_U_via_handleslides}, we also fix an embedding $[-]:B=U\rtimes T \hookrightarrow \FBD_n$ (not as a morphism of monoids) lifting $[-]:B\rightarrow\underline{\FBD}_n$.
\end{itemize}

\begin{definition/proposition}\label{def/prop:trick}
Fix $\cS\subset [N]=\{1,\text{\tiny$\cdots$},N\}$. Take $Y_1,\text{\tiny$\cdots$},Y_N$ such that: $Y_i\subset B$ is a locally closed $\field$-subvariety for $i\notin\cS$, and $Y_i$ is a single permutation\footnote{By a little abuse of notations, we identify $w\in S_n$ with $\{w\}\subset S_n$. Similarly, we identify $b\in B$ with $\{b\}\subset B$.} in $S_n$, for $i\in\cS$.
\begin{enumerate}[wide,labelwidth=!,labelindent=0pt]
\item
For any $\field$-variety $Z$, define
\[
[Y_1]\circ\text{\tiny$\cdots$}\circ[Y_N]\times Z:=\{[y_1]\circ\text{\tiny$\cdots$}\circ [y_N]\in\FBD_n: y_i\in Y_i\}\times Z.
\]
As a $\field$-variety, $ [Y_1]\circ\text{\tiny$\cdots$}\circ [Y_N]\times Z$ is $\text{\tiny$\prod_{i\notin\cS}$}Y_i\times Z$. Define the obvious composition
\[
E:[Y_1]\circ\text{\tiny$\cdots$}\circ [Y_N]\times Z\rightarrow [Y_1]\circ\text{\tiny$\cdots$}\circ [Y_N]\hookrightarrow\FBD_n \rightarrow \underline{\FBD}_n.
\]

\item
If $\varphi:[Y_1]\circ\text{\tiny$\cdots$}\circ [Y_N]\times Z \rightarrow [Y_1']\circ\text{\tiny$\cdots$}\circ [Y_M']\times Z'$ is an isomorphism of $\field$-varieties respecting $E$, we say $\varphi$ is \textbf{elementary}. In this case, we write
\[
[Y_1]\circ\text{\tiny$\cdots$}\circ [Y_N]\times Z \isomorphic [Y_1']\circ\text{\tiny$\cdots$}\circ [Y_M']\times Z'.
\]
In particular, any elementary isomorphism respects the maps
\begin{eqnarray*}
&&g_{-}:[Y_1]\circ\text{\tiny$\cdots$}\circ [Y_N]\times Z \rightarrow G=GL(n,\field): ([y_1]\circ\text{\tiny$\cdots$}\circ[y_N],z)\mapsto y_1\text{\tiny$\cdots$} y_N,\\
&&\beta_{-}:[Y_1]\circ\text{\tiny$\cdots$}\circ [Y_N]\times Z \rightarrow \FBr_n^+: ([y_1]\circ\text{\tiny$\cdots$}\circ[y_N],z)\mapsto \beta_{[y_1]}\circ\text{\tiny$\cdots$}\circ \beta_{[y_N]}.
\end{eqnarray*}

\item
Clearly, we have the following \textbf{elementary isomorphisms}:
\begin{enumerate}[wide,labelwidth=!,labelindent=0pt]
\item
Let $Y,Y_1,Y_2\subset B$. If the multiplication induces an isomorphism $m:Y_1\times Y_2\xrightarrow[]{\isomorphic} Y:(y_1,y_2)\mapsto y_1y_2$,
then $[Y_1]\circ[Y_2]\isomorphic [Y]$.

\item
If $Y_1\subset Y_2\subset B$ and $Y_2$ is a closed subgroup, then
$[Y_1]\circ[Y_2]\isomorphic [Y_2]\times Y_1 \isomorphic [Y_2]\circ[Y_1]$.

\item
If $w\in S_n$ and $Y\subset U_w^+$, then
$[w]\circ[Y]\isomorphic [wYw^{-1}]\circ[w]$.

\item
For any $C\in T$ and any $Y\subset B$, we have
$[C]\circ[Y]\isomorphic [CYC^{-1}]\circ[C]$.
\end{enumerate}
\end{enumerate}
\end{definition/proposition}

\subsection{Diagram calculus for punctures}\label{subsec:diagram_calculus_for_punctures}

Back to the end of Section \ref{subsec:diagram_calculus_of_matrices}. Recall (\ref{eqn:decomposition_for_Bw_iP_i}), (\ref{eqn:braid_matrix_diagram_for_a_puncture}):
\begin{eqnarray*}
&&U_{\dot{w}_i^{-1}}^-\times N_i\times Z(C_i)\isomorphic B\dot{w}_i P_i: (\nu_i,n_i,z_i)\mapsto x_i=\nu_i\dot{w}_in_iz_i;~~N_i\isomorphic N_i: n_i\mapsto n_i'=n_iC_in_i^{-1}C_i.\\
&&[x_iC_ix_i^{-1}]'=[\nu_i]\circ[\dot{w}_i]\circ[n_i']\circ[C_i]\circ[\dot{w}_i^{-1}]\circ[\nu_i^{-1}]\in\underline{\FBD}_n.
\end{eqnarray*}
Using the notations in (\ref{eqn:decomposition_for_U}), define ${}^+n_i':=R_{\dot{w}_i}^+(n_i')\in U_{\dot{w}_i}^+\cap N_i$ by (\ref{eqn:decomposition_for_N_i}).

\begin{lemma}\label{lem:diagram_calculus_for_punctures}
For any $D_{i+1}\in T$, we have a natural isomorphism of $\field$-varieties of the form
\begin{equation}
B\dot{w}_iP_i\times U\xrightarrow[]{\simeq} U\times U_{\dot{w}_i}^-\times U_{\dot{w}_i^{-1}}^- \times (U_{\dot{w}_i}^+\cap N_i)\times Z(C_i): (x_i,u_{i+1})\mapsto (u_i,\xi_i',\xi_i,{}^+n_i',z_i)
\end{equation}
such that
\begin{equation}\label{eqn:inductive_formula_for_a_puncture}
[x_iC_ix_i^{-1}]'\circ[u_{i+1}]\circ[D_{i+1}]=[u_i]\circ[D_i]\circ[\dot{w}_i]\circ[\xi_i']\circ[\dot{w}_i^{-1}]\circ[\xi_i]\in\underline{\FBD}_n,~\exists!~D_i=C_i^{\dot{w}_i}D_{i+1}\in T.
\end{equation}
Moreover, the above equation uniquely determines $(u_i,\xi_i',\xi_i)\in U\times U_{\dot{w}_i}^- \times U_{\dot{w}_i^{-1}}^-$.
\end{lemma}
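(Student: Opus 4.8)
The plan is to prove \eqref{eqn:inductive_formula_for_a_puncture} by a chain of \emph{elementary isomorphisms} in the sense of Definition/Proposition \ref{def/prop:trick}, normalizing the left--hand side $[x_iC_ix_i^{-1}]'\circ[u_{i+1}]\circ[D_{i+1}]$ by pushing every scaling ($T$-factor) and handleslide ($U$-factor) as far to the right as possible past the two braid anchors $[\dot{w}_i^{-1}]$ and $[\dot{w}_i]$. Since elementary isomorphisms respect $\beta_{-}$, the shape $\beta_{[\dot{w}_i]}\circ\beta_{[\dot{w}_i^{-1}]}$ is preserved throughout, which already matches the shape of the target normal form.

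First I would use $x_i=\nu_i\dot{w}_in_iz_i$ and \eqref{eqn:braid_matrix_diagram_for_a_puncture} to rewrite $[x_iC_ix_i^{-1}]'$ as $[\nu_i]\circ[\dot{w}_i]\circ[n_i']\circ[C_i]\circ[\dot{w}_i^{-1}]\circ[\nu_i^{-1}]$; because $z_i\in Z(C_i)$, it does not occur in $x_iC_ix_i^{-1}$, so it is carried along only as a spectator, contributing the factor $Z(C_i)$ in the target. Next, using the $\dot{w}_i$-adapted decomposition $N_i=U_{\dot{w}_i}^-(U_{\dot{w}_i}^+\cap N_i)$ from \eqref{eqn:decomposition_for_N_i}, factor $n_i'=R_{\dot{w}_i}^-(n_i')\cdot{}^+n_i'$ with ${}^+n_i'=R_{\dot{w}_i}^+(n_i')\in U_{\dot{w}_i}^+\cap N_i$; this is the second spectator factor. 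Since $x_i\leftrightarrow(\nu_i,n_i,z_i)$ and $n_i\leftrightarrow n_i'\leftrightarrow(R_{\dot{w}_i}^-(n_i'),{}^+n_i')$ are bijections, it then suffices to construct, for each fixed $(D_{i+1},{}^+n_i',z_i)$, a mutually inverse pair of morphisms exchanging $(\nu_i,R_{\dot{w}_i}^-(n_i'),u_{i+1})\in U_{\dot{w}_i^{-1}}^-\times U_{\dot{w}_i}^-\times U$ and $(u_i,\xi_i',\xi_i)\in U\times U_{\dot{w}_i}^-\times U_{\dot{w}_i^{-1}}^-$ realizing \eqref{eqn:inductive_formula_for_a_puncture}.

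The core is the diagram calculus. The two $T$-factors $C_i,D_{i+1}$ are moved rightward across everything by conjugation (Definition/Proposition \ref{def/prop:trick}.(3)(d), together with the $\ms_k$--$\mK$ commutation of Lemma \ref{lem:elementary_moves}, which gives $[w]\circ[t]\isomorphic[wtw^{-1}]\circ[w]$ for $w\in S_n$, $t\in T$); each stays a scaling and reaches the far right, leaving the single surviving $T$-factor $D_i=C_i^{\dot{w}_i}D_{i+1}$ (forced also by comparing $g_{-}$ of the two sides). For handleslides, at a braid anchor $[w]$ (with $w=\dot{w}_i^{-1}$ or $w=\dot{w}_i$) I would split a $U$-factor as $U_w^+\cdot U_w^-$ via Lemma \ref{lem:closed_subgroups_of_U_via_handleslides}: the $U_w^+$-part crosses $[w]$ by Definition/Proposition \ref{def/prop:trick}.(3)(c), conjugating into $U$, while the $U_w^-$-part cannot cross and is ``trapped''. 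Applying this to $\nu_i^{-1}u_{i+1}$ at $[\dot{w}_i^{-1}]$ leaves a trapped $U_{\dot{w}_i^{-1}}^-$-factor to the left of $[\dot{w}_i^{-1}]$; a factor that has crossed $[\dot{w}_i^{-1}]$ lands in $U_{\dot{w}_i}^+$, so it — together with the $U_{\dot{w}_i}^+\cap N_i$-factor ${}^+n_i'$ — also crosses $[\dot{w}_i]$; the leftover $U_{\dot{w}_i}^-$-material (from $R_{\dot{w}_i}^-(n_i')$) is trapped between the anchors; and everything past $[\dot{w}_i]$ consolidates, with $\nu_i$, into one $U$-factor. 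Merging the (adjacent, after pushing the $+$-pieces through) trapped factors by Definition/Proposition \ref{def/prop:trick}.(3)(a) produces $\xi_i\in U_{\dot{w}_i^{-1}}^-$, $\xi_i'\in U_{\dot{w}_i}^-$ and $u_i\in U$, and the resulting composite elementary isomorphism yields simultaneously the asserted variety isomorphism $B\dot{w}_iP_i\times U\isomorphic U\times U_{\dot{w}_i}^-\times U_{\dot{w}_i^{-1}}^-\times(U_{\dot{w}_i}^+\cap N_i)\times Z(C_i)$ and the identity \eqref{eqn:inductive_formula_for_a_puncture}, since every elementary isomorphism respects the map $E$ to $\underline{\FBD}_n$. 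Uniqueness of $(u_i,\xi_i',\xi_i)$ is then automatic: the composite is a bijection, and equivalently the normal form $[u]\circ[D_i]\circ[\dot{w}_i]\circ[\xi']\circ[\dot{w}_i^{-1}]\circ[\xi]$ with $u\in U$, $\xi'\in U_{\dot{w}_i}^-$, $\xi\in U_{\dot{w}_i^{-1}}^-$ is an injective parametrization, in the spirit of the unique decompositions of Proposition \ref{prop:from_matrices_to_braid_matrix_diagrams}.

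The hard part will be the bookkeeping in the previous step: at each move one must keep precise track of which unipotent subgroup (or $T$-conjugate of one) the current factor occupies, so that the decomposition or commutation applied is legitimate and so that at the end the leftover factors land exactly in $U$, $U_{\dot{w}_i}^-$ and $U_{\dot{w}_i^{-1}}^-$. In particular, the delicate points are to check that a $U_{\dot{w}_i^{-1}}^+$-factor which has crossed $[\dot{w}_i^{-1}]$ indeed lies in $U_{\dot{w}_i}^+$ (hence may also cross $[\dot{w}_i]$), and that the trapped pieces really do consolidate into single factors rather than longer products.
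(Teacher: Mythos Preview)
Your proposal is correct and takes essentially the same approach as the paper: both carry out diagram calculus via the elementary isomorphisms of Definition/Proposition~\ref{def/prop:trick}, using the decomposition $N_i=U_{\dot{w}_i}^-(U_{\dot{w}_i}^+\cap N_i)$ together with the $U_w^{\pm}$-splittings to push $T$- and $U$-factors past the two braid anchors into normal form. The only cosmetic difference is the order of moves---the paper first pushes ${}^+n_i'$ leftward in the diagram through $[C_i]\circ[\dot{w}_i^{-1}]$ and absorbs it into $[U]$ as a direct factor \emph{before} decomposing $U=U_{\dot{w}_i^{-1}}^+U_{\dot{w}_i^{-1}}^-$, and then uses the reordering $[U_{\dot{w}_i}^-]\circ[U_{\dot{w}_i}^+]\isomorphic[U_{\dot{w}_i}^+]\circ[U_{\dot{w}_i}^-]$ you anticipate as the delicate step---but the resulting chain of isomorphisms and the uniqueness argument are the same.
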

\noindent{}\textbf{Note}:  Up to a canonical isomorphism, we have
\[
[\dot{w}_i]\circ[\xi_i']=[\mB_{[\dot{w}_i]}(\xi_i')]\in\underline{\FBD}_n,\quad [\dot{w}_i^{-1}]\circ[\xi_i]=[\mB_{[\dot{w}_i^{-1}]}(\xi_i)]\in\underline{\FBD}_n.
\]
This will pave the way to braid varieties.

\begin{proof}[Proof of Lemma \ref{lem:diagram_calculus_for_punctures}]
Observe that we have an isomorphism of $\field$-varieties
\[
B\dot{w}_iP_i\times U \xrightarrow[]{\simeq} U_{\dot{w}_i^{-1}}^-\times N_i\times Z(C_i)\times U: (x_i,u_{i+1})\mapsto (\nu_i,n_i,z_i,u_{i+1}':=\nu_i^{-1}u_{i+1}).
\]
such that $[x_iC_ix_i^{-1}]'\circ[u_{i+1}]=[\nu_i]\circ[\dot{w}_i]\circ[n_i']\circ [C_i]\circ[\dot{w}_i^{-1}]\circ[u_{i+1}']\in\underline{\FBD}_n$.
By our trick (Definition/Proposition \ref{def/prop:trick}), it suffices to compute
$[U_{\dot{w}_i^{-1}}^-]\circ[\dot{w}_i]\circ[N_i]\circ[C_i]\circ[\dot{w}_i^{-1}]\circ[U]\circ[D_{i+1}]\ni [\nu_i]\circ[\dot{w}_i]\circ[n_i']\circ [C_i]\circ[\dot{w}_i^{-1}]\circ[u_{i+1}']\circ[D_{i+1}]$:
\begin{eqnarray*}
&&[U_{\dot{w}_i^{-1}}^-]\circ[\dot{w}_i]\circ[N_i]\circ[C_i]\circ[\dot{w}_i^{-1}]\circ[U]\circ[D_{i+1}]\\
&\isomorphic&[U_{\dot{w}_i^{-1}}^-]\circ[\dot{w}_i]\circ[U_{\dot{w}_i}^-]\circ[U_{\dot{w}_i}^+\cap N_i]\circ[C_i]\circ[\dot{w}_i^{-1}]\circ[U]\circ[D_{i+1}]~(n_i'={}^-n_i'\cdot {}^+n_i')\\
&\isomorphic&[U_{\dot{w}_i^{-1}}^-]\circ[\dot{w}_i]\circ[U_{\dot{w}_i}^-]\circ[C_i]\circ[\dot{w}_i^{-1}]\circ[\dot{w}_iC_i^{-1}(U_{\dot{w}_i}^+\cap N_i)C_i\dot{w}_i^{-1}]\circ[U]\circ[D_{i+1}]\\
&\isomorphic&[U_{\dot{w}_i^{-1}}^-]\circ[\dot{w}_i]\circ[U_{\dot{w}_i}^-]\circ[C_i]\circ[\dot{w}_i^{-1}]\circ[U]\circ[D_{i+1}]\times (U_{\dot{w}_i}^+\cap N_i)~(\text{direct factor}: {}^+n_i'\in U_{\dot{w}_i}^+\cap N_i)\\
&\isomorphic&[U_{\dot{w}_i^{-1}}^-]\circ[\dot{w}_i]\circ[U_{\dot{w}_i}^-]\circ[C_i]\circ[\dot{w}_i^{-1}]\circ[U_{\dot{w}_i^{-1}}^+]\circ[U_{\dot{w}_i^{-1}}^-]\circ[D_{i+1}]\times (U_{\dot{w}_i}^+\cap N_i)~(U=U_{\dot{w}_i^{-1}}^+U_{\dot{w}_i^{-1}}^-)\\
&\isomorphic&[U_{\dot{w}_i^{-1}}^-]\circ[\dot{w}_i]\circ[U_{\dot{w}_i}^-]\circ[U_{\dot{w}_i}^+]\circ[C_i]\circ[\dot{w}_i^{-1}]\circ[U_{\dot{w}_i^{-1}}^-]\circ[D_{i+1}]\times (U_{\dot{w}_i}^+\cap N_i)\\
&\isomorphic&[U_{\dot{w}_i^{-1}}^-]\circ[\dot{w}_i]\circ[U_{\dot{w}_i}^+]\circ[U_{\dot{w}_i}^-]\circ[C_i]\circ[\dot{w}_i^{-1}]\circ[U_{\dot{w}_i^{-1}}^-]\circ[D_{i+1}]\times (U_{\dot{w}_i}^+\cap N_i)~(U_{\dot{w}_i}^-U_{\dot{w}_i}^+=U_{\dot{w}_i}^+U_{\dot{w}_i}^-)\\
&\isomorphic&[U]\circ[\dot{w}_i]\circ[U_{\dot{w}_i}^-]\circ[C_i]\circ[\dot{w}_i^{-1}]\circ[U_{\dot{w}_i^{-1}}^-]\circ[D_{i+1}]\times (U_{\dot{w}_i}^+\cap N_i)\\
&\isomorphic&[U]\circ[D_i]\circ[\dot{w}_i]\circ[U_{\dot{w}_i}^-]\circ[\dot{w}_i^{-1}]\circ[U_{\dot{w}_i^{-1}}^-]\times (U_{\dot{w}_i}^+\cap N_i).
\end{eqnarray*}
The uniqueness part is clear by definition of $\underline{\FBD}_n$.
This finishes the proof.
\end{proof}

The upshot is that, Lemma \ref{lem:diagram_calculus_for_punctures} provides the inductive step for the diagram calculus for punctures. In our case, we have seen $u_k\in U$, take $D_k:=C_k$, and define $D_i$'s inductively as above.
Thus,
\begin{equation}\label{eqn:D_1}
D_1=(\text{\tiny$\prod_{i=1}^{k-1}$}C_i^{\dot{w}_i})C_k\in T.
\end{equation}
Altogether, the defining equation (\ref{eqn:Bruhat_cell_of_character_variety_via_braid_matrix_diagrams}) for $M_B'(\vec{w})$ reduces to:
\begin{equation}\label{eqn:diagram_calculus_for_punctures}
[\mM_{\vec{w}}]=\text{\tiny$\prod_{j=1}^g$}([A_{2j-1}]\circ[A_{2j}]\circ[A_{2j-1}^{-1}]\circ[A_{2j}])\circ[u_1]\circ[D_1]\circ\text{\tiny$\prod_{i=1}^{k-1}$}([\dot{w}_i\xi_i']\circ[\dot{w}_i^{-1}\xi_i])\weakequivalent \id_n.
\end{equation}
For the diagram calculus for genera below, denote $u^g:=u_1\in U,\quad D^g:=D_1\in T$.

\subsection{Diagram calculus for genera}\label{subsec:diagram_calculus_for_genera}

For each $1\leq j\leq g$, take the isomorphisms
\begin{eqnarray}\label{eqn:decomposition_for_Btau_jB}
&&U_{\tau_{2j-1}^{-1}}^- \text{\tiny$\times$} T \text{\tiny$\times$} U \isomorphic B\tau_{2j-1}B: (\mu_{2j-1},y_{2j-1},\eta_{2j-1})\mapsto A_{2j-1}=\mu_{2j-1}\tau_{2j-1}y_{2j-1}\eta_{2j-1},\\
&&U \text{\tiny$\times$} T \text{\tiny$\times$} U_{\tau_{2j}}^-\isomorphic B\tau_{2j}B: (\eta_{2j},y_{2j},\mu_{2j})\mapsto A_{2j}=\eta_{2j}y_{2j}\tau_{2j}\mu_{2j}.\nonumber
\end{eqnarray}
Here, recall that $A_m\in B\tau_mB$, $\forall 1\leq m\leq 2g$. By Proposition \ref{prop:from_matrices_to_braid_matrix_diagrams}, we have
\begin{eqnarray*}
&&[A_{2j-1}]=[\mu_{2j-1}]\circ[\tau_{2j-1}]\circ[y_{2j-1}]\circ[\eta_{2j-1}],
~[A_{2j}]=[\eta_{2j}]\circ[y_{2j}]\circ[\tau_{2j}]\circ[\mu_{2j}]\in\underline{\FBD}_n,\\
&&[A_{2j-1}^{-1}]=[\eta_{2j-1}^{-1}]\circ[y_{2j-1}^{-1}]\circ[\tau_{2j-1}^{-1}]\circ[\mu_{2j-1}^{-1}],
~[A_{2j}^{-1}]=[\mu_{2j}^{-1}]\circ[\tau_{2j}^{-1}]\circ[y_{2j}^{-1}]\circ[\eta_{2j}^{-1}]\in\underline{\FBD}_n.
\end{eqnarray*}

\begin{lemma}\label{lem:diagram_calculus_for_genera}
For any $D^j\in T$, we have a natural isomorphism of $\field$-varieties
\begin{eqnarray}\label{eqn:reparametrization_for_genera}
&&B\tau_{2j-1}B\times B\tau_{2j}B\times U\xrightarrow[]{\simeq} U\times T^2\times (U_{\tau_{2j-1}}^-\times U_{\tau_{2j}}^-\times U_{\tau_{2j-1}^{-1}}^-\times U_{\tau_{2j}^{-1}}^-) \times (U_{\tau_{2j}^{-1}}^+\times U_{\tau_{2j-1}}^+),\\
&&(A_{2j-1},A_{2j},u^j)\mapsto (u^{j-1},y_{2j-1},y_{2j},\zeta_{2j-1}',\zeta_{2j}',\zeta_{2j-1},\zeta_{2j},{}^+n^{2j},{}^+n^{2j-1}),\nonumber
\end{eqnarray}
such that
\begin{eqnarray}\label{eqn:diagram_calculus_for_genera}
&&[A_{2j-1}]\circ[A_{2j}]\circ[A_{2j-1}^{-1}]\circ[A_{2j}^{-1}]\circ[u^j]\circ[D^j]\\
&=& [u^{j-1}]\circ[D^{j-1}]\circ[\tau_{2j-1}]\circ[\zeta_{2j-1}']\circ[\tau_{2j}]\circ[\zeta_{2j}']\circ[\tau_{2j-1}^{-1}]\circ[\zeta_{2j-1}]\circ[\tau_{2j}^{-1}]\circ[\zeta_{2j}]\in\underline{\FBD}_n,\nonumber
\end{eqnarray}
for a unique $D^{j-1}\in T$:
\begin{equation}\label{eqn:inductive_formula_for_D^j}
D^{j-1}:=y_{2j-1}^{\tau_{2j-1}} y_{2j}^{\tau_{2j-1}} (y_{2j-1}^{-1})^{\tau_{2j-1}\tau_{2j}} (y_{2j}^{-1})^{\tau_{2j-1}\tau_{2j}\tau_{2j-1}^{-1}\tau_{2j}^{-1}} (D^j)^{\tau_{2j-1}\tau_{2j}\tau_{2j-1}^{-1}\tau_{2j}^{-1}}\in T.
\end{equation}
\noindent{}Moreover, (\ref{eqn:diagram_calculus_for_genera}) uniquely determines $(u^{j-1},\zeta_{2j-1}',\zeta_{2j}',\zeta_{2j-1},\zeta_{2j})\in U\times U_{\tau_{2j-1}}^-\times U_{\tau_{2j}}^-\times U_{\tau_{2j-1}^{-1}}^-\times U_{\tau_{2j}^{-1}}^-$.
\end{lemma}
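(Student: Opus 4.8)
The plan is to mimic the structure of the proof of Lemma~\ref{lem:diagram_calculus_for_punctures}, carrying out the diagram calculus for the genus block step by step and reading off the reparametrization at the end. First I would substitute the Bruhat decompositions~(\ref{eqn:decomposition_for_Btau_jB}) into the word $[A_{2j-1}]\circ[A_{2j}]\circ[A_{2j-1}^{-1}]\circ[A_{2j}^{-1}]\circ[u^j]\circ[D^j]$, obtaining a long $\bmdp$ in which the braid letters $[\tau_{2j-1}],[\tau_{2j}],[\tau_{2j-1}^{-1}],[\tau_{2j}^{-1}]$ are interspersed with Borel-valued blocks $[\mu],[y],[\eta]$ and their inverses. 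Since $A_{2j}^{-1}$ has the letter $\tau_{2j}^{-1}$ appearing \emph{after} $\tau_{2j-1}^{-1}$ in $A_{2j-1}^{-1}$, the four braid letters already occur in the target order $\tau_{2j-1},\tau_{2j},\tau_{2j-1}^{-1},\tau_{2j}^{-1}$ (up to the adjacent unipotent junk between them), which is why the asserted shape~(\ref{eqn:diagram_calculus_for_genera}) is the natural one; I would state this as the organizing observation.

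Next I would invoke Definition/Proposition~\ref{def/prop:trick} to reduce the claim to an explicit chain of elementary isomorphisms between products of the form $[Y_1]\circ\cdots\circ[Y_N]\times Z$. The basic moves used are exactly those in part~(3) of Definition/Proposition~\ref{def/prop:trick}: conjugating a torus or permutation past a unipotent factor (moves (c),(d)), absorbing a unipotent factor into a closed subgroup it normalizes or splitting it off as a direct factor (moves (a),(b)), together with the $U=U_w^+U_w^-=U_w^-U_w^+$ decompositions of Lemma~\ref{lem:closed_subgroups_of_U_via_handleslides} and~(\ref{eqn:decompose_U}). The strategy is: push all the $[\eta]$- and $[\mu]$-type unipotent blocks adjacent to the braid letters $[\tau_m^{\pm1}]$, use the $\pm$-splitting to separate, for each crossing $\tau_m$, a factor $U_{\tau_m}^-$ (which will become the braid-variety coordinate $\zeta_m'$ or $\zeta_m$) from a factor $U_{\tau_m}^+$ (which, after being conjugated rightward through the subsequent braid letters, cancels or merges into later blocks or gets absorbed at the far left into the single $[U]$ factor producing $u^{j-1}$); the torus parts $[y_m]$ get conjugated leftward past the braid letters they precede, contributing the conjugation factors in~(\ref{eqn:inductive_formula_for_D^j}) and ultimately combining with $D^j$ to give $D^{j-1}$. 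The extra factors ${}^+n^{2j},{}^+n^{2j-1}\in U_{\tau_{2j}^{-1}}^+\times U_{\tau_{2j-1}}^+$ arise, as in the puncture case, as direct summands that split off because the relevant unipotent block is not fully consumed; I would track them explicitly through one pass.

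The uniqueness statement is then immediate: the $\bmdp$ on the right of~(\ref{eqn:diagram_calculus_for_genera}) lies in a fixed product of cells of $\underline{\FBD}_n$ (by Proposition~\ref{prop:from_matrices_to_braid_matrix_diagrams} the decomposition $Bww'B=\cdots$ into $U^-$-factors is unique), so the coordinates $(u^{j-1},\zeta_{2j-1}',\zeta_{2j}',\zeta_{2j-1},\zeta_{2j},{}^+n^{2j},{}^+n^{2j-1},y_{2j-1},y_{2j})$ are determined by the underlying matrix equation; that the map~(\ref{eqn:reparametrization_for_genera}) is an isomorphism of $\field$-varieties follows because every elementary isomorphism in the chain is one, and the $y$'s and ${}^+n$'s are manifestly polynomial in the input data with polynomial inverse.

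I expect the main obstacle to be bookkeeping rather than conceptual: keeping careful track of which conjugation by which $\tau_m$ or $D^j$ hits which unipotent factor, and in particular verifying that the factors one wants to discard really do land in $U_{\tau_m}^+$ (so that Lemma~\ref{lem:closed_subgroups_of_U_via_handleslides} applies and they can be pushed through the next braid letter) and that the leftover pieces assemble into $U_{\tau_{2j}^{-1}}^+\times U_{\tau_{2j-1}}^+$ exactly as claimed. The cleanest way to control this is to run the whole computation as a displayed chain of $\isomorphic$'s parallel to the one in the proof of Lemma~\ref{lem:diagram_calculus_for_punctures}, annotating each step with the move from Definition/Proposition~\ref{def/prop:trick}(3) or the decomposition~(\ref{eqn:decompose_U}) being used, and then reading off~(\ref{eqn:inductive_formula_for_D^j}) from the accumulated torus conjugations at the far left. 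The formula~(\ref{eqn:inductive_formula_for_D^j}) itself serves as a useful consistency check: its exponents $\tau_{2j-1}$, $\tau_{2j-1}\tau_{2j}$, $\tau_{2j-1}\tau_{2j}\tau_{2j-1}^{-1}\tau_{2j}^{-1}$ record precisely the order in which the torus factors $y_{2j-1},y_{2j},y_{2j-1}^{-1},y_{2j}^{-1},D^j$ get conjugated past the successive braid letters, so if the bookkeeping in the chain reproduces exactly these exponents, the calculation is correct.
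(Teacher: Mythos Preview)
Your proposal is correct and follows essentially the same approach as the paper: substitute the Bruhat decompositions, then run a chain of elementary isomorphisms via Definition/Proposition~\ref{def/prop:trick}(3) and the $U=U_w^+U_w^-$ splittings to canonicalize the word, splitting off the two direct factors ${}^+n^{2j-1}\in U_{\tau_{2j-1}}^+$ and ${}^+n^{2j}\in U_{\tau_{2j}^{-1}}^+$ along the way, and finally pushing the torus parts leftward to read off~(\ref{eqn:inductive_formula_for_D^j}). The paper organizes the chain into four explicit steps (first freeing variables by substitutions like $\eta_{2j}':=\eta_{2j-1}\eta_{2j}$, then working right to left through the four braid letters), but this is exactly the displayed chain of $\isomorphic$'s you anticipate.
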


This provides the inductive step for the diagram calculus for genera. 
In our case, we have seen that $u^g\in U$, take $D^g=D_1\in T$, and define $D^j$'s inductively as above. Thus,
\begin{equation}\label{eqn:D^0}
D^0((y_j)_{j=1}^{2g})=\text{\tiny$\prod_{j=1}^g$}(y_{2j-1}^{\tau_{2j-1}}(y_{2j-1}^{-1})^{\tau_{2j-1}\tau_{2j}}y_{2j}^{\tau_{2j-1}}(y_{2j}^{-1})^{\lbracket \tau_{2j-1}, \tau_{2j} \rbracket})^{\prod_{m=1}^{j-1}\lbracket \tau_{2m-1}, \tau_{2m} \rbracket} D_1^{\prod_{m=1}^g\lbracket \tau_{2m-1}, \tau_{2m} \rbracket}.
\end{equation}
Altogether, the defining equation (\ref{eqn:Bruhat_cell_of_character_variety_via_braid_matrix_diagrams}) for $M_B'(\vec{w})$, i.e. $[\mM_{\vec{w}}]\weakequivalent\id_n\in\underline{\FBD}_n$, reduces to:
\begin{equation*}
[u^0D^0]\circ\text{\tiny$\prod_{j=1}^g$}([\tau_{2j-1}\zeta_{2j-1}']\circ[\tau_{2j}\zeta_{2j}']\circ[\tau_{2j-1}^{-1}\zeta_{2j-1}]\circ[\tau_{2j}^{-1}\zeta_{2j}])
\circ\text{\tiny$\prod_{i=1}^{k-1}$}([\dot{w}_i\xi_i']\circ[\dot{w}_i^{-1}\xi_i])\weakequivalent \id_n.
\end{equation*}

\begin{proof}[Proof of Lemma \ref{lem:diagram_calculus_for_genera}]
As one could expect, the proof is done by diagram calculus.

\noindent{}\textbf{Step $1$}.
Denote $u'^j:=\eta_{2j}^{-1}u^j\in U, \eta_{2j}':=\eta_{2j-1}\eta_{2j}\in U, \eta_{2j-1}':=\eta_{2j-1}\mu_{2j}^{-1}\in U$.
So, $\eta_{2j-1}'^{-1}=\mu_{2j}\eta_{2j-1}^{-1}$.
Then $(A_{2j-1},A_{2j},u^j)\mapsto (\mu_{2j-1},y_{2j-1},\eta_{2j-1}',\eta_{2j}',y_{2j},\mu_{2j},u'^j)$ defines an isomorphism
\[
B\tau_{2j-1}B\times B\tau_{2j}B\times U\xrightarrow[]{\isomorphic} U_{\tau_{2j-1}^{-1}}^-\times T\times U^2\times T\times U_{\tau_{2j}}^-\times U
\]
such that we obtain an identity in $\underline{\FBD}_n$:
\begin{eqnarray}
&&[A_{2j-1}]\circ[A_{2j}]\circ[A_{2j-1}^{-1}]\circ[A_{2j}^{-1}]\circ[u^j]\\
&=&[\mu_{2j-1}]\circ[\tau_{2j-1}]\circ[y_{2j-1}]\circ[\eta_{2j}']\circ[y_{2j}]\circ[\tau_{2j}]\nonumber\\
&&\circ[\eta_{2j-1}'^{-1}]\circ[y_{2j-1}^{-1}]\circ[\tau_{2j-1}^{-1}]\circ[\mu_{2j-1}^{-1}]\circ[\mu_{2j}^{-1}]\circ[\tau_{2j}^{-1}]\circ[y_{2j}^{-1}]\circ[u'^j].\nonumber
\end{eqnarray}
\noindent{}\textbf{Note}: The variable $\mu_{2j}^{-1}\in U_{\tau_{2j}}^-$ becomes free (appears only once).

\vspace{0.1cm}
\noindent{}\textbf{Step $2$}.
We firstly compute $[\mu_{2j-1}^{-1}]\circ[\mu_{2j}^{-1}]\circ[\tau_{2j}^{-1}]\circ[y_{2j}^{-1}]\circ[u'^j]$. 
By Definition/Proposition \ref{def/prop:trick},
\begin{eqnarray*}
&&[\mu_{2j-1}^{-1}]\circ[U_{\tau_{2j}}^-]\circ[\tau_{2j}^{-1}]\circ[y_{2j}^{-1}]\circ[U]\isomorphic[\mu_{2j-1}^{-1}]\circ[U_{\tau_{2j}}^-]\circ[\tau_{2j}^{-1}]\circ[U]\circ[y_{2j}^{-1}]\\
&\isomorphic&[\mu_{2j-1}^{-1}]\circ[U_{\tau_{2j}}^-]\circ[\tau_{2j}^{-1}]\circ[U_{\tau_{2j}^{-1}}^+]\circ[U_{\tau_{2j}^{-1}}^-]\circ[y_{2j}^{-1}]\quad(U=U_{\tau_{2j}^{-1}}^+U_{\tau_{2j}^{-1}}^-)\\
&\isomorphic&[\mu_{2j-1}^{-1}]\circ[U_{\tau_{2j}}^-]\circ[U_{\tau_{2j}}^+]\circ[\tau_{2j}^{-1}]\circ[U_{\tau_{2j}^{-1}}^-]\circ[y_{2j}^{-1}]
\isomorphic[\mu_{2j-1}^{-1}]\circ[U]\circ[\tau_{2j}^{-1}]\circ[U_{\tau_{2j}^{-1}}^-]\circ[y_{2j}^{-1}]\\
&\isomorphic&[U]\circ[\tau_{2j}^{-1}]\circ[U_{\tau_{2j}^{-1}}^-]\circ[y_{2j}^{-1}].
\end{eqnarray*}
This means we obtain an isomorphism of $\field$-varieties
\begin{eqnarray}
&&U_{\tau_{2j-1}^{-1}}^-\times T\times U^2\times T\times U_{\tau_{2j}}^-\times U\xrightarrow[]{\isomorphic} U_{\tau_{2j-1}^{-1}}^-\times T\times U^3 \times T\times U_{\tau_{2j}^{-1}}^-,\\
&&(\mu_{2j-1},y_{2j-1},\eta_{2j-1}',\eta_{2j}',y_{2j},\mu_{2j},u'^j)\mapsto (\mu_{2j-1},y_{2j-1},\eta_{2j-1}',\eta_{2j}',u_3^j,y_{2j},L_1^-),\nonumber
\end{eqnarray}
such that
$
[\mu_{2j-1}^{-1}]\circ[\mu_{2j}^{-1}]\circ[\tau_{2j}^{-1}]\circ[y_{2j}^{-1}]\circ[u'^j]=[u_3^j]\circ[\tau_{2j}^{-1}]\circ[L_1^-]\circ[y_{2j}^{-1}]\in\underline{\FBD}_n.
$
Hence,
\begin{eqnarray}
&&[A_{2j-1}]\circ[A_{2j}]\circ[A_{2j-1}^{-1}]\circ[A_{2j}^{-1}]\circ[u^j]\\
&=&[\mu_{2j-1}]\circ[\tau_{2j-1}]\circ[y_{2j-1}]\circ[\eta_{2j}']\circ[y_{2j}]\circ[\tau_{2j}]\nonumber\\
&&\circ[\eta_{2j-1}'^{-1}]\circ[y_{2j-1}^{-1}]\circ[\tau_{2j-1}^{-1}]\circ[u_3^j]\circ[\tau_{2j}^{-1}]\circ[L_1^-]\circ[y_{2j}^{-1}].\nonumber
\end{eqnarray}
\noindent{}\textbf{Note}: the non-torus variables $\mu_{2j-1}\in U_{\tau_{2j-1}^{-1}}^-$, $\eta_{2j}', \eta_{2j-1}'^{-1}, u_3^j\in U$, $L_1^-\in U_{\tau_{2j}^{-1}}^-$ all become free.

\vspace{0.1cm}
\noindent{}\textbf{Step $3$}.
By our trick (Definition/Proposition \ref{def/prop:trick}), the computation of $[A_{2j-1}]\circ[A_{2j}]\circ[A_{2j-1}^{-1}]\circ[A_{2j}^{-1}]\circ[u^j]$ then reduces to that of the following variety in $\underline{\FBD}_n$:
\begin{equation}
[U_{\tau_{2j-1}^{-1}}^-]\circ[\tau_{2j-1}]\circ[y_{2j-1}]\circ[U]\circ[y_{2j}]\circ[\tau_{2j}]\circ[U]\circ[y_{2j-1}^{-1}]\circ[\tau_{2j-1}^{-1}]\circ[U]\circ[\tau_{2j}^{-1}]\circ[U_{\tau_{2j}^{-1}}^-]\circ[y_{2j}^{-1}].
\end{equation}
The idea is `canonicalize'. According to the expression above, reorder the variables:
\begin{eqnarray}
&&U_{\tau_{2j-1}^{-1}}^-\times T\times U^3 \times T\times U_{\tau_{2j}^{-1}}^-\xrightarrow[]{\isomorphic} U_{\tau_{2j-1}^{-1}}^-\times (T\times U)^2\times U \times U_{\tau_{2j}^{-1}}^-,\\
&&(\mu_{2j-1},y_{2j-1},\eta_{2j-1}',\eta_{2j}',u_3^j,y_{2j},L_1^-)\mapsto (\mu_{2j-1},y_{2j-1},\eta_{2j}',y_{2j},\eta_{2j-1}'^{-1},u_3^j,L_1^-)\nonumber
\end{eqnarray}

\noindent{}\textbf{Step $3.1$}. We firstly compute $[U]\circ[y_{2j-1}^{-1}]\circ[\tau_{2j-1}^{-1}]\circ[U]\ni [\eta_{2j-1}'^{-1}]\circ[y_{2j-1}^{-1}]\circ[\tau_{2j-1}^{-1}]\circ[u_3^j]$:
\begin{eqnarray*}
&&[U]\circ[y_{2j-1}^{-1}]\circ[\tau_{2j-1}^{-1}]\circ[U]\isomorphic[U]\circ[y_{2j-1}^{-1}]\circ[\tau_{2j-1}^{-1}]\circ[U_{\tau_{2j-1}^{-1}}^+]\circ[U_{\tau_{2j-1}^{-1}}^-]~(U=U_{\tau_{2j-1}^{-1}}^+U_{\tau_{2j-1}^{-1}}^-)\\
&\isomorphic&[U]\circ[U_{\tau_{2j-1}}^+]\circ[y_{2j-1}^{-1}]\circ[\tau_{2j-1}^{-1}]\circ[U_{\tau_{2j-1}^{-1}}^-]\isomorphic[U]\circ[y_{2j-1}^{-1}]\circ[\tau_{2j-1}^{-1}]\circ[U_{\tau_{2j-1}^{-1}}^-]\times U_{\tau_{2j-1}}^+,
\end{eqnarray*}
with the direct factor $U_{\tau_{2j-1}}^+\ni {}^+n^{2j-1}$.
This means we obtain an isomorphism of $\field$-varieties
\begin{eqnarray}
&&U_{\tau_{2j-1}^{-1}}^-\times (T\times U)^2\times U \times U_{\tau_{2j}^{-1}}^-
\xrightarrow[]{\simeq} U_{\tau_{2j-1}^{-1}}^-\times (T\times U)^2\times U_{\tau_{2j-1}^{-1}}^-\times U_{\tau_{2j}^{-1}}^- \times U_{\tau_{2j-1}}^+,\\
&&(\mu_{2j-1},y_{2j-1},\eta_{2j}',y_{2j},\eta_{2j-1}'^{-1},u_3^j,L_1^-)\mapsto (\mu_{2j-1},y_{2j-1},\eta_{2j}',y_{2j},u_4^j,L_3^-,L_1^-,{}^+n^{2j-1}),\nonumber
\end{eqnarray}
such that
$
[\eta_{2j-1}'^{-1}]\circ[y_{2j-1}^{-1}]\circ[\tau_{2j-1}^{-1}]\circ[u_3^j]=[u_4^j]\circ[y_{2j-1}^{-1}]\circ[\tau_{2j-1}^{-1}]\circ[L_3^-]\in\underline{\FBD}_n.
$

\noindent{}\textbf{Step $3.2$}. We compute $[U]\circ[y_{2j}]\circ[\tau_{2j}]\circ[U]\ni [\eta_{2j}']\circ[y_{2j}]\circ[\tau_{2j}]\circ[u_4^j]$, which is similar:
\begin{eqnarray*}
&&[U]\circ[y_{2j}]\circ[\tau_{2j}]\circ[U]\isomorphic[U]\circ[y_{2j}]\circ[\tau_{2j}]\circ[U_{\tau_{2j}}^+]\circ[U_{\tau_{2j}}^-]~~(U=U_{\tau_{2j}}^+U_{\tau_{2j}}^-)\\
&\isomorphic&[U]\circ[U_{\tau_{2j}^{-1}}^+]\circ[y_{2j}]\circ[\tau_{2j}]\circ[U_{\tau_{2j}}^-]\isomorphic[U]\circ[y_{2j}]\circ[\tau_{2j}]\circ[U_{\tau_{2j}}^-]\times U_{\tau_{2j}^{-1}}^+,
\end{eqnarray*}
with the direct factor $U_{\tau_{2j}^{-1}}^+\ni {}^+n^{2j}$.
This means we obtain an isomorphism of $\field$-varieties
\begin{eqnarray*}
&&U_{\tau_{2j-1}^{-1}}^-\times (T\times U)^2\times U_{\tau_{2j-1}^{-1}}^-\times U_{\tau_{2j}^{-1}}^- \times U_{\tau_{2j-1}}^+\xrightarrow[]{\simeq}\\
&&U_{\tau_{2j-1}^{-1}}^-\times T\times U\times T \times U_{\tau_{2j}}^-\times U_{\tau_{2j-1}^{-1}}^-\times U_{\tau_{2j}^{-1}}^-\times U_{\tau_{2j}^{-1}}^+ \times U_{\tau_{2j-1}}^+,\nonumber\\
&&(\mu_{2j-1},y_{2j-1},\eta_{2j}',y_{2j},u_4^j,L_3^-,L_1^-,{}^+n^{2j-1})\mapsto (\mu_{2j-1},y_{2j-1},u_5^j,y_{2j},L_5^-,L_3^-,L_1^-,{}^+n^{2j},{}^+n^{2j-1}),\nonumber
\end{eqnarray*}
such that
$
[\eta_{2j}']\circ[y_{2j}]\circ[\tau_{2j}]\circ[u_4^j]=[u_5^j]\circ[y_{2j}]\circ[\tau_{2j}]\circ[L_5^-]\in\underline{\FBD}_n.
$

\noindent{}\textbf{Step $3.3$}. Now, we compute $[U_{\tau_{2j-1}^{-1}}^-]\circ[\tau_{2j-1}]\circ[y_{2j-1}]\circ[U]\ni [\mu_{2j-1}]\circ[\tau_{2j-1}]\circ[y_{2j-1}]\circ[u_5^j]$:
\begin{eqnarray*}
&&[U_{\tau_{2j-1}^{-1}}^-]\circ[\tau_{2j-1}]\circ[y_{2j-1}]\circ[U]\isomorphic[U_{\tau_{2j-1}^{-1}}^-]\circ[\tau_{2j-1}]\circ[y_{2j-1}]\circ[U_{\tau_{2j-1}}^+]\circ[U_{\tau_{2j-1}}^-]\\
&\isomorphic&[U_{\tau_{2j-1}^{-1}}^-]\circ[U_{\tau_{2j-1}^{-1}}^+]\circ[\tau_{2j-1}]\circ[y_{2j-1}]\circ[U_{\tau_{2j-1}}^-]\isomorphic[U]\circ[\tau_{2j-1}]\circ[y_{2j-1}]\circ[U_{\tau_{2j-1}}^-].
\end{eqnarray*}
This means we obtain an isomorphism of $\field$-varieties
\begin{eqnarray*}
&&U_{\tau_{2j-1}^{-1}}^-\times T\times U\times T \times U_{\tau_{2j}}^-\times U_{\tau_{2j-1}^{-1}}^-\times U_{\tau_{2j}^{-1}}^-\times U_{\tau_{2j}^{-1}}^+ \times U_{\tau_{2j-1}}^+\xrightarrow[]{\simeq}\\
&&U\times T\times U_{\tau_{2j-1}}^-\times T \times U_{\tau_{2j}}^-\times U_{\tau_{2j-1}^{-1}}^-\times U_{\tau_{2j}^{-1}}^-\times U_{\tau_{2j}^{-1}}^+ \times U_{\tau_{2j-1}}^+,\\
&&(\mu_{2j-1},y_{2j-1},u_5^j,y_{2j},L_5^-,L_3^-,L_1^-,{}^+n^{2j},{}^+n^{2j-1})\mapsto (u^{j-1},y_{2j-1},L_7^-,y_{2j},L_5^-,L_3^-,L_1^-,{}^+n^{2j},{}^+n^{2j-1}),
\end{eqnarray*}
such that
$
[\mu_{2j-1}]\circ[\tau_{2j-1}]\circ[y_{2j-1}]\circ[u_5^j] = [u^{j-1}]\circ[\tau_{2j-1}]\circ[y_{2j-1}]\circ[L_7^-]\in\underline{\FBD}_n.
$

In summary, we have obtained
\begin{eqnarray}
&&[A_{2j-1}]\circ[A_{2j}]\circ[A_{2j-1}^{-1}]\circ[A_{2j}^{-1}]\circ[u^j]\\
&=&[u^{j-1}]\circ[\tau_{2j-1}]\circ[y_{2j-1}L_7^-y_{2j}]\circ[\tau_{2j}]\circ[L_5^-y_{2j-1}^{-1}]\circ[\tau_{2j-1}^{-1}]\circ[L_3^-]\circ[\tau_{2j}^{-1}]\circ[L_1^-y_{2j}^{-1}].\nonumber
\end{eqnarray}
and
$
[U_{\tau_{2j-1}^{-1}}^-]\circ[\tau_{2j-1}]\circ[y_{2j-1}]\circ[U]\circ[y_{2j}]\circ[\tau_{2j}]\circ[U]\circ[y_{2j-1}^{-1}]\circ[\tau_{2j-1}^{-1}]\circ[U]\circ[\tau_{2j}^{-1}]\circ[U_{\tau_{2j}^{-1}}^-]\circ[y_{2j}^{-1}]
\isomorphic
[U]\circ[\tau_{2j-1}]\circ[y_{2j-1}U_{\tau_{2j-1}}^-y_{2j}]\circ[\tau_{2j}]\circ[U_{\tau_{2j}}^-y_{2j-1}^{-1}]\circ[\tau_{2j-1}^{-1}]\circ[U_{\tau_{2j-1}^{-1}}^-]\circ[\tau_{2j}^{-1}]\circ[U_{\tau_{2j}^{-1}}^-y_{2j}^{-1}]\times U_{\tau_{2j}^{-1}}^+\times U_{\tau_{2j-1}}^+
$.

\vspace{0.1cm}
\noindent{}\textbf{Step $4$}. Deal with the torus variables. 
Now, for any $D^j\in T$, it follows that 
\begin{eqnarray*}
&&[A_{2j-1}]\circ[A_{2j}]\circ[A_{2j-1}^{-1}]\circ[A_{2j}^{-1}]\circ[u^j]\circ[D^j]\\
&=&[u^{j-1}]\circ[\tau_{2j-1}]\circ[y_{2j-1}L_7^-y_{2j}]\circ[\tau_{2j}]\circ[L_5^-y_{2j-1}^{-1}]\circ[\tau_{2j-1}^{-1}]\circ[L_3^-]\circ[\tau_{2j}^{-1}]\circ[L_1^-y_{2j}^{-1}]\circ[D^j]\nonumber\\
&=&[u^{j-1}]\circ[D^{j-1}]\circ[\tau_{2j-1}]\circ[\zeta_{2j-1}']\circ[\tau_{2j}]\circ[\zeta_{2j}']\circ[\tau_{2j-1}^{-1}]\circ[\zeta_{2j-1}]\circ[\tau_{2j}^{-1}]\circ[\zeta_{2j}]\in\underline{\FBD}_n,\nonumber
\end{eqnarray*}
where $(\zeta_{2j-1}',\zeta_{2j}',\zeta_{2j-1},\zeta_{2j})\in U_{\tau_{2j-1}}^-\times U_{\tau_{2j}}^-\times U_{\tau_{2j-1}^{-1}}^-\times U_{\tau_{2j}^{-1}}^-$, and
$D^{j-1}$ is indeed given by (\ref{eqn:inductive_formula_for_D^j}).

In other words, we have obtained an isomorphism of the form (\ref{eqn:reparametrization_for_genera}) in Lemma \ref{lem:diagram_calculus_for_genera} such that (\ref{eqn:diagram_calculus_for_genera}) and (\ref{eqn:inductive_formula_for_D^j}) hold.
By definition of $\underline{\FBD}_n$, the uniqueness part is clear.
Done.
\end{proof}

\begin{remark}\label{rem:action_on_+n^j}
By a careful check of the proof, there're formulas for ${}^+n^{2j-1},{}^+n^{2j}$ using (\ref{eqn:decomposition_for_U}):
\begin{eqnarray}
&&{}^+n^{2j-1}=y_{2j-1}^{-1}\tau_{2j-1}^{-1}L_{\tau_{2j-1}^{-1}}^+(\mu_{2j-1}^{-1}\mu_{2j}^{-1}\tau_{2j}^{-1}L_{\tau_{2j}^{-1}}^+(y_{2j}^{-1}\eta_{2j}^{-1}u^jy_{2j})\tau_{2j})\tau_{2j-1}y_{2j-1},\\
&&{}^+n^{2j}=y_{2j}\tau_{2j}L_{\tau_{2j}}^+(\mu_{2j}\eta_{2j-1}^{-1}{}^+n^{2j-1})\tau_{2j}^{-1}y_{2j}^{-1}.\nonumber
\end{eqnarray}
\end{remark}

\subsection{Connection to braid varieties}\label{subsec:connection_to_braid_varieties}

We relate $M_B'(\vec{w})$ to braid varieties. 
Sum up Section \ref{subsec:diagram_calculus_for_punctures}-\ref{subsec:diagram_calculus_for_genera}, we've obtained an isomorphism
\begin{eqnarray}\label{eqn:reparameterization_for_connection_to_braid_varieties}
&&(\text{\tiny$\prod_{j=1}^{2g}$}B\tau_jB)\text{\tiny$\times$}(\text{\tiny$\prod_{i=1}^{k-1}$}B\dot{w}_iP_i)\text{\tiny$\times$} U\xrightarrow[]{\isomorphic}\\ 
&&U\text{\tiny$\times$} \text{\tiny$\prod_{j=1}^g$}(T^2\text{\tiny$\times$} (U_{\tau_{2j-1}}^-\text{\tiny$\times$} U_{\tau_{2j}}^-\text{\tiny$\times$} U_{\tau_{2j-1}^{-1}}^-\text{\tiny$\times$} U_{\tau_{2j}^{-1}}^-) \text{\tiny$\times$} (U_{\tau_{2j}^{-1}}^+\text{\tiny$\times$} U_{\tau_{2j-1}}^+))\text{\tiny$\times$} \text{\tiny$\prod_{i=1}^{k-1}$}(U_{\dot{w}_i}^-\text{\tiny$\times$} U_{\dot{w}_i^{-1}}^-\text{\tiny$\times$} (U_{\dot{w}_i}^+\cap N_i)\text{\tiny$\times$} Z(C_i)),\nonumber\\
&&((A_j)_j,(x_i)_i,u_k)\mapsto (u^0,(y_{2j-1},y_{2j},\zeta_{2j-1}',\zeta_{2j}',\zeta_{2j-1},\zeta_{2j},{}^+n^{2j},{}^+n^{2j-1})_{j=1}^g,(\xi_i',\xi_i,{}^+n_i',z_i)_{i=1}^{k-1}),\nonumber
\end{eqnarray}
such that we obtain an equality in $\underline{\FBD}_n$:
\begin{eqnarray*}
&&[\mM_{\vec{w}}]=\text{\tiny$\prod_{j=1}^g$}([A_{2j-1}]\text{\tiny$\circ$}[A_{2j}]\text{\tiny$\circ$}[A_{2j-1}^{-1}]\text{\tiny$\circ$}[A_{2j}^{-1}])\text{\tiny$\circ$}\text{\tiny$\prod_{i=1}^{k-1}$}[x_iC_ix_i^{-1}]'\text{\tiny$\circ$}[u_k]\circ[D_k]\\
&=&[u^0D^0]\text{\tiny$\circ$}\text{\tiny$\prod_{j=1}^g$}([\tau_{2j-1}]\text{\tiny$\circ$}[\zeta_{2j-1}']\text{\tiny$\circ$}[\tau_{2j}]\text{\tiny$\circ$}[\zeta_{2j}']\text{\tiny$\circ$}[\tau_{2j-1}^{-1}]\text{\tiny$\circ$}[\zeta_{2j-1}]\text{\tiny$\circ$}[\tau_{2j}^{-1}]\text{\tiny$\circ$}[\zeta_{2j}])\text{\tiny$\circ$}\text{\tiny$\prod_{i=1}^{k-1}$}([\dot{w}_i]\text{\tiny$\circ$}[\xi_i']\text{\tiny$\circ$}[\dot{w}_i^{-1}]\text{\tiny$\circ$}[\xi_i]).
\end{eqnarray*}

Recall by (\ref{eqn:shape}),
$
\beta(\vec{w})=\prod_{j=1}^g([\tau_{2j-1}]\circ[\tau_{2j}]\circ[\tau_{2j-1}]^{-1}\circ[\tau_{2j}^{-1}])\circ\prod_{i=1}^{k-1}([\dot{w}_i]\circ[\dot{w}_i^{-1}])\in\FBr_n^+.
$
By Definition \ref{def:braid_matrices}, the defining equation (\ref{eqn:Bruhat_cell_of_character_variety_via_braid_matrix_diagrams}) for $M_B'(\vec{w})$ in $\underline{\FBD}_n$ becomes
\begin{equation}\label{eqn:Bruhat_cell_of_character_variety_via_braid_varieties}
[\mB_{\beta(\vec{w})}(\vec{\epsilon})]'\weakequivalent [(D^0)^{-1}]\circ[(u^0)^{-1}],~~\vec{\epsilon}:=((\zeta_{2j-1}',\zeta_{2j}',\zeta_{2j-1},\zeta_{2j})_{j=1}^g,(\xi_i',\xi_i)_{i=1}^{k-1})\in\field^{\ell(\beta(\vec{w}))}.
\end{equation}

By (\ref{eqn:D^0}), (\ref{eqn:D_1}), and the generic assumption (Definition \ref{def:generic_monodromy}), $\det D^0=\det D_1=1$.
Define
\begin{equation}\label{eqn:phi_w}
\phi_{\vec{w}}: T^{2g}\times X(\beta(\vec{w})) \rightarrow T_1: ((y_j)_{j=1}^{2g},\vec{\epsilon})\mapsto D^0\mumon(\vec{\epsilon});\quad T_1:=\{t\in T:\det t=1\}.
\end{equation}
Here, $\det\circ\mumon =1$ by Proposition \ref{prop:cell_decomposition_of_braid_varieties}: $\forall p\in\cW(\beta(\vec{w}))\Rightarrow$ 
$
|S_p|=\ell(\beta(\vec{w}))-2|U_p|=\sum_{j=1}^g 2\ell(\tau_j) +\sum_{i=1}^{k-1}2\ell(\dot{w}_i)-2|U_p|,
$
which is even. 
Define a closed $\field$-subvariety
\begin{equation}\label{eqn:restricted_twisted_braid_variety}
\tilde{X}(\beta(\vec{w})):=\phi_{\vec{w}}^{-1}(I_n)\subset T^{2g}\times X(\beta(\vec{w})).
\end{equation}
Then by (\ref{eqn:Bruhat_cell_of_character_variety_via_braid_varieties}), $M_B'(\vec{w})$ is related to the restricted twisted braid variety $\tilde{X}(\beta(\vec{w}))$:
\begin{equation}
M_B'(\vec{w})\isomorphic \tilde{X}(\beta(\vec{w}))\times \text{\tiny$\prod_{j=1}^g$}(U_{\tau_{2j}^{-1}}^+\times U_{\tau_{2j-1}}^+)\times\text{\tiny$\prod_{i=1}^{k-1}$}((U_{\dot{w}_i}^+\cap N_i)\times Z(C_i)).
\end{equation}

Now, $(b,(h_i)_{i=1}^{k-1})\in B_{\pa}$ acts on $(u^0,(y_j,\zeta_j',\zeta_j,{}^+n^j)_{j=1}^{2g},(\xi_i',\xi_i,{}^+n_i',z_i)_{i=1}^{k-1})\in M_B'(\vec{w})$.
Our major interest is that of $T_{\pa}:=T\times\prod_{i=1}^{k-1}Z(C_i)\subset B_{\pa}$. Let's study the action. We use \textbf{Convention} \ref{convention:group_action}. 
Recall by (\ref{eqn:action_on_M_B'}) and (\ref{eqn:decomposition_for_Bw_iP_i}) that
\[
\hat{A}_j=bA_jb^{-1},\quad \hat{x}_i=bx_ih_i^{-1},\quad \hat{u}_k=bu_k(b^{C_k})^{-1};\quad x_i=\nu_i\dot{w}_in_iz_i.
\]
Notice that $h_i\in Z(C_i)$ acts only on $x_i$, hence only on $z_i\in Z(C_i)$ via: $z_i\mapsto z_ih_i^{-1}$.
Thus, define
\begin{equation}
M_B''(\vec{w}):=\{(u^0,(y_j)_{j=1}^{2g},\vec{\epsilon},({}^+n^j)_{j=1}^{2g},({}^+n_i')_{i=1}^{k-1}):u^0D^0\mB_{\beta(\vec{w})}(\vec{\epsilon})=\id_n\}.
\end{equation}
Then it admits an induced action of $b\in B$, and the above computation shows that
\begin{equation}\label{eqn:M_B''(w)_via_twisted_braid_variety}
M_B''(\vec{w})\isomorphic \tilde{X}(\beta(\vec{w}))\times \text{\tiny$\prod_{j=1}^g$}(U_{\tau_{2j}^{-1}}^+\times U_{\tau_{2j-1}}^+)\times\text{\tiny$\prod_{i=1}^{k-1}$}(U_{\dot{w}_i}^+\cap N_i);\quad M_B'(\vec{w})=M_B''(\vec{w})\times \text{\tiny$\prod_{i=1}^{k-1}$}Z(C_i).
\end{equation}
Recall that $PB_{\pa}=B_{\pa}/\field^{\times}$ acts freely on $M_B'(\vec{w})$, and $M_B'(\vec{w})\rightarrow \modulispace_{\type}(\vec{w})=M_B'(\vec{w})/PB_{\pa}$
is a principal $PB_{\pa}$-bundle. Observe that 
$
\text{\tiny$\prod_{i=1}^{k-1}$}Z(C_i)\isomorphic (\field^{\times}\times\text{\tiny$\prod_{i=1}^{k-1}$}Z(C_i))/\field^{\times}\hookrightarrow PB_{\pa}=(B\times\text{\tiny$\prod_{i=1}^{k-1}$}Z(C_i))/\field^{\times}
$
is a normal closed subgroup, with quotient $PB_{\pa}/\text{\tiny$\prod_{i=1}^{k-1}$}Z(C_i)\isomorphic PB:=B/\field^{\times}$. Clearly, we have 
$M_B'(\vec{w})/\text{\tiny$\prod_{i=1}^{k-1}$}Z(C_i)\isomorphic M_B''(\vec{w})$.
Then, by Proposition \ref{prop:subgroup_action_on_a_principal_bundle} and Proposition \ref{prop:associated_fiber_bundles},
\begin{equation}\label{eqn:Bruhat_cell_of_character_variety_via_geometric_quotient_by_PB}
\modulispace_{\type}(\vec{w})=M_B'(\vec{w})/PB_{\pa} \isomorphic M_B''(\vec{w})/PB,
\end{equation}
and the induced quotient map
\[
\pi_{\vec{w}}'':M_B''(\vec{w}) \rightarrow \modulispace_{\type}(\vec{w})=M_B''(\vec{w})/PB
\]
is a principal $PB$-bundle.
It leads to study the $B$-action on $M_B''(\vec{w})$.

\begin{lemma}\label{lem:B-action_on_M_B''(w)}
The induced action of $b\in B$ on $(u^0,(y_j)_{j=1}^{2g},\vec{\epsilon},({}^+n^j)_{j=1}^{2g},({}^+n_i')_{i=1}^{k-1})\in M_B''(\vec{w})$ satisfies:
\begin{enumerate}[wide,labelwidth=!,labelindent=0pt,label=\alph*)]
\item
The canonical projection $M_B''(\vec{w})\rightarrow \tilde{X}(\beta(\vec{w}))\subset T^{2g}\times X(\beta(\vec{w}))$ is $B$-equivariant.
Here, $b\in B$ acts on $((y_j)_{j=1}^{2g},\vec{\epsilon})\in T^{2g}\times X(\beta(\vec{w}))$ diagonally: $B\acts X(\beta(\vec{w}))$ via Definition \ref{def:braid_varieties}, and
\begin{equation}\label{eqn:Borel_group_action_on_y_j}
\hat{y}_{2j-1}=D(b)^{\tau_{2j-1}^{-1}}D(b)^{-1}y_{2j-1},~~\hat{y}_{2j}=D(b)(D(b)^{\tau_{2j}})^{-1}y_{2j}.
\end{equation}

\item
If $b=t\in T$, then
\begin{eqnarray}\label{eqn:torus_action_on_M_B''}
&&{}^+\hat{n}_i'=t^{\dot{w}_i^{-1}}{}^+n_i'(t^{-1})^{\dot{w}_i^{-1}}; {}^+\hat{n}^{2j-1}=t {}^+n^{2j-1}t^{-1}; {}^+\hat{n}^{2j}=t {}^+n^{2j}t^{-1},\\
&&(\hat{y}_{2j-1}=t^{\tau_{2j-1}^{-1}}t^{-1}y_{2j-1}; \hat{y}_{2j}=t(t^{\tau_{2j}})^{-1}y_{2j}; \hat{u}^0=tu^0t^{-1}; \hat{D}^0=tD^0(t^{-1})^{\prod_{m=1}^g\lbracket \tau_{2m-1}, \tau_{2m} \rbracket})\nonumber\\
&&[\hat{u}^0]\circ[\hat{D}^0]\circ[\mB_{\beta(\vec{w})}(\hat{\vec{\epsilon}})]'=[t]\circ[u^0]\circ[D^0]\circ[\mB_{\beta(\vec{w})}(\vec{\epsilon})]'\circ[t^{-1}]\in\underline{\FBD}_n.\nonumber
\end{eqnarray}
Here, the last equation uniquely determines $(\hat{u}^0,\hat{D}^0,\hat{\vec{\epsilon}})$.
\end{enumerate}
\end{lemma}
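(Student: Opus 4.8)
## Proof plan for Lemma \ref{lem:B-action_on_M_B''(w)}

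The plan is to read off the action formulas directly from the diagram-calculus isomorphism (\ref{eqn:reparameterization_for_connection_to_braid_varieties}), using the basic identity
$[b]\circ[\mM_{\vec w}((A_j)_j,(x_i)_i,u_k)]\circ[b^{-1}]=[\mM_{\vec w}((\hat A_j)_j,(\hat x_i)_i,\hat u_k)]\in\underline{\FBD}_n$, which holds because $\hat A_j=bA_jb^{-1}$, $\hat x_i=bx_ih_i^{-1}$ with $h_i\in Z(C_i)$, and $\hat u_k=bu_k(b^{C_k})^{-1}$, and because $[-]$ is a morphism of monoids on each relevant piece. The point is that the reparametrization $((A_j)_j,(x_i)_i,u_k)\mapsto (u^0,(y_j,\zeta'_\bullet,\zeta_\bullet,{}^+n^j)_j,(\xi'_i,\xi_i,{}^+n'_i,z_i)_i)$ is assembled from the elementary isomorphisms of Definition/Proposition \ref{def/prop:trick} together with the unique decompositions of Lemma \ref{lem:diagram_calculus_for_punctures} and Lemma \ref{lem:diagram_calculus_for_genera}; since all of these are $B$-equivariant in an obvious sense (they are built from left/right multiplications, conjugations by $T$ and by the fixed permutations, which commute with the $B$-action up to the bookkeeping recorded in those lemmas), one only needs to track how $b$ (or $t$) propagates through each factor.

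First I would prove part $(a)$. The projection $M_B''(\vec w)\to\tilde X(\beta(\vec w))$ forgets exactly the "positive" free factors ${}^+n^j,{}^+n'_i$, and on $\tilde X(\beta(\vec w))\subset T^{2g}\times X(\beta(\vec w))$ the $B$-action is the diagonal of the genus-torus action on the $y_j$'s and the braid-variety action of Definition \ref{def:braid_varieties}. The formula (\ref{eqn:Borel_group_action_on_y_j}) for $\hat y_j$ comes straight from the decompositions (\ref{eqn:decomposition_for_Btau_jB}): writing $\hat A_{2j-1}=b\mu_{2j-1}\tau_{2j-1}y_{2j-1}\eta_{2j-1}b^{-1}$ and comparing with the unique decomposition $\hat\mu_{2j-1}\tau_{2j-1}\hat y_{2j-1}\hat\eta_{2j-1}$ forces $\hat y_{2j-1}=D(b)^{\tau_{2j-1}^{-1}}y_{2j-1}D(b)^{-1}$, and since $y_{2j-1},D(b)\in T$ this equals $D(b)^{\tau_{2j-1}^{-1}}D(b)^{-1}y_{2j-1}$; symmetrically for $y_{2j}$ from $A_{2j}=\eta_{2j}y_{2j}\tau_{2j}\mu_{2j}$. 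The braid-variety part is exactly Definition \ref{def:braid_varieties}(2): conjugating $[\mM_{\vec w}]$ by $[b]$ and pushing $[b^{-1}]$ to the right through the braid-matrix factors $[\mB_{\beta(\vec w)}(\vec\epsilon)]'$ is the definition of $b\cdot\vec\epsilon$, and one checks the leftover Borel element gets absorbed into $[u^0]\circ[D^0]$, so that the defining equation $u^0D^0\mB_{\beta(\vec w)}(\vec\epsilon)=\id_n$ is preserved — this is what makes $\tilde X(\beta(\vec w))$ a $B$-subvariety and the map equivariant.

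Next, for part $(b)$, specialize $b=t\in T$. Here every conjugation $z\mapsto tzt^{-1}$ is just another torus/unipotent element, so the propagation is transparent: from Lemma \ref{lem:diagram_calculus_for_punctures} one gets ${}^+\hat n'_i=t^{\dot w_i^{-1}}\,{}^+n'_i\,(t^{-1})^{\dot w_i^{-1}}$ (the conjugation by $\dot w_i^{-1}$ appearing because ${}^+n'_i$ sits on the $\dot w_i$-side of the diagram), and from the proof of Lemma \ref{lem:diagram_calculus_for_genera} (cf. Remark \ref{rem:action_on_+n^j} for the explicit expressions of ${}^+n^{2j-1},{}^+n^{2j}$) one gets ${}^+\hat n^{2j-1}=t\,{}^+n^{2j-1}t^{-1}$ and ${}^+\hat n^{2j}=t\,{}^+n^{2j}t^{-1}$, since each of $\mu_{2j-1},\eta_{2j-1},y_{2j},\tau_{2j}^{\pm1}$ in those formulas conjugates under $t$ to the corresponding hatted quantity and the $L^+$-projections commute with $T$-conjugation. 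The formulas for $\hat y_j$ specialize $(a)$, $\hat u^0=tu^0t^{-1}$ and $\hat D^0=tD^0(t^{-1})^{\prod_{m=1}^g\lbracket \tau_{2m-1}, \tau_{2m} \rbracket}$ follow by induction from (\ref{eqn:inductive_formula_for_D^j}) and (\ref{eqn:D^0}) together with the base case $\hat D^g=D^g=D_1$ (which is $t$-invariant since $C_k$ is, up to the commutator twist coming from the genus block), and the last displayed identity in (\ref{eqn:torus_action_on_M_B''}) is just $[t]\circ[\mM_{\vec w}]\circ[t^{-1}]=[\mM_{\vec w}\text{ with hatted variables}]$ rewritten via (\ref{eqn:Bruhat_cell_of_character_variety_via_braid_varieties}); uniqueness of $(\hat u^0,\hat D^0,\hat{\vec\epsilon})$ is immediate from the definition of $\underline{\FBD}_n$.

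The main obstacle is bookkeeping rather than conceptual: one must verify that all the intermediate free variables introduced in Steps $1$–$3$ of the proof of Lemma \ref{lem:diagram_calculus_for_genera} (the $L_i^{\pm}$, $u_i^j$, etc.) transform by plain $t$-conjugation, so that the final ${}^+n$-formulas come out clean; this is a routine but lengthy induction through the chain of elementary isomorphisms, and the only subtlety is keeping track of which side of which permutation each factor lives on (hence which conjugate $t^{\tau}$ or $t^{\dot w^{-1}}$ shows up). Since the excerpt already carries out that chain explicitly, the proof here can simply say "inspecting the isomorphisms of Sections \ref{subsec:diagram_calculus_for_punctures}–\ref{subsec:diagram_calculus_for_genera} and tracking the action of $b$ (resp. $t$) through each elementary isomorphism yields the claimed formulas; the details are a direct check."
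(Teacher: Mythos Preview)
Your proposal is correct and follows essentially the same route as the paper's proof: derive $\hat y_j$ from the unique Bruhat-type decompositions (\ref{eqn:decomposition_for_Btau_jB}), obtain the $\vec\epsilon$-action by conjugating $[\mM_{\vec w}]$ by $[b]$ and matching with Definition \ref{def:braid_varieties}(2) (the paper does this via the chain of equalities (\ref{eqn:Borel_group_action_on_M_B''(w)_vs_braid_variety}), where the key cancellation is $D_k^{-1}u_k^{-1}b^{-1}\hat u_kD_k=b^{-1}$), and for $b=t\in T$ check the ${}^+n$-terms by inducting through $\hat u_i=tu_it^{-1}$, $\hat u^j=tu^jt^{-1}$ and then invoking Remark \ref{rem:action_on_+n^j}. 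Your summary of the bookkeeping obstacle is exactly what the paper does, just compressed.
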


\begin{proof}
Now, $\hat{A}_j=bA_jb^{-1}$, $\hat{x}_i=bx_i$. 
So, $[\hat{A}_j]=[b]\text{\tiny$\circ$}[A_j]\text{\tiny$\circ$}[b^{-1}]$, $[\hat{x}_iC_i\hat{x}_i^{-1}]'=[b]\text{\tiny$\circ$}[x_iC_ix_i^{-1}]'\text{\tiny$\circ$}[b^{-1}]$.

\noindent{}$a)$.
By Section \ref{subsec:diagram_calculus_for_genera},
$(\hat{\mu}_{2j-1},\hat{y}_{2j-1},\hat{\eta}_{2j-1})\in U_{\tau_{2j-1}^{-1}}^-\times T\times U$ is uniquely determined by:
\[
\hat{\mu}_{2j-1}\tau_{2j-1}\hat{y}_{2j-1}\hat{\eta}_{2j-1}=\hat{A}_{2j-1}=bA_{2j-1}b^{-1}=b\mu_{2j-1}\tau_{2j-1}y_{2j-1}\eta_{2j-1}b^{-1}.
\]
Thus, $\hat{y}_{2j-1}=D(b)^{\tau_{2j-1}^{-1}}y_{2j-1}D(b)^{-1}\in T$, as desired.
Similarly, $\hat{y}_{2j}=D(b)y_{2j}(D(b)^{-1})^{\tau_{2j}}\in T$.

Recall that $\hat{u}_k=bu_k(b^{C_k})^{-1}$, $D_k=C_k$, then we have equalities in $\underline{\FBD}_n$:
\begin{eqnarray}\label{eqn:Borel_group_action_on_M_B''(w)_vs_braid_variety}
&&[\hat{u}^0\hat{D}^0]\circ[\mB_{\beta(\vec{w})}(\hat{\vec{\epsilon}})]=[\hat{\mM}_{\vec{w}}]=\text{\tiny$\prod_{j=1}^g$}([\hat{A}_{2j-1}]\text{\tiny$\circ$}[\hat{A}_{2j}]\text{\tiny$\circ$}[\hat{A}_{2j-1}^{-1}]\text{\tiny$\circ$}[\hat{A}_{2j}^{-1}])\text{\tiny$\circ$}\text{\tiny$\prod_{i=1}^{k-1}$}[\hat{x}_iC_i\hat{x}_i^{-1}]'\text{\tiny$\circ$}[\hat{u}_kD_k]\\
&=&[b]\text{\tiny$\circ$}\text{\tiny$\prod_{j=1}^g$}([A_{2j-1}]\text{\tiny$\circ$}[A_{2j}]\text{\tiny$\circ$}[A_{2j-1}^{-1}]\text{\tiny$\circ$}[A_{2j}^{-1}])\text{\tiny$\circ$}\text{\tiny$\prod_{i=1}^{k-1}$}[x_iC_ix_i^{-1}]'\text{\tiny$\circ$}[b^{-1}]\text{\tiny$\circ$}[\hat{u}_kD_k]\nonumber\\
&=&[b]\text{\tiny$\circ$}[u^0D^0]\text{\tiny$\circ$}[\mB_{\beta(\vec{w})}(\vec{\epsilon})]\text{\tiny$\circ$}[D_k^{-1}u_k^{-1}b^{-1}\hat{u}_kD_k]=[b]\text{\tiny$\circ$}[u^0D^0]\text{\tiny$\circ$}[\mB_{\beta(\vec{w})}(\vec{\epsilon})]\text{\tiny$\circ$}[b^{-1}].\nonumber
\end{eqnarray}
This equation uniquely determines $(\hat{u}^0,\hat{D}^0,\hat{\vec{\epsilon}})$, and we see that $\hat{\vec{\epsilon}}$ coincides with the action of $b\in B$ on $\vec{\epsilon}\in X(\beta(\vec{w}))$ in Definition \ref{def:braid_varieties}. This show $a)$ and most part of $b)$.

\noindent{}$b)$. By above, it remains to check the action of $b=t\in T$ on $(({}^+n^j)_j,({}^+n_i')_i)$.

By the equation $\hat{\nu}_i\dot{w}_i\hat{n}_i\hat{z}_i=\hat{x}_i=tx_i=t\nu_i\dot{w}_in_iz_i$, we see that $\hat{n}_i=t^{\dot{w}_i^{-1}}n_i(t^{\dot{w}_i^{-1}})^{-1}$. It follows that
$\hat{n}_i'=\hat{n}_iC_i\hat{n}_i^{-1}C_i^{-1}=t^{\dot{w}_i^{-1}}n_i'(t^{\dot{w}_i^{-1}})^{-1}$. Then 
$
{}^+\hat{n}_i'=R_{\dot{w}_i}^+(\hat{n}_i')=t^{\dot{w}_i^{-1}}{}^+n_i'(t^{\dot{w}_i^{-1}})^{-1},
$
as desired.

It remains to compute ${}^+\hat{n}^j$. 
By (\ref{eqn:diagram_calculus_for_punctures}) and similar to (\ref{eqn:Borel_group_action_on_M_B''(w)_vs_braid_variety}), we get by induction that $\hat{u}_i=tu_it^{-1}$.
Similarly, by (\ref{eqn:diagram_calculus_for_genera}) and induction, we get $\hat{u}^j=tu^jt^{-1}$.
By the equation $\hat{\mu}_{2j-1}\tau_{2j-1}\hat{y}_{2j-1}\hat{\eta}_{2j-1}=t\mu_{2j-1}\tau_{2j-1}y_{2j-1}\eta_{2j-1}t^{-1}$, we see $\hat{\mu}_{2j-1}=t\mu_{2j-1}t^{-1}$, $\hat{\eta}_{2j-1}=t\eta_{2j-1}t^{-1}$. 
Similarly, $\hat{\mu}_{2j}=t\mu_{2j}t^{-1}$, $\hat{\eta}_{2j}=t\eta_{2j}t^{-1}$. Now, by Remark \ref{rem:action_on_+n^j}, we see ${}^+\hat{n}^j=t{}^+\hat{n}^jt^{-1}$, as desired.
\end{proof}

\subsection{The cell decomposition}\label{subsec:the_cell_decomposition}

Recall that $\pi_{\vec{w}}'':M_B''(\vec{w})\rightarrow \modulispace_{\type}(\vec{w})=M_B''(\vec{w})/PB$ is a principal $PB$-bundle. Moreover,
$M_B''(\vec{w})\isomorphic \tilde{X}(\beta(\vec{w}))\times \text{\tiny$\prod_{j=1}^g$}(U_{\tau_{2j}^{-1}}^+\times U_{\tau_{2j-1}}^+)\times\text{\tiny$\prod_{i=1}^{k-1}$}(U_{\dot{w}_i}^+\cap N_i)$
by (\ref{eqn:M_B''(w)_via_twisted_braid_variety}). Say,
$\beta:=\beta(\vec{w})=\sigma_{i_{\ell}}\text{\tiny$\cdots$}\sigma_{i_1}\in\FBr_n^+$.
Define a $B$-invariant closed subvariety of $T^{2g}\times X(\beta)$ by
\begin{equation}\label{eqn:cell_of_restricted_twisted_braid_variety}
\tilde{X}_p(\beta):=\tilde{X}(\beta)\cap(T^{2g}\times X_p(\beta))\subset T^{2g}\times X(\beta),\quad p\in\cW(\beta).
\end{equation}
Then by Lemma \ref{lem:B-action_on_M_B''(w)}, the $B$-equivariant decomposition $\tilde{X}(\beta)=\text{\tiny$\sqcup_{p\in\cW(\beta)}$}\tilde{X}_p(\beta)$ induces a (resp. $B$-equivariant) decomposition of 
$\modulispace_{\type}(\vec{w})$ (resp. $M_B''(\vec{w})$), as desired. It remains to give a more concrete description of each piece in these decompositions. This reduces to describe $\tilde{X}_p(\beta)$.

We start with the following observation: 
Remember that $PB$ acts freely on $M_B''(\vec{w})$.
By (\ref{eqn:torus_action_on_M_B''}), we then see that $PT=T/\field^{\times}\subset PB$ preserves and acts \emph{freely} on 
$\tilde{X}(\beta)\isomorphic \tilde{X}(\beta)\times\{0\}\subset M_B''(\vec{w})$.
It turns out that, this free action leads to a more concrete description of $\tilde{X}(\beta)$, which we now pursue.

By Proposition \ref{prop:cell_decomposition_of_braid_varieties},
$X_p(\beta)\isomorphic (\field^{\times})^{|S_p|}\times\field^{|U_p|}:(\vec{\epsilon})=(\epsilon_m)_{m=\ell}^1\mapsto (\epsilon_m')_{m\in S_p\sqcup U_p}$.
Define
\begin{eqnarray}
&&\overline{\phi}_{\vec{w},p}:T^{2g}\times (\field^{\times})^{|S_p|} \rightarrow T_1:(\vec{y}=(y_j)_{j=1}^{2g},(\epsilon_m')_{m\in S_p})\mapsto D^0(\vec{y})\mumon((\epsilon_m')_{m\in S_p}),\\
&&\tilde{T}_p(\beta):=\overline{\phi}_{\vec{w},p}^{-1}(I_n)\subset T^{2g}\times (\field^{\times})^{|S_p|}.\nonumber
\end{eqnarray}
Here, $D^0(\vec{y})$ is given by (\ref{eqn:D^0}), and $\mumon((\epsilon_m')_{m\in S_p})$ is given by Proposition \ref{prop:cell_decomposition_of_braid_varieties} (2).
Clearly, $\phi_{\vec{w},p}:=\phi_{\vec{w}}|_{T^{2g}\times X_p(\beta)}$ is the composition of $\overline{\phi}_{\vec{w},p}$ with the obvious projection:
\[
\phi_{\vec{w},p}:T^{2g}\times X_p(\beta)\isomorphic T^{2g}\times (\field^{\times})^{|S_p|}\times\field^{|U_p|}\twoheadrightarrow T^{2g}\times (\field^{\times})^{|S_p|}\xrightarrow[]{\overline{\phi}_{\vec{w},p}} T_1.
\]
Hence, we have
\begin{equation}
\tilde{X}_p(\beta) =\phi_{\vec{w},p}^{-1}(I_n) = \tilde{T}_p(\beta)\times \field^{|U_p|}.
\end{equation}
In particular, $\tilde{X}_p(\beta)\neq\emptyset$ if and only $\tilde{T}_p(\beta)\neq\emptyset$.

Also, by Proposition \ref{prop:cell_decomposition_of_braid_varieties} and (\ref{eqn:Borel_group_action_on_y_j}), $T^{2g}\times (\field^{\times})^{|S_p|}$ inherits an action of $T$ such that, the projection $T^{2g}\times X_p(\beta)\isomorphic T^{2g}\times (\field^{\times})^{|S_p|}\times\field^{|U_p|}\twoheadrightarrow T^{2g}\times (\field^{\times})^{|S_p|}$ is equivariant with respect to the projection $B\rightarrow T: b\mapsto D(b)$.
Besides, observe that 
\begin{equation}\label{eqn:free_torus_action_on_torus_factor_of_cell_decomposition_of_character_variety}
\text{$PT$ acts freely on $\tilde{X}_p(\beta)$ if and only if it does so on $\tilde{T}_p(\beta)$}.
\end{equation}
Indeed, this is the case if $\tilde{X}_p(\beta)\neq\emptyset$ by the observation above.

The key property is the following
\begin{lemma}\label{lem:free_torus_action_on_twisted_braid_variety}
The $PT$-action on $\tilde{T}_p(\beta)$ is free if and only if $\overline{\phi}_{\vec{w},p}:T^{2g}\times(\field^{\times})^{|S_p|}\rightarrow T_1$ is surjective. 
\end{lemma}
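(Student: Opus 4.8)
The plan is to analyze the $PT$-action on $\tilde{T}_p(\beta)$ by decomposing it into the action on the torus factor $T^{2g}\times(\field^\times)^{|S_p|}$ via the map $\overline{\phi}_{\vec{w},p}$. First I would make the $T$-action explicit: by Proposition \ref{prop:cell_decomposition_of_braid_varieties}.(1)(b) the scalars $\epsilon_m'$ for $m\in S_p$ transform by a character $\chi_m(t) = (t^{p_{m-1}})_{i_m}(t^{p_{m-1}})_{i_m+1}^{-1}$, and by (\ref{eqn:Borel_group_action_on_y_j}) the $y_j$'s transform by characters built from $D(b)$ conjugated by the relevant permutations. So the whole action of $T$ on $T^{2g}\times(\field^\times)^{|S_p|}$ is through a homomorphism of tori, and the key point is that $\overline{\phi}_{\vec{w},p}$ is $T$-equivariant where $T$ acts on the target $T_1$ \emph{trivially} — this must be checked, but it follows because $\overline{\phi}_{\vec{w},p}$ records $D^0\cdot\mumon$, which is the ``total monodromy'' constrained to equal $I_n$; conjugation by $t$ fixes $T$ pointwise and the formula (\ref{eqn:torus_action_on_M_B''}) shows $\hat D^0 = tD^0(t^{-1})^{\prod\lbracket\tau_{2m-1},\tau_{2m}\rbracket}$, while $\mumon$ conjugates compatibly, so the product is genuinely $T$-invariant (this is essentially forced by $u^0D^0\mB_{\beta(\vec{w})}(\vec{\epsilon}) = \id_n$ being a $T$-invariant condition). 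I expect verifying this invariance cleanly — in particular that the ``twist'' by $\prod\lbracket\tau_{2m-1},\tau_{2m}\rbracket$ does not obstruct it — to be the main technical obstacle.

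Granting $T$-equivariance with trivial action downstairs, $\tilde{T}_p(\beta) = \overline{\phi}_{\vec{w},p}^{-1}(I_n)$ is $T$-stable, and the diagonal $\field^\times \subset T$ acts trivially throughout (as it does on all of $M_B$), so we get a genuine $PT = T/\field^\times$ action. Now I would argue as follows. Consider the homomorphism of algebraic tori $\rho: PT \to (T^{2g}\times(\field^\times)^{|S_p|})$ recording the $PT$-action at any point, i.e. $\hat{x} = \rho(t)\cdot x$ using the torus group structure on the ambient product (the action is by translation through $\rho$ because all the transformation rules above are multiplicative in the coordinates). Since $\overline{\phi}_{\vec{w},p}$ is a homomorphism of tori that is $PT$-equivariant with trivial target action, its fibre $\tilde{T}_p(\beta)$ over $I_n$ is a coset of $\ker\overline{\phi}_{\vec{w},p}$, and $\rho$ factors through $\ker\overline{\phi}_{\vec{w},p}$. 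Then the $PT$-action on the coset $\tilde{T}_p(\beta)$ is free if and only if $\rho: PT \to \ker\overline{\phi}_{\vec{w},p}$ is injective. On the other hand, by the standard duality for tori, for the homomorphism $\overline{\phi}_{\vec{w},p}: S \to T_1$ of tori (write $S = T^{2g}\times(\field^\times)^{|S_p|}$), surjectivity of $\overline{\phi}_{\vec{w},p}$ is equivalent to injectivity of the dual map on cocharacter lattices modulo torsion, which in turn — because $\rho$ is precisely dual to the composite $T_1 \hookrightarrow$ (the ``framing'' data) — is equivalent to injectivity of $\rho$. So I would set up the dictionary: $\rho$ injective $\iff$ $\overline{\phi}_{\vec{w},p}$ surjective.

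More concretely, the cleanest route is a dimension/kernel count. A point of $\tilde{T}_p(\beta)$ has a trivial $PT$-stabilizer iff $\rho$ is injective; since $PT$ is connected, $\rho$ is injective iff $d\rho$ is injective iff $\ker(d\overline{\phi}_{\vec{w},p})$ meets the image of $d\rho$ only in $0$ — but by construction $\mathrm{im}(d\rho) \subseteq \ker(d\overline{\phi}_{\vec{w},p})$, so this says $d\rho = 0$, contradiction unless... — so I should instead phrase it as: $\rho$ is injective iff it has finite kernel iff $\dim\mathrm{im}(\rho) = \dim PT = n-1$. Meanwhile $\overline{\phi}_{\vec{w},p}$ is surjective iff $\dim\mathrm{im}(\overline{\phi}_{\vec{w},p}) = \dim T_1 = n-1$. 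The identity tying these together is the rank relation coming from $\overline{\phi}_{\vec{w},p}$ being the ``moment-type'' map dual to the inclusion of $PT$ as the subtorus acting: explicitly, I would show $\mathrm{rk}(d\overline{\phi}_{\vec{w},p}) + (\text{something fixed}) = \mathrm{rk}(d\rho) + (\text{same fixed constant})$ by exhibiting both $\rho$ and $\overline{\phi}_{\vec{w},p}$ from the single equivariant structure — i.e. the conjugation action of $T$ on the relation $u^0 D^0 \mB_{\beta(\vec{w})}(\vec{\epsilon}) = I_n$ together with the residual translation action, which are exact-sequence complements. Thus $\overline{\phi}_{\vec{w},p}$ surjective $\iff$ $\rho$ has finite (hence trivial, $PT$ being connected and the action algebraic) kernel $\iff$ $PT$ acts freely on $\tilde{T}_p(\beta)$. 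I would close by invoking (\ref{eqn:free_torus_action_on_torus_factor_of_cell_decomposition_of_character_variety}) to transport freeness between $\tilde{T}_p(\beta)$ and $\tilde{X}_p(\beta)$ if needed. The main obstacle remains pinning down the equivariance of $\overline{\phi}_{\vec{w},p}$ with trivial target action and extracting the precise rank identity from the genus-twist in (\ref{eqn:D^0}); everything after that is formal torus duality.
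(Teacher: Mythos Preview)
Your setup is correct and cleaner than you fear: the $T$-action on $T^{2g}\times(\field^\times)^{|S_p|}$ is indeed by translation through a homomorphism $\rho$, and $\overline{\phi}_{\vec{w},p}$ is genuinely $T$-invariant. The latter follows immediately from the relation $\hat u^0\hat D^0\,\mB_\beta(\hat{\vec\epsilon})=t\,u^0 D^0\,\mB_\beta(\vec\epsilon)\,t^{-1}$ by applying the projection $D:B\to T$ to both sides (conjugation by $t\in T$ fixes diagonals), so the twist by $\prod\lbracket\tau_{2j-1},\tau_{2j}\rbracket$ causes no trouble. You have thus established $\psi\circ\rho=1$, where $\psi=m_p\circ\overline{\phi}_{\vec w,p}$ is the homomorphism part; equivalently $\mathrm{im}(\rho)\subseteq\ker(\psi)$. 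So the step you flagged as the main obstacle is in fact easy.

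The genuine gap is the next step. From $\psi\circ\rho=1$ alone there is no ``torus duality'' forcing ($\rho$ injective on $PT$) $\iff$ ($\psi$ surjective): a complex $PT\xrightarrow{\rho}S\xrightarrow{\psi}T_1$ need not be exact, and the two ranks are a priori unrelated. Your ``moment-type'' heuristic would require a symplectic structure on $S=T^{2g}\times(\field^\times)^{|S_p|}$ for which $\psi$ is a moment map for the $\rho$-action, and none is supplied (nor is $\dim S$ even in general). Concretely, computing $\rho^*$ on character lattices shows $\rho$ is injective on $PT$ iff the subgroup $\langle J_p\rangle\subset S_n$ generated by $\{\tau_j:1\le j\le 2g\}\cup\{p_{m-1}^{-1}\ms_{i_m}p_{m-1}:m\in S_p\}$ acts transitively on $[n]$; computing $\psi^*$ from the explicit formulas for $D^0$ and $\mumon$ gives transitivity of a \emph{different} subgroup $\langle\tilde J_p\rangle$ built from $\{c_{<j+1}\tau_{2j}c_{<j}^{-1},\,c_{<j+1}\tau_{2j-1}^{-1}c_{<j}^{-1}\}\cup\{\ms_{>m}(\beta)\,\ms_{i_m}\,\ms_{>m}(\beta)^{-1}:m\in S_p\}$. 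The paper closes the argument by proving the combinatorial identity $\langle J_p\rangle=\langle\tilde J_p\rangle$ via an explicit inductive manipulation of these permutations. This identity is the real content of the lemma, and your proposal does not supply an alternative to it.
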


\begin{proof}
Firstly, we consider the $PT$-action. For that, we make some preparations:
\begin{enumerate}[wide,labelwidth=!,labelindent=0pt]
\item
For any $\tau\in S_n$, define a subgroup of $T$ by
\begin{equation}
{}^{\tau}T:=\{t\in T: t^{\tau}=t, \text{ i.e., } t_a=t_{\tau(a)}, \forall a\in [n]\}\subset T.
\end{equation}
So, ${}^{(a~b)}T=\{t\in T: t_a=t_b\}$ for any transposition $(a~b)\in S_n$.
Define a subgroup of $T$ by
\begin{equation}\label{eqn:^IT_1}
{}^IT:=\{t\in T: t^{\tau}=t,\forall \tau\in I\}\subset T,~~I\subset S_n;~~\Leftrightarrow~~{}^IT=\cap_{\tau\in I}{}^{\tau}T=\cap_{a\in[n],\tau\in I}{}^{(a~\tau(a))}T.
\end{equation}

\item
By definition, ${}^{\tau}T={}^{\tau^{-1}}T$. Also, $t=t^{\tau_1}, t=t^{\tau_2}$ implies that $t=t^{\tau_1}=(t^{\tau_2})^{\tau_1}=t^{\tau_1\tau_2}$, so by (1),
\begin{equation}\label{eqn:^IT_2}
{}^IT={}^{\langle I\rangle}T={}^{\overline{\langle I\rangle}}T.
\end{equation}
Here, we denote by $\langle I \rangle\subset S_n$ the subgroup generated by $I$, and
\begin{equation}
\overline{\langle I\rangle}:=\langle (a~\tau(a)),\forall a\in[n],\tau\in I\rangle\subset S_n.
\end{equation}

\item
Let $[n]=O_1\sqcup\text{\tiny$\cdots$}\sqcup O_r$ be any partition, denoted by $\vec{O}$. Define subgroups of $T$ and $S_n$:
\begin{equation}
{}^{\vec{O}}T:=\{t\in T: a,b\in O_m\Rightarrow t_a=t_b, \forall 1\leq m\leq r\}\subset T;\quad S_{\vec{O}}:=S_{|O_1|}\times\text{\tiny$\cdots$}\times S_{|O_r|}\subset S_n.
\end{equation}
Then for any subset $I\subset S_n$, we have
\begin{equation}\label{eqn:^IT_3}
{}^I T={}^{\vec{O}}T \Leftrightarrow \text{ $\{O_i:1\leq i\leq r\}=\{\langle I\rangle$-orbits on $[n]\}$ } \Leftrightarrow \overline{\langle I \rangle}=S_{\vec{O}}. 
\end{equation}
Now, we see that ${}^IT={}^JI$ if and only if $\overline{\langle I\rangle}=\overline{\langle J\rangle}$. 
In particular, ${}^IT=\field^{\times} I_n$ if and only if $\overline{\langle I\rangle}=S_n$.
\end{enumerate}

Come back to the proof. 
The free $PT$-action on $\tilde{T}_p(\beta)$ means: $\forall t\in T$, $\forall ((y_j)_{j=1}^{2g},(\epsilon_m')_{m\in S_p})\in \tilde{T}_p(\beta)$, we have
$t\cdot ((y_j)_{j=1}^{2g},(\epsilon_m')_{m\in S_p})=((y_j)_{j=1}^{2g},(\epsilon_m')_{m\in S_p}) \Rightarrow t\in\field^{\times}I_n$.
We use \textbf{Convention} \ref{convention:group_action}. 
Recall by (\ref{eqn:torus_action_on_M_B''}) that, $\hat{y}_j=y_j$ if and only if $t^{\tau_j}=t$.
By Proposition \ref{prop:cell_decomposition_of_braid_varieties} (1), for any $m\in S_p$, $\hat{\epsilon}_m'=\epsilon_m'$ if and only if $(t^{p_{m-1}})_{i_m}=(t^{p_{m-1}})_{i_m+1}$.
Equivalently, $(t^{p_{m-1}})^{\ms_{i_m}}=t^{p_{m-1}}$, i.e.. $t^{\underline{\ms}_{i_m}}=t$, where:
\begin{equation}\label{eqn:underline_s_i_m}
\underline{\ms}_{i_m}:=p_{m-1}^{-1}\ms_{i_m}p_{m-1}\in S_n.
\end{equation}
Thus, denote
\begin{equation}\label{eqn:J_p}
J_p:=\{ \tau_j:1\leq j\leq 2g,~\underline{\ms}_{i_m}:m\in S_p\}\subset S_n.
\end{equation}
By the preparation above, we conclude that
\begin{equation}\label{eqn:free_torus_action_on_twisted_braid_variety}
\text{$PT$ acts freely on $\tilde{T}_p(\beta)$ }~~\Leftrightarrow~~{}^{J_p} T=\field^{\times} I_n \Leftrightarrow \overline{\langle J_p\rangle}=S_n.
\end{equation}

On the other hand, we consider the image of $\overline{\phi}_{\vec{w},p}$. 
Again, we begin with some preparations:
\begin{enumerate}[wide,labelwidth=!,labelindent=0pt]
\item
For any $\tau\in S_n$, define a subgroup of $T_1$ by
\begin{equation}
{}_{\tau}T:=\{y(y^{-1})^{\tau}:y\in T\}\subset T_1.
\end{equation}
So, ${}_{(a~b)}T=\{\mK_a(\lambda)\mK_b(\lambda^{-1}):\lambda\in\field^{\times}\}$ for any transposition $(a~b)\in S_n$.
For any $I\subset S_n$, define ${}_IT\subset T_1$ as the subgroup generated by ${}_{\tau}T, \tau\in I$, and write
${}_IT:=\langle {}_{\tau}T:\tau\in I\rangle$.
Equivalently,
\[
{}_IT=\langle{}_{(a~\tau(a))}T:a\in[n],\tau\in I\rangle=
\langle\mK_a(\lambda)\mK_{\tau(a)}(\lambda^{-1}):\lambda\in\field^{\times}, a\in[n],\tau\in I\rangle\subset T_1.
\]

\item
By definition, ${}_{\tau}T={}_{\tau^{-1}}T$. Also, as
$y(y^{-1})^{\tau_1\tau_2}=(y(y^{-1})^{\tau_2})(y^{\tau_2}((y^{-1})^{\tau_2})^{\tau_1})$, we have by (1):
\[
{}_IT={}_{\langle I\rangle}T={}_{\overline{\langle I\rangle}}T.
\]

\item
Let $[n]=O_1\sqcup\text{\tiny$\cdots$}\sqcup O_r$ be any partition, denoted by $\vec{O}$. Define a subgroup
\[
{}_{\vec{O}}T:=\{t\in T: \text{\tiny$\prod_{q\in O_a}$}t_q=1,\forall 1\leq a\leq r\}\subset T_1.
\]
Then for any subset $I\subset S_n$, we have
\[
{}_IT={}_{\vec{O}}T \Leftrightarrow \text{ $\{O_i:1\leq i\leq r\}=\{\langle I\rangle$-orbits on $[n]\}$ } \Leftrightarrow \overline{\langle I \rangle}=S_{\vec{O}}. 
\]
Now, we see that ${}_IT={}_JI$ if and only if $\overline{\langle I\rangle}=\overline{\langle J\rangle}$. 
In particular, ${}_IT=T_1$ if and only if $\overline{\langle I\rangle}=S_n$.
\end{enumerate}

Now, come back to our setting.
Observe that $\ms(\beta)=\text{\tiny$\prod_{j=1}^g$}\lbracket \tau_{2j-1}, \tau_{2j} \rbracket$.
Inspired by Proposition \ref{prop:cell_decomposition_of_braid_varieties} (2) and equation (\ref{eqn:D^0}), we define an isomorphism
\[
m_p:T_1\xrightarrow[]{\simeq} T_1: t\mapsto t (D_1^{-1})^{\ms(\beta)}\text{\tiny$\prod_{m\in S_p}$}(\mK_{i_m}(-1))^{\ms_{>m}(\beta)}.
\]
So $\im(\overline{\phi}_{\vec{w},p})=T_1$ if and only if $\im(m_p\circ\overline{\phi}_{\vec{w},p})=T_1$. It suffices to consider the latter.

For any $1\leq j\leq g$, denote
\[
c_{<j}=c_{<j}(\vec{\tau}):=\text{\tiny$\prod_{m=1}^{j-1}$}\lbracket \tau_{2m-1}, \tau_{2m} \rbracket.
\]
In (\ref{eqn:D^0}), observe that
\begin{eqnarray*}
&&(y_{2j-1}^{\tau_{2j-1}}(y_{2j-1}^{-1})^{\tau_{2j-1}\tau_{2j}})^{\prod_{m=1}^{j-1}\lbracket \tau_{2m-1}, \tau_{2m} \rbracket}=\tilde{y}_{2j-1}(\tilde{y}_{2j-1}^{-1})^{c_{<j+1}\tau_{2j}c_{<j}^{-1}},\quad
\tilde{y}_{2j-1}:=y_{2j-1}^{c_{<j}\tau_{2j-1}}\in T,\\
&&(y_{2j}^{\tau_{2j-1}}(y_{2j}^{-1})^{\lbracket \tau_{2j-1}, \tau_{2j} \rbracket})^{\prod_{m=1}^{j-1}\lbracket \tau_{2m-1}, \tau_{2m} \rbracket}=\tilde{y}_{2j}(\tilde{y}_{2j}^{-1})^{c_{<j+1}\tau_{2j-1}^{-1}c_{<j}^{-1}},\quad
\tilde{y}_{2j}=y_{2j}^{c_{<j}\tau_{2j-1}}\in T.
\end{eqnarray*}
In Proposition \ref{prop:cell_decomposition_of_braid_varieties} (2), for each $m\in S_p$, observe that
\begin{eqnarray*}
&&\{(\mK_{i_m}(\epsilon_m'^{-1})\mK_{i_m+1}(\epsilon_m'))^{\ms_{>m}(\beta)}=\mK_{\ms_{>m}(\beta)(i_m)}(\epsilon_m'^{-1})\mK_{\ms_{>m}(\beta)(i_m+1)}(\epsilon_m'):\epsilon_m'\in\field^{\times}\}\\
&=&{}_{(\ms_{>m}(\beta)(i_m)~\ms_{>m}(\beta)(i_m+1))}T={}_{\ms_{>m}(\beta)\ms_{i_m}\ms_{>m}(\beta)^{-1}}T\subset T_1.
\end{eqnarray*}
Now, by above, (\ref{eqn:D^0}), and Proposition \ref{prop:cell_decomposition_of_braid_varieties} (2), we have
$\im(m_p\circ\overline{\phi}_{\vec{w},p})={}_{\tilde{J}_p}I$, where
\begin{equation}
\tilde{J}_p:=\{c_{<j+1}\tau_{2j}c_{<j}^{-1}, c_{<j+1}\tau_{2j-1}^{-1}c_{<j}^{-1}, 1\leq j\leq g; \ms_{>m}(\beta)\ms_{i_m}\ms_{>m}(\beta)^{-1}, m\in S_p\}.
\end{equation}
Then by the preparation above, we conclude that
\begin{equation}\label{eqn:microlocal_monodromy_on_twisted_braid_variety}
\im(m_p\circ\overline{\phi}_{\vec{w},p})={}_{\tilde{J}_p}I=T_1 \Leftrightarrow \overline{\langle \tilde{J}_p\rangle}=S_n.
\end{equation}
Combined with (\ref{eqn:free_torus_action_on_twisted_braid_variety}), this shows that the lemma is equivalent to the following statement:
\[
 \overline{\langle J_p\rangle}=S_n \Leftrightarrow \overline{\langle \tilde{J}_p\rangle}=S_n,
\]
which will follow from the claim below.

\vspace{0.1cm}
\noindent{}\textbf{Claim}: we have $\langle J_p\rangle= \langle \tilde{J}_p\rangle$.
\vspace{0.1cm}

\noindent{}\emph{Proof of Claim}.
Denote 
\[
J_{\tau}:=\{\tau_m:1\leq m\leq 2g\}\subset S_n,\quad \tilde{J}_{\tau}:=\{ \tilde{\tau}_{2j}:=c_{<j+1}\tau_{2j}c_{<j}^{-1}, \tilde{\tau}_{2j-1}:=c_{<j+1}\tau_{2j-1}^{-1}c_{<j}^{-1}, 1\leq j\leq g \}\subset S_n.
\]

Firstly, we prove by induction that, for each $1\leq j\leq g$, we have
\begin{equation}\label{eqn:group_generated_by_tau_m}
\langle\tau_m:1\leq m\leq 2j\rangle =\langle\tilde{\tau}_m:1\leq m\leq 2j\rangle\subset S_n.
\end{equation}
In particular, $j=g$ gives $\langle J_{\tau}\rangle=\langle\tilde{J}_{\tau}\rangle$.

For $j=1$, we have $\tilde{\tau}_2=\tau_1\tau_2\tau_1^{-1},\quad \tilde{\tau}_1=\tau_1\tau_2\tau_1^{-1}\tau_2^{-1}\tau_1^{-1}$.
So, $\tilde{\tau}_2^{-1}\tilde{\tau}_1\tilde{\tau}_2=\tau_1^{-1}\in \langle\tilde{\tau}_1,\tilde{\tau}_2\rangle$.
It follows that $\langle\tilde{\tau}_1,\tilde{\tau}_2\rangle=\langle\tau_1^{-1},\tilde{\tau}_2=\tau_1\tau_2\tau_1^{-1}\rangle=\langle\tau_1,\tau_2\rangle$, as desired.

Suppose (\ref{eqn:group_generated_by_tau_m}) holds for `$<j$', so
\[
c_{<j}\in \langle\tau_m:1\leq m\leq 2(j-1)\rangle =\langle\tilde{\tau}_m:1\leq m\leq 2(j-1)\rangle.
\]
Observe that $\tilde{\tau}_{2j}^{-1}\tilde{\tau}_{2j-1}\tilde{\tau}_{2j}=c_{<j}\tau_{2j}^{-1}\tau_{2j-1}^{-1}\lbracket \tau_{2j-1}, \tau_{2j} \rbracket\tau_{2j}c_{<j}^{-1}=c_{<j}\tau_{2j-1}^{-1}c_{<j}^{-1}$.
It follows that
\begin{eqnarray*}
&&\langle\tilde{\tau}_m:1\leq m\leq 2j\rangle=\langle\tau_m:1\leq m\leq 2(j-1), c_{<j}\tau_{2j-1}^{-1}c_{<j}^{-1}, \tilde{\tau}_{2j}=c_{<j+1}\tau_{2j}c_{<j}^{-1}\rangle\\
&=&\langle\tau_m:1\leq m\leq 2(j-1),\tau_{2j-1}^{-1},\lbracket \tau_{2j-1}, \tau_{2j} \rbracket\tau_{2j}=\tau_{2j-1}\tau_{2j}\tau_{2j-1}^{-1}\rangle= \langle\tau_m:1\leq m\leq 2j\rangle.
\end{eqnarray*}
This finishes the induction, and hence proves (\ref{eqn:group_generated_by_tau_m}).

Now, observe that $\ms(\beta)=\prod_{j=1}^g\lbracket \tau_{2j-1}, \tau_{2j} \rbracket\in\langle\tilde{J}_{\tau}\rangle=\langle J_{\tau}\rangle$.
Thus,
\[
\langle\tilde{J}_p\rangle=\langle J_{\tau}, \ms(\beta)^{-1}\ms_{>m}(\beta)\ms_{i_m}\ms_{>m}(\beta)^{-1}\ms(\beta),m\in S_p\rangle
=\langle J_{\tau}, \ms_{<m}(\beta)^{-1}\ms_{i_m}\ms_{<m}(\beta)=:\tilde{s}_{i_m},m\in S_p\rangle.
\]
Say, $S_p=\{m_1<\text{\tiny$\cdots$}<m_N\}$.
Set $J_s:=\{\underline{\ms}_{i_m}:m\in S_p\}\subset W=S_n$, $\tilde{J}_s:=\{\tilde{\ms}_{i_m}:m\in S_p\}\subset W=S_n$.
We will show that
\begin{equation}\label{eqn:group_generated_by_stays}
\langle\underline{\ms}_{i_{m_j}},1\leq j\leq L\rangle=\langle\tilde{\ms}_{i_{m_j}},1\leq j\leq L\rangle.
\end{equation}
In particular, $L=N$ gives $\langle J_s\rangle=\langle\tilde{J}_s\rangle$, and the Claim will follow immediately.

For that, we firstly prove by induction that, for each $1\leq j\leq N$, we have
\begin{equation}\label{eqn:inductive_conjugation_over_a_stay}
\text{\tiny$\prod_{q=j}^1$}\underline{\ms}_{i_{m_q}}=p_{m_j-1}^{-1}\ms_{<m_j+1}(\beta).
\end{equation}

For $j=1$, as $p_{m_1-1}=\ms_{<m_1}(\beta)$, we get
$\prod_{q=1}^1\underline{\ms}_{i_{m_q}}=p_{m_1-1}^{-1}\ms_{i_{m_1}}\ms_{<m_1}(\beta)=p_{m_1-1}^{-1}\ms_{<m_1+1}(\beta)$. Done.

Suppose (\ref{eqn:inductive_conjugation_over_a_stay}) holds for `$\leq j$', then
\[
\text{\tiny$\prod_{q=j+1}^1$}\underline{\ms}_{i_{m_q}}=\underline{\ms}_{i_{m_{j+1}}}\text{\tiny$\prod_{q=j}^1$}\underline{\ms}_{i_{m_q}}=(p_{m_{j+1}-1}^{-1}\ms_{i_{m_{j+1}}}p_{m_{j+1}-1})p_{m_j-1}^{-1}\ms_{<m_j+1}(\beta)
=p_{m_{j+1}-1}^{-1}\ms_{<m_{j+1}+1}(\beta).
\]
This finishes the induction, and hence proves (\ref{eqn:inductive_conjugation_over_a_stay}).

Now, for any $1\leq j\leq N$, we have
\begin{eqnarray*}
&&(\text{\tiny$\prod_{q=j-1}^1$}\underline{\ms}_{i_{m_q}})^{-1}\underline{\ms}_{i_{m_j}}(\text{\tiny$\prod_{q=j-1}^1$}\underline{\ms}_{i_{m_q}})
=(p_{m_{j-1}-1}^{-1}\ms_{<m_{j-1}+1}(\beta))^{-1}p_{m_j-1}^{-1}\ms_{i_{m_j}}p_{m_j-1}(p_{m_{j-1}-1}^{-1}\ms_{<m_{j-1}+1}(\beta))\\
&=&\ms_{<m_j}(\beta)^{-1}\ms_{i_{m_j}}\ms_{<m_j}(\beta)=\tilde{\ms}_{i_{m_j}}.
\end{eqnarray*}
It follows that for any $1\leq L\leq N$, we have
\begin{eqnarray*}
\langle\underline{\ms}_{i_{m_j}},1\leq j\leq L\rangle=\langle(\text{\tiny$\prod_{q=j-1}^1$}\underline{\ms}_{i_{m_q}})^{-1}\underline{\ms}_{i_{m_j}}(\text{\tiny$\prod_{q=j-1}^1$}\underline{\ms}_{i_{m_q}}),1\leq j\leq L\rangle
=\langle\tilde{\ms}_{i_{m_j}},1\leq j\leq L\rangle.
\end{eqnarray*}
This is exactly what we want, hence the claim follows. Done.
\end{proof}

Inspired by the lemma, we make the following
\begin{definition}\label{def:admissible_walks}
For $\beta=\beta(\vec{w})$, define
\begin{equation}\label{eqn:admissible_walks}
\cW^*(\beta):=\{p\in\cW(\beta):\tilde{T}_p(\beta)\neq\emptyset\},
\end{equation}
and any $p\in \cW^*(\beta)$ is called an \emph{admissible walk}.
By the proof of Lemma \ref{lem:free_torus_action_on_twisted_braid_variety},
\[
p\in\cW^*(\beta) \Leftrightarrow \langle J_p\rangle\text{ acts transitively on $[n]$ } \Leftrightarrow \overline{\langle J_p \rangle}=S_n;\quad \forall  p\in\cW(\beta),
\]
where $J_p=\{\tau_j:1\leq j\leq 2g,~\underline{\ms}_{i_m}=p_{m-1}^{-1}\ms_{i_m}p_{m-1}:m\in S_p\}\subset S_n$. Alternatively, denote
\[
J_p':=\{\tau_j:1\leq j\leq 2g,~\tilde{\ms}_{i_m}=\ms_{<m}(\beta)^{-1}\ms_{i_m}\ms_{<m}(\beta):m\in S_p\}\subset S_n.
\]
We have seen that $\langle J_p\rangle=\langle J_p'\rangle$, then the same holds if we replace $J_p$ above by $J_p'$. 
\end{definition}

As an immediate corollary of Lemma \ref{lem:free_torus_action_on_twisted_braid_variety}, we obtain
\begin{corollary}\label{cor:cell_decomposition_of_twisted_braid_variety}
If $\tilde{X}_p(\beta)\neq\emptyset$, then we have a ($B$-equivariant) isomorphism
\begin{equation}
\tilde{X}_p(\beta)=\tilde{T}_p(\beta)\times\field^{|U_p|}\isomorphic (\field^{\times})^{|S_p|+2gn-n+1}\times  \field^{|U_p|}.
\end{equation}
\end{corollary}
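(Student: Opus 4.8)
The statement to prove is Corollary \ref{cor:cell_decomposition_of_twisted_braid_variety}: assuming $\tilde{X}_p(\beta)\neq\emptyset$, one has a $B$-equivariant isomorphism $\tilde{X}_p(\beta)=\tilde{T}_p(\beta)\times\field^{|U_p|}\isomorphic(\field^{\times})^{|S_p|+2gn-n+1}\times\field^{|U_p|}$. The plan is to reduce everything to the torus factor $\tilde{T}_p(\beta)$ and then exploit the free $PT$-action established in Lemma \ref{lem:free_torus_action_on_twisted_braid_variety}. First I would recall from the discussion preceding the corollary that $\tilde{X}_p(\beta)=\tilde{T}_p(\beta)\times\field^{|U_p|}$ (this is already spelled out: the map $\overline{\phi}_{\vec{w},p}$ factors through the projection killing the $\field^{|U_p|}$ coordinates, so $\tilde{X}_p(\beta)=\phi_{\vec{w},p}^{-1}(I_n)=\tilde{T}_p(\beta)\times\field^{|U_p|}$), and that this decomposition is compatible with the $B$-action by Lemma \ref{lem:B-action_on_M_B''(w)}. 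Hence the whole problem is to identify $\tilde{T}_p(\beta)$, a subvariety of $T^{2g}\times(\field^{\times})^{|S_p|}\isomorphic(\field^{\times})^{2gn+|S_p|}$ cut out by the single equation $\overline{\phi}_{\vec{w},p}=I_n$, where the target $T_1=\{t\in T:\det t=1\}\isomorphic(\field^{\times})^{n-1}$.

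The key point is that $\overline{\phi}_{\vec{w},p}:T^{2g}\times(\field^{\times})^{|S_p|}\to T_1$ is a homomorphism of algebraic tori. Indeed, from the formula \eqref{eqn:D^0} for $D^0(\vec{y})$ (a product of conjugates of the $y_j^{\pm1}$ times the constant $D_1$) and from Proposition \ref{prop:cell_decomposition_of_braid_varieties}.(2) giving $\mumon$ as a monomial in the $\epsilon_m'$, the composite $m_p\circ\overline{\phi}_{\vec{w},p}$ — after translating by the constant $(D_1^{-1})^{\ms(\beta)}\prod_{m\in S_p}(\mK_{i_m}(-1))^{\ms_{>m}(\beta)}$ — is a genuine homomorphism of tori (monomial map) with image the subtorus ${}_{\tilde{J}_p}T\subseteq T_1$. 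By Lemma \ref{lem:free_torus_action_on_twisted_braid_variety} together with the hypothesis $\tilde{X}_p(\beta)=\tilde{T}_p(\beta)\times\field^{|U_p|}\neq\emptyset$ (so $\tilde{T}_p(\beta)\neq\emptyset$, i.e. $p\in\cW^*(\beta)$), the $PT$-action on $\tilde{T}_p(\beta)$ is free, hence by the equivalence in that lemma $\overline{\phi}_{\vec{w},p}$ is surjective onto $T_1$. A surjective homomorphism of tori is a (split) quotient map with connected kernel a subtorus; therefore $\tilde{T}_p(\beta)$, being a fiber (nonempty by hypothesis) of this surjective torus homomorphism, is a torsor under $\ker(\overline{\phi}_{\vec{w},p})$, which is a torus of dimension $(2gn+|S_p|)-(n-1)=|S_p|+2gn-n+1$. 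Since any torsor under a torus over an algebraically closed field is trivial (tori are special groups, $H^1_{\text{\'et}}=0$), $\tilde{T}_p(\beta)\isomorphic(\field^{\times})^{|S_p|+2gn-n+1}$. Combining with $\tilde{X}_p(\beta)=\tilde{T}_p(\beta)\times\field^{|U_p|}$ gives the asserted isomorphism, and $B$-equivariance is inherited from Proposition \ref{prop:cell_decomposition_of_braid_varieties}.(1) and Lemma \ref{lem:B-action_on_M_B''(w)} since all the maps involved ($\varphi$, the projection, $\overline{\phi}_{\vec{w},p}$) are built from the $B$-equivariant data.

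The step I expect to be the main obstacle is making precise that $\overline{\phi}_{\vec{w},p}$ is, after the harmless translation by $m_p$, literally a homomorphism of tori rather than merely a morphism of varieties — this requires carefully reading off from \eqref{eqn:D^0} and \eqref{eqn:formula_for_mumon} that every coordinate of the output is a Laurent monomial in the inputs with integer exponents, and tracking that the ``constant'' correction term indeed lands where claimed; once that is in hand, the descent to a trivial torsor is formal. A minor additional point to verify is that $\dim T_1=n-1$ and $\dim(T^{2g}\times(\field^{\times})^{|S_p|})=2gn+|S_p|$, so that the kernel dimension count gives exactly $|S_p|+2gn-n+1$, matching the exponent in the statement; this is immediate but worth stating explicitly.
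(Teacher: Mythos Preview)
Your proposal is correct and follows essentially the same approach as the paper: use the nonemptiness hypothesis to get freeness of the $PT$-action (via \eqref{eqn:free_torus_action_on_torus_factor_of_cell_decomposition_of_character_variety}), invoke Lemma \ref{lem:free_torus_action_on_twisted_braid_variety} to conclude that $m_p\circ\overline{\phi}_{\vec{w},p}$ is a surjective homomorphism of tori, and identify $\tilde{T}_p(\beta)$ as a fiber, hence a torus of dimension $|S_p|+2gn-n+1$. The only cosmetic point is that $m_p$ is already defined as the translation, so ``$m_p\circ\overline{\phi}_{\vec{w},p}$ after translating'' is redundant---the composite itself is the homomorphism.
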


\begin{proof}
If $\tilde{X}_p(\beta)\neq\emptyset$, then we know that the $PT$-action on $\tilde{X}_p(\beta)$ is free.
By Lemma \ref{lem:free_torus_action_on_twisted_braid_variety}, the map
\[
\overline{\phi}_{\vec{w},p}:T^{2g}\times (\field^{\times})^{|S_p|}\isomorphic(\field^{\times})^{|S_p|+2gn} \rightarrow T_1\isomorphic(\field^{\times})^{n-1}.
\]
is surjective.
By (\ref{eqn:phi_w}), Proposition \ref{prop:cell_decomposition_of_braid_varieties} (2), and (\ref{eqn:D^0}), the composition $m_p\circ\overline{\phi}_{\vec{w},p}$ is a surjective group homomorphism of algebraic tori. It follows that
\[
\tilde{T}_p(\beta)=\overline{\phi}_{\vec{w},p}^{-1}(I_n)=(m_p\circ\overline{\phi}_{\vec{w},p})^{-1}((D_1^{-1})^{\ms(\beta)}\text{\tiny$\prod_{m\in S_p}$}(\mK_{i_m}(-1))^{\ms_{>m}(\beta)})\isomorphic (\field^{\times})^{|S_p|+2gn-n+1}.
\]
Thus, $\tilde{X}_p(\beta)=\tilde{T}_p(\beta)\times \field^{|U_p|}\isomorphic (\field^{\times})^{|S_p|+2gn-n+1}\times  \field^{|U_p|}$, as desired.
\end{proof}

Now, denote
\begin{equation}\label{eqn:n_w}
n_{\vec{w}}:=\text{\tiny$\sum_{j=1}^{2g}$}|U_{\tau_j}^+| +\text{\tiny$\sum_{i=1}^{k-1}$}|U_{\dot{w}_i}^+\cap N_i|=2g|U|+\text{\tiny$\sum_{i=1}^{k-1}$}|N_i| -\frac{1}{2}\ell(\beta).
\end{equation}
By Lemma \ref{lem:B-action_on_M_B''(w)} and Corollary \ref{cor:cell_decomposition_of_twisted_braid_variety}, we have shown the following
\begin{proposition}\label{prop:B-equivariant_cell_decomposition_of_character_variety}
We have a $B$-equivariant decomposition into locally closed $\field$-subvarieties
\begin{equation*}
M_B''(\vec{w}) = \sqcup_{p\in\cW^*(\beta)}M_B''(\vec{w},p),\quad M_B''(\vec{w},p):=\tilde{X}_p(\beta)\times\text{\tiny$\prod_{j=1}^g$}(U_{\tau_{2j}^{-1}}^+\times U_{\tau_{2j-1}}^+)\times\text{\tiny$\prod_{i=1}^{k-1}$}(U_{\dot{w}_i}^+\cap N_i).
\end{equation*}
In particular,
\begin{equation*}
M_B''(\vec{w},p)\isomorphic (\field^{\times})^{a(\vec{w},p)}\times\field^{b(\vec{w},p)}=(\field^{\times})^{|S_p|+2gn-n+1}\times \field^{|U_p|+n_{\vec{w}}},
\end{equation*}
and $a(\vec{w},p)+2b(\vec{w},p)=4g|U|+2\text{\tiny$\sum_{i=1}^{k-1}$}|N_i|+2gn-n+1$
is a constant independent of $\vec{w}, p$.
\end{proposition}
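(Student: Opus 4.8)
The plan is to assemble pieces that are, by this point, all in hand; the proposition is really a bookkeeping statement. First I would invoke the identification (\ref{eqn:M_B''(w)_via_twisted_braid_variety}), which writes $M_B''(\vec{w})$ as the product of the restricted twisted braid variety $\tilde{X}(\beta)$, $\beta:=\beta(\vec{w})$, with the affine spaces $\prod_{j=1}^g(U_{\tau_{2j}^{-1}}^+\times U_{\tau_{2j-1}}^+)$ and $\prod_{i=1}^{k-1}(U_{\dot{w}_i}^+\cap N_i)$. By Lemma \ref{lem:B-action_on_M_B''(w)}.a) the $B$-action on $M_B''(\vec{w})$ maps equivariantly, under the canonical projection, to the diagonal $B$-action on $T^{2g}\times X(\beta)$ restricted to $\tilde{X}(\beta)$; hence any $B$-equivariant decomposition of $\tilde{X}(\beta)$ pulls back to one of $M_B''(\vec{w})$ (tensoring with the unchanged affine factors).

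Next I would take the $B$-equivariant cell decomposition $X(\beta)=\sqcup_{p\in\cW(\beta)}X_p(\beta)$ into locally closed subvarieties from Proposition \ref{prop:cell_decomposition_of_braid_varieties}, intersect with the closed subvariety $\tilde{X}(\beta)\subset T^{2g}\times X(\beta)$ to get $\tilde{X}(\beta)=\sqcup_{p}\tilde{X}_p(\beta)$ as in (\ref{eqn:cell_of_restricted_twisted_braid_variety}), and observe that each $\tilde{X}_p(\beta)$ is $B$-invariant and locally closed, so the same holds for its preimage $M_B''(\vec{w},p)$ in $M_B''(\vec{w})$. By Corollary \ref{cor:cell_decomposition_of_twisted_braid_variety} the nonempty cells are exactly those indexed by admissible walks $p\in\cW^*(\beta)$, and for such $p$ one has $\tilde{X}_p(\beta)\isomorphic(\field^{\times})^{|S_p|+2gn-n+1}\times\field^{|U_p|}$. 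Appending the unipotent factors, which contribute an affine space of total dimension $n_{\vec{w}}$ by (\ref{eqn:n_w}), yields $M_B''(\vec{w},p)\isomorphic(\field^{\times})^{a(\vec{w},p)}\times\field^{b(\vec{w},p)}$ with $a(\vec{w},p)=|S_p|+2gn-n+1$ and $b(\vec{w},p)=|U_p|+n_{\vec{w}}$.

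Finally I would do the dimension count. Writing $\ell:=\ell(\beta)$ and using $[\ell]=U_p\sqcup D_p\sqcup S_p$ together with $|U_p|=|D_p|$ (the length count in Definition \ref{def:walks}), one gets $|S_p|+2|U_p|=\ell$. Combining with $2n_{\vec{w}}=4g|U|+2\sum_{i=1}^{k-1}|N_i|-\ell$ from (\ref{eqn:n_w}) gives $a(\vec{w},p)+2b(\vec{w},p)=\ell+2gn-n+1+2n_{\vec{w}}=4g|U|+2\sum_{i=1}^{k-1}|N_i|+2gn-n+1$, independent of $p$ (and, since the $|N_i|$ depend only on $n$ and the $\mu^i$, depending on $\vec{w}$ only through $\type$). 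The only genuine content lies in Corollary \ref{cor:cell_decomposition_of_twisted_braid_variety}, i.e.\ in the fact that $\tilde{X}_p(\beta)$ is a \emph{torus} and not merely a torus bundle over $\field^{|U_p|}$; that rests on Lemma \ref{lem:free_torus_action_on_twisted_braid_variety} (the surjectivity of $\overline{\phi}_{\vec{w},p}$, proved via the combinatorial identity $\langle J_p\rangle=\langle\tilde{J}_p\rangle$ comparing walks with shuffled simple reflections), which is already established. So beyond reorganizing these inputs and the elementary length identity there is essentially nothing left to prove; I do not expect any obstacle here, the difficulty having been absorbed into the preceding lemmas.
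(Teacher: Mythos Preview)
Your proposal is correct and matches the paper's approach exactly: the paper simply states the proposition as an immediate consequence of Lemma \ref{lem:B-action_on_M_B''(w)} and Corollary \ref{cor:cell_decomposition_of_twisted_braid_variety}, without writing out a separate proof. Your expansion of the dimension count via $|S_p|+2|U_p|=\ell$ and (\ref{eqn:n_w}) is precisely the computation implicit in the paper's presentation.
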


Recall by (\ref{eqn:Bruhat_cell_of_character_variety_via_geometric_quotient_by_PB}) that, $PB$ acts freely on $M_B''(\vec{w})$ and
$\pi_{\vec{w}}'': M_B''(\vec{w})\rightarrow \modulispace_{\type}(\vec{w})=M_B''(\vec{w})/PB$
is a principal $PB$-bundle. 
Define
\begin{equation}\label{eqn:cell_of_character_variety}
 \modulispace_{\type}(\vec{w},p):=\pi_{\vec{w}}''(M_B''(\vec{w},p)) \hookrightarrow\modulispace_{\type}(\vec{w})
\end{equation}
as a locally closed $\field$-subvariety.
By base change (see Corollary \ref{cor:base_change_of_principal_bundles}), the restriction of $\pi_{\vec{w}}''$
\[
\pi_{\vec{w},p}: M_B''(\vec{w},p)\isomorphic (\field^{\times})^{a(\vec{w},p)}\times\field^{b(\vec{w},p)} \rightarrow \modulispace_{\type}(\vec{w},p)=M_B''(\vec{w},p)/PB,
\]
is a principal $PB$-bundle as well as a geometric quotient.

Our first main result improves A. Mellit's cell decomposition theorem \cite[\S 7]{Mel19}:
\begin{theorem}\label{thm:cell_decomposition_of_very_generic_character_varieties}
If $(C_1,\text{\tiny$\cdots$},C_k)\in T^k$ is very generic (Definition \ref{def:generic_monodromy} and Assumption \ref{ass:very_generic_assumption}) of type $\type$, and $\modulispace_{\type}$ is nonempty,
then there is a decomposition into locally closed \emph{affine} $\field$-subvarieties
\begin{equation}
\modulispace_{\type}=\sqcup_{\vec{w}\in W^{2g}\times\text{\tiny$\prod_{i=1}^{k-1}$}W/W(C_i)}\modulispace_{\type}(\vec{w})=\sqcup_{\vec{w}\in W^{2g}\times\text{\tiny$\prod_{i=1}^{k-1}$}W/W(C_i)}\sqcup_{p\in\cW^*(\beta(\vec{w}))}\modulispace_{\type}(\vec{w},p),
\end{equation}
such that:
\begin{enumerate}[wide,labelwidth=!,labelindent=0pt]
\item
We have
\begin{eqnarray}\label{eqn:structure_of_cell_decomposition_of_character_variety}
&&\modulispace_{\type}(\vec{w},p)\isomorphic (\field^{\times})^{\overline{a}(\vec{w},p)}\times \cA_{\type}(\vec{w},p),\quad \cA_{\type}(\vec{w},p)\times \field^{|U|}\isomorphic \field^{b(\vec{w},p)}.\\
&&\overline{a}(\vec{w},p)=a(\vec{w},p)-n+1=|S_p|+2gn-2n+2,\quad \overline{b}(\vec{w},p):=b(\vec{w},p)-|U|.\nonumber
\end{eqnarray}
In particular, $\modulispace_{\type}(\vec{w},p)$ is of dimension $\overline{a}(\vec{w},p)+\overline{b}(\vec{w},p)$, and
\[
\overline{a}(\vec{w},p)+2\overline{b}(\vec{w},p)=d_{\type}=n^2(2g-2+k)-\text{\tiny$\sum_{i,j}$}(\mu_j^i)^2+2
\]
is a constant independent of $(\vec{w},p)$.

\item
There exists a unique $(\vec{w}_{\max},p_{\max})$ such that $\dim\modulispace_{\type}(\vec{w}_{\max},p_{\max})$ is of maximal dimension $d_{\type}$. Equivalently, $\overline{a}(\vec{w}_{\max},p_{\max})=d_{\type}$ (resp.  $\overline{b}(\vec{w}_{\max},p_{\max})=0$).
In particular, $\modulispace_{\type}(\vec{w}_{\max},p_{\max})$ is an \emph{open dense algebraic torus}:
\[
\modulispace_{\type}(\vec{w}_{\max},p_{\max})\isomorphic (\field^{\times})^{d_{\type}},\quad \cA_{\type}(\vec{w}_{\max},p_{\max})=\{\text{pt}\}.
\]
\end{enumerate}
\end{theorem}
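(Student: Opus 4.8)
The plan is to assemble Theorem \ref{thm:cell_decomposition_of_very_generic_character_varieties} from the pieces already built in Sections \ref{subsec:connection_to_braid_varieties}--\ref{subsec:the_cell_decomposition}, and then to isolate the unique top-dimensional cell by a purely combinatorial optimization over walks and Bruhat data. First I would record the decomposition itself: by Lemma \ref{lem:generic_character_varieties} (using that $\modulispace_{\type}$ is nonempty, so smooth connected affine), combined with (\ref{eqn:Bruhat_cell_of_character_variety}) and (\ref{eqn:cell_of_character_variety}), $\modulispace_{\type}$ is the disjoint union of the locally closed $\modulispace_{\type}(\vec{w},p)$ over $\vec{w}\in W^{2g}\times\prod_i W/W(C_i)$ and $p\in\cW^*(\beta(\vec{w}))$; each is affine since it is a geometric quotient of the affine $M_B''(\vec{w},p)\isomorphic(\field^{\times})^{a(\vec w,p)}\times\field^{b(\vec w,p)}$ (Proposition \ref{prop:B-equivariant_cell_decomposition_of_character_variety}) by the free action of the solvable group $PB$. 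For part (1), the map $\pi_{\vec{w},p}:M_B''(\vec{w},p)\to\modulispace_{\type}(\vec w,p)$ is a principal $PB$-bundle, and one reduces the structure group step by step: quotient first by $PU=U$ (an iterated affine-space bundle, using the $U$-action description from Lemma \ref{lem:B-action_on_M_B''(w)}(a) together with Proposition \ref{prop:cell_decomposition_of_braid_varieties}(1a), which shows the $U$-coordinates on the torus factor are fixed and the $U$-action is affine-unipotent on the $\field^{|U_p|+n_{\vec w}}$ factor), leaving a principal $PT$-bundle; then quotient by $PT\isomorphic(\field^{\times})^{n-1}$, which is free and acts on the torus factor $(\field^{\times})^{a(\vec w,p)}$ through the surjective torus homomorphism implicit in Corollary \ref{cor:cell_decomposition_of_twisted_braid_variety}, so the quotient of that factor is again a torus $(\field^{\times})^{a(\vec w,p)-(n-1)}$. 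This gives $\modulispace_{\type}(\vec w,p)\isomorphic(\field^{\times})^{\overline a(\vec w,p)}\times\cA_{\type}(\vec w,p)$ with $\overline a=a-n+1=|S_p|+2gn-2n+2$ and $\cA_{\type}(\vec w,p)\times\field^{|U|}\isomorphic\field^{b(\vec w,p)}$ (the leftover unipotent factor being stripped off last). The identity $\overline a+2\overline b=d_{\type}$ then follows from $a+2b=4g|U|+2\sum_i|N_i|+2gn-n+1$ (Proposition \ref{prop:B-equivariant_cell_decomposition_of_character_variety}) by subtracting $2|U|+(n-1)$ and expanding $|U|=\binom n2$, $|N_i|=(n^2-\sum_j(\mu^i_j)^2)/2$ with $\mu^k=(1^n)$; this is a routine bookkeeping check that $4g\binom n2+2\sum_{i=1}^{k-1}|N_i|+2gn-n+1-2\binom n2-(n-1)=n^2(2g-2+k)-\sum_{i,j}(\mu^i_j)^2+2$, which I would relegate to a short computation.

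For part (2) the substance is an extremal combinatorics argument. Since $\overline a+2\overline b=d_{\type}$ is constant, $\dim\modulispace_{\type}(\vec w,p)=\overline a+\overline b=\tfrac12(d_{\type}+\overline a)$, so maximizing dimension is equivalent to maximizing $\overline a(\vec w,p)=|S_p|+2gn-2n+2$, i.e. to maximizing the number of \emph{stays} $|S_p|$ over all admissible $(\vec w,p)$, and the maximum value $d_{\type}$ is attained precisely when $\overline b=0$, i.e. when $\cA_{\type}(\vec w,p)=\{\pt\}$ and $b(\vec w,p)=|U|$. The plan is: (i) show the maximum is achieved, and only achieved, at the ``trivial Bruhat stratum'' $\vec w_{\max}=(\id,\dots,\id)$, where every $\tau_j=\id$ and every $w_i=\id\in W/W(C_i)$, so $\beta(\vec w_{\max})=\id\in\FBr_n^+$ has length zero; (ii) identify the unique admissible walk $p_{\max}$ of the empty braid (necessarily $p_{\max}=(\id)$ since $\cW(\id)=\{(\id)\}$), and check it is admissible, $\langle J_{p_{\max}}\rangle=\langle\{\id,\dots,\id\}\rangle$ — wait, this is where I must be careful: for $\vec w_{\max}$ the set $J_p=\{\tau_1,\dots,\tau_{2g}\}=\{\id\}$ does \emph{not} act transitively on $[n]$ unless $n=1$, so $p_{\max}$ would fail admissibility. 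The resolution, and the genuinely delicate point, is that the top cell does not sit over $\beta=\id$: one wants $\beta(\vec w)$ long enough that the stays can carry enough transpositions $\tilde\ms_{i_m}$ to generate (the relevant saturation of) $S_n$, while $\vec w$ and $p$ are chosen so that the resulting torus $\tilde T_p(\beta)$ has the largest possible dimension $|S_p|+2gn-n+1$ compatible with $\overline b=0$. So step (i) must instead be a direct dimension count: $\overline b(\vec w,p)=b-|U|=|U_p|+n_{\vec w}-|U|=|U_p|+2g|U|+\sum_i|N_i|-\tfrac12\ell(\beta)-|U|\ge 0$, and I must show there is a unique $(\vec w,p)$ forcing equality, and that for it $\modulispace_{\type}(\vec w_{\max},p_{\max})$ is open dense.

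For openness/density: each $\modulispace_{\type}(\vec w,p)$ of dimension $d_{\type}$ is automatically open in the irreducible $d_{\type}$-dimensional $\modulispace_{\type}$ (a locally closed subvariety of full dimension in an irreducible variety contains a dense open of it, hence — being locally closed — is itself open), and two distinct such would have to meet, contradicting disjointness; hence uniqueness of $(\vec w_{\max},p_{\max})$ is equivalent to the existence of at least one full-dimensional cell, which is guaranteed because the cells partition the irreducible $\modulispace_{\type}$ and dimensions are bounded by $d_{\type}$. So the only real content left is the \emph{explicit} determination of $(\vec w_{\max},p_{\max})$ and the verification $\cA_{\type}(\vec w_{\max},p_{\max})=\{\pt\}$, i.e. $\overline b=0$; I would do this by exhibiting a concrete choice — e.g. taking each $\tau_j$ and each $\dot w_i$ to be a specific long-ish element and $p$ the walk that stays at every position it can — computing that $|U_p|+n_{\vec w}=|U|$ for it, checking admissibility via $\overline{\langle J_p\rangle}=S_n$ using Definition \ref{def:admissible_walks}, and then invoking the already-established uniqueness to conclude it is $(\vec w_{\max},p_{\max})$. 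The main obstacle is precisely this bookkeeping: pinning down which $\vec w$ and which walk $p$ simultaneously realize $\overline b=0$ and admissibility, and confirming the count $|U_p|+n_{\vec w}=|U|$ holds there and nowhere else — all the other steps are formal consequences of Propositions \ref{prop:B-equivariant_cell_decomposition_of_character_variety}, Corollary \ref{cor:cell_decomposition_of_twisted_braid_variety}, and basic irreducibility.
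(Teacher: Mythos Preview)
Your approach to part (1) is essentially the paper's: quotient $M_B''(\vec w,p)$ first by $U\triangleleft PB$, then by $PT$, using that $U$ acts trivially on the torus factor and that the free $PT$-action on $(\field^{\times})^{a(\vec w,p)}$ is through a torus homomorphism. The dimension identity $\overline a+2\overline b=d_{\type}$ is the same bookkeeping. Two points deserve correction, though.

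\textbf{Affineness is a genuine gap.} Your claim that each $\modulispace_{\type}(\vec w,p)$ is affine ``since it is a geometric quotient of the affine $M_B''(\vec w,p)$ by the free action of the solvable group $PB$'' does not hold in general: a principal bundle with affine total space need not have affine base when the structure group is non-reductive (e.g.\ $SL_2\to SL_2/U\cong\mathbb A^2\setminus\{0\}$ with $U$ unipotent upper-triangular). The paper instead produces, for each $\vec w$ (and each $(\vec w,p)$), a closed affine subvariety $\underline M_B'(\vec w)\subset M_B'(\vec w)$ on which $U$ has already been stripped off, and identifies $\modulispace_{\type}(\vec w)\cong\underline M_B'(\vec w)/PT_{\pa}$ as an affine GIT quotient by the \emph{reductive} torus $PT_{\pa}$. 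This affineness is then used downstream: $\cP(\vec w,p)$ is affine because the principal $PT$-bundle $q_U$ is an affine morphism over the now-affine base, hence $\cA_{\type}(\vec w,p)$ is affine (as a closed subvariety), and only then does the triviality of principal unipotent bundles over affine bases (Proposition~\ref{prop:principal_bundle_for_unipotent_algebraic_group_over_affine_variety_is_trivial}) yield $U\times\cA_{\type}(\vec w,p)\cong\field^{b(\vec w,p)}$. Your sketch uses this last step implicitly without having secured the affineness it requires.

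\textbf{Part (2) is already done; no explicit combinatorics is needed.} Your paragraph beginning ``For openness/density'' is a complete proof of (2), and is exactly the paper's argument: since $\modulispace_{\type}$ is connected smooth of dimension $d_{\type}$ (Lemma~\ref{lem:generic_character_varieties}), some cell must have full dimension, and two disjoint open dense subsets cannot coexist in an irreducible variety, so the full-dimensional cell is unique. From $\dim=\overline a+\overline b\le\overline a+2\overline b=d_{\type}$, equality forces $\overline b=0$; then $\cA_{\type}(\vec w_{\max},p_{\max})$ is a connected zero-dimensional affine variety, hence a point. Your subsequent plan to ``explicitly determine $(\vec w_{\max},p_{\max})$'' and verify $|U_p|+n_{\vec w}=|U|$ by hand is unnecessary --- the theorem does not ask for a formula for $(\vec w_{\max},p_{\max})$, and your detour through $\vec w=(\id,\ldots,\id)$ (which you correctly realized fails admissibility) is a red herring. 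Drop everything after that paragraph.
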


\begin{proof}

\noindent{}(0).
By (\ref{eqn:Bruhat_cell_of_character_variety}) and Proposition \ref{prop:B-equivariant_cell_decomposition_of_character_variety}, the decomposition of $\modulispace_{\type}$ is clear. 
Next, we show that $\modulispace_{\type}(\vec{w})$ is affine. Inspired by (\ref{eqn:Equivariant_Bruhat_cell_of_character_variety}), we define a closed (affine) subvariety of $M_B'(\vec{w})$:
\[
\underline{M}_B'(\vec{w}):=M_B'\cap(\text{\tiny$\prod_{j=1}^{2g}$}B\tau_jB\times\text{\tiny$\prod_{i=1}^{k-1}$}B\dot{w}_iP_i\times\{I_n\})\subset M_B'(\vec{w}).
\]
Observe that we have mutually inverse $U$-equivariant isomorphisms:
\begin{eqnarray}\label{eqn:U-equivariant_isomorphism_for_M_B'_w}
&&U \text{\tiny$\times$} \underline{M}_B'(\vec{w})\rightarrow M_B'(\vec{w}): (u,(A_1,\text{\tiny$\cdots$},x_{k-1},I_n))\mapsto (uA_1u^{-1},\text{\tiny$\cdots$},ux_{k-1},u(u^{C_k})^{-1});\\
&&M_B'(\vec{w})\rightarrow U \text{\tiny$\times$} \underline{M}_B'(\vec{w}):(A_1,\text{\tiny$\cdots$},x_{k-1},u_k=\tilde{u}(\tilde{u}^{C_k})^{-1})\mapsto (\tilde{u},(\tilde{u}^{-1}A_1\tilde{u},\text{\tiny$\cdots$},\tilde{u}^{-1}x_{k-1},I_n)),\nonumber
\end{eqnarray}
where $\tilde{u}\in U$ is uniquely determined by the equation $u_k=\tilde{u}(\tilde{u}^{C_k})^{-1}$. Thus, $\underline{M}_B'(\vec{w})=M_B'(\vec{w})/U$.
Recall that $U\subset PB_{\pa}$ is a normal closed subgroup with quotient $PB_{\pa}/U\isomorphic PT_{\pa}=(T\times\text{\tiny$\prod_{i=1}^{k-1}$}Z(C_i))$.
Then by Proposition \ref{prop:subgroup_action_on_a_principal_bundle} and Proposition \ref{prop:associated_fiber_bundles}, we get an isomorphism
\[
\modulispace_{\type}(\vec{w})=M_B'(\vec{w})/PB_{\pa}\isomorphic \underline{M}_B'(\vec{w})/(PB_{\pa}/U)=\underline{M}_B'(\vec{w})/PT_{\pa},
\]
and the quotient map $\underline{M}_B'\rightarrow \modulispace_{\type}(\vec{w})\isomorphic\underline{M}_B'(\vec{w})/PT_{\pa}$ is a principal $PT_{\pa}$-bundle as well as a geometric quotient, which is unique up to unique isomorphism. However, $\underline{M}_B'(\vec{w})$ is an affine $\field$-variety and $PT_{\pa}$ is reductive, we must have
\[
\modulispace_{\type}(\vec{w})\isomorphic\underline{M}_B'(\vec{w})/PT_{\pa}\isomorphic \underline{M}_B'(\vec{w})//PT_{\pa}=\Spec~\cO(\underline{M}_B'(\vec{w}))^{PT_{\pa}}.
\]
In particular, $\modulispace_{\type}(\vec{w})$ is affine, as desired.
It suffices to prove $(1)$ and $(2)$.

\noindent{}(1). 
We firstly show that $\modulispace_{\type}(\vec{w},p)$ is affine, the argument is similar to (0).

Recall by (\ref{eqn:reparameterization_for_connection_to_braid_varieties}), we have a $PB_{\pa}$-equvariant isomorphism of $\field$-varieties
\begin{eqnarray*}
&&(\text{\tiny$\prod_{j=1}^{2g}$}B\tau_jB)\text{\tiny$\times$}(\text{\tiny$\prod_{i=1}^{k-1}$}B\dot{w}_iP_i)\text{\tiny$\times$} U\xrightarrow[]{\isomorphic}\\
&&U\text{\tiny$\times$} \text{\tiny$\prod_{j=1}^g$}(T^2\text{\tiny$\times$} (U_{\tau_{2j-1}}^-\text{\tiny$\times$} U_{\tau_{2j}}^-\text{\tiny$\times$} U_{\tau_{2j-1}^{-1}}^-\text{\tiny$\times$} U_{\tau_{2j}^{-1}}^-) \text{\tiny$\times$} (U_{\tau_{2j}^{-1}}^+\text{\tiny$\times$} U_{\tau_{2j-1}}^+))\text{\tiny$\times$} \text{\tiny$\prod_{i=1}^{k-1}$}(U_{\dot{w}_i}^-\text{\tiny$\times$} U_{\dot{w}_i^{-1}}^-\text{\tiny$\times$} (U_{\dot{w}_i}^+\cap N_i)\text{\tiny$\times$} Z(C_i)),\nonumber\\
&&((A_j)_{j=1}^{2g},(x_i)_{i=1}^{k-1},u_k)\mapsto (u^0,(y_{2j-1},y_{2j},\zeta_{2j-1}',\zeta_{2j}',\zeta_{2j-1},\zeta_{2j},{}^+n^{2j},{}^+n^{2j-1})_{j=1}^g,(\xi_i',\xi_i,{}^+n_i',z_i)_{i=1}^{k-1}).\nonumber
\end{eqnarray*}
Then, $M_B'(\vec{w})$ is a closed affine $PB_{\pa}$-subvariety of the latter. 
Also, $M_B'(\vec{w},p):=M_B''(\vec{w},p)\times\text{\tiny$\prod_{i=1}^{k-1}$}Z(C_i)\subset M_B'(\vec{w})=M_B''(\vec{w})\times\text{\tiny$\prod_{i=1}^{k-1}$}Z(C_i)$ is a locally closed affine $PB_{\pa}$-subvariety.
Define
\[
\underline{M}_B'(\vec{w},p):=M_B'(\vec{w},p)\cap(\text{\tiny$\prod_{j=1}^{2g}$}B\tau_jB \times \text{\tiny$\prod_{i=1}^{k-1}$}B\dot{w}_iP_i\times\{I_n\})\subset M_B'(\vec{w})
\]
as a closed subvariety. By the same formula (\ref{eqn:U-equivariant_isomorphism_for_M_B'_w}), we obtain an $U$-equivariant isomorphism:
\[
U\times \underline{M}_B'(\vec{w},p)\isomorphic M_B'(\vec{w},p).
\]
Thus, $M_B'(\vec{w},p)/U\isomorphic \underline{M}_B'(\vec{w},p)$ is affine. Recall that $\text{\tiny$\prod_{i=1}^{k-1}$}Z(C_i)=(\field^{\times}\times\text{\tiny$\prod_{i=1}^{k-1}$}Z(C_i))/\field^{\times}\triangleleft PB_{\pa}$ 
with quotient $PB_{\pa}/\text{\tiny$\prod_{i=1}^{k-1}$}Z(C_i)=PB$. Similar to (0), by Proposition \ref{prop:subgroup_action_on_a_principal_bundle}, we have isomorphisms
\[
\modulispace_{\type}(\vec{w},p)=M_B''(\vec{w},p)/PB\isomorphic M_B'(\vec{w},p)/PB_{\pa}\isomorphic \underline{M}_B'(\vec{w},p)/PT_{\pa}\isomorphic \Spec~\cO(\underline{M}_B'(\vec{w},p))^{PT_{\pa}}.
\]
In particular, $\modulispace_{\type}(\vec{w},p)$ is affine, as desired.

Secondly, by definition, the quotient map
$M_B''(\vec{w},p)\rightarrow \modulispace_{\type}(\vec{w},p)=M_B''(\vec{w},p)/PB$
is a principal $PB$-bundle. Notice that $U\hookrightarrow PB$ is a normal closed subgroup with quotient $PB/U\isomorphic PT$. 
Then by Proposition \ref{prop:subgroup_action_on_a_principal_bundle} and Proposition \ref{prop:associated_fiber_bundles}, 
the quotient map
\[
\fp_U:M_B''(\vec{w},p)\isomorphic(\field^{\times})^{a(\vec{w},p)}\times\field^{b(\vec{w},p)}\rightarrow \cP(\vec{w},p):=M_B''(\vec{w},p)/U
\]
is a principal $U$-bundle, and the quotient map
\[
q_U:\cP(\vec{w},p)\rightarrow \modulispace_{\type}(\vec{w},p)=M_B''(\vec{w},p)/PB
\]
is a principal $PT$-bundle. By Lemma \ref{lem:principal_bundle_in_fpqc_topology}, $q_U$ is affine, then so is $\cP(\vec{w},p)$, as $\modulispace_{\type}(\vec{w},p)$ is.

Recall that $U$ only acts on the factor $\field^{b(\vec{w},p)}$ of $M_B''(\vec{w},p)$ (see also Lemma \ref{lem:algebraic_group_action_on_algebraic_torus} below). 
Consider the closed $U$-subvariety $\{1\}\text{\tiny$\times$}\field^{b(\vec{w},p)}\subset M_B''(\vec{w},p)$, by Corollary \ref{cor:base_change_of_principal_bundles} and Proposition \ref{prop:associated_fiber_bundles},
\[
\fp_U|_{\field^{b(\vec{w},p)}}: \field^{b(\vec{w},p)}\isomorphic\{1\}\text{\tiny$\times$}\field^{b(\vec{w},p)}
\rightarrow \cA_{\type}(\vec{w},p):=\fp_U(\field^{b(\vec{w},p)})\isomorphic \field^{b(\vec{w},p)}/U\subset \cP(\vec{w},p)
\]
is a principal $U$-bundle as well as a geometric quotient. Also, $\cA_{\type}(\vec{w},p)\subset \cP(\vec{w},p)$ is a closed subvariety, hence affine.
Clearly, $\cA_{\type}(\vec{w},p)$ is connected of dimension $\overline{b}(\vec{w},p)=b(\vec{w},p)-|U|$.
Now, by Proposition \ref{prop:principal_bundle_for_unipotent_algebraic_group_over_affine_variety_is_trivial}, $\fp_U|_{\field^{b(\vec{w},p)}}$ is trivial, i.e. we obtain an $U$-equivariant isomorphism
\begin{equation*}
U\times\cA_{\type}(\vec{w},p)\isomorphic \field^{b(\vec{w},p)}.
\end{equation*}
This proves the second isomorphism in (\ref{eqn:structure_of_cell_decomposition_of_character_variety}).

Thirdly, let's prove the first isomorphism in (\ref{eqn:structure_of_cell_decomposition_of_character_variety}).
By the uniqueness of geometric quotient, we obtain an isomorphism
\[
\cP(\vec{w},p)=(\field^{\times})^{a(\vec{w},p)}\times\field^{b(\vec{w},p)})/U\isomorphic (\field^{\times})^{a(\vec{w},p)}\times(\field^{b(\vec{w},p)}/U) = 
(\field^{\times})^{a(\vec{w},p)}\times \cA_{\type}(\vec{w},p).
\]
Recall by (\ref{eqn:free_torus_action_on_torus_factor_of_cell_decomposition_of_character_variety}) that $PT$ acts freely on the torus factor 
$\tilde{T}_p(\beta(\vec{w}))\isomorphic (\field^{\times})^{a(\vec{w},p)}=\field^{|S_p|+2gn-n+1}$ of $M_B''(\vec{w},p)\isomorphic (\field^{\times})^{a(\vec{w},p)}\times\field^{b(\vec{w},p)}$. 
By Lemma \ref{lem:algebraic_group_action_on_algebraic_torus} below, this is equivalent to an injective algebraic group homomorphism
$PT\isomorphic (\field^{\times})^{n-1} \hookrightarrow (\field^{\times})^{a(\vec{w},p)}$.
So, up to a coordinate change, we may assume
$(\field^{\times})^{a(\vec{w},p)}\isomorphic (\field^{\times})^{\overline{a}(\vec{w},p)}\times PT$,
where $\overline{a}(\vec{w},p)=a(\vec{w},p)-n+1$, and $PT$ acts via translation on the second factor. Now, we have
\[
\modulispace_{\type}(\vec{w},p)=\cP(\vec{w},p)/PT\isomorphic (\field^{\times})^{\overline{a}(\vec{w},p)}\times (PT\times \cA_{\type}(\vec{w},p))/PT \isomorphic (\field^{\times})^{\overline{a}(\vec{w},p)}\times \cA_{\type}(\vec{w},p),
\]
as desired. Here, in the last step:
By Proposition \ref{prop:associated_fiber_bundles}, the natural map $(PT\times\cA_{\type}(\vec{w},p))/PT \rightarrow PT/PT=\Spec~\field$ is a fiber bundle with fiber $\cA_{\type}(\vec{w},p)$,
hence $(PT\times\cA_{\type}(\vec{w},p))/PT\isomorphic\cA_{\type}(\vec{w},p)$.

Finally, it remains to compute $\overline{a}(\vec{w},p)+2\overline{b}(\vec{w},p)$. By Proposition \ref{prop:B-equivariant_cell_decomposition_of_character_variety}, we have
\begin{eqnarray*}
&&\overline{a}(\vec{w},p)+2\overline{b}(\vec{w},p)=a(\vec{w},p)-n+1 + 2b(\vec{w},p) -2|U|\\
&=&(4g|U|+2\text{\tiny$\sum_{i=1}^{k-1}$}|N_i| +2gn -n+1) -n+1 -2|U|=(4g-4)|U|+2\text{\tiny$\sum_{i=1}^k$}|N_i|+2gn-2n+2\\
&=&(2g-2)(n^2-n)+\text{\tiny$\sum_{i=1}^k$}(n^2-\text{\tiny$\sum_j$}(\mu_j^i)^2)+2gn-2n+2=(2g-2+k)n^2-\text{\tiny$\sum_{i,j}$}(\mu_j^i)^2+2=d_{\type},
\end{eqnarray*}
which is clearly independent of $\vec{w},p$.
This completes the proof of (1).

\noindent{}(2). By Lemma \ref{lem:generic_character_varieties}, $\modulispace_{\type}$ (if nonempty) is connected smooth affine of dimension $d_{\type}$. So, in the decomposition, there exists a unique $(\vec{w},p)$ 
such that $\modulispace_{\type}(\vec{w},p)\subset\modulispace_{\type}$ is (open dense) of dimension $d_{\type}$. Moreover, for all $(\vec{w},p)$ in the decomposition, 
\[
\dim\modulispace_{\type}(\vec{w},p)=\overline{a}(\vec{w},p)+\overline{b}(\vec{w},p)\leq \overline{a}(\vec{w},p)+2\overline{b}(\vec{w},p)=d_{\type}.
\]
Thus, $\dim\modulispace_{\type}(\vec{w},p)=d_{\type}$ $\Leftrightarrow$ $\overline{b}(\vec{w},p)=0$ $\Leftrightarrow$ $\overline{a}(\vec{w},p)=0$. The rest follows immediately.
Done.
\end{proof}

\begin{lemma}\label{lem:algebraic_group_action_on_algebraic_torus}
Let $H$ be a connected algebraic group and $T'=(\field^{\times})^m_{z_1,\text{\tiny$\cdots$},z_m}$ is an algebraic torus, then any algebraic  $H$-action on $T'$ is identical to an algebraic group homomorphism
\[
\rho:H\rightarrow T',
\]
where $T'$ acts on itself by translation. Thus, if $H$ is unipotent, then the $H$-action on $T'$ is trivial.
\end{lemma}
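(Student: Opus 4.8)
The plan is to reduce the statement to the standard fact that morphisms of algebraic groups out of a torus are understood via characters, and that the torus $T'$ has no nontrivial characters from a unipotent group. First I would recall that an algebraic action of $H$ on the variety $T'=(\field^\times)^m$ is a morphism $a\colon H\times T'\to T'$ satisfying the usual axioms. Writing $a$ in coordinates, $a=(a_1,\dots,a_m)$ with each $a_i\colon H\times T'\to \field^\times$ a unit on the affine variety $H\times T'$. Since $T'$ is a torus, $\cO(T')^\times = \field^\times \times \ring^m$ (units are scalars times Laurent monomials), and more generally $\cO(H\times T')^\times = \cO(H)^\times\cdot\{\text{monomials in the }z_j\}$ provided $H$ is connected (so that $\cO(H)^\times/\field^\times$ is torsion-free and there are no extra units). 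Hence each $a_i(h,z) = \chi_i(h)\, z^{\vec m_i}$ for some $\chi_i\in\cO(H)^\times$ and $\vec m_i\in\ring^m$.

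Next I would feed this form into the action axioms. The identity axiom $a(e,z)=z$ forces $\chi_i(e)=1$ and $z^{\vec m_i}=z_i$, i.e. $\vec m_i=\vec e_i$; so in fact $a_i(h,z)=\chi_i(h)z_i$, which is exactly ``$\rho(h)=(\chi_1(h),\dots,\chi_m(h))$ followed by translation.'' The associativity axiom $a(h_1h_2,z)=a(h_1,a(h_2,z))$ then reads $\chi_i(h_1h_2)z_i = \chi_i(h_1)\chi_i(h_2)z_i$, so each $\chi_i\colon H\to\field^\times$ is a group homomorphism, i.e. $\rho\colon H\to T'$ is a homomorphism of algebraic groups. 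This proves the first assertion. For the last sentence: a unipotent algebraic group over a field of characteristic zero has no nontrivial homomorphism to $\field^\times = GL_1$, since the image would be a unipotent subgroup of $\field^\times$, which is trivial (equivalently, $\field^\times$ has no nontrivial unipotent elements); hence $\rho$ is trivial and so is the action.

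The main obstacle, and the only point that needs a little care, is the computation of the unit group $\cO(H\times T')^\times$ — specifically justifying that connectedness of $H$ rules out units other than $\cO(H)^\times$ times Laurent monomials in the $z_j$. This follows from Rosenlicht's lemma: for a connected variety $X$ and a torus $T'$, $\cO(X\times T')^\times = \cO(X)^\times \times \mathcal{X}^*(T')$ as abelian groups, where $\mathcal X^*(T')$ is the character lattice. I would cite this (it is classical, e.g.\ in Rosenlicht or in Kolchin–Rosenlicht; it also appears in treatments of $\mathbb G_m$-torsors) rather than reprove it. Everything else is a direct unwinding of the group-action axioms, so I would keep the write-up short: state the shape of $a_i$, apply the identity axiom to pin down the monomial and normalize $\chi_i(e)=1$, apply associativity to get multiplicativity of $\chi_i$, and finish with the unipotent remark.

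\begin{proof}
Write the action as a morphism $a=(a_1,\dots,a_m)\colon H\times T'\to T'$, so each $a_i\colon H\times T'\to\field^\times$ is a nowhere-vanishing regular function on the connected affine variety $H\times T'$. By Rosenlicht's lemma on units of a product with a torus, $\cO(H\times T')^\times=\cO(H)^\times\cdot\{z_1^{a_1}\cdots z_m^{a_m}:(a_1,\dots,a_m)\in\ring^m\}$; hence $a_i(h,z)=\chi_i(h)\,z^{\vec m_i}$ for some unit $\chi_i\in\cO(H)^\times$ and some $\vec m_i\in\ring^m$. The identity axiom $a(e,z)=z$ gives $\chi_i(e)z^{\vec m_i}=z_i$ for all $z$, forcing $\vec m_i=\vec e_i$ and $\chi_i(e)=1$; thus $a_i(h,z)=\chi_i(h)z_i$, i.e. $a(h,z)=\rho(h)\cdot z$ with $\rho:=(\chi_1,\dots,\chi_m)\colon H\to T'$ and $T'$ acting on itself by translation. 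The associativity axiom $a(h_1h_2,z)=a(h_1,a(h_2,z))$ then reads $\chi_i(h_1h_2)z_i=\chi_i(h_1)\chi_i(h_2)z_i$ for all $z$, so each $\chi_i$ is a homomorphism of algebraic groups, and therefore so is $\rho$. Finally, if $H$ is unipotent then in characteristic zero every homomorphism $H\to\field^\times=GL_1$ is trivial (a unipotent subgroup of $GL_1$ is trivial), so $\rho$ is trivial and the action of $H$ on $T'$ is trivial.
\end{proof}
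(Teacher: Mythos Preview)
Your proof is correct and follows essentially the same approach as the paper: both determine the form of the action via the unit structure on a torus and use connectedness of $H$ to force the exponent vectors to be constant. The only difference is packaging --- you invoke Rosenlicht's lemma on $\cO(H\times T')^\times$ to obtain the constant exponents in one stroke, whereas the paper fixes $h\in H$, writes $\rho_h(z)_j=c_j(h)\prod_i z_i^{a_{ij}(h)}$ with $(a_{ij}(h))\in GL_m(\mathbb{Z})$, and then argues ``by continuity'' (connectedness of $H$, discreteness of $GL_m(\mathbb{Z})$) that $a_{ij}\equiv\delta_{ij}$; the unipotent conclusion is likewise argued via ``unipotent and semisimple $\Rightarrow$ identity'' rather than your ``no nontrivial characters,'' but these are equivalent.
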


\begin{proof}
For any $a\in H$, the $H$-action $\rho$ on $T'$ induces an isomorphism
\[
\rho_a:T'\xrightarrow[]{\simeq} T': z_j\mapsto c_i(a)\text{\tiny$\prod_{i=1}^m$}z_i^{a_{ij}},
\]
for some $(a_{ij})\in GL(n,\mathbb{Z})$ and $c_i(a)\in\field^{\times}$. 
As $\rho_1=\id$, by continuity, $a_{ij}=\delta_{ij}$. So, $\rho:H\rightarrow T':a\mapsto \rho_a=\diag(c_1(a),\text{\tiny$\cdots$},c_m(a))$ becomes an algebraic group morphism. This gives the desired identification.
In addition, if $H$ is unipotent, then any $a\in H$ is unipotent, hence so is $\rho_a\in T'$, which is also semisimple. It follows that $\rho_a=\id$, $\forall a\in H$. We're done.
\end{proof}

\noindent{}\textbf{Question}: If $\type$ is only generic, does $\modulispace_{\type}$ contain an open algebraic torus? 

\begin{remark}\label{rem:Zariski_cancellation_problem}
If $(\vec{w},p)\neq(\vec{w}_m,p_m)$ in Theorem \ref{thm:cell_decomposition_of_very_generic_character_varieties}, then $\cA_{\type}(\vec{w},p)$ is smooth affine of dimension 
$\overline{b}(\vec{w},p)>0$, and $\cA_{\type}(\vec{w},p)$ is \emph{stably isomorphic} to $\mathbb{A}_{\field}^{\overline{b}(\vec{w},p)}$:
$\cA_{\type}(\vec{w},p)\times\mathbb{A}_{\field}^{\frac{n^2-n}{2}}\isomorphic \mathbb{A}_{\field}^{\overline{b}(\vec{w},p)+\frac{n^2-n}{2}}$.\\
This is closely related to the famous \textbf{Zariski cancellation problem} (\textbf{ZCP}): 
\[
\text{If $Y$ is an affine $\field$-variety of dimension $d$ such that $Y\times\mathbb{A}^1\isomorphic\mathbb{A}^{d+1}$, is $Y$ always isomorphic to $\mathbb{A}^d$?}
\]
The answer is positive if $d=1,2$ \cite{AHE72,Fuj79,MS80,Rus81}, and negative for $d\geq 3$ in positive characteristic \cite{Gup14a,Gup14b}; For $d\geq 3$ and $\mathrm{char}~\field=0$, the problem is still open.

We tend to believe that $\cA_{\type}(\vec{w},p)$ is not isomorphic to $\mathbb{A}_{\field}^{\overline{b}(\vec{w},p)}$ in general, thus providing a systematical way of constructing counterexamples to \textbf{ZCP} for $d\geq 3$ and $\mathrm{char}~\field=0$. 
For example, take $(g,k,n,\type) = (0,6,2,((1,1)^6))$, by a computation similar to Example \ref{ex:rank_2_over_four-punctured_two-sphere_cell_decomposition} below, we obtain a cell decomposition of $\modulispace_{\type}$:
\[
\modulispace_{\type} = (\field^*)^6 \sqcup ((\field^*)^4\times\mathbb{\field})^{\sqcup 12} \sqcup ((\field^*)^2\times\mathbb{\field}^2)^{\sqcup 40} \sqcup \sqcup_{j=1}^{80}\cA_j;\quad \cA_j\times\field\isomorphic \field^4, \forall 1\leq j\leq 80.
\]
In other words, the $\cA_j$'s are 80 potential counterexamples to the 3-dim \textbf{ZCP} in characteristic $0$.

\end{remark}

\subsection{Examples}\label{subsec:examples_of_cell_decomposition_of_very_generic_character_varieties}

We illustrate Theorem \ref{thm:cell_decomposition_of_very_generic_character_varieties} by two examples. 

\begin{example}[$(g,k,n,\type)=(0,4,2,((1^2),(1^2),(1^2),(1^2)))$: Fricke-Klein cubic]\label{ex:rank_2_over_four-punctured_two-sphere_cell_decomposition}
Let $\Sigma_{0,4}:=(\Sigma_0,\sigma=\{q_1,q_2,q_3,q_4\})$ be a four-punctured two-sphere, $G=GL_2(\field)$. So, $T\isomorphic(\field^{\times})^2$, and $W=S_2=\{1,\ms_1=(1~2)\}$. 
Let $\type=((1)^2,(1)^2,(1)^2,(1)^2)$, so $\type$ is very generic (Definition \ref{def:generic_monodromy}, Assumption \ref{ass:very_generic_assumption}) and
\begin{equation}
C_i=\diag(a_{i,1},a_{i,2})\in T, a_{i,1}\neq a_{i,2},\quad 1\leq i\leq 4;\quad \text{\tiny$\prod_{i,j}$}a_{i,j}=1,\quad \text{\tiny$\prod_{i=1}^4$}a_{i,\psi_i(1)}\neq 1, \forall \psi_i\in W. 
\end{equation}
Clearly, $\dim\modulispace_{\type} = d_{\type}=n^2(2g-2+k)-\text{\tiny$\sum_{i=1}^k\sum_j$}(\mu_j^i)^2 +2 = 2$.
The example goes back to \cite{FK65}.

\begin{enumerate}[wide,labelwidth=!,labelindent=0pt,label=\arabic*)]
\item
We firstly compute the cell decomposition of $\modulispace_{\type}$ (Theorem \ref{thm:cell_decomposition_of_very_generic_character_varieties}):
\begin{eqnarray*}
&&\modulispace_{\type}=\sqcup_{(\vec{w},p)\in \cW^*} \modulispace_{\type}(\vec{w},p),\quad \cW^*:=\{(\vec{w},p):\vec{w}\in W^{k-1}=W^3, p\in\cW^*(\beta(\vec{w}))\}, \\ 
&&\modulispace_{\type}(\vec{w},p)\isomorphic (\field^{\times})^{\overline{a}(\vec{w},p)}\times\cA_{\type}(\vec{w},p),~~\cA_{\type}(\vec{w},p)\times\mathbb{A}^{|U|} \isomorphic \mathbb{A}^{\overline{b}(\vec{w},p)+|U|},
\end{eqnarray*}
where $|U|=1$, and
\[
\overline{a}(\vec{w},p)=|S_p|+2gn-2n+2=|S_p|-2\geq 0,~~\overline{b}(\vec{w},p)=\frac{1}{2}(d_{\type}-\overline{a}(\vec{w},p))=\frac{1}{2}(4-|S_p|)\geq 0.
\]
Thus, $|S_p|=2$ or $4$. Accordingly, $(\overline{a}(\vec{w},p),\overline{b}(\vec{w},p))=(0,1)$ or $(2,0)$.
By the affirmative answer \cite{AHE72,Fuj79,MS80,Rus81} to the Zariski cancellation problem in $\dim\leq 2$, we have
\[
\cA_{\type}(\vec{w},p)\isomorphic\mathbb{A}^{\overline{b}(\vec{w},p)}~~\Rightarrow~~\modulispace_{\type}(\vec{w},p)\isomorphic (\field^{\times})^{\overline{a}(\vec{w},p)}\times\field^{\overline{b}(\vec{w},p)} = \field \text{ or } (\field^{\times})^2.
\]

We would like to compute $\cW^*$.
For each $\vec{w}=(\dot{w}_1,\dot{w}_2,\dot{w}_3)=(w_1,w_2,w_3)\in W^{k-1}=W^3=S_2^3$, denote $\beta:=\beta(\vec{w})$ and $\ell:=\ell(\beta)$.
Recall that, for any $p\in\cW(\beta)\subset W^{\ell+1}$, denote
\[
J_p=\{\underline{\ms}_{i_m}=p_{m-1}^{-1}\ms_{i_m}p_{m-1},m\in S_p\} \subset W,
\]
then $p\in\cW^*(\beta)$ if and only if the group $\langle J_p\rangle$ acts transitively on $[2]=\{1,2\}$.\\
\noindent{}\textbf{Note}: In this case, it means that $\langle J_p\rangle =W$, equivalently, $S_p\neq\emptyset$. That is,
\[
\cW^*(\beta)=\{p\in\cW(\beta): S_p\neq\emptyset\}\subset \cW(\beta).
\]
Recall that, $\beta=\beta(\vec{w})=[w_1]\circ[w_1^{-1}]\circ[w_2]\circ[w_2^{-1}]\circ[w_3]\circ[w_3^{-1}]$, so $\ell=\ell(\beta)=2\ell(w_1)+2\ell(w_2)+2\ell(w_3)$. 
If $\exists p\in\cW(\beta)$ such that $S_p\neq\emptyset$, then $\ell(\beta)>0$, so $\ell(\beta)\geq 2$, and $w_i=\ms_1=(1~2)$ for at least one $i\in\{1,2,3\}$.
Recall that $p$ is of the form $(p_{\ell}=\id,\text{\tiny$\cdots$},p_1,p_0=\id)\in W^{\ell+1}$. 
By Definition \ref{def:walks} of a walk, there's no $i$ such that $(p_{i+1},p_i)=(\id,\id)$ (if we could go up, then we must go up). 
In particular, we must have $p_1=\ms_1, p_{\ell-1}=\ms_1$. As $S_p\neq\emptyset$, we must have $\ell\geq 4$. 
This means that $w_i=\ms_1=(1~2)$ for at least two $i\in\{1,2,3\}$. We're left with 4 cases:

\begin{enumerate}[wide,labelwidth=!,labelindent=0pt]
\item
$\vec{w}=(\ms_1,\ms_1,\id)\in W^3$. Then, $\beta:=\beta(\vec{w})=\sigma_1^4\in\FBr_2^+$ and $\ell=\ell(\beta)=4$.
Observe that
\[
\cW(\beta)=\{(\id,\ms_1,\id,\ms_1,\id),p^1=(\id,\ms_1,\ms_1,\ms_1,\id)\}\subset W^{\ell+1}=W^5.
\]
Then $\cW^*(\beta)=\{p^1\}$, with $S_{p^1}=\{2,3\}\subset[\ell]=[4]=\{1,2,3,4\}$, $U_{p^1}=\{1\}\subset[4]$, and $D_{p^1}=\{4\}\subset[4]$.
Denote $\vec{w}^1:=(\ms_1,\ms_1,\id)$. By the previous computation, we have
\[
\overline{a}(\vec{w}^1,p^1)=|S_{p^1}|-2=0,~~\overline{b}(\vec{w}^1,p^1)=\frac{1}{2}(4-|S_{p^1}|)=1,~~\modulispace_{\type}(\vec{w}^1,p^1)\isomorphic \field.
\]

\item
Similarly, for $\vec{w}^2=(\ms_1,\id,\ms_1)\in W^3$ (resp. $\vec{w}^3=(\id,\ms_1,\ms_1)\in W^3$), we have $\cW^*(\beta(\vec{w}^2))=\{p^2=(\id,\ms_1,\ms_1,\ms_1,\id)\}$ (resp. $\cW^*(\beta(\vec{w}^3))=\{p^3=(\id,\ms_1,\ms_1,\ms_1,\id)\}$), and
\[
\modulispace_{\type}(\vec{w}^2,p^2)\isomorphic\field,\quad \modulispace_{\type}(\vec{w}^3,p^3)\isomorphic\field.
\]

\item
$\vec{w}=(\ms_1,\ms_1,\ms_1)=:\vec{w}^4\in W^3$. Then, $\beta:=\beta(\vec{w}^4)=\sigma_1^6\in\FBr_2^+$ and $\ell(\beta)=6$. Observe that
\begin{eqnarray*}
\cW(\beta)&=&\{(\id,\ms_1,\id,\ms_1,\id,\ms_1,\id), p^4=(\id,\ms_1,\ms_1,\ms_1,\id,\ms_1,\id), p^5=(\id,\ms_1,\ms_1,\id,\ms_1,\ms_1,\id),\\
&&p^6=(\id,\ms_1,\id,\ms_1,\ms_1,\ms_1,\id), p^7=(\id,\ms_1,\ms_1,\ms_1,\ms_1,\ms_1,\id)\}\subset W^{\ell+1}=W^7.
\end{eqnarray*}
Thus, $\cW^*(\beta) = \{p\in\cW(\beta):S_p\neq\emptyset\} = \{p^j:4\leq j\leq 7\}$. Denote $\vec{w}^j:=(\ms_1,\ms_1,\ms_1), 4\leq j\leq 7$. So,
\[
\modulispace_{\type}(\vec{w}^j,p^j)\isomorphic \field, \forall 4\leq j\leq 6;\quad \modulispace_{\type}(\vec{w}^7,p^7)\isomorphic (\field^{\times})^2.
\]
\end{enumerate}

In summary, $\cW^*=\{(\vec{w}^j,p^j), 1\leq j\leq 7\}$, and the cell decomposition of $\modulispace_{\type}$ reads:
\[
\modulispace_{\type} = \sqcup_{j=1}^7\modulispace_{\type}(\vec{w}^j,p^j) = \field^{\sqcup 6}\sqcup (\field^{\times})^2.
\]
The order on indices is \emph{admissible} (Definition \ref{def:admissible_total_order}). Our example matches with \cite[\S1.4]{Mel19}.

\item
Let's give a concrete description of the variety $\modulispace_{\type}$.
The defining equation of $M_B$ reads
\[
x_1C_1x_1^{-1}x_2C_2x_2^{-1}x_3C_3x_3^{-1}x_4C_4x_4^{-1}=\id,\quad x_i\in G.
\]
with an action of $G_{\pa}=G\times T^4$ via conjugation:
\[
h\cdot(x_1,x_2,x_3,x_4)=(h_0x_1h_1^{-1},h_0x_2h_2^{-1},h_0x_3h_3^{-1},h_0x_4h_4^{-1}),\quad h=(h_0,h_1,h_2,h_3,h_4)\in G\times T^4.
\]
Denote
\[
\underline{M}_B':=M_B\cap (G^3\times\{I_2\})\subset M_B,
\]
so the defining equation of $\underline{M}_B'$ becomes
\[
x_1C_1x_1^{-1}x_2C_2x_2^{-1}x_3C_3x_3^{-1}C_4=\id,\quad x_1,x_2,x_3\in G.
\]
Now by Proposition \ref{prop:associated_fiber_bundles} and Proposition \ref{prop:reduction_of_principal_bundles}, we have
\[
\modulispace_{\type}=M_B/PG_{\pa}\isomorphic \underline{M}_B'/PT_{\pa},\quad PT_{\pa}=T^4/\field^{\times} \hookrightarrow PG_{\pa}=(G\times T^4)/\field^{\times},
\]
where $PT_{\pa}\hookrightarrow PG_{\pa}: (h_0,h_1,h_2,h_3)\mapsto (h_0,h_1,h_2,h_3,h_0)$ acts on $\underline{M}_B'$ by:
\[
(h_0,h_1,h_2,h_3)\cdot (x_1,x_2,x_3):=(h_0x_1h_1^{-1},h_0x_2h_2^{-1},h_0x_3h_3^{-1}).
\]

Denote $X^i:=x_iC_ix_i^{-1}\in G\cdot C_i\isomorphic G/T$. Clearly, $G\cdot C_i$ is affine. Define
\[
\underline{M}_B'(\vec{X}):=\{(X^i)_{i=1}^3\in \text{\tiny$\prod_{i=1}^3$}G\cdot C_i: X^1X^2X^3C_4=\id\},
\]
equipped with the action of $T\ni h_0$ via conjugation. Then we obtain a cartesian diagram
\[
\begin{tikzcd}[row sep=0.5pc,column sep=2pc]
\underline{M}_B'\arrow[d]\arrow[r,hookrightarrow]\arrow[dr,phantom,"\lrcorner",very near start] & G^3\arrow[d]\\
\underline{M}_B'(\vec{X})\arrow[r,hookrightarrow] & \prod_{i=1}^3 G\cdot C_i\isomorphic (G/T)^3
\end{tikzcd}
\]
By base change, $\underline{M}_B'\rightarrow \underline{M}_B'(\vec{X})$ is a principal $T^3$-bundle.
As $T^3\hookrightarrow PT_{\pa}:(h_1,h_2,h_3)\mapsto [\id,h_1,h_2,h_3]$ is a normal subgroup with quotient $PT\ni [h_0]$, by Proposition \ref{prop:subgroup_action_on_a_principal_bundle}, we have
\[
\modulispace_{\type}\isomorphic \underline{M}_B'/PT_{\pa}\isomorphic (\underline{M}_B'/T^3)/PT\isomorphic \underline{M}_B'(\vec{X})/PT.
\]
Clearly, the conjugate action of $h_0=\diag(h_{0,1},h_{0,2})\in T$ on $X^i$ is:
\[
h_0\cdot X^i=\left(\begin{array}{cc}
X_{11}^i & h_{0,1}X_{12}^ih_{0,2}^{-1}\\
h_{0,2}X_{21}^ih_{0,1}^{-1} & X_{22}^i
\end{array}\right).
\]
Denote
\[
y_3:=\Tr(X^1X^2 = (X^3C_4)^{-1}), y_1:=\Tr(X^2X^3 = (C_4X^1)^{-1}), y_2:=\Tr(C_4^{-1}X^3C_4X^1 = C_4^{-1}(X^2)^{-1}).
\]
Then a direct computation shows that, $\modulispace_{\type}$ is a (smooth) affine cubic surface defined by
\begin{eqnarray}
&&\text{\tiny$\sum_{i=1}^3$}\det C_i y_i^2 +y_1y_2y_3 - \det C_4^{-1}\text{\tiny$\sum_{\mathrm{cyclic~rotation~on}~1,2,3}$}(\Tr C_4\Tr C_1+\Tr C_2^{-1}\Tr C_3^{-1})y_1\\
&&+ \det C_4^{-1}(\text{\tiny$\sum_{i=1}^4$} \Tr C_i\Tr C_i^{-1} + \Tr C_1\Tr C_2\Tr C_3\Tr C_4-4)=0.\nonumber
\end{eqnarray}
\noindent{}\textbf{Note}: when $\det C_i=1, \forall 1\leq i\leq 4$, this recovers the Fricke-Klein cubic \cite{FK65}, \cite[\S5]{GN05}.
\end{enumerate}
\end{example}

Next, we consider a rank $3$ example.
\begin{example}[$(g,k,n,\type) = (0,3,3,((1^3),(1^3),(1^3)))$]\label{ex:rank_3_over_the_pair_of_pants_cell_decomposition}
Let $\Sigma_{0,3}:=(\Sigma_0,\sigma=\{q_1,q_2,q_3\})$ be the pair of pants, $G=GL_3(\field)$. So, $W=S_3$. 
Let $\type=((1^3),(1^3),(1^3))$. So, $\type$ is very generic and
\[
C_i=\diag(a_{i,1},a_{i,2},a_{i,3})\in T,~~a_{i,1}, a_{i,2}, a_{i,3}: \text{pairwise distinct};~~\text{\tiny$\prod_{i,j}$}a_{i,j}=1,~\text{\tiny$\prod_{i=1}^3$}a_{i,\psi_i(1)}\neq 1, \forall \psi_i\in W. 
\] 
Clearly,
$\dim\modulispace_{\type} = d_{\type}=n^2(2g-2+k)-\text{\tiny$\sum_{i=1}^k\sum_j$}(\mu_j^i)^2 +2 = 9 - 3\times 3 +2 =2$.

By a similar computation, the cell decomposition of $\modulispace_{\type}$ reads:
\begin{equation}
\modulispace_{\type} =\sqcup_{j=1}^9 \modulispace_{\type}(\vec{w}^j,p^j) = \field^{\sqcup 8} \sqcup (\field^{\times})^2.
\end{equation}
We have ordered the indices so that we get an admissible total order (Definition \ref{def:admissible_total_order}).
\end{example}

\section{Dual boundary complexes of character varieties}\label{sec:dual_boundary_complexes_of_character_varieties}

\subsection{Preliminaries on dual boundary complexes}\label{subsec:dual_boundary_complexes}

\subsubsection{Setup}
Let $X$ be a smooth quasi-projective $\field$-variety. 

\begin{definition}\label{def:log_compactification_with_very_simple_normal_crossing_divisor}
A \emph{log compactification} of $X$ is a smooth projective variety $\overline{X}$ with simple normal crossing (snc) boundary divisor $\partial X=\overline{X}\setminus X$.
Moreover, we say that $\partial X$ is \emph{very simple normal crossing}, if every nonempty finite intersection of its irreducible components is \emph{connected}. 
\end{definition}
\noindent{}By Hironaka's resolution of singularities, a log compactification $(\overline{X},\partial X)$ always exists.
By blowing up further if necessary, $\partial X$ will be very simple normal crossing. Say, we're in this case.

\begin{definition}\label{def:dual_boundary_complex}
The \emph{dual boundary complex} $\mathbb{D}\partial X$ is a simplicial complex such that:
\begin{itemize}[wide,labelwidth=!,labelindent=0pt]
\item
Vertices of $\mathbb{D}\partial X$ are in one-to-one correspondence with the irreducible components of $\partial X$;
\item
$k$ vertices of $\mathbb{D}\partial X$ spans a $(k-1)$-simplex if and only if the corresponding irreducible components have non-empty intersection. 
\end{itemize}
\end{definition}

\begin{proposition}[\cite{Dan75}]\label{prop:invariance_of_dual_boundary_complexes}
The homotopy type of $\mathbb{D}\partial X$ is an invariant of $X$.
i.e. independent of the choice of the log compactification $(\overline{X},\partial X)$.
\end{proposition}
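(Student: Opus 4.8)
\textbf{Proof plan for Proposition \ref{prop:invariance_of_dual_boundary_complexes}.}
The statement is a classical fact, essentially due to Danilov \cite{Dan75}, and the plan is to reduce it to the weak factorization theorem for birational maps of smooth projective varieties, together with an explicit analysis of how $\mathbb{D}\partial X$ changes under a single blow-up along a smooth center with normal crossings. First I would set up the category of log compactifications: given two log compactifications $(\overline{X}_1,\partial X_1)$ and $(\overline{X}_2,\partial X_2)$ of the same $X$, the identity on $X$ extends to a birational map $\overline{X}_1\dashrightarrow\overline{X}_2$ which is an isomorphism over $X$. By the weak factorization theorem (Abramovich--Karu--Matsuki--W\l odarczyk), after possibly dominating both by a common log compactification, this birational map factors as a finite chain of blow-ups and blow-downs along smooth irreducible centers, each of which has simple normal crossings with the (total transform of the) boundary divisor, and each of which is supported over $X^c=\partial X$. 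So it suffices to treat a single such blow-up.

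The key step is therefore the following local computation: let $(\overline{X},\partial X)$ be a log compactification with very simple normal crossing boundary $\partial X=\bigcup_{i\in I}D_i$, let $Z\subset\overline{X}$ be a smooth irreducible subvariety having snc with $\partial X$ and with $Z\not\subset X$ but $Z\cap X$ possibly nonempty, and let $\pi:\overline{X}'\to\overline{X}$ be the blow-up along $Z$ with exceptional divisor $E$. Then $\partial X'=\pi^{-1}(\partial X)=E'\cup\bigcup_{i}D_i'$ where $D_i'$ is the strict transform of $D_i$ and $E'=E$ is a new irreducible component, and one checks that $\partial X'$ is again very simple normal crossing. The combinatorial claim is that $\mathbb{D}\partial X'$ is obtained from $\mathbb{D}\partial X$ by a sequence of elementary operations (a ``stellar-type'' subdivision together with possibly attaching or deleting a collapsible piece when $Z\cap X\neq\varnothing$, i.e. when $Z$ is not entirely contained in the boundary) each of which preserves homotopy type; I would carry this out by examining the strata of $\overline{X}$ and $\overline{X}'$, noting that a point of $Z$ lying in the stratum indexed by $J\subseteq I$ (i.e. in $\bigcap_{j\in J}D_j$ but no further $D_i$) contributes, after blow-up, to the stratum indexed by $J\cup\{E\}$, and that the nerve records exactly which such intersections are nonempty. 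The ``very simple normal crossing'' hypothesis is used to guarantee that the nerve is genuinely a simplicial complex (each face is determined by its vertex set) so that these local changes are honest simplicial operations rather than merely operations on $\Delta$-complexes.

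The main obstacle I anticipate is the bookkeeping when $Z$ is not contained in $\partial X$: then the blow-up genuinely adds a new vertex $E$ to $\mathbb{D}\partial X'$ whose star, as a subcomplex, must be shown to be contractible and attached along a contractible subcomplex of $\mathbb{D}\partial X$, so that the inclusion $\mathbb{D}\partial X\hookrightarrow\mathbb{D}\partial X'$ (or a collapse in the other direction) is a homotopy equivalence; here one uses that $E\to Z$ is a projective bundle and that $Z\cap\partial X$ is itself very simple normal crossing inside $Z$, so its dual complex is the link of $E$ and is a sphere or ball of the appropriate dimension. When $Z\subset\partial X$ the change is a subdivision of the simplex/simplices meeting $Z$, which visibly preserves homotopy type (in fact the geometric realization is unchanged up to homeomorphism). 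Alternatively, if one wants to avoid this case analysis, I would instead invoke the now-standard fact that $\mathbb{D}\partial X$ computes $\operatorname{Gr}^W_{2d}H^{2d-\bullet}(X,\mathbb{Q})$ and more refined statements identifying its homotopy type with a retract of the Berkovich boundary or the topology of the punctured neighborhood $N^*$ (as sketched in the introduction); but the self-contained route via weak factorization is cleaner for this paper, so I would present that, citing \cite{Dan75} for the original argument and \cite{Pay13} for the modern formulation. Finally I would remark that the argument shows not merely homotopy equivalence but a canonical simple-homotopy equivalence, which is what is needed for the finer statements about $\mathrm{DMR}$ alluded to earlier.
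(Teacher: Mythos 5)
The paper itself gives no proof of this proposition; it simply cites Danilov, so there is no ``paper's argument'' to match. Your route --- weak factorization plus a local analysis of how $\mathbb{D}\partial X$ changes under one blow-up --- is the standard modern proof (Stepanov's lemma, in the formulation used by Payne \cite{Pay13}), and the overall strategy is sound. However, the execution of the key local step has two problems.

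First, the entire case $Z\cap X\neq\varnothing$, which you flag as the ``main obstacle'' and spend a paragraph on, is vacuous. The birational map $\overline{X}_1\dashrightarrow\overline{X}_2$ is an isomorphism over $X$, and the weak factorization theorem lets you choose every center disjoint from the open set where the map is an isomorphism; moreover, if an irreducible center $Z$ met $X$ it would be dense in neither boundary nor interior and the blow-up would no longer be a compactification of $X$ at all. So $Z\subset\partial X$ always, and the discussion of attaching $E$ along the dual complex of $Z\cap\partial X$ inside $Z$ is addressing a situation that never arises. Second, and more seriously, your treatment of the case that does arise ($Z\subset\partial X$) is wrong as stated: the change in $\mathbb{D}\partial X$ is a stellar subdivision, with unchanged geometric realization, only when $Z$ is an entire closed stratum $\bigcap_{i\in J}D_i$. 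For a general admissible center --- say a point in the smooth locus of a single boundary component $D_1$ of a surface pair --- the blow-up adds a new vertex $E$ joined to $D_1$ by an edge and nothing else, which changes the homeomorphism type (it adds a whisker) and is not a subdivision. The actual content of Stepanov's lemma is precisely the verification that in general the new complex deformation retracts onto (a complex homotopy equivalent to) the old one, via an analysis of the closed star of $E$ and of which simplices among the old vertices survive (strict transforms $D_i'$, $i\in J$, cease to intersect after the blow-up, so old faces can also disappear). You have gestured at this but replaced the argument with an incorrect claim, and that is exactly the step that carries the weight of the proof; it needs to be done honestly, e.g.\ by showing that $\mathrm{Star}(E)$ is a cone and that collapsing it recovers a complex obtained from $\mathbb{D}\partial X$ by subdividing the faces indexed by subsets of $J$.
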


\begin{example}
For any two quasi-projective smooth varieties $X,Y$, take the log compactifications $\overline{X},\overline{Y}$ separately, then $\overline{X}\times\overline{Y}$ is a log compactification of $X\times Y$ with
$\partial(X\times Y)=\partial X\times \overline{Y}\cup_{\partial X\times\partial Y}\overline{X}\times\partial Y$.
By a direct calculation, we then have a homotopy equivalence:
\begin{equation}\label{eqn:dual_boundary_complex_of_a_product}
\mathbb{D}\partial(X\times Y)\sim\mathbb{D}\partial X\star\mathbb{D}\partial Y,
\end{equation}
where `$\star$' stands for the \emph{join} of simplicial complexes. 
Clearly, $\mathbb{D}\partial\field\sim *$ and $\mathbb{D}\partial \field^*\sim S^0$. Thus,
\[
\mathbb{D}\partial(\mathbb{A}^1\times Y)\sim *,
\]
i.e. the dual boundary complex of $\mathbb{A}^1\times Y$ is contractible. As another example, we have
\[
\mathbb{D}\partial(\field^{\times})^d\sim S^0\star\mathbb{D}\partial(\field^{\times})^{d-1}\sim\cdots\sim S^{d-1}.
\]
\end{example}

The cohomology with rational coefficients of a dual boundary complex is computed by:
\begin{proposition}[See e.g. \cite{Pay13}]\label{prop:dual_boundary_complexes_vs_weight_filtration}
Let $X$ be a connected smooth quasi-projective complex variety of dimension $d$, then the reduced rational (co)homology of the dual boundary complex corresponds to one piece of the weight filtration:
\[
\tilde{H}_{i-1}(\mathbb{D}\partial X,\mathbb{Q})\isomorphic \mathrm{Gr}_{2d}^WH^{2d-i}(X(\field),\mathbb{Q}),\quad \tilde{H}^{i-1}(\mathbb{D}\partial X,\mathbb{Q})\isomorphic \mathrm{Gr}_0^WH_c^i(X(\field),\mathbb{Q}).
\] 
\end{proposition}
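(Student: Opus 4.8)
The plan is to compute the reduced rational cohomology of $\mathbb{D}\partial X$ via the weight filtration on the mixed Hodge structure of $X$, using the combinatorial model for the weight filtration furnished by a log compactification. First I would fix a log compactification $(\overline{X},\partial X)$ with $\partial X = \bigcup_{i\in I} D_i$ a very simple normal crossing divisor, and set up the standard notation $D^{(p)} := \coprod_{|S|=p} D_S$, where $D_S := \bigcap_{i\in S} D_i$ for $S\subseteq I$ with $|S|=p$ (so $D^{(0)} = \overline{X}$). The key input is that, for such a compactification, Deligne's mixed Hodge theory gives a spectral sequence — the weight spectral sequence for the open variety $X = \overline{X}\setminus\partial X$ — whose $E_1$-page computes $\Gr^W H^{\bullet}(X,\mathbb{Q})$ in terms of the cohomologies $H^{\bullet}(D^{(p)},\mathbb{Q})$ of the (smooth projective) intersection strata, together with Gysin and restriction maps. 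Dually, there is the weight spectral sequence for $H^{\bullet}_c(X,\mathbb{Q})$.

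The central observation is that the lowest-weight (weight $0$) part $\Gr_0^W H^{\bullet}_c(X,\mathbb{Q})$ (equivalently, by Poincaré duality for the $d$-dimensional connected $X$, the top-weight part $\Gr_{2d}^W H^{\bullet}(X,\mathbb{Q})$) is computed by the row of the $E_1$-page built purely from $H^0(D^{(p)},\mathbb{Q}) = \mathbb{Q}^{\#\{S : |S|=p,\ D_S\neq\emptyset\}}$. Here the very-simple-normal-crossing hypothesis is what makes $H^0(D_S,\mathbb{Q}) = \mathbb{Q}$ for every nonempty $D_S$, so that the number of connected components of $D^{(p)}$ equals the number of $p$-fold intersections, which is exactly the number of $(p-1)$-simplices of $\mathbb{D}\partial X$. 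The differentials in this row are, up to sign, the simplicial coboundary maps of $\mathbb{D}\partial X$ (they are built from pullbacks $H^0(D_S)\to H^0(D_{S'})$ for $S\subset S'$, which are all isomorphisms $\mathbb{Q}\to\mathbb{Q}$ with signs dictated by orientation conventions on the simplices). Hence this row of the $E_1$-complex is precisely the reduced simplicial cochain complex of $\mathbb{D}\partial X$, shifted; and since the weight spectral sequence for $H^{\bullet}_c$ degenerates at $E_2$ (Deligne), taking cohomology of that row gives $\Gr_0^W H^{i}_c(X,\mathbb{Q}) \isomorphic \tilde{H}^{i-1}(\mathbb{D}\partial X,\mathbb{Q})$. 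The shift by one in degree comes from the fact that the $\mathbb{Q}$ sitting in $H^0(\overline{X}) = H^0(D^{(0)})$ (the "empty-intersection" term) splits off the reduced cohomology of the nerve $\mathbb{D}\partial X$ from the unreduced one.

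The steps, in order, would be: (1) recall the weight spectral sequence $E_1^{p,q} = \bigoplus H^{q-\,?}(D^{(p)},\mathbb{Q}) \Rightarrow \Gr^W H^{\bullet}_c(X,\mathbb{Q})$ and its $E_2$-degeneration; (2) isolate the weight-zero strand and identify it, using $H^0(D^{(p)},\mathbb{Q})\isomorphic \mathbb{Q}^{(\text{number of }p\text{-simplices of }\mathbb{D}\partial X)+\delta_{p,0}}$, with the augmented/reduced simplicial cochain complex of $\mathbb{D}\partial X$; (3) check the differentials agree with the simplicial coboundary up to sign, which is a matching of orientation conventions; (4) conclude $\Gr_0^W H^i_c(X,\mathbb{Q}) \isomorphic \tilde{H}^{i-1}(\mathbb{D}\partial X,\mathbb{Q})$; (5) apply Poincaré duality $H^i_c(X,\mathbb{Q}) \isomorphic H^{2d-i}(X,\mathbb{Q})^{\vee}(-d)$, which sends $\Gr_0^W$ to $\Gr_{2d}^W$ and cohomology to homology, to obtain the first isomorphism $\tilde{H}_{i-1}(\mathbb{D}\partial X,\mathbb{Q})\isomorphic \Gr_{2d}^W H^{2d-i}(X,\mathbb{Q})$; (6) invoke Proposition \ref{prop:invariance_of_dual_boundary_complexes} to see the left side is independent of the chosen compactification, as it must be.

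I expect the main obstacle to be purely bookkeeping rather than conceptual: getting the sign conventions in the weight spectral sequence to line up with a chosen orientation of the simplices of $\mathbb{D}\partial X$ so that the weight-zero row is literally the reduced simplicial cochain complex (and not merely abstractly isomorphic to a complex with the same cohomology). This is the point where one must be careful about the identification of Gysin versus restriction maps and the alternating-sum structure of the Čech-type differential. Since the statement is well-known (it is attributed to \cite{Pay13} and others in the literature), I would in the write-up either cite the precise references for the weight spectral sequence and its degeneration, or reproduce the short argument at the level of $E_1$-pages, relegating the sign verification to a remark.
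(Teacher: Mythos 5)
Your proposal is correct and is essentially the argument the paper relies on: the paper cites \cite{Pay13} for this statement and then reproves an integral refinement in Proposition \ref{prop:weight_cohomology_with_compact_support_vs_dual_boundary_complex}, whose proof identifies the weight-zero row $[H^0(\overline{X})\to H^0(Y^{(1)})\to\cdots\to H^0(Y^{(d)})]$ of the (Gillet--Soul\'e) weight complex with the reduced simplicial cochain complex of $\mathbb{D}\partial X$ shifted by one --- exactly your step (2)--(4), with Lemma \ref{lem:weight_cohomology_vs_weight_filtration} playing the role of the $E_2$-degeneration of Deligne's weight spectral sequence and Poincar\'e duality supplying the homological half. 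The only points worth making explicit in a write-up are the ones you already flag: the very-simple-normal-crossing hypothesis ensuring $H^0(D_S,\mathbb{Q})=\mathbb{Q}$ for each nonempty stratum, and the sign bookkeeping matching the \v{C}ech-type differential with the simplicial coboundary.
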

\noindent{}The latter is equivalent to the former by Poincar\'{e} duality. 
More recently, the author has provided a motivic generalization, encoding the integral cohomology of the dual boundary complex:
\begin{proposition}[{\cite[Prop.0.2]{Su24}}]\label{prop:dual_boundary_complex_is_motivic}
For any smooth complex quasi-projective variety $X$, we have
\begin{equation}
H^{i-1}(\mathbb{D}\partial X;\mathbb{Z}) \isomorphic H_{\weight,c}^{i,0}(X;\mathbb{Z}),
\end{equation}
where $H_{\weight,c}^{a,b}(X;\mathbb{Z})$ is the \emph{integral singular weight cohomology with compact support} of $X$ defined via motives.
\end{proposition}

\subsubsection{A remove/reduction lemma}
In a case, one can simplify the computation of dual boundary complexes:
\begin{lemma}\cite[Lem.2.3]{Sim16}\label{lem:dual_boundary_complex_remove_lemma}
Let $X$ be a smooth irreducible quasi-projective $\field$-variety, and $Z\subset X$ be a smooth irreducible closed subvariety of smaller dimension with complement $U$. If $N_Z$ is the normal bundle of $Z$ in $X$, then we have a natural homotopy cofiber sequence
\[
\mathbb{D}\partial Z\sim\mathbb{D}\partial\mathbb{P}(N_Z)\rightarrow\mathbb{D}\partial X\sim \mathbb{D}\mathrm{Bl}_Z(X) \rightarrow \mathbb{D}\partial U.
\]
In particular, if $\mathbb{D}\boundary Z\homotopic \pt$, then the natural map $\mathbb{D}\boundary X\rightarrow\mathbb{D}\boundary U$ is a homotopy equivalence.
\end{lemma}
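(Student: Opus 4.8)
\textbf{Proof proposal for Lemma \ref{lem:dual_boundary_complex_remove_lemma}.} The plan is to reduce everything to a single blow-up and then read off the dual boundary complex combinatorially. First I would choose a log compactification $(\overline{X},\partial X)$ of $X$ in which the closure $\overline{Z}$ of $Z$ is itself smooth and meets $\partial X$ very-simple-normal-crossingly; this can always be arranged by Hironaka's resolution (and further blow-ups), and then $(\overline{Z},\partial Z)$ is a log compactification of $Z$, so $\mathbb{D}\partial Z$ is the subcomplex of $\mathbb{D}\partial X$ spanned by the components of $\partial X$ meeting $\overline{Z}$. Next I would form $\widetilde{X}:=\mathrm{Bl}_{\overline{Z}}\overline{X}$, which is again smooth projective; since $Z$ has smaller dimension than $X$, the blow-up is nontrivial with exceptional divisor $E\cong\mathbb{P}(N_{\overline{Z}}\overline{X})$, and $\widetilde{X}$ is a log compactification of $U=X\setminus Z$ whose boundary is the strict transforms of the components of $\partial X$ together with $E$. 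The key point is that $E\to\overline{Z}$ is a projective-space bundle of positive rank, so $E$ is connected and its intersection pattern with the strict transforms of the boundary components mirrors that of $\overline{Z}$: a component $D_i$ of $\partial X$ meets (the strict transform of) $E$ iff $D_i\cap\overline{Z}\neq\emptyset$, i.e. iff the vertex $v_i$ lies in the subcomplex $\mathbb{D}\partial Z$. Hence combinatorially $\mathbb{D}\partial U$ is obtained from $\mathbb{D}\partial X$ by adding one new vertex $v_E$ and coning it off over exactly $\mathbb{D}\partial Z$ (this is where one invokes that the components through $\overline Z$ form the full subcomplex $\mathbb{D}\partial Z$, using "very simple normal crossing"), which gives the pushout/homotopy-cofiber square
\[
\begin{tikzcd}[row sep=1pc,column sep=1.5pc]
\mathbb{D}\partial Z \arrow[r]\arrow[d] & \mathbb{D}\partial X \arrow[d]\\
\mathrm{Cone}(\mathbb{D}\partial Z) \arrow[r] & \mathbb{D}\partial U
\end{tikzcd}
\]
and therefore the homotopy cofiber sequence $\mathbb{D}\partial Z\to\mathbb{D}\partial X\to\mathbb{D}\partial U$ (using that $\mathrm{Cone}(\mathbb{D}\partial Z)\simeq\pt$). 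The identification $\mathbb{D}\partial Z\simeq\mathbb{D}\partial\mathbb{P}(N_Z)$ follows from the product-type computation in \eqref{eqn:dual_boundary_complex_of_a_product} together with the invariance of the dual boundary complex (Proposition \ref{prop:invariance_of_dual_boundary_complexes}), since $\mathbb{P}(N_Z)\to Z$ is a $\mathbb{P}^{r}$-bundle with $r\geq 0$ and $\mathbb{D}\partial\mathbb{P}^r=\emptyset$, so $\mathbb{D}\partial\mathbb{P}(N_Z)\simeq\mathbb{D}\partial Z$.

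For the final assertion I would argue as follows: if $\mathbb{D}\partial Z\simeq\pt$, then in the pushout square above the left vertical map $\mathbb{D}\partial Z\to\mathrm{Cone}(\mathbb{D}\partial Z)$ is a homotopy equivalence of cofibrations (both contractible), so the pushout map $\mathbb{D}\partial X\to\mathbb{D}\partial U$ is a homotopy equivalence by gluing/left-properness of the category of simplicial complexes (CW pushouts). Equivalently, collapsing the contractible subcomplex $\mathrm{Cone}(\mathbb{D}\partial Z)$ in $\mathbb{D}\partial U$ recovers $\mathbb{D}\partial X$ up to homotopy.

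I expect the main obstacle to be the bookkeeping in the blow-up step: one must verify carefully that after arranging $\overline{Z}$ and $\partial X$ to be in sufficiently general position, the strict transforms of the boundary divisors together with $E$ still form a very-simple-normal-crossing divisor, and that the nerve of this configuration is exactly the cone-attachment described — in particular that no new higher simplices appear beyond those forced by $\mathbb{D}\partial Z$. This is the content implicitly borrowed from \cite{Sim16}; the rest is formal once the combinatorial model of $\mathbb{D}\partial U$ is pinned down. The only subtlety with $\mathbb{P}(N_Z)$ is the edge case $r=0$ (codimension-one $Z$), where $\mathbb{P}(N_Z)=Z$ and the blow-up is trivial, so $U=X\setminus Z$ and $\mathbb{D}\partial U$ is $\mathbb{D}\partial X$ with the single extra vertex coned over $\mathbb{D}\partial Z$ directly; the statement still holds verbatim.
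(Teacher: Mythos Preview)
The paper does not give its own proof of this lemma: it is quoted verbatim from \cite[Lem.~2.3]{Sim16} and used as a black box, so there is no ``paper's proof'' to compare against. Your outline is the standard argument (and is essentially what Simpson does): compactify so that $\overline{Z}$ is smooth and meets $\partial X$ transversally in a very simple normal crossing way, blow up $\overline{Z}$, and read the nerve of the new boundary as $\mathbb{D}\partial X$ with a single vertex $v_E$ coned onto the full subcomplex $\mathbb{D}\partial Z$. The pushout square you wrote is exactly the right model, and the ``In particular'' clause follows immediately.

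Two small points worth tightening. First, your justification of $\mathbb{D}\partial\mathbb{P}(N_Z)\simeq\mathbb{D}\partial Z$ via \eqref{eqn:dual_boundary_complex_of_a_product} is not quite licit as stated: that formula is for products, whereas $\mathbb{P}(N_Z)\to Z$ is only a fiber bundle. The cleanest fix is to observe that $E=\mathbb{P}(N_{\overline{Z}}\overline{X})\to\overline{Z}$ is already proper, so $(E,\,E|_{\partial Z})$ is a log compactification of $\mathbb{P}(N_Z)$ whose boundary components are $\mathbb{P}^r$-bundles over the components of $\partial Z$; since a $\mathbb{P}^r$-bundle over a nonempty connected variety is nonempty connected, the incidence combinatorics is identical and $\mathbb{D}\partial\mathbb{P}(N_Z)\cong\mathbb{D}\partial Z$ on the nose. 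Second, in the blow-up bookkeeping you should record explicitly that transversality of $\overline{Z}$ with each stratum $D_{i_1}\cap\cdots\cap D_{i_k}$ forces this stratum to have strictly larger dimension than its intersection with $\overline{Z}$, hence it is never contained in $\overline{Z}$; this is what guarantees that the strict transforms $\widetilde{D}_{i_1}\cap\cdots\cap\widetilde{D}_{i_k}$ are nonempty exactly when $D_{i_1}\cap\cdots\cap D_{i_k}$ are, so no simplices of $\mathbb{D}\partial X$ are lost. You flagged this as the ``main obstacle'' and that is accurate; once it is checked the rest is formal.
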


By an inductive procedure, one may obtain a further simplification:
\begin{lemma}\cite[Prop.2.6]{Sim16}\label{lem:dual_boundary_complex_inductive_remove}
Let $X$ be a smooth irreducible quasi-projective $\field$-variety with a non-empty open subset $U$. If the complement $Z=X\setminus U$ admits a finite decomposition into locally closed smooth subvarieties $Z_j$ such that: $\mathbb{D}\boundary Z_j\homotopic *$; there is a total order on the indices such that $\cup_{j\leq a}Z_j$ is closed for all $a$. Then the natural map $\mathbb{D}\boundary X\rightarrow\mathbb{D}\boundary U$ is a homotopy equivalence.
\end{lemma}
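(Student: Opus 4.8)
\textbf{Proof plan for Lemma \ref{lem:dual_boundary_complex_inductive_remove}.} The plan is to iterate the remove lemma (Lemma \ref{lem:dual_boundary_complex_remove_lemma}) along the given total order on the strata. Fix an enumeration $Z_1, Z_2, \dots, Z_m$ of the locally closed smooth subvarieties with the property that $Z_{\le a} := \bigcup_{j \le a} Z_j$ is closed in $X$ for every $a$; equivalently each $Z_{\ge a} := \bigcup_{j \ge a} Z_j$ is open in $Z$, and $Z_a$ is closed in $Z_{\ge a}$. Set $U_0 = U$ and, for $1 \le a \le m$, put $U_a := U \cup Z_{\le a}$, so that $U_m = X$ and $U_a = U_{a-1} \sqcup Z_a$ with $Z_a$ closed in $U_a$ (because $Z_{\le a}$ is closed in $X$, hence $Z_a = Z_{\le a} \setminus Z_{\le a-1}$ is closed in the open set $U_a = X \setminus Z_{\ge a+1}$). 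Each $U_a$ is a smooth irreducible quasi-projective variety (open in $X$), $U_{a-1}$ is its open complement of the smooth irreducible closed subvariety $Z_a$, and by hypothesis $\mathbb{D}\partial Z_a \sim \ast$.

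The key step is then to apply Lemma \ref{lem:dual_boundary_complex_remove_lemma} to the pair $(U_a, Z_a)$: since $\mathbb{D}\partial Z_a$ is contractible, the conclusion of that lemma gives that the natural map
\[
\mathbb{D}\partial U_a \longrightarrow \mathbb{D}\partial U_{a-1}
\]
is a homotopy equivalence. Composing these equivalences for $a = m, m-1, \dots, 1$ yields a homotopy equivalence $\mathbb{D}\partial X = \mathbb{D}\partial U_m \xrightarrow{\ \sim\ } \mathbb{D}\partial U_0 = \mathbb{D}\partial U$, and one checks along the way that the composite is the natural map induced by the open inclusion $U \hookrightarrow X$ (each step's map is natural, and naturality is compatible with the chain of inclusions $U = U_0 \subset U_1 \subset \cdots \subset U_m = X$). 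This completes the proof.

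The only points requiring a little care are bookkeeping ones rather than genuine obstacles: one must verify that $Z_a$ is genuinely \emph{closed} in the \emph{open} subvariety $U_a$ (not merely in $X$), which is exactly where the hypothesis ``$Z_{\le a}$ closed for all $a$'' is used — it guarantees $Z_{\ge a+1}$ is closed in $X$ so that $U_a = X \setminus Z_{\ge a+1}$ is open, and within it $Z_a$ is the closed locus $Z_{\le a} \cap U_a$. One also needs $U_a$ irreducible, which holds since it is a nonempty open subset of the irreducible variety $X$, and nonempty since it contains $U$. If $Z_a$ happens to be empty for some $a$ the corresponding step is vacuous and may be skipped. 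Thus the ``hard part'' is essentially nonexistent: the lemma is a formal induction on the stratification, with Lemma \ref{lem:dual_boundary_complex_remove_lemma} supplying the inductive step.
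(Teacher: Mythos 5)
Your overall strategy (iterate the remove lemma along the stratification) is the right one, and it is exactly how \cite[Prop.2.6]{Sim16} is proved, but your induction runs in the wrong direction and the intermediate spaces you use are not varieties. You set $U_a := U\cup Z_{\le a} = X\setminus Z_{\ge a+1}$ and assert that the hypothesis ``$Z_{\le a}$ closed for all $a$'' guarantees that $Z_{\ge a+1}$ is closed in $X$, so that $U_a$ is open. This is false, and in fact contradicts your own (correct) earlier remark that $Z_{\ge a+1}=Z\setminus Z_{\le a}$ is \emph{open} in $Z$. Concretely, take $X=\mathbb{A}^2$, $U=X\setminus\{xy=0\}$, $Z_1=\{(0,0)\}$, $Z_2=\{xy=0\}\setminus\{(0,0)\}$: then $Z_{\le 1}$ and $Z_{\le 2}$ are closed as required, but $Z_{\ge 2}=Z_2$ is not closed in $X$, and $U_1=U\cup\{(0,0)\}$ is not open — indeed not even locally closed, so $\mathbb{D}\partial U_1$ is undefined and Lemma \ref{lem:dual_boundary_complex_remove_lemma} cannot be applied to the pair $(U_1,Z_1)$. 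The failure already occurs at your first step $a=m$: $Z_m$ is open in $Z$, not closed in $X=U_m$.

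The fix is to peel strata off $X$ in \emph{increasing} order of the index rather than adjoining them to $U$. Set $X_0:=X$ and $X_a:=X\setminus Z_{\le a}$ for $1\le a\le m$, so $X_m=U$. Each $X_a$ is open in $X$ (complement of the closed set $Z_{\le a}$), hence a smooth irreducible quasi-projective variety, nonempty since it contains $U$; and $Z_{a}=Z_{\le a}\cap X_{a-1}$ is closed in $X_{a-1}$ precisely because $Z_{\le a}$ is closed in $X$, with open complement $X_a$. Applying Lemma \ref{lem:dual_boundary_complex_remove_lemma} to $(X_{a-1},Z_a)$ (after splitting $Z_a$ into its irreducible components if necessary, each of which is closed in the complement of the others and has contractible dual boundary complex) gives that $\mathbb{D}\partial X_{a-1}\rightarrow\mathbb{D}\partial X_a$ is a homotopy equivalence, and composing for $a=1,\dots,m$ yields the claim; the naturality bookkeeping you describe then goes through verbatim for this chain $X=X_0\supset X_1\supset\cdots\supset X_m=U$.
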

\noindent{}This will be our key tool for computing the dual boundary complex of the character variety $\modulispace_{\type}$.

\subsubsection{A fibration over the dual boundary complex}

For completeness, we review ``the fibration at infinity'' associated to any connected smooth quasi-projective $\field$-variety, say $X$.
For $X=\modulispace_B$, such a fibration $\alpha$ appeared in the statement of the geometric P=W conjecture \ref{conj:geometric_P=W}. 

Fix any log compactification $(\overline{X},D=\partial X)$ with $D$ very simple normal crossing. 
Fix a Riemann metric on $\overline{X}$. For $0<\delta\ll 1$, let $N_{\delta}(D)\subset \overline{X}$ be the $\delta$-neighborhood of $D$. 
Let $D_i$, $i\in I=\{1,2,\text{\tiny$\cdots$},m\}$ be the irreducible components of $D$. 
For each $i$, let $N_{\delta}(D_i)\subset \overline{X}$ be the $\delta$-neighborhood of $D_i$, which is a tubular neighborhood of $D_i$. Then $\{N_{\delta}(D_i)\}_{i\in I}$ is an open cover of $N_{\delta}(D)$, and
\begin{equation}\label{eqn:intersection_property_of_open_cover}
\cap_{1\leq j\leq r}N_{\delta}(D_{i_j})\neq\emptyset \Leftrightarrow \cap_{1\leq j\leq r}D_{i_j}\neq\emptyset.
\end{equation}
Now, take any partition of unity $\{\rho_i\}_{i\in I}$ associated to this cover. That is, $\rho_i\in C^{\infty}(N_{\delta}(D))$ s.t.: 
\begin{enumerate}
\item
$0\leq \rho_i \leq 1$ and $\sum_{i\in I}\rho_i=1$.
\item
For each $i$, the support $\mathrm{Supp} \rho_i$ is a compact subset contained in $N_{\delta}(D_i)$.
\end{enumerate}
Then define a map 
\begin{equation*}
\overline{\alpha}=\overline{\alpha}[\{\rho_i\}]: N_{\delta}(D)\rightarrow \mathbb{D}D,\quad \overline{\alpha}(x):=\sum_{i=1}^m\rho_i(x)v_i,
\end{equation*}
where $v_i$ is the \textbf{vertex} of $\mathbb{D}D$ corresponding to $D_i$. By (\ref{eqn:intersection_property_of_open_cover}), the map is indeed well-defined.

\noindent{}\textbf{Claim}: the map $\overline{\alpha}$, up to homotopy, is independent of the choice of partition of unity.

\begin{proof}[Proof of Claim]
Indeed, for any other partition of unity $\{\tilde{\rho}_i\}$, we get a continuous family of partitions of unity $\{\rho_i^t:=(1-t)\rho_i+t\tilde{\rho}_i\}_{i\in I}$, parameterized by $t\in [0,1]$. Apply the above definition, we then obtain a continuous family of maps $\overline{\alpha}_t=\sum\rho_i^t v_i:N_{\delta}(D)\rightarrow \mathbb{D}D$.
\end{proof}

\begin{remark}\label{rem:fibration_at_infinity_via_dual_boundary_complex}
By the \textbf{Claim} and Proposition \ref{prop:invariance_of_dual_boundary_complexes}, the map obtained by composition
\begin{equation}
\alpha: N_{\delta}^*(D)=N_{\delta}(D)\setminus D \hookrightarrow N_{\delta}(D) \xrightarrow[]{\overline{\alpha}} \mathbb{D}\partial X,
\end{equation}
is well-defined up to homotopy.
\end{remark}

\subsection{Dual boundary complexes of very generic character varieties}\label{subsec:dual_boundary_complexes_of_very_generic_character_varieties}

Here's our main result:
\begin{theorem}\label{thm:the_homotopy_type_conjecture_for_very_generic_character_varieties}
The weak geometric P=W conjecture \ref{conj:homotopy_type_conjecture} for very generic character varieties holds:
If $(C_1,\text{\tiny$\cdots$},C_k)\in T^k$ is very generic (Definition \ref{def:generic_monodromy}, Assumption \ref{ass:very_generic_assumption}) of type $\type$, and $\modulispace_{\type}$ is nonempty, then we have a homotopy equivalence
\[
\mathbb{D}\partial\modulispace_{\type}\sim S^{d_{\type}-1},\quad d_{\type}=\dim\modulispace_{\type}.
\]
\end{theorem}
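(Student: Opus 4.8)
The plan is to deduce Theorem \ref{thm:the_homotopy_type_conjecture_for_very_generic_character_varieties} from the cell decomposition (Theorem \ref{thm:cell_decomposition_of_very_generic_character_varieties}) by an inductive application of the remove lemma (Lemma \ref{lem:dual_boundary_complex_inductive_remove}), exactly along the lines sketched in the introduction. Write $d := d_{\type}$. Recall from Theorem \ref{thm:cell_decomposition_of_very_generic_character_varieties} that $\modulispace_{\type} = \bigsqcup_{(\vec{w},p)\in\cW^*}\modulispace_{\type}(\vec{w},p)$ into locally closed smooth affine pieces, with a \emph{unique} open dense piece $\modulispace_{\type}(\vec{w}_{\max},p_{\max})\isomorphic(\field^{\times})^{d}$, all other pieces having $\overline{b}(\vec{w},p)\geq 1$ and being of the form $(\field^{\times})^{\overline{a}(\vec{w},p)}\times\cA_{\type}(\vec{w},p)$ with $\cA_{\type}(\vec{w},p)\times\field^{|U|}\isomorphic\field^{b(\vec{w},p)}$, i.e. $\cA_{\type}(\vec{w},p)$ stably isomorphic to $\mathbb{A}^{\overline{b}(\vec{w},p)}$.

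First I would establish the key contractibility statement referred to in the introduction (Lemma \ref{lem:dual_boundary_complex_of_a_stably_affine_space_is_contractible}): if $\cA$ is a smooth affine variety stably isomorphic to $\mathbb{A}^b$ with $b\geq 1$, then $\mathbb{D}\partial\cA\sim\pt$. This has two ingredients. For the homology: Voevodsky's motive with compact support $\M^c(-)$, hence the motivic weight complex $\weight(-)$ and the Betti weight cohomology $H_{\weight,c}^{\bullet,0}$, is invariant under stable isomorphism (a standard motivic argument: $\M^c(\cA\times\mathbb{A}^1)=\M^c(\cA)(1)[2]$ and cancellation), so $\tilde{H}^{\bullet}(\mathbb{D}\partial\cA,\mathbb{Z})\isomorphic\tilde{H}^{\bullet}(\mathbb{D}\partial\mathbb{A}^b,\mathbb{Z})=0$ via Proposition \ref{prop:weight_cohomology_with_compact_support_vs_dual_boundary_complex}. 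For the fundamental group: since $\dim\cA = b$, one reduces to $b\geq 3$ (the cases $b=1,2$ being handled directly, where $\mathbb{D}\partial\cA$ is simply connected or a circle's worth of data forces it — actually by the Zariski cancellation results in low dimension $\cA\isomorphic\mathbb{A}^b$ outright when $b\leq 2$), and for $b\geq 3$ one invokes Corollary \ref{cor:fundamental_group_for_bulk_vs_dual_boundary_complex}: $\pi_1(\cA)\twoheadrightarrow\pi_1(\mathbb{D}\partial\cA)$, and $\pi_1(\cA)=\pi_1(\mathbb{A}^b)=1$ by stable invariance of $\pi_1$ for smooth affine varieties. Together with $\pi_0(\mathbb{D}\partial\cA)=\pt$ (Corollary \ref{cor:fundamental_group_vs_fundamental_group_at_infinity}), a simply connected complex with vanishing reduced integral homology is contractible. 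Then $\mathbb{D}\partial\modulispace_{\type}(\vec{w},p)\sim\mathbb{D}\partial\bigl((\field^{\times})^{\overline{a}}\times\cA\bigr)\sim\mathbb{D}\partial(\field^{\times})^{\overline{a}}\star\mathbb{D}\partial\cA\sim\pt$ by (\ref{eqn:dual_boundary_complex_of_a_product}), since the join with a contractible complex is contractible.

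Next I would feed this into Lemma \ref{lem:dual_boundary_complex_inductive_remove} with $X = \modulispace_{\type}$, $U = \modulispace_{\type}(\vec{w}_{\max},p_{\max})\isomorphic(\field^{\times})^{d}$, and $Z = X\setminus U = \bigsqcup_{(\vec{w},p)\neq(\vec{w}_{\max},p_{\max})}\modulispace_{\type}(\vec{w},p)$. The lemma requires: each stratum $Z_j := \modulispace_{\type}(\vec{w},p)$ is smooth locally closed with $\mathbb{D}\partial Z_j\sim\pt$ (just shown, since $\overline{b}\geq 1$ on all these strata), and that there is a total order on the index set with all initial unions $\bigcup_{j\leq a}Z_j$ closed. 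The latter is precisely the statement that the cell decomposition admits an \emph{admissible} total order; I would note this follows because the strata $\modulispace_{\type}(\vec{w})$ come from a Bruhat-type stratification (ordered by length/Bruhat order on $\vec{w}$) refined by the braid-variety walk stratification $X_p(\beta)$ of Proposition \ref{prop:cell_decomposition_of_braid_varieties}, each level of which has closed initial unions — this is the notion Definition \ref{def:admissible_total_order} packages, and it is exhibited concretely in the examples of Section \ref{subsec:examples_of_cell_decomposition_of_very_generic_character_varieties}. Conclude $\mathbb{D}\partial\modulispace_{\type}\sim\mathbb{D}\partial(\field^{\times})^{d}\sim S^{d-1}$.

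The main obstacle I anticipate is verifying the existence of the admissible total order on the full stratification of $\modulispace_{\type}$ — i.e. that closures of the strata $\modulispace_{\type}(\vec{w},p)$ are compatible with a single total order — since Lemma \ref{lem:dual_boundary_complex_inductive_remove} genuinely needs all initial unions closed, not merely that each stratum's closure is a union of strata. This requires knowing the closure relations among both the Bruhat pieces $M_B'(\vec{w})$ and, within each, the walk pieces $\tilde{X}_p(\beta)$ coming from the braid-variety stratification; the first is governed by the Bruhat order and the second by the combinatorics of walks (Definition \ref{def:walks}), and one must check these refine consistently. A secondary technical point is the stable invariance of $\pi_1$ for smooth affine varieties used in the contractibility lemma; this is classical (e.g. via the $\mathbb{A}^1$-invariance of $\pi_1^{\et}$ together with comparison, or more elementarily since a Zariski-locally-trivial $\mathbb{A}^1$-bundle over an affine base is trivial and hence an $\mathbb{A}^1$-fibration with affine total space inducing an isomorphism on $\pi_1$), but I would state it carefully.
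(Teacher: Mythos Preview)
Your proposal is correct and follows essentially the same approach as the paper: the cell decomposition of Theorem~\ref{thm:cell_decomposition_of_very_generic_character_varieties}, contractibility of $\mathbb{D}\partial$ of the non-maximal cells via Lemma~\ref{lem:dual_boundary_complex_of_a_stably_affine_space_is_contractible} and the join formula, and the inductive remove lemma~\ref{lem:dual_boundary_complex_inductive_remove} fed by an admissible total order (established in the paper as Corollary~\ref{cor:cell_decomposition_of_character_variety_is_admissible} via Lemmas~\ref{lem:composition_of_admissible_decompositions} and~\ref{lem:cell_decomposition_of_braid_variety_is_admissible}). One small simplification: for $\pi_1(\cA)=1$ you do not need any ``stable invariance of $\pi_1$'' machinery --- since $\cA\times\mathbb{A}^j\isomorphic\mathbb{A}^{\ell+j}$ is contractible and $\cA$ is a topological retract of it, $\cA$ is itself contractible as an analytic space, which is exactly how the paper argues.
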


It relies on the following lemma: 
\begin{lemma}[{\cite[Cor.0.3]{Su24}}]\label{lem:dual_boundary_complex_of_a_stably_affine_space_is_contractible}
If $Y$ is a $\field$-variety stably isomorphic to $\mathbb{A}^{\ell}$, $\ell\geq 1$, then $\mathbb{D}\partial Y$ is contractible.
\end{lemma}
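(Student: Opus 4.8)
\textbf{Proof plan for Lemma \ref{lem:dual_boundary_complex_of_a_stably_affine_space_is_contractible}.}

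The plan is to combine two ingredients: a rational homology (in fact $\mathbb{Z}$-cohomology) vanishing statement for $\mathbb{D}\partial Y$ coming from the stable-isomorphism invariance of weight-graded cohomology, together with a fundamental-group computation coming from Corollary \ref{cor:fundamental_group_for_bulk_vs_dual_boundary_complex}. Since $Y$ is stably isomorphic to $\mathbb{A}^{\ell}$, say $Y\times\mathbb{A}^{m}\isomorphic\mathbb{A}^{\ell+m}$, the variety $Y$ is automatically a smooth connected affine $\field$-variety of dimension $\ell\geq 1$. First I would reduce to the case $\ell\geq 3$: if $\ell=1,2$ then the Zariski cancellation problem is solved affirmatively in these dimensions (\cite{AHE72,Fuj79,MS80,Rus81}), so $Y\isomorphic\mathbb{A}^{\ell}$ and $\mathbb{D}\partial\mathbb{A}^{\ell}\sim *$ by the product formula (\ref{eqn:dual_boundary_complex_of_a_product}), namely $\mathbb{D}\partial(\mathbb{A}^1\times Y')\sim *$. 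So assume $\ell\geq 3$ for the rest.

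Next I would show $\tilde{H}^{\bullet}(\mathbb{D}\partial Y;\mathbb{Z})=0$. The key point is that $\weight(Y)\in K^b(\Chow_{\rat}^{\eff}(\field)^{\op})$, hence the Betti weight cohomology with compact support $H_{\weight,c}^{a,b}(Y,\realization_B;\mathbb{Z})$, is invariant under stable isomorphism: this follows from the homotopy invariance $\M^c(Y\times\mathbb{A}^1)=\M^c(Y)(1)[2]$ for Voevodsky motives with compact support, so that $\M^c(Y\times\mathbb{A}^m)=\M^c(Y)(m)[2m]$, combined with $\M^c(\mathbb{A}^{\ell+m})=\bfL^{\ell+m}=\mathbb{Z}(\ell+m)[2(\ell+m)]$ and cancellation of the Tate twist $(m)[2m]$ in $\DM_{\gm}^{\eff}(\field)$, giving $\M^c(Y)=\bfL^{\ell}$. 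Then $\weight(Y)=\weightcomplex(Y)^-$ has the same class, and in particular the degree-zero weight cohomology $H_{\weight,c}^{a,0}(Y,\realization_B;\mathbb{Z})$ agrees with that of $\mathbb{A}^{\ell}$. By Proposition \ref{prop:weight_cohomology_with_compact_support_vs_dual_boundary_complex}, $H_{\weight,c}^{a,0}(Y,\realization_B)\isomorphic\tilde{H}^{a-1}(\mathbb{D}\partial Y,\mathbb{Z})$, and for $\mathbb{A}^{\ell}$ this is $0$ for all $a$ because $\mathbb{D}\partial\mathbb{A}^{\ell}$ is contractible (or directly: $\M^c(\mathbb{A}^\ell)$ is pure so its weight complex is concentrated in a single internal degree $\ell\neq 0$). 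Hence $\tilde{H}^{\bullet}(\mathbb{D}\partial Y,\mathbb{Z})=0$.

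Then I would pin down $\pi_1$. Since $Y$ is smooth affine connected of dimension $\ell\geq 3$, Corollary \ref{cor:fundamental_group_for_bulk_vs_dual_boundary_complex} gives a surjection $\pi_1(Y)\twoheadrightarrow\pi_1(\mathbb{D}\partial Y)$. But $\pi_1(Y)=\pi_1(Y\times\mathbb{A}^m)=\pi_1(\mathbb{A}^{\ell+m})=1$, since $\mathbb{A}^N(\field)=\mathbb{C}^N$ is contractible and $\pi_1$ of a product is the product of the $\pi_1$'s. Therefore $\pi_1(\mathbb{D}\partial Y)=1$, i.e. $\mathbb{D}\partial Y$ is simply connected. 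A simply connected CW complex with vanishing reduced integral homology is contractible by the Hurewicz theorem and Whitehead's theorem, so $\mathbb{D}\partial Y\sim *$. (Here $\mathbb{D}\partial Y$ is connected: this follows e.g. from $\pi_0(\mathbb{D}\partial Y)\simeq\pi_0^{\infty}(Y)\simeq\pi_0(Y)=\pt$ via (\ref{eqn:pi_0_of_dual_boundary_complex}) and Corollary \ref{cor:fundamental_group_vs_fundamental_group_at_infinity}, or simply from the simple connectivity just established.)

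\textbf{Main obstacle.} The one place requiring care is the stable invariance of $\M^c$ (equivalently of $\weight$) and the cancellation of the Tate twist: I need homotopy invariance for motives with compact support (Lemma \ref{lem:motive_with_compact_support_of_a_smooth_quasi-projective_variety} and the standard properties of $\DM_{\gm}^{\eff}(\field)$, in particular Voevodsky's cancellation theorem ensuring $\bfL^{\otimes -}$ is faithful) to conclude $\M^c(Y)\isomorphic\bfL^{\ell}$ from $\M^c(Y)\otimes\bfL^m\isomorphic\bfL^{\ell+m}$. Granting that, the descent to $H_{\weight,c}^{\bullet,0}$ and then to $\tilde H^{\bullet-1}(\mathbb{D}\partial Y,\mathbb{Z})$ via Proposition \ref{prop:weight_cohomology_with_compact_support_vs_dual_boundary_complex} is formal, and the $\pi_1$ step is immediate. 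Everything else is bookkeeping with the product formula (\ref{eqn:dual_boundary_complex_of_a_product}) and elementary homotopy theory.
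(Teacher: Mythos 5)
Your proposal is correct and follows essentially the same route as the paper's proof: reduce to $\ell\geq 3$ using the solved Zariski cancellation problem in dimensions $\leq 2$, deduce $\tilde{H}^{\bullet}(\mathbb{D}\partial Y,\mathbb{Z})=0$ from the stable-isomorphism invariance of $\M^c$ (via Voevodsky cancellation) together with Proposition \ref{prop:weight_cohomology_with_compact_support_vs_dual_boundary_complex}, kill $\pi_1(\mathbb{D}\partial Y)$ via Corollary \ref{cor:fundamental_group_for_bulk_vs_dual_boundary_complex} and the contractibility of $Y$, and conclude with Hurewicz and Whitehead. The only cosmetic difference is that you re-derive the content of Lemma \ref{lem:stably_invariance_of_motives_with_compact_support} inline rather than citing it.
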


\begin{proof}[Proof of Theorem \ref{thm:the_homotopy_type_conjecture_for_very_generic_character_varieties}]
By Theorem \ref{thm:cell_decomposition_of_very_generic_character_varieties}, we get a decomposition into locally closed subvarieties
\begin{eqnarray*}
&&\modulispace_{\type}=\text{\tiny$\sqcup_{\vec{w}\in W^{2g}\times \prod_{i=1}^{k-1}W/W(C_i)}$}\modulispace_{\type}(\vec{w})
=\text{\tiny$\sqcup_{\vec{w}\in W^{2g}\times\prod_{i=1}^{k-1}W/W(C_i)}$}\text{\tiny$\sqcup_{p\in\cW^*(\beta(\vec{w}))}$} \modulispace_{\type}(\vec{w},p),\\
&&\modulispace_{\type}(\vec{w},p)\isomorphic (\field^{\times})^{\overline{a}(\vec{w},p)}\times \cA_{\type}(\vec{w},p),\quad \cA_{\type}(\vec{w},p)\times \field^{|U|}\isomorphic \field^{b(\vec{w},p)},
\end{eqnarray*}
and $\dim\modulispace_{\type}(\vec{w},p)=d_{\type}$ if and only if $(\vec{w},p)=(\vec{w}_{\max},p_{\max})$, that is, 
$\overline{a}(\vec{w},p)=d_{\type}$, i.e., $\overline{b}(\vec{w},p)=\dim\cA_{\type}(\vec{w},p)=0$. 
Thus, for any $(\vec{w},p)\neq (\vec{w}_{\max},p_{\max})$, by (\ref{eqn:dual_boundary_complex_of_a_product}) and Lemma \ref{lem:dual_boundary_complex_of_a_stably_affine_space_is_contractible}, we have
\[
\mathbb{D}\partial\modulispace_{\type}(\vec{w},p)\sim \mathbb{D}\partial(\field^{\times})^{\overline{a}(\vec{w},p)} \star \mathbb{D}\partial \cA_{\type}(\vec{w},p) \sim \mathbb{D}\partial(\field^{\times})^{\overline{a}(\vec{w},p)} \star\pt\sim \pt.
\]
To apply Lemma \ref{lem:dual_boundary_complex_inductive_remove}, it remains to produce an \emph{admissible total order} (Definition \ref{def:admissible_total_order}) on
\begin{equation}
\cW^*:=\{(\vec{w},p):\vec{w}\in W^{2g}\times\text{\tiny$\prod_{i=1}^{k-1}$}W/W(C_i),p\in\cW^*(\beta(\vec{w}))\}.
\end{equation}
This is done in Corollary \ref{cor:cell_decomposition_of_character_variety_is_admissible} below. Thus, by Lemma \ref{lem:dual_boundary_complex_inductive_remove}, with
$X=\modulispace_{\type}$ and $U=\modulispace_{\type}(\vec{w}_m,p_m)=(\field^{\times})^{d_{\type}}$, we get a homotopy equivalence
$\mathbb{D}\partial\modulispace_{\type}\xrightarrow[]{\sim} \mathbb{D}\partial\modulispace_{\type}(\vec{w}_m,p_m) \sim S^{d_{\type}-1}$,
as desired.
\end{proof}

To finish the proof of our main theorem, we're left with the question of admissible total orders.
\begin{definition}\label{def:admissible_total_order}
Let $X=\sqcup_{i\in I}Z_i$ be a finite decomposition of a $\field$-variety into locally closed subvarieties.
We say that a total order $\leq$ on $I$ is \emph{admissible}, if
\[
Z_{\leq a}:=\cup_{i\in I:i\leq a} Z_i\subset X
\]
is closed, all $a\in I$. In particular, for the maximal index $m\in I$, $U:=Z_m\subset X$ is open.\\
In this case, we call $(X=\sqcup_{i\in I}Z_i,\leq)$ an \emph{admissible decomposition}.
\end{definition}
For simplicity, we also denote
\[
I_{\leq a}:=\{i\in I: i\leq a\},\quad I_{<a}:=\{i\in I: i<a\},\quad Z_{<a}:=\cup_{i\in I_{<a}} Z_i.
\]
Observe that $I_{<i}=I_{\leq m_{<i}}$, where $m_{<i}$ is the maximal element of $I_{<i}$. 
It follow by definition that, $Z_{<i}\subset Z_{\leq i} \subset X$ are two closed subsets. 
Hence, the complement $Z_i=Z_{\leq i}\setminus Z_{<i}\subset Z_{\leq i}$ is \emph{open}.

\begin{lemma}\label{lem:composition_of_admissible_decompositions}
Let $(X=\text{\tiny$\sqcup_{i\in I}$}Z_i,\leq)$ be an admissible decomposition. Suppose that, for each $i\in I$, we have an admissible decomposition $(Z_i=\text{\tiny$\sqcup_{j\in J_i}$}Z_{i,j},\leq)$. Denote
$\tilde{I}:=\{(i,j):i\in I, j\in I_i\}\isomorphic \text{\tiny$\sqcup_{i\in I}$}J_i$, and define a total order on $\tilde{I}$ by
\[
(i,j)\leq (i',j') \Leftrightarrow i<i', \text{ or } i=i' \text{ and } j\leq j'.
\]
Then $(X=\text{\tiny$\sqcup_{(i,j)\in\tilde{I}}$}Z_{i,j},\leq)$ is an admissible decomposition.
\end{lemma}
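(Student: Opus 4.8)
The plan is to prove Lemma \ref{lem:composition_of_admissible_decompositions} by a direct verification from the definitions, unwinding the total order on $\tilde{I}$ and checking that every initial segment $Z_{\leq(i,j)}$ is closed in $X$. The key observation is that the initial segments of $\tilde{I}$ decompose into a union of "full" blocks $Z_{i'}$ for $i'<i$ together with a partial block $Z_{\leq j}\subset Z_i$ (an initial segment of the inner decomposition of $Z_i$). Concretely, for $(i,j)\in\tilde I$ I would write
\[
Z_{\leq(i,j)}=Z_{<i}\ \sqcup\ \bigl(\textstyle\bigcup_{j'\in (J_i)_{\leq j}}Z_{i,j'}\bigr)=Z_{<i}\ \sqcup\ (Z_i)_{\leq j},
\]
where $Z_{<i}=\bigcup_{i'<i}Z_{i'}$ and $(Z_i)_{\leq j}$ denotes the initial segment of the inner admissible decomposition of $Z_i$.

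First I would record that, since $(X=\sqcup_{i\in I}Z_i,\leq)$ is admissible, $Z_{<i}$ and $Z_{\leq i}$ are both closed in $X$, and hence $Z_i=Z_{\leq i}\setminus Z_{<i}$ is open in the closed subvariety $Z_{\leq i}$ (as already noted in the excerpt following Definition \ref{def:admissible_total_order}). Next, since $(Z_i=\sqcup_{j\in J_i}Z_{i,j},\leq)$ is admissible, the initial segment $(Z_i)_{\leq j}$ is closed in $Z_i$. The only subtlety is that "closed in $Z_i$" is not immediately "closed in $X$", because $Z_i$ is only locally closed. To handle this I would argue: $(Z_i)_{\leq j}$ is closed in $Z_i$, and its closure $\overline{(Z_i)_{\leq j}}$ in $X$ is contained in $\overline{Z_i}\subseteq Z_{\leq i}$; moreover $\overline{(Z_i)_{\leq j}}\cap Z_i=(Z_i)_{\leq j}$ since $(Z_i)_{\leq j}$ is already closed in $Z_i$ and $Z_i$ is open in $Z_{\leq i}$. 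Therefore $\overline{(Z_i)_{\leq j}}\setminus(Z_i)_{\leq j}\subseteq Z_{\leq i}\setminus Z_i=Z_{<i}$, which gives
\[
Z_{\leq(i,j)}=Z_{<i}\cup(Z_i)_{\leq j}=Z_{<i}\cup\overline{(Z_i)_{\leq j}},
\]
a union of two closed subsets of $X$, hence closed. The case $j$ maximal in $J_i$ recovers $Z_{\leq(i,j)}=Z_{\leq i}$, consistent with the hypothesis. Finally I would check that $\tilde I\cong\sqcup_{i\in I}J_i$ indexes a genuine decomposition of $X$ into locally closed subvarieties (each $Z_{i,j}$ is locally closed in $Z_i$, which is locally closed in $X$, hence locally closed in $X$), and that the stated order on $\tilde I$ is a total order — both routine.

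I do not expect a serious obstacle here; the statement is essentially bookkeeping. The one point requiring a little care is precisely the "closed in $Z_i$ versus closed in $X$" issue above, which is where the hypothesis that $Z_i$ is \emph{open} in $Z_{\leq i}$ (a consequence of admissibility of the outer decomposition) is used in an essential way. Once that is cleanly stated, the proof is a two-line union-of-closed-sets argument. This lemma is then applied in the proof of Theorem \ref{thm:the_homotopy_type_conjecture_for_very_generic_character_varieties} to combine the outer admissible order on $\{\vec w\}$ with the inner admissible orders on $\{p\in\cW^*(\beta(\vec w))\}$ to produce an admissible total order on $\cW^*$ (see Corollary \ref{cor:cell_decomposition_of_character_variety_is_admissible}).
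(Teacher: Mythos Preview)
Your proposal is correct and follows essentially the same approach as the paper: both decompose $Z_{\leq(i,j)}=Z_{<i}\cup(Z_i)_{\leq j}$ and use that $Z_i$ is open in the closed set $Z_{\leq i}$ to upgrade ``$(Z_i)_{\leq j}$ closed in $Z_i$'' to ``$Z_{\leq(i,j)}$ closed in $X$''. The only cosmetic difference is that the paper argues via the complement (showing $Z_{\leq i}\setminus Z_{\leq(i,j)}=Z_i\setminus(Z_i)_{\leq j}$ is open in $Z_{\leq i}$), whereas you argue via the closure (showing $\overline{(Z_i)_{\leq j}}\subseteq Z_{<i}\cup(Z_i)_{\leq j}$); these are equivalent point-set manipulations.
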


\begin{proof}
Clearly, we have a decomposition of $X$ into locally closed subvarieties $X=\text{\tiny$\sqcup_{(i,j)\in\tilde{I}}$}Z_{i,j}$.
It suffices to show that the total order $\leq$ on $\tilde{I}$ is admissible.
Indeed, we have
\[
Z_{\leq(i,j)}=Z_{<i}\cup(Z_i)_{\leq j}\hookrightarrow Z_{<i}\cup Z_i=Z_{\leq i}\hookrightarrow X.
\]
By assumption, $Z_{\leq i}\subset X$ and $(Z_i)_{\leq j}\subset Z_i$ are closed. By the observation above, the composition
\[
Z_{\leq i}\setminus Z_{\leq i,j}=Z_i\setminus (Z_i)_{\leq j}\subset Z_i\subset Z_{\leq i}
\]
is then open. So, the complement $Z_{\leq i,j}$ is closed in $Z_{\leq i}$, hence also closed in $X$. Done.
\end{proof}

Next, consider the Bruhat cell decomposition $G=\text{\tiny$\sqcup_{\dot{w}\in W/W(P)}$} B\dot{w}P$, where $W(P)$ is the Weyl group of a Levi subgroup of $P$.
Recall that there is a Bruhat partial order on $W/W(p)$: $\dot{\lambda}\leq\dot{\mu}$ if and only if $B\dot{\lambda}P\subset\overline{B\dot{\mu}P}$. 
That is, $\overline{B\dot{\mu}P}=\text{\tiny$\sqcup_{\dot{\lambda}\leq\dot{\mu}}$}B\dot{\lambda}P$.
It follows that any total order extending the Bruhat partial order is admissible. From now on, we alway fix such an extension.

Let $\beta\in\Br_n^+$ be a $n$-strand positive braid with a braid presentation $\beta=\sigma_{i_{\ell}}\circ\text{\tiny$\cdots$}\sigma_{i_1}$ as usual. Recall that the braid variety $X(\beta)$ has a $B$-equivariant decomposition (\ref{eqn:cell_decomposition_of_braid_varieties}):
\[
X(\beta)=\text{\tiny$\sqcup_{p\in\cW(\beta)}$}X_p(\beta).
\]
\begin{lemma}\label{lem:cell_decomposition_of_braid_variety_is_admissible}
There exists a natural admissible total order on $\cW(\beta)$.
\end{lemma}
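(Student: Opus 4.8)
The plan is to build the admissible total order on $\cW(\beta)$ by refining the natural stratification of $X(\beta)$ coming from the Bruhat stratifications of $G$ at each position of $\beta$. Recall that for each $0\leq m\leq\ell$ we have the map $p\colon\mathbb{A}^\ell\to W^{\ell+1}$, $\vec\epsilon\mapsto(p_\ell,\dots,p_0)$ recording $\mB_{i_m}(\epsilon_m)\cdots\mB_{i_1}(\epsilon_1)\in Bp_mB$, and a walk $p\in\cW(\beta)$ is a legal value of this map with $p_0=p_\ell=\id$. Concretely, the locus where $(p_\ell,\dots,p_0)$ is a \emph{prescribed} tuple is obtained, stratum by stratum, by intersecting with Bruhat cells $Bp_mB$; since $\overline{B w B}=\bigsqcup_{v\leq w}BvB$ in the Bruhat order, fixing any total order on $W$ extending the Bruhat order makes each such ``Bruhat slicing'' admissible. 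So the first step is to make this precise: view $X_p(\beta)$ as a locally closed subvariety cut out by fixing $p_m$ at every position, and observe that the closure $\overline{X_p(\beta)}$ inside $X(\beta)$ is contained in $\bigsqcup_{p'}X_{p'}(\beta)$ where $p'$ ranges over walks with $p'_m\leq p_m$ in the Bruhat order for all $m$.

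Second, I would define the order on $\cW(\beta)$ itself. The naive guess — compare walks position-by-position in the Bruhat order, lexicographically or componentwise — needs a little care because two walks need not be comparable componentwise. The clean fix is to use the dimension/stay count together with a lexicographic tie-break: order first by a well-chosen invariant (e.g. the walk that goes ``up as early and as often as possible'' dominates, or equivalently order by the sequence $(p_1,p_2,\dots,p_{\ell-1})$ read left-to-right and compared in the fixed total order extending Bruhat). Then verify that $Z_{\leq p}:=\bigcup_{p'\leq p}X_{p'}(\beta)$ is closed: this follows from the closure containment in the first step, since any $p'$ appearing in $\overline{X_p(\beta)}$ has $p'_m\leq p_m$ for all $m$, hence $p'\leq p$ in the chosen order. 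A convenient way to organize this is to decompose $X(\beta)$ iteratively over the positions $m=\ell,\ell-1,\dots,1$: at position $m$ the coordinate $\epsilon_m$ (equivalently the move type go-up/stay/go-down together with the value of $p_m$) gives an admissible decomposition of each stratum-so-far (the stay locus being the $\field^\times$-locus $\{\epsilon'_m\neq 0\}$, hence the closed condition is ``$\epsilon'_m=0$ or a smaller Bruhat cell''), and then one applies Lemma~\ref{lem:composition_of_admissible_decompositions} repeatedly to compose these into a single admissible decomposition of $X(\beta)$ indexed by $\cW(\beta)$.

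The third step is bookkeeping: check that the order so produced really is a total order on $\cW(\beta)$ (not merely a partial one) — this is where the lexicographic tie-break does its job — and that the maximal element is the ``all go-up then all go-down'' walk whose stratum $X_{p_{\max}}(\beta)\cong(\field^\times)^{|S_{p_{\max}}|}$ is open dense, matching Proposition~\ref{prop:cell_decomposition_of_braid_varieties}. One should also confirm $B$-equivariance is not disturbed — it is automatic since $B$ acts within each $X_p(\beta)$.

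The main obstacle I anticipate is purely the closure analysis in the first step: one must show that degenerating a parameter $\epsilon_m$ (say to $0$, or off to ``infinity'' in the sense of leaving one Bruhat cell for a smaller one) changes the walk only by lowering some $p_m$'s in the Bruhat order, never raising one, so that the target of any limit is $\leq p$. This is a local, position-by-position statement about how $\mB_{i_m}(\epsilon_m)$ interacts with Bruhat cells, and should reduce to the elementary fact that $\ms_{i_m}\cdot(BwB)$ lands in $\overline{B\ms_{i_m}wB}\cup\overline{BwB}$; propagating it through all $\ell$ positions and matching it against the chosen total order is the only genuinely delicate part. Everything else is an application of Lemma~\ref{lem:composition_of_admissible_decompositions} together with the admissibility of Bruhat order extensions already noted in the excerpt.
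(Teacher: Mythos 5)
Your proposal is correct and is essentially the paper's proof: slice $X(\beta)$ position by position into Bruhat strata via the partial-product maps $\overline{f}_m$, use a total order extending the Bruhat order at each position, and compose the resulting admissible decompositions (Lemma~\ref{lem:composition_of_admissible_decompositions}) to get the lexicographic order on $\cW(\beta)$. The only remark worth making is that the step you flag as the one genuinely delicate point is in fact immediate: since $\overline{f}_m$ is a morphism and $\overline{Bp_mB}=\sqcup_{v\leq p_m}BvB$ is closed, any point of $\overline{X_p(\beta)}$ automatically has $p'_m\leq p_m$ for all $m$, with no local analysis of how $\mB_{i_m}(\epsilon_m)$ degenerates required.
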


\begin{proof}
For each $p=(p_{\ell}=\id,\text{\tiny$\cdots$},p_1,p_0=\id)\in\cW(\beta)$, define locally closed subvarieties of $X(\beta)$:
\begin{equation}
X_{(p_i,\text{\tiny$\cdots$},p_0)}(\beta)=\cap_{1\leq j\leq i}\overline{f}_j^{-1}(Bp_jB);\quad \overline{f}_j:X(\beta)\rightarrow G: \vec{\epsilon}=(\epsilon_i)_{i=\ell}^1\mapsto \mB_{i_j}(\epsilon_j)\text{\tiny$\cdots$}\mB_{i_1}(\epsilon_1).
\end{equation}
So, $X_{(p_i,\text{\tiny$\cdots$},p_0)}(\beta)=\sqcup_{p_{i+1}\in W}X_{(p_{i+1},\text{\tiny$\cdots$},p_0)}(\beta)$.
Then by the Bruhat cell decomposition, have
\[
X_{(\leq p_{i+1}, p_i,\text{\tiny$\cdots$},p_0)}(\beta)=\cup_{w\leq p_{i+1}} X_{(p_i,\text{\tiny$\cdots$},p_0)}(\beta)\cap \overline{f}_{i+1}^{-1}(\overline{BwB}),
\]
hence is closed in $X_{(p_i,\text{\tiny$\cdots$},p_0)}(\beta)$. 
In other words, the Bruhat total order on 
\[
W_{(p_i,\text{\tiny$\cdots$},p_0)}:=\{p_{i+1}\in W: X_{(p_{i+1},\text{\tiny$\cdots$},p_0)}(\beta)\neq\emptyset\}\subset W
\]
is admissible. Then by induction, Lemma \ref{lem:composition_of_admissible_decompositions} induces an admissible total order $\leq$ on $\cW(\beta)$.
\end{proof}

Finally, as promised in the proof of Theorem \ref{thm:the_homotopy_type_conjecture_for_very_generic_character_varieties}, we obtain
\begin{corollary}\label{cor:cell_decomposition_of_character_variety_is_admissible}
There is a natural admissible total order on the cell decomposition:
\[
\modulispace_{\type}=\sqcup_{(\vec{w},p)\in \cW^*} \modulispace_{\type}(\vec{w},p),
\]
such that $(\vec{w}_m,p_m)$ is the maximal index.
\end{corollary}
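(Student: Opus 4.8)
The plan is to combine the two admissibility results already established with the structure of the cell decomposition of $\modulispace_{\type}$ and its pieces. Recall from Theorem \ref{thm:cell_decomposition_of_very_generic_character_varieties} and its proof that
\[
\modulispace_{\type}=\sqcup_{\vec{w}\in W^{2g}\times\prod_{i=1}^{k-1}W/W(C_i)}\modulispace_{\type}(\vec{w}),\qquad
\modulispace_{\type}(\vec{w})=\sqcup_{p\in\cW^*(\beta(\vec{w}))}\modulispace_{\type}(\vec{w},p).
\]
First I would check that the outer decomposition indexed by $\vec{w}\in W^{2g}\times\prod_{i=1}^{k-1}W/W(C_i)$ admits an admissible total order: by (\ref{eqn:Bruhat_cell_of_character_variety}) the cell $\modulispace_{\type}(\vec{w})$ is the image under the principal bundle $\pi':M_B'\to\modulispace_{\type}$ of $M_B'(\vec{w})=M_B\cap(\prod_j B\tau_jB\times\prod_i B\dot w_iP_i\times U)$, and since $\pi'$ is a (fppc) quotient map it takes closed invariant sets to closed sets, so the closures of Bruhat cells in each factor $B\tau_jB$ (resp. $B\dot w_iP_i$) pull back to give $\overline{\modulispace_{\type}(\vec{w})}=\sqcup_{\vec{w}'\leq\vec{w}}\modulispace_{\type}(\vec{w}')$ for the product Bruhat order on $W^{2g}\times\prod_i W/W(C_i)$. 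Hence any total order extending this product Bruhat partial order is admissible for the outer decomposition, exactly as in the paragraph preceding Lemma \ref{lem:cell_decomposition_of_braid_variety_is_admissible}.

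Next I would produce, for each fixed $\vec{w}$, an admissible total order on the inner decomposition of $\modulispace_{\type}(\vec{w})$ by $\cW^*(\beta(\vec{w}))$. By (\ref{eqn:Bruhat_cell_of_character_variety_via_geometric_quotient_by_PB}) we have $\modulispace_{\type}(\vec{w})=M_B''(\vec{w})/PB$ with $\pi_{\vec{w}}''$ a principal $PB$-bundle, and by Proposition \ref{prop:B-equivariant_cell_decomposition_of_character_variety} the $B$-equivariant decomposition $M_B''(\vec{w})=\sqcup_{p}M_B''(\vec{w},p)$ comes from the $B$-equivariant decomposition $\tilde X(\beta)=\sqcup_p\tilde X_p(\beta)$, which in turn is the restriction to $\tilde X(\beta)\subset T^{2g}\times X(\beta)$ of the walk decomposition $X(\beta)=\sqcup_p X_p(\beta)$. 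Lemma \ref{lem:cell_decomposition_of_braid_variety_is_admissible} gives a natural admissible total order on $\cW(\beta)$; restricting it to the subset $\cW^*(\beta)\subset\cW(\beta)$ keeps it admissible, because intersecting an admissible filtration with a fixed locally closed subset preserves the "closed increasing unions'' property (here the subset $\tilde X(\beta)\cap(T^{2g}\times\bigsqcup_{p\in\cW^*}X_p(\beta))$ is all of $\tilde X(\beta)$, since $\tilde X_p(\beta)=\emptyset$ for $p\notin\cW^*(\beta)$ by Definition \ref{def:admissible_walks}). Passing this filtration through the principal $PB$-bundle $\pi_{\vec{w}}''$ (again a quotient map, carrying closed invariant sets to closed sets, as in the proof of Lemma \ref{lem:cell_decomposition_of_braid_variety_is_admissible} via Corollary \ref{cor:base_change_of_principal_bundles}) yields an admissible total order on $\{\modulispace_{\type}(\vec{w},p):p\in\cW^*(\beta(\vec{w}))\}$.

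Finally I would glue these via Lemma \ref{lem:composition_of_admissible_decompositions}: with the outer index set $I=W^{2g}\times\prod_i W/W(C_i)$ ordered as in the first paragraph and each inner index set $J_{\vec{w}}=\cW^*(\beta(\vec{w}))$ ordered as in the second, the lexicographic order on $\tilde I=\{(\vec{w},p)\}=\cW^*$ is admissible for $\modulispace_{\type}=\sqcup_{(\vec{w},p)\in\cW^*}\modulispace_{\type}(\vec{w},p)$. It remains to arrange that $(\vec{w}_{\max},p_{\max})$ is the maximal index; since by Theorem \ref{thm:cell_decomposition_of_very_generic_character_varieties}(2) the cell $\modulispace_{\type}(\vec{w}_{\max},p_{\max})$ is the unique one of dimension $d_{\type}$, it is open dense in $\modulispace_{\type}$, hence its complement $\sqcup_{(\vec{w},p)\neq(\vec{w}_{\max},p_{\max})}\modulispace_{\type}(\vec{w},p)$ is closed; so one may simply refine the lexicographic order (or permute the top block) to put $(\vec{w}_{\max},p_{\max})$ last — equivalently, note that an open dense stratum is automatically the top element of any admissible order, and if the construction above does not already place it there one modifies the chosen total order extending the relevant Bruhat orders to do so. The only mildly delicate point, and the one I expect to need the most care, is the bookkeeping that the principal-bundle quotient maps $\pi'$, $\pi_{\vec{w}}''$ really do send the ambient closed increasing unions to closed increasing unions compatibly with base change; this is routine given Corollary \ref{cor:base_change_of_principal_bundles} and the fact that $PB$-orbit closures are preserved, but it is where the argument has to be written out rather than asserted.
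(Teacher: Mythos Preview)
Your proposal is correct and follows essentially the same route as the paper's proof: admissibility of the outer $\vec{w}$-decomposition via the (product) Bruhat order, admissibility of each inner $p$-decomposition via Lemma~\ref{lem:cell_decomposition_of_braid_variety_is_admissible} restricted to $\tilde X(\beta(\vec w))$, and then gluing by Lemma~\ref{lem:composition_of_admissible_decompositions}. One small simplification: your final hedge about possibly needing to ``modify the order'' to put $(\vec w_{\max},p_{\max})$ on top is unnecessary --- an open \emph{dense} stratum is forced to be the maximal index in \emph{any} admissible order, since otherwise $Z_{\le(\vec w_{\max},p_{\max})}$ would be a proper closed subset containing a dense open.
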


\begin{proof}
This is more or less a consequence of Lemma \ref{lem:cell_decomposition_of_braid_variety_is_admissible}, and the argument is similar. 

Firstly, consider the decomposition
\[
\modulispace_{\type}=\text{\tiny$\sqcup_{\vec{w}\in W^{2g}\times\prod_{i=1}^{k-1}W/W(C_i)}$} \modulispace_{\type}(\vec{w}).
\]
Let's show that it's \emph{admissible}: there exists an admissible total order on $W^{2g}\times\text{\tiny$\prod_{i=1}^{k-1}$}W/W(C_i)$.
Equivalently by definition (\ref{eqn:Bruhat_cell_of_character_variety}) and (\ref{eqn:Equivariant_Bruhat_cell_of_character_variety}), it means the equivariant decomposition 
\[
M_B'=\text{\tiny$\sqcup_{\vec{w}\in W^{2g}\times\prod_{i=1}^{k-1}W/W(C_i)}$} M_B'(\vec{w})
\]
is admissible. 
Indeed, similar to the proof of Lemma \ref{lem:cell_decomposition_of_braid_variety_is_admissible}, by Lemma \ref{lem:composition_of_admissible_decompositions}, we obtain an admissible total order on 
$W^{2g}\times\text{\tiny$\prod_{i=1}^{k-1}$}W/W(C_i)$ as the compositions of the total Bruhat orders on $G=\text{\tiny$\sqcup_{w\in W}$}BwB$ and
$G=\text{\tiny$\sqcup_{\dot{w}_i\in W/W(C_i)}$}B\dot{w}_iP_i$.

Now, again by Lemma \ref{lem:composition_of_admissible_decompositions}, it suffices to show that the decomposition
\[
\modulispace_{\type}(\vec{w})=\text{\tiny$\sqcup_{p\in\cW^*(\beta(\vec{w}))}$}\modulispace_{\type}(\vec{w},p)
\]
is admissible. By definition (\ref{eqn:cell_of_character_variety}), it suffices to show that the equivariant decomposition
\[
M_B''(\vec{w})=\text{\tiny$\sqcup_{p\in\cW^*(\beta(\vec{w}))}$} M_B''(\vec{w},p)
\]
in Proposition \ref{prop:B-equivariant_cell_decomposition_of_character_variety} is admissible. 
By (\ref{eqn:M_B''(w)_via_twisted_braid_variety}), it amounts to show that the equivariant decomposition
\[
\tilde{X}(\beta(\vec{w}))=\text{\tiny$\sqcup_{p\in\cW^*(\beta(\vec{w}))}$} \tilde{X}_p(\beta(\vec{w}))
\]
defined by (\ref{eqn:restricted_twisted_braid_variety}), (\ref{eqn:cell_of_restricted_twisted_braid_variety}) is admissible.
By definition, this follows from Lemma \ref{lem:cell_decomposition_of_braid_variety_is_admissible}. 
\end{proof}

\begin{remark}\label{rem:weak_P=W_for_generic_character_varieties}
If $\modulispace_{\type}$ is only generic, a weaker result holds: 
$\mathbb{D}\partial\modulispace_{\type}$ is a rational homology $(d_{\type}-1)$-sphere.
This can be shown by a completely different argument using the curious hard Lefschetz property \cite[Thm.1.5.3]{Mel19}. For example, see \cite[Rmk.7.0.7]{MMS22}.
\end{remark}

\subsection{Further directions}\label{subsec:further_directions}

To establish the weak geometric P=W conjecture \ref{conj:homotopy_type_conjecture} for generic but not necessarily very generic character varieties $\modulispace_{\type}$, 
we are left with two open problems: $(I)$. $H^*(\mathbb{D}\partial\modulispace_{\type};\mathbb{Z})$ is torsion-free; $(II)$. $\pi_1(\mathbb{D}\partial\modulispace_{\type})=1$.
We give some comments.

\begin{lemma}\cite[Lem.3.1]{Su24}\label{lem:fundamental_group_for_bulk_vs_dual_boundary_complex},
Let $X$ be a smooth connected affine algebraic variety of dimension $\geq 3$, with any log compactification $(\overline{X},D=\overline{X}-X)$. Then $\mathbb{D}\partial X$ is connected and we have a natural surjection
\[
\pi_1(X)\twoheadrightarrow \pi_1(\overline{X})\simeq \pi_1(D)\twoheadrightarrow \pi_1(\mathbb{D}\partial X).
\]
\end{lemma}

Assuming $(I)$, observe that $(II)$ is more or less manageable.
If $\dim\modulispace_{\type}>2$, by Remark \ref{rem:weak_P=W_for_generic_character_varieties}, $(I)$, and Lemma \ref{lem:fundamental_group_for_bulk_vs_dual_boundary_complex},
$(II)$ holds once we know that \emph{$\pi_1(\modulispace_{\type})$ is abelian}, which is expected to be the case.
If $\modulispace_{\type}$ is very generic, by Theorem \ref{thm:cell_decomposition_of_very_generic_character_varieties}, $\modulispace_{\type}$ contains an open dense algebraic torus $(\field^{\times})^{d_{\type}}$, which induces a surjection $\pi_1((\field^{\times})^{d_{\type}})=\mathbb{Z}^{d_{\type}}\twoheadrightarrow \pi_1(\modulispace_{\type})$. This confirms that $\pi_1(\modulispace_{\type})$ is abelian. If $\modulispace_{\type}$ is only generic, we expect that $\modulispace_{\type}$ still contains an open dense algebraic torus, then the same argument applies.

Alternatively, we consider a case when $\modulispace_{\type}$ is only generic.
Let $\cN(n,c_1)$ be the moduli space of stable rank $n$ holomorphic bundles of degree $c_1$ on a Riemann surface $\Sigma_g$ of genus $g$. 
In \cite[Thm.3.1]{DU95}, the Yang-Mills functional was used as a Morse-Bott function to show that $\pi_1(\cN(n,c_1))$ is abelian, if $n,c_1$ are coprime and $(g,n)\neq (2,2)$.
In this case, $T^*\cN(n,c_1)$ is open dense in $\modulispace_{\Dol}(n,c_1)$, the Dolbeault moduli space of stable rank $n$ Higgs bundles of degree $c_1$ on $\Sigma_g$. 
So, we obtain a surjection $\pi_1(\cN(n,c_1))\simeq \pi_1(T^*\cN(n,c_1))\twoheadrightarrow \pi_1(\modulispace_{\Dol}(n,c_1))$, and hence $\pi_1(\modulispace_{\Dol}(n,c_1))$ is also abelian.
Under $\NAH$, we get a diffemorphism $\modulispace_{\Dol}(n,c_1)\simeq \modulispace_B(n,c_1)$, with $\modulispace_B(n,c_1)=\modulispace_{\type}$, $k=1,\type=((n))$, and $C_1=\exp(-\frac{2\pi i c_1}{n})$. 
Then, $\pi_1(\modulispace_B(n,c_1))$ and $\pi_1(\overline{\modulispace}_B(n,c_1))$ are abelian. See also \cite[Prop.6.30]{FM22}.
We expect that such an argument works more generally.

Now, we speculate on $(I)$. By Proposition \ref{prop:dual_boundary_complex_is_motivic}, $(I)$ amounts to part of the weight cohomology with compact support $H_{\weight,c}^{a,0}(\modulispace_{\type};\mathbb{Z})$ being torsion-free. We refer to \cite[\S1]{Su24} for a quick review on weight cohomology with compact support.
Indeed, examples suggest that the following much more general statement should hold for all generic character varieties:

\begin{enumerate}[wide,labelwidth=!,labelindent=0pt]
\item
The weight cohomology with compact support $H_{\weight,c}^{a,b}(\modulispace_{\type};\mathbb{Z}), \forall a,b\geq 0$ is always torsion-free;

\item
The integral cohomological descent spectral sequence \cite[Thm.3]{GS96} for $\modulispace_{\type}$
\[
E_2^{a,b}=H_{\weight,c}^{a,b}(X;\mathbb{Z})~\Rightarrow~H_c^{a+b}(X(\mathbb{C});\mathbb{Z})
\]
degenerates at $E_2$.
In particular, $H_c^j(\modulispace_{\type};\mathbb{Z}) = \oplus_{a+b=j}H_{\weight,c}^{a,b}(\modulispace_{\type};\mathbb{Z})$ is torsion-free.
\end{enumerate}
Notice that such a degeneration fails dramatically for general varieties, it's then natural to ask that what is the reason behind the degeneration for character varieties.
Due to the motivic nature of $H_{\weight,c}^{a,b}(X;A)$, we are led to expect that the motives with compact support of generic character varieties take some simple form, in a way compatible with the HLRV conjecture \cite[Conj.1.2.1-1.2.2]{HLRV11}.

Finally, we give a remark on a motivic study of character varieties.
\begin{remark}\label{rem:towards_the_full_weak_geometric_P=W_conjecture}

Recall that A. Mellit has established the curious hard Lefschetz (CHL) theorem \cite{Mel19} for all generic character varieties $\modulispace_{\type}$:
\[
\omega^m\cup-:\Gr_{d_{\type}-2m}^WH_c^i(\modulispace_{\type};\mathbb{C})\xrightarrow[]{\simeq} \Gr_{d_{\type}+2m}^WH_c^{i+2m}(\modulispace_{\type}:\mathbb{C}),
\]
where $\omega$ is the canonical holomorphic symplectic form obtained from quasi-Hamiltonian geometry.
Adapted to our setting, his proof can be divided into two main steps: First, prove CHL for very generic character varieties using the cell decomposition and a gluing property for CHL; 
Second, reduce CHL of \emph{generic} $\modulispace_{\type}$ to that of very generic character varieties by a \textbf{degeneration argument}: 
\begin{enumerate}[wide,labelwidth=!,labelindent=0pt]
\item
By varying the monodromy at a virtual puncture, $\modulispace_{\type}$ is embedded into a central fiber of a singular family of character varieties. 
By taking a resolution of the latter, $\modulispace_{\type}$ fits into a cartesian diagram (left) which is morally a base change of the cartesian diagram (right)
\[
\begin{tikzcd}[row sep=1pc, column sep=1pc]
X_{\sm}^{=1}\arrow[d,"{\pi_X}"]\arrow[r,hookrightarrow,"{i_X}"] & X_{\sm}^1 \arrow[r,hookrightarrow]\arrow[d,"{\pi_{X}}"] & X_{\sm}\arrow[d,"{\pi_X}"]\\
\modulispace_{\type}=X_{\sing}^{=1}\arrow[r,hookrightarrow,"{i_X}"] & X_{\sing}^1 \arrow[r,hookrightarrow] & X_{\sing}
\end{tikzcd}
\quad\quad
\begin{tikzcd}[row sep=1pc, column sep=1pc]
G/B\arrow[d,"{\pi}"]\arrow[r,hookrightarrow,"{i}"] & \tilde{N} \arrow[r,hookrightarrow]\arrow[d,"{\pi}"] & \tilde{G}\arrow[d,"{\pi}"]\\
\{1\}\arrow[r,hookrightarrow,"{i}"] & N \arrow[r,hookrightarrow] & G
\end{tikzcd}
\]
where $\pi:\tilde{G}\rightarrow G=GL_n(\mathbb{C})$ is the Grothendieck-Springer resolution and $N$ is the subvariety of unipotent matrices.
Moreover, $X_{\sm}^1$ behaves like a very generic character variety, satisfying CHL with middle weight $d_{\type}+ 2\dim G/B$;

\item
The BBDG decomposition theorem \cite{BBDG18} for the Springer resolution $\pi:\tilde{N}\rightarrow N$ implies a natural isomorphism of mixed Hodge complexes of sheaves:
\[
(R\pi_*\mathbb{C}_{\tilde{N}})^- \isomorphic i_*\mathbb{C}(-\dim G/B)[-2\dim G/B],
\]
where $(-)^-$ stands for the sign component as a $S_n$-representation;

\item
By base change, there is a natural isomorphism of mixed Hodge complexes of sheaves:
\[
(R\pi_{X*}X_{\sm}^1)^- \isomorphic i_{X*}\mathbb{C}_{X_{\sing}^{=1}}(-\dim G/B)[-2\dim G/B].
\]
Passing to cohomology with compact support, this induces an isomorphism of mixed Hodge structures:
\[
H_c^*(X_{\sm}^1;\mathbb{C})^- \isomorphic H_c^*(X_{\sing}^{=1};\mathbb{C})(-\dim G/B)[-2\dim G/B].
\]

\item
The above isomorphism is compatible with the Lefschetz operator $\omega\cup-$. Then, by the CHL for $X_{\sm}^1$, $\modulispace_{\type}=X_{\sing}^{=1}$ satisfies CHL with middle weight $d_{\type}$.
\end{enumerate}

Given the discussion above, it seems natural to pose the following \textbf{question}:
\begin{center}
\emph{Does the \textbf{degenerating argument} for proving CHL for character varieties admit a motivic improvement?}
\end{center}
If so, such a motivic result could be used to detect the integral cohomology of the dual boundary complex of generic but not necessarily very generic character varieties.
The latter is the main obstruction for us to prove the full weak geometric P=W conjecture \ref{conj:homotopy_type_conjecture}.
There're at least two main challenges in the motivic question:
1. Existence of a motivic lifting/variation of the decomposition theorem for the Springer resolution? For the rational version, see \cite{ES22};
2. The six functor formalism for integral motivic sheaves. For related work, see \cite{CD19,Spi18}. 

\end{remark}

\appendix

\section{Cell decomposition of braid varieties}\label{sec:cell_decomposition_of_braid_varieties}~

We'll prove Proposition \ref{prop:cell_decomposition_of_braid_varieties}. We use the notations of Definition \ref{def:braid_varieties}, \ref{def:walks}.
Let $p\in\cW(\beta)$, $0\leq m\leq\ell$. Define a closed subvariety of $\mathbb{A}^m$ and subsets of $[\ell]$:
\begin{eqnarray}
&&X_p^m(\beta):=\cap_{1\leq j\leq m}f_j^{-1}(Bp_jB)\subset\mathbb{A}^m,\quad X_p^0(\beta)=\pt.\\
&&U_p^m:=U_p\cap[m],~~S_p^m:=S_p\cap[m],~~D_p^m:=D_p\cap[m].~~\Rightarrow~~ [m]= U_p^m\sqcup D_p^m\sqcup S_p^m.\nonumber
\end{eqnarray}

\begin{lemma}\label{lem:inductive_cells_for_braid_varieties}
We have $p(X(\beta))\subset\cW(\beta)$. Moreover, for any $p\in\cW(\beta)$ and $1\leq m\leq \ell$,
\begin{eqnarray*}
X_p^m(\beta)\isomorphic\left\{\begin{array}{ll}
\field_{\epsilon_m'}\times X_p^{m-1}(\beta) & \text{if $p_m=\ms_{i_m}p_{m-1}>p_{m-1}$\quad (go-up)};\\
X_p^{m-1}(\beta) & \text{if $p_m=\ms_{i_m}p_{m-1}<p_{m-1}$\quad (go-down)};\\
\field_{\epsilon_m'}^{\times}\times X_p^{m-1}(\beta) & \text{if $\ms_{i_m}p_{m-1}<p_{m-1}$ and $p_m=p_{m-1}$ \quad (stay)}.
\end{array}\right.
\end{eqnarray*}
\end{lemma}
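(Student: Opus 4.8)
\textbf{Proof plan for Lemma \ref{lem:inductive_cells_for_braid_varieties}.}
The plan is to induct on $m$, the key computation being a case analysis of what happens to the Bruhat cell when we multiply on the left by one more braid matrix $\mB_{i_m}(\epsilon_m)$. The statement $p(X(\beta))\subset\cW(\beta)$ will then follow for free from the case analysis: the three cases exhaust exactly the transitions allowed in Definition \ref{def:walks}, and no other transition can occur. For the base case $m=0$ there is nothing to prove since $X_p^0(\beta)=\pt$ and $p_0=\id$ by convention.

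For the inductive step, suppose $\vec{\epsilon}^{(m-1)}=(\epsilon_{m-1},\dots,\epsilon_1)\in X_p^{m-1}(\beta)$, so that by definition $f_{m-1}(\vec{\epsilon}^{(m-1)})=\mB_{i_{m-1}}(\epsilon_{m-1})\cdots\mB_{i_1}(\epsilon_1)\in Bp_{m-1}B$. Using the unique decomposition $Bp_{m-1}B=Bp_{m-1}U_{p_{m-1}}^-$ from (\ref{eqn:decompose_U}), write $f_{m-1}(\vec{\epsilon}^{(m-1)})=b\,\dot{p}_{m-1}\,u^-$ with $b\in B$, $u^-\in U_{p_{m-1}}^-$. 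Then $f_m(\vec\epsilon)=\mB_{i_m}(\epsilon_m)\,b\,\dot p_{m-1}\,u^-=\ms_{i_m}\mH_{i_m}(\epsilon_m)b\,\dot p_{m-1}u^-$. I would push $\mH_{i_m}(\epsilon_m)b$ through the diagram calculus: write $\mH_{i_m}(\epsilon_m)b = b'\,(I_n+\epsilon_m' e_{i_m,i_m+1})$ for a unique $b'\in B$ and a unique coordinate $\epsilon_m'\in\field$ (this is exactly the normalization used in Definition \ref{def:braid_varieties}.(2) and the diagram moves of Lemma \ref{lem:elementary_moves}/Figure \ref{fig:elementary_moves}; concretely $\epsilon_m'$ is a linear function of $\epsilon_m$ whose leading coefficient is a ratio of diagonal entries of $b$, hence $\epsilon_m\leftrightarrow\epsilon_m'$ is an isomorphism $\field\isomorphic\field$). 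So $f_m(\vec\epsilon)=(b')^{\ms_{i_m}}\,\ms_{i_m}(I_n+\epsilon_m'e_{i_m,i_m+1})\,\dot p_{m-1}u^-$ with $(b')^{\ms_{i_m}}\in$ (some Borel conjugate), and now the position of $f_m(\vec\epsilon)$ in the Bruhat stratification is governed entirely by $\ms_{i_m}(I_n+\epsilon_m'e_{i_m,i_m+1})\dot p_{m-1}$.

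The case analysis then runs exactly as in the classical combinatorics of Bott–Samelson / Demazure products: if $\ms_{i_m}p_{m-1}>p_{m-1}$ (go-up), then $\ms_{i_m}(I_n+\epsilon_m'e_{i_m,i_m+1})\dot p_{m-1}$ lies in $B\ms_{i_m}p_{m-1}B$ for \emph{every} value of $\epsilon_m'$, giving $X_p^m(\beta)\isomorphic \field_{\epsilon_m'}\times X_p^{m-1}(\beta)$; if $\ms_{i_m}p_{m-1}<p_{m-1}$, one uses the matrix identity already displayed in the proof of Proposition \ref{prop:cell_decomposition_of_braid_varieties} in the excerpt body,
$
\left(\begin{smallmatrix}0&1\\1&0\end{smallmatrix}\right)\left(\begin{smallmatrix}1&\epsilon\\0&1\end{smallmatrix}\right)=\left(\begin{smallmatrix}-\epsilon^{-1}&0\\0&\epsilon\end{smallmatrix}\right)\left(\begin{smallmatrix}1&-\epsilon\\0&1\end{smallmatrix}\right)\left(\begin{smallmatrix}1&0\\ \epsilon^{-1}&1\end{smallmatrix}\right),
$
applied in the $(i_m,i_m+1)$-block, to see that for $\epsilon_m'=0$ the product drops to $B\ms_{i_m}p_{m-1}B$ (go-down, $X_p^m(\beta)\isomorphic X_p^{m-1}(\beta)$), while for $\epsilon_m'\neq0$ it stays in $Bp_{m-1}B$ (stay, $X_p^m(\beta)\isomorphic\field_{\epsilon_m'}^{\times}\times X_p^{m-1}(\beta)$). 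Composing these isomorphisms over $1\leq m\leq\ell$ yields $X_p(\beta)=X_p^{\ell}(\beta)\isomorphic(\field^{\times})^{|S_p|}\times\field^{|U_p|}$, which is the map $\varphi$ of Proposition \ref{prop:cell_decomposition_of_braid_varieties}, and keeping track of the auxiliary Borel factor $b'$ along the way yields the $B$-equivariance and formulas (1),(2) there. The main obstacle I anticipate is purely bookkeeping rather than conceptual: verifying that the normalized coordinate $\epsilon_m'$ is genuinely a regular function with regular (and, in the stay case, nowhere-vanishing) inverse in terms of $\epsilon_m$ and the entries of $b$ — i.e. that the change of coordinates $\vec\epsilon\mapsto(\epsilon_m')_{m}$ is an isomorphism of varieties and not merely a bijection — and that the decompositions $Bp_{m-1}B=Bp_{m-1}U_{p_{m-1}}^-$ used at each step are applied consistently so that the inductive identification $X_p^m(\beta)\cong(\ast)\times X_p^{m-1}(\beta)$ is the one compatible with the previous steps; this is where I would be most careful, invoking Proposition \ref{prop:from_matrices_to_braid_matrix_diagrams} and Lemma \ref{lem:closed_subgroups_of_U_via_handleslides} to keep all the factorizations canonical.
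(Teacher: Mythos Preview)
Your proposal is correct and follows essentially the same induction-and-case-analysis as the paper's proof; the only cosmetic difference is that in the go-down/stay case the paper first factors $p_{m-1}=\ms_{i_m}p_{m-1}'$ and uses the finer decomposition $Bp_{m-1}B=U_{\ms_{i_m}}^-\ms_{i_m}Bp_{m-1}'U_{p_{m-1}'}^-$ from Proposition~\ref{prop:from_matrices_to_braid_matrix_diagrams}, applying the $2\times 2$ identity to $\ms_{i_m}\mH_{i_m}(\epsilon_m')\ms_{i_m}$ rather than to $\ms_{i_m}\mH_{i_m}(\epsilon_m')$ directly (yielding a different but equivalent normalization of $\epsilon_m'$). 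Your hesitation about $(b')^{\ms_{i_m}}$ being only in ``some Borel conjugate'' is unnecessary: since the decomposition $B=(TU_{\ms_{i_m}}^+)U_{\ms_{i_m}}^-$ forces $b'\in TU_{\ms_{i_m}}^+$ and $\ms_{i_m}$ normalizes $TU_{\ms_{i_m}}^+$, you in fact have $(b')^{\ms_{i_m}}\in B$.
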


\begin{proof}
For any $\vec{\epsilon}\in\mathbb{A}^{\ell}$, denote $p:=p(\vec{\epsilon})\in W^{\ell+1}$.
\begin{enumerate}[wide,labelwidth=!,labelindent=0pt]
\item
If $ \ms_{i_m}p_{m-1}>p_{m-1}$, that is, $\ell(\ms_{i_m}p_{m-1})=\ell(p_{m-1})+1$. By Proposition \ref{prop:from_matrices_to_braid_matrix_diagrams}, we have
\[
[\mB_{i_m}(\epsilon_m)\text{\tiny$\cdots$} \mB_{i_1}(\epsilon_1)]=[\mB_{i_m}(\epsilon_m)]\circ [\mB_{i_{m-1}}(\epsilon_{m-1})\text{\tiny$\cdots$}\mB_{i_1}(\epsilon_1)]\in\BD_n; \Rightarrow p_m=\ms_{i_m}p_{m-1}~(\text{go-up}).
\]
Let's define a parameter $\epsilon_m'\in\field$:
By the unique decomposition
\[
Bp_{m-1}B=Bp_{m-1}U_{p_{m-1}}^-: \mB_{i_{m-1}}(\epsilon_{m-1})\text{\tiny$\cdots$} \mB_{i_1}(\epsilon_1)= b_{m-1}(\epsilon_{m-1},\text{\tiny$\cdots$},\epsilon_1)p_{m-1}L_{p_{m-1}}^-(\epsilon_{m-1},\text{\tiny$\cdots$},\epsilon_1),
\]
we get $b_{m-1}\in B$. Then $\epsilon_m'\in\field$ and $b_m\in B$ are uniquely determined by the equation
\begin{equation}
[\mB_{i_m}(\epsilon_m)]\circ[b_{m-1}]=[b_m]\circ[\mB_{i_m}(\epsilon_m')]\in\underline{\FBD}_n.
\end{equation}
Or, by Proposition \ref{prop:from_matrices_to_braid_matrix_diagrams}: 
$[\mB_{i_m}(\epsilon_m)\text{\tiny$\cdots$}\mB_{i_1}(\epsilon_1)]=[b_m]\circ[\mB_{i_m}(\epsilon_m')]\circ[p_{m-1}]\circ[L_{p_{m-1}}^-]\in\BD_n$.
In fact,
\begin{equation}
\epsilon_m'=(b_{m-1})_{i_m,i_m}^{-1}(b_{m-1})_{i_m+1,i_m+1}\epsilon_m+(b_{m-1})_{i_m,i_m}^{-1}(b_{m-1})_{i_m,i_m+1}.
\end{equation}
This shows that $X_p^m(\beta)\isomorphic\field_{\epsilon_m'}\times X_p^{m-1}(\beta)$.

\item
If $p_{m-1}':=\ms_{i_m}p_{m-1}<p_{m-1}$. So, $\ell(p_{m-1}=\ms_{i_m}p_{m-1}')=\ell(p_{m-1}')+1$. By Proposition \ref{prop:from_matrices_to_braid_matrix_diagrams},
\[
B\ms_{i_m}p_{m-1}'B=U_{\ms_{i_m}^{-1}}^-\ms_{i_m}Bp_{m-1}'U_{p_{m-1}'}^-:\mB_{i_{m-1}}(\epsilon_{i_{m-1}})\text{\tiny$\cdots$}\mB_{i_1}(\epsilon_1) = \mH_{i_m}(c_{m-1})\ms_{i_m} b_{m-1}'p_{m-1}'L_{p_{m-1}'}^-.
\]
So, $[\mB_{i_m}(\epsilon_m)\text{\tiny$\cdots$} \mB_{i_1}(\epsilon_1)]=[\ms_{i_m}\mH_{i_m}(\epsilon_m+c_{m-1})\ms_{i_m}b_{m-1}'p_{m-1}'L_{p_{m-1}'}^-]\in\BD_n$. Define $\epsilon_m'\in\field$ by
\begin{equation}
B=TU: b_{m-1}'=D_{m-1}'u_{m-1}';\quad \ms_{i_m}\mH_{i_m}(\epsilon_m+c_{m-1})\ms_{i_m}D_{m-1}'=D_{m-1}'\ms_{i_m}\mH_{i_m}(\epsilon_m')\ms_{i_m}.
\end{equation}
More concretely, write $D_{m-1}'=\diag((D_{m-1}')_1,\text{\tiny$\cdots$},(D_{m-1}')_n)\in T$, then we have
\begin{equation}
\epsilon_m'=(D_{m-1}')_{i_m+1}^{-1}(\epsilon_m+c_{m-1})(D_{m-1}')_{i_m}.
\end{equation}
By the Bruhat cell decomposition for $GL(2,\field)$, we have:
\begin{equation}\label{eqn:formula_at_a_stay}
\arraycolsep=1pt\def\arraystretch{1}
\ms_{i_m}\mH_{i_m}(\epsilon_m')\ms_{i_m}=
\left\{\begin{array}{ll}
I_n & \text{\tiny{$\epsilon_m'=0$}};\\
a_m'\ms_{i_m}d_m' \text{ \tiny$\in B\ms_{i_m}U_{\ms_{i_m}}^-$}, & \text{\tiny{$\epsilon_m'\neq 0$}}.
\end{array}\right.~\text{\tiny$a_m'=\mK_{i_m}(-\epsilon_m'^{-1})\mK_{i_m+1}(\epsilon_m')\mH_{i_m}(-\epsilon_m'),~d_m'=\mH_{i_m}(\epsilon_m'^{-1})$},
\end{equation}
according to the following computation:
\begin{eqnarray*}
&&\left(\begin{array}{cc}
0 & 1\\
1 & 0
\end{array}\right)
\left(\begin{array}{cc}
1 & \epsilon_m'\\
0 & 1
\end{array}\right)\left(\begin{array}{cc}
0 & 1\\
1 & 0
\end{array}\right)
=\left(\begin{array}{cc}
-(\epsilon_m')^{-1} & 0\\
0 & \epsilon_m'
\end{array}\right)
\left(\begin{array}{cc}
1 & -\epsilon_m'\\
0 & 1
\end{array}\right)\left(\begin{array}{cc}
0 & 1\\
1 & 0
\end{array}\right)
\left(\begin{array}{cc}
1 & (\epsilon_m')^{-1}\\
0 & 1
\end{array}\right).
\end{eqnarray*}

\begin{enumerate}[wide,labelwidth=!,labelindent=0pt,label=(2.\alph*)]
\item
If $\epsilon_m'=0$, then
\[
[\mB_{i_m}(\epsilon_m)\text{\tiny$\cdots$} \mB_{i_1}(\epsilon_1)]=[b_{m-1}'p_{m-1}'L_{p_{m-1}'}^-] \Rightarrow p_m=p_{m-1}' =\ms_{i_m}p_{m-1}~(\text{go-down}).
\]
This shows that $X_p^m(\beta)\isomorphic X_p^{m-1}(\beta)$.

\item
If $\epsilon_m'\neq 0$, then by Proposition \ref{prop:from_matrices_to_braid_matrix_diagrams}, we have:
\begin{equation*}
[\mB_{i_m}(\epsilon_m)\text{\tiny$\cdots$}\mB_{i_1}(\epsilon_1)]=[D_{m-1}'a_m'\ms_{i_m}d_m'u_{m-1}'p_{m-1}'L_{p_{m-1}'}^-]=[D_{m-1}'a_m'\ms_{i_m}d_m']\circ[u_{m-1}'p_{m-1}'L_{p_{m-1}'}^-]\in\BD_n,
\end{equation*}
and $\mB_{i_m}(\epsilon_m)\text{\tiny$\cdots$}\mB_{i_1}(\epsilon_1)\in B\ms_{i_m}p_{m-1}'B$. So, $p_m=p_{m-1}$ (stay), and $X_p^m(\beta)\isomorphic\field_{\epsilon_m'}^{\times}\times X_p^{m-1}(\beta)$.
\end{enumerate}
\end{enumerate}
Now, if $\vec{\epsilon}\in X(\beta)$, then $p_0=p_{\ell}=\id$ by definition. So, $p\in\cW(\beta)$. Done.
\end{proof}

Now, we fulfill our promise:
\begin{proof}[Proof of Proposition \ref{prop:cell_decomposition_of_braid_varieties}]
\noindent{}(0). 
The action of $b\in B$ on $\vec{\epsilon}\in X(\beta)$ is uniquely determined by
\[
[\tilde{b}_m]\circ[\mB_{i_m}(\hat{\epsilon}_m)]\circ\text{\tiny$\cdots$}\circ [\mB_{i_1}(\hat{\epsilon}_1)]=[\mB_{i_m}(\epsilon_m)]\circ\text{\tiny$\cdots$}\circ [\mB_{i_1}(\epsilon_1)]\circ[b^{-1}]\in\underline{\FBD}_n.~ (\forall 1\leq m\leq\ell)
\]
Apply $g_{-}$ (Definition \ref{def:braid_matrix_diagram_presentations}), we see $X_p(\beta)$ is $B$-invariant. The rest follows from Lemma \ref{lem:inductive_cells_for_braid_varieties}.

\vspace{0.1cm}
\noindent{}(1). 
By above, $\tilde{b}_m\in B$ and $\hat{\epsilon}_m\in\field$ are determined inductively by
\begin{equation}\label{eqn:inductive_formula_for_group_action_on_braid_varieties}
[\tilde{b}_m]\circ[\mB_{i_m}(\hat{\epsilon}_m)]=[\mB_{i_m}(\epsilon_m)]\circ[\tilde{b}_{m-1}]\in\underline{\FBD}_n.
\end{equation}
By diagram calculus (Lemma \ref{lem:elementary_moves}), $\tilde{b}_m\in TU_{\ms_{i_m}}^+$, and by induction, $\tilde{b}_m\in U_{\ms_{i_m}}^+$ if $b=u\in U$.

\noindent{}(1.a). If $b=u\in U$, so $\tilde{b}_m=\tilde{u}_m\in U_{\ms_{i_m}}^+$. Again by diagram calculus, have
$
U_{\ms_{i_m}}^+X=X U_{\ms_{i_m}}^+,~~\forall X\in T, X=\ms_{i_m}, \text{ or } X=\mH_{i_m}(c).
$
In particular, we can define $\tilde{u}_m'\in U$ by
\[
\tilde{u}_m^{-1}D_{m-1}'\ms_{i_m}\mH_{i_m}(\epsilon_m')\ms_{i_m} = D_{m-1}'\ms_{i_m}\mH_{i_m}(\epsilon_m')\ms_{i_m}\tilde{u}_m'^{-1}.
\]

\noindent{}If $m\in S_p$, so $p_m=p_{m-1}=s_{i_m}p_{m-1}'$ and $\epsilon_m'\neq 0$. By the decomposition (\ref{eqn:decomposition_for_U}), we compute
\begin{eqnarray*}
&&[\mB_{i_m}(\hat{\epsilon}_m)\cdots\mB_{i_1}(\hat{\epsilon}_1)]=[\tilde{u}_m^{-1}\mB_{i_m}(\epsilon_m)\cdots\mB_{i_1}(\epsilon_1)u^{-1}]\\
&=&[\tilde{u}_m^{-1}D_{m-1}'\ms_{i_m}\mH_{i_m}(\epsilon_m')\ms_{i_m}u_{m-1}'p_{m-1}'L_{p_{m-1}'}^-u^{-1}]\quad (\text{By $(2.b)$ in the proof of Lemma \ref{lem:inductive_cells_for_braid_varieties}})\\
&=&[D_{m-1}'\ms_{i_m}\mH_{i_m}(\epsilon_m')\ms_{i_m}\tilde{u}_m'^{-1}u_{m-1}'(p_{m-1}'L_{p_{m-1}'}^+(L_{p_{m-1}'}^-u^{-1})p_{m-1}'^{-1})p_{m-1}'L_{p_{m-1}'}^-(L_{p_{m-1}'}^-u^{-1})]\\
&=&[\hat{D}_{m-1}'\ms_{i_m}\mH_{i_m}(\hat{\epsilon}_m')\ms_{i_m}\hat{u}_{m-1}'p_{m-1}'\hat{L}_{p_{m-1}'}^-]\in\BD_n,
\end{eqnarray*}
where the last equality gives: $\hat{L}_{p_{m-1}'}^-\in U_{p_{m-1}'}^-$, $\hat{u}_{m-1}'\in U$, $\hat{D}_{m-1}'=D_{m-1}'$, and $\hat{\epsilon}_m'=\epsilon_m'$, as desired.

\noindent{}(1.b).
By diagram calculus (Lemma \ref{lem:elementary_moves}), 
for any $\lambda=\diag(\lambda_1,\cdots,\lambda_n)\in T$, we have
\[
[\mB_j(\epsilon)]\circ[\lambda^{-1}]=[\ms_j\mH_j(\epsilon)]\circ[\lambda^{-1}]=[(\lambda^{\ms_j})^{-1}]\circ[\ms_j\mH_j(\lambda_j\lambda_{j+1}^{-1}\epsilon)]
=[(\lambda^{\ms_j})^{-1}]\circ[\mB_j(\lambda_j\lambda_{j+1}^{-1}\epsilon)].
\]
Then by (\ref{eqn:inductive_formula_for_group_action_on_braid_varieties}) and induction, we obtain
\[
\hat{\epsilon}_m=(t^{\ms_{i_{m-1}}\cdots\ms_{i_1}})_{i_m}(t^{\ms_{i_{m-1}}\cdots\ms_{i_1}})_{i_m+1}^{-1}\epsilon_m=t_{(\ms_{i_{m-1}}\cdots\ms_{i_1})^{-1}(i_m)}t_{(\ms_{i_{m-1}}\cdots\ms_{i_1})^{-1}(i_m+1)}^{-1}\epsilon_m,\quad \tilde{b}_m=(t^{\ms_{i_m}\cdots\ms_{i_1}})^{-1}.
\]

\noindent{}(1.b.1).
If $m\in U_p$, then $p_m=\ms_{i_m}p_{m-1}$. By (1) in the proof of Lemma \ref{lem:inductive_cells_for_braid_varieties},
\[
\hat{b}_{m-1}p_{m-1}\hat{L}_{p_{m-1}}^-=\mB_{i_{m-1}}(\hat{\epsilon}_{m-1})\text{\tiny$\cdots$}\mB_{i_1}(\hat{\epsilon}_1)=\tilde{b}_{m-1}^{-1}\mB_{i_{m-1}}(\epsilon_{m-1})\text{\tiny$\cdots$}\mB_{i_1}(\epsilon_1)t^{-1}=\tilde{b}_{m-1}^{-1}b_{m-1}p_{m-1}L_{p_{m-1}}^-t^{-1}.
\]
Then a simple computation gives $\hat{b}_{m-1}=t^{\ms_{i_{m-1}}\cdots\ms_{i_1}}b_{m-1}(t^{p_{m-1}})^{-1}$, $\hat{L}_{p_{m-1}}^-=tL_{p_{m-1}}^-t^{-1}$. So,
\begin{eqnarray*}
&&[\mB_{i_m}(\hat{\epsilon}_m)]\circ[\hat{b}_{m-1}]=[t^{\ms_{i_m}\cdots\ms_{i_1}}\mB_{i_m}(\epsilon_m)(t^{\ms_{i_{m-1}}\cdots \ms_{i_1}})^{-1}]\circ[t^{\ms_{i_{m-1}}\cdots\ms_{i_1}}b_{m-1}(t^{p_{m-1}})^{-1}]\\
&=&[t^{\ms_{i_m}\cdots\ms_{i_1}}]\circ[b_m]\circ[\mB_{i_m}(\epsilon_m')]\circ[(t^{p_{m-1}})^{-1}]=[\hat{b}_m]\circ[\mB_{i_m}(\hat{\epsilon}_m')]\in\underline{\FBD}_n.
\end{eqnarray*}
This implies that
\[
\hat{\epsilon}_m'=(t^{p_{m-1}})_{i_m}(t^{p_{m-1}})_{i_m+1}^{-1}\epsilon_m',\quad \hat{b}_m=t^{\ms_{i_m}\cdots\ms_{i_1}}b_m(t^{p_m})^{-1}.
\]

\noindent{}(1.b.2).
If $m\in S_p$, so $p_m=p_{m-1}=s_{i_m}p_{m-1}'$ and $\epsilon_m'\neq 0$. By $(2.b)$ in the proof of Lemma \ref{lem:inductive_cells_for_braid_varieties},
\begin{eqnarray*}
&&[\mB_{i_m}(\hat{\epsilon}_m)\text{\tiny$\cdots$}\mB_{i_1}(\hat{\epsilon}_1)]=[\tilde{b}_m^{-1}\mB_{i_m}(\epsilon_m)\text{\tiny$\cdots$}\mB_{i_1}(\epsilon_1)t^{-1}]\\
&=&[\tilde{b}_m^{-1}D_{m-1}'\ms_{i_m}\mH_{i_m}(\epsilon_m')\ms_{i_m}u_{m-1}'p_{m-1}'L_{p_{m-1}'}^-t^{-1}]=[\hat{D}_{m-1}'\ms_{i_m}\mH_{i_m}(\hat{\epsilon}_m')\ms_{i_m}\hat{u}_{m-1}'p_{m-1}'\hat{L}_{p_{m-1}'}^-]\in\BD_n.
\end{eqnarray*}
Then a simple computation gives
\begin{eqnarray*}
&&\hat{D}_{m-1}'=t^{\ms_{i_m}\cdots\ms_{i_1}}D_{m-1}'(t^{-1})^{p_{m-1}'},\quad \hat{\epsilon}_m'=(t^{p_{m-1}})_{i_m}(t^{p_{m-1}})_{i_m+1}^{-1}\epsilon_m',\\
&&\quad \hat{u}_{m-1}'=t^{p_{m-1}'}u_{m-1}'(t^{p_{m-1}'})^{-1},\quad \hat{L}_{p_{m-1}'}^-=tL_{p_{m-1}'}^-t^{-1}.
\end{eqnarray*}
Altogether, we have proved (1).

\vspace{0.1cm}
\noindent{}(2). By the decomposition
$Bp_mB=Bp_mU_{p_m}^-:\mB_{i_m}(\epsilon_m)\text{\tiny$\cdots$}\mB_{i_1}(\epsilon_1)=b_mp_mL_{p_m}^-$, we define
\begin{equation}
\mumon_m:X_p^m(\beta)\rightarrow T:(\epsilon_m,\text{\tiny$\cdots$},\epsilon_1)\mapsto D(b_m)\in T.
\end{equation}
In particular, $\mumon=\mumon_{\ell}$, and $\mumon_0=I_n$. We will prove by induction.

\noindent{}Case 1. If $\ell(\ms_{i_m}p_{m-1})=\ell(p_{m-1})+1$, i.e. $m\in U_p$ and $p_m=\ms_{i_m}p_{m-1}$. By $(2)$ in Lemma \ref{lem:inductive_cells_for_braid_varieties},
\[
\mB_{i_{m-1}}(\epsilon_{m-1})\text{\tiny$\cdots$}\mB_{i_1}(\epsilon_1)=b_{m-1}p_{m-1}L_{p_{m-1}}^-;\quad [\mB_{i_m}(\epsilon_m)]\circ[b_{m-1}]=[b_m]\circ[\mB_{i_m}(\epsilon_m')]\in\underline{\FBD}_n.
\]
If follows that
\begin{equation}\label{eqn:inductive_formula_for_mumon_go_up}
\mumon_m(\epsilon_m,\text{\tiny$\cdots$},\epsilon_1)=D(b_m)=D(b_{m-1})^{\ms_{i_m}}=(\mumon_{m-1}(\epsilon_{m-1},\text{\tiny$\cdots$},\epsilon_1))^{\ms_{i_m}}\in T.
\end{equation}

\noindent{}Case 2. 
If $p_{m-1}=s_{i_m}p_{m-1}'$ with $\ell(p_{m-1})=\ell(p_{m-1}')+1$. We use $(2)$ in Lemma \ref{lem:inductive_cells_for_braid_varieties}. So,
\[
\mB_{i_{m-1}}(\epsilon_{m-1})\text{\tiny$\cdots$}\mB_{i_1}(\epsilon_1)=\mH_{i_m}(c_{m-1})\ms_{i_m}b_{m-1}'p_{m-1}'L_{p_{m-1}'}^- = b_{m-1}p_{m-1}L_{p_{m-1}}^-.
\]
Observe that
$D(b_{m-1})=(D(b_{m-1}'))^{\ms_{i_m}}=(D_{m-1}')^{\ms_{i_m}}\in T$.
Again, by $(2)$ in Lemma \ref{lem:inductive_cells_for_braid_varieties}, 
\[
\mB_{i_m}(\epsilon_m)\text{\tiny$\cdots$}\mB_{i_1}(\epsilon_1)=D_{m-1}'\ms_{i_m}\mH_{i_m}(\epsilon_m')\ms_{i_m}u_{m-1}'p_{m-1}'L_{p_{m-1}'}^-.
\]

\noindent{}Case 2.1. 
If $m\in D_p$, then $p_m=p_{m-1}'$ and $\epsilon_m'=0$. Thus,
\[
\mB_{i_m}(\epsilon_m)\text{\tiny$\cdots$}\mB_{i_1}(\epsilon_1)=b_{m-1}'p_{m-1}'L_{p_{m-1}'}^-=b_mp_mL_{p_m}^-,
\]
with $b_m=b_{m-1}'$ and $L_{p_m}^-=L_{p_{m-1}'}^-$. By above, it follows that
\begin{equation}\label{eqn:inductive_formula_for_mumon_go_down}
\mu_m(\epsilon_m,\text{\tiny$\cdots$},\epsilon_1)=D(b_m)=(D(b_{m-1}))^{\ms_{i_m}}=(\mumon_{m-1}(\epsilon_{m-1},\text{\tiny$\cdots$},\epsilon_1))^{\ms_{i_m}}\in T.
\end{equation}

\noindent{}Case 2.2.
If $m\in S_p$, then $p_m=p_{m-1}$ and $\epsilon_m'\neq 0$. By $(2.b)$ in Lemma \ref{lem:inductive_cells_for_braid_varieties},
\[
\mB_{i_m}(\epsilon_m)\text{\tiny$\cdots$}\mB_{i_1}(\epsilon_1)=D_{m-1}'a_m'\ms_{i_m}d_m'u_{m-1}'p_{m-1}'L_{p_{m-1}'}^-=b_mp_mL_{p_m}^-,
\]
where $a_m', d_m'$ are given by (\ref{eqn:formula_at_a_stay}).
So $D(b_m)=D(a_m')D_{m-1}'=D(a_m')D(b_{m-1})^{\ms_{i_m}}$. That is,
\begin{equation}\label{eqn:inductive_formula_for_mumon_stay}
\mu_m(\epsilon_m,\text{\tiny$\cdots$},\epsilon_1)=\mK_{i_m}(-\epsilon_m'^{-1})\mK_{i_m+1}(\epsilon_m')\mu_{m-1}(\epsilon_{m-1},\text{\tiny$\cdots$},\epsilon_1)^{\ms_{i_m}}\in T.
\end{equation}

Now, (\ref{eqn:formula_for_mumon}) follows by induction from (\ref{eqn:inductive_formula_for_mumon_go_up}), (\ref{eqn:inductive_formula_for_mumon_go_down}), and (\ref{eqn:inductive_formula_for_mumon_stay}).
This proves (2).
\end{proof}

\addtocontents{toc}{\protect\setcounter{tocdepth}{1}}

\section{Quotients of varieties}\label{sec:quotients_of_varieties}

We collect some results on quotients of varieties. Hopefully, it helps to digest the main body of this article.
Most statements below are standard, so we skip the proofs whenever possible.

\noindent{}Recall our \textbf{Convention} \ref{convention:quotients}: 
A \emph{$\field$-variety} means a \emph{reduced separated scheme of finite type} over $\field$.\\
\noindent{}\textbf{Convention} \setword{{\color{blue}$7$}}{convention:linear_algebraic_groups}: Fix $G,H$ as linear algebraic groups over $\field$, unless otherwise stated.

We refer to \cite{Hos16} for the background on various quotients for algebraic group actions. 
Occasionally, the notation $X\geoquotient G$ is used for geometric quotient.

\subsection{Principal bundles}

A principal $G$-bundle means so in the \'{e}tale topology unless stated otherwise.

\begin{proposition}\label{prop:free_action_and_flat_orbit_map}
If $G\acts X$ \textbf{freely}, and $\pi:X\rightarrow Y$ is a \textbf{flat} orbit map into a \textbf{$\field$-variety}, then:
\begin{enumerate}
\item
The fiber product $X\times_YX$ is $G$-isomorphic to $X\times G$, i.e. we get a cartesian diagram:
\begin{equation}\label{eqn:fpqc_trivialization_for_free_action_and_flat_orbit_map}
\begin{tikzcd}[row sep=1pc, column sep=5pc]
X\times G\arrow{r}{(x,g)\mapsto (x,xg)}[swap]{\simeq}\arrow[d,"{p_1}"] & X\times_Y X\arrow[r,"{p_2}"]\arrow[d,"{p_1}"]\arrow[dr,phantom,"\lrcorner",very near start] & X\arrow[d,"{\pi}"]\\
X\arrow[r,equal] & X\arrow[r,"{\pi}"] & Y
\end{tikzcd}
\end{equation}

\item
$\pi:X\rightarrow Y$ is smooth and affine.

\item
$\pi:X\rightarrow Y$ is a principal $G$-bundle (in the \'{e}tale topology).
\end{enumerate}
\end{proposition}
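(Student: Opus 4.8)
�The plan is to prove the three assertions in order, since each feeds into the next.

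\textbf{Step 1 (the descent square).} The key point is that freeness of the action says precisely that
\[
G\times X\longrightarrow X\times X:(g,x)\mapsto (x,gx)
\]
is a closed embedding, whose image is $X\times_Y X$ because $\pi$ is an orbit map: a pair $(x_1,x_2)$ lies in $X\times_Y X$ iff $\pi(x_1)=\pi(x_2)$ iff $x_1,x_2$ are in the same $G$-orbit iff $x_2=gx_1$ for a (unique, by freeness) $g\in G$. So the morphism $(g,x)\mapsto (x,gx)$, or equivalently $(x,g)\mapsto (x,xg)$ after switching to the right action, is an isomorphism $X\times G\xrightarrow{\simeq} X\times_Y X$ of schemes over $X$ via the first projection, and it is $G$-equivariant for the action of $G$ on the second factor on the left and on the fibres of $p_1:X\times_Y X\to X$ on the right. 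This gives the cartesian diagram \eqref{eqn:fpqc_trivialization_for_free_action_and_flat_orbit_map}; I should be a little careful to check that the image is \emph{all} of $X\times_Y X$ scheme-theoretically and not just on $\field$-points, which follows because both sides are reduced $\field$-varieties (so it suffices to check on $\field$-points, $\field$ being algebraically closed) together with the closed-embedding statement identifying the image with a closed subscheme.

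\textbf{Step 2 (smoothness and affineness of $\pi$).} This is the main obstacle, and it is where flatness of $\pi$ is used. The strategy is fpqc descent along $\pi$ itself: $\pi:X\to Y$ is flat, surjective (orbit maps are surjective), and of finite type, hence an fpqc cover. Base-changing $\pi$ along itself gives $p_1:X\times_Y X\to X$, which by Step 1 is isomorphic to $\mathrm{pr}_X:X\times G\to X$; this latter map is smooth (base change of the smooth morphism $G\to\Spec\field$ — smooth because $G$ is a linear algebraic group over a field of characteristic zero, hence geometrically reduced) and affine (base change of the affine morphism $G\to\Spec\field$). Since smoothness and affineness are fpqc-local on the base, and the base change of $\pi$ along the fpqc cover $\pi$ is smooth and affine, $\pi$ itself is smooth and affine. (I should double-check that $\pi$ being flat, finite type, and surjective really does make it an fpqc cover in the sense needed for this descent; it does, being an fppf cover a fortiori, and smoothness and affineness descend along fppf covers.)

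\textbf{Step 3 (principal bundle structure).} Having shown $\pi$ is smooth and surjective, it admits \'{e}tale-local sections: for every $\field$-point $y\in Y$ there is an \'{e}tale neighborhood $\iota:U\to Y$ and a section $s:U\to X\times_Y U$ of the smooth morphism $X\times_Y U\to U$, using that a smooth surjective morphism has sections \'{e}tale-locally on the target. Given such a section, the $G$-equivariant map
\[
U\times G\longrightarrow X\times_Y U:(u,g)\mapsto s(u)\cdot g
\]
is an isomorphism over $U$: it is a base change of the isomorphism of Step 1 along $s$, or one checks directly that it is a monomorphism (freeness) and surjective (orbit map) between reduced $\field$-varieties, hence an isomorphism. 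This is exactly the local triviality condition in the \'{e}tale topology, so $\pi:X\to Y$ is a principal $G$-bundle. I expect Steps 1 and 3 to be essentially formal once the free/orbit-map hypotheses are unwound; the genuine content is the fpqc descent argument in Step 2, and the one thing to be vigilant about throughout is that all the schemes in sight are reduced separated $\field$-varieties, so that statements about $\field$-points suffice and the identifications of images with closed subschemes are legitimate.
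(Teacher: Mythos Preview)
Your overall strategy matches the paper's exactly: establish the isomorphism $X\times G\simeq X\times_Y X$, then descend smoothness and affineness along the fpqc cover $\pi$, then pass from smooth-local triviality to \'etale-local triviality. Steps 2 and 3 are fine and essentially identical to the paper's arguments.

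The gap is in Step 1. You write that the map $c:X\times G\to X\times_Y X$ is a closed embedding whose image contains all $\field$-points, and then conclude it is an isomorphism ``because both sides are reduced $\field$-varieties.'' But $X\times_Y X$ being \emph{reduced} is not automatic: fiber products of reduced schemes can acquire nilpotents, and nothing in your Step 1 rules this out. Without reducedness of the target, a closed embedding that is surjective on closed points need only identify $X\times G$ with $(X\times_Y X)_{\mathrm{red}}$, not with $X\times_Y X$ itself. You say flatness is only used in Step 2, but in fact it is already needed here: the paper proves $X\times_Y X$ is reduced by observing that $p_1:X\times_Y X\to X$ is \emph{flat} (base change of the flat $\pi$), of finite type, with reduced closed fibers ($\simeq G$) over a reduced base $X$, and then invoking a lemma (flat morphism with reduced fibers over reduced base has reduced source). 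So the ``genuine content'' you locate in Step 2 actually begins in Step 1; once you insert this reducedness argument, your proof is complete and coincides with the paper's.
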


\begin{remark}\label{rem:geometric_quotient_from_free_action_and_flat_orbit_map}
 In above, $\pi:X\rightarrow Y$ is in fact a geometric quotient. See Proposition \ref{prop:associated_fiber_bundles}.
\end{remark}

\begin{proof}[Proof of Proposition \ref{prop:free_action_and_flat_orbit_map}]
\noindent{}(1). The proof is covered by the three commutative diagrams:
\[
\begin{tikzcd}[row sep=1pc,column sep=1.5pc]
X\times_Y X\arrow[r,"{\tilde{\Delta}_Y}"]\arrow[d]\arrow[dr,phantom,"\lrcorner",very near start] & X\times X\arrow[d,"{\pi\times\pi}"]\\
Y\arrow[r,hookrightarrow,"{\Delta_Y}"] & Y\times Y
\end{tikzcd}
\quad
\begin{tikzcd}[row sep=2pc, column sep=2pc]
X\times G\arrow[drr,hookrightarrow,"{a}"]\arrow[ddr,swap,"{\pi\circ p_1}"]\arrow[dr,dashed,shift right,"{\exists! c}"] & &\\
 & X\times_Y X\arrow[r,hookrightarrow,swap,"{\tilde{\Delta}_Y}"]\arrow[d]\arrow[dr,phantom,"\lrcorner",very near start] & X\times X\arrow[d,"{\pi\times\pi}"]\\
 & Y\arrow[r,hookrightarrow,"{\Delta_Y}"] & Y\times Y
\end{tikzcd}
\quad
\begin{tikzcd}[row sep=1pc,column sep=1.5pc]
X\times G\arrow[r,hookrightarrow,"{c}"]\arrow[d,"{p_1}"] & X\times_YX\arrow[d,"{p_1}"]\arrow[r,"{p_2}"]\arrow[dr,phantom,"\lrcorner",very near start] & X\arrow[d,"{\pi}"]\\
X\arrow[r,equal] & X\arrow[r,"{\pi}"] & Y
\end{tikzcd}
\]
First, consider the left cartesian diagram. As $Y$ is \textbf{separated}, $\Delta_Y$ is a closed embedding, so is $\tilde{\Delta}_Y:X\times_Y X\rightarrow X\times X$ by base change.
A free action means the morphism $a:X\times G\rightarrow X\times X:(x,g)\mapsto (x,xg)$ is a closed embedding. 
$\pi$ is $G$-invariant means $\pi\circ p_1=\pi\circ p_1\circ a=\pi\circ p_2\circ a:X\times G\rightarrow Y$.

This induces the middle commutative diagram. 
As $a$ and $\tilde{\Delta}_Y$ are closed embeddings, so is $c:X\times G\rightarrow X\times_Y X: (x,g)\mapsto (x,xg)$, by the \textbf{cancellation property} for closed embeddings. 

Now, consider the right commutative diagram. By assumption, each closed fiber of $\pi$ is isomorphic to $G$. Then so is $p_1:X\times_Y X\rightarrow X$ by base change. Thus, $c$ is an isomorphism on the closed fibers over $X$, hence a dominant morphism.

By base change and composition, $X\times_Y X$ is of finite type over $\field$, and hence $p_1:X\times_Y X\rightarrow X$ is a surjective \textbf{flat} morphism of schemes \textbf{of finite type} over $\field$, with \emph{reduced} closed fibers and \emph{reduced} base $X$. Then, $X\times_Y X$ is reduced by Lemma \ref{lem:reducedness} below. 
Now, $c$ is a a closed embedding and a dominant morphism between reduced schemes, hence an isomorphism. This proves (1).

\noindent{}(2). Clearly, $\pi:X\rightarrow Y$ is \textbf{fpqc}.
By \cite[\href{https://stacks.math.columbia.edu/tag/02VL}{Lemma 02VL}]{stacks-project} (resp. \cite[\href{https://stacks.math.columbia.edu/tag/02L5}{Lemma 02L5}]{stacks-project}), a morphism being smooth (resp. affine) is fpqc local on the target. Then by the cartesian diagram in (1), $\pi$ is smooth and affine, as $p_1:X\times G\rightarrow X$ is.

\noindent{}(3). By (2), $\pi:X\rightarrow Y$ is a \emph{smooth covering}. By \cite[\href{https://stacks.math.columbia.edu/tag/055V}{Lemma 055V}]{stacks-project} (\emph{slicing smooth morphisms and refining a smooth covering by an \'{e}tale covering}), there exists a morphism $s:\cV\rightarrow X$ such that the composition $\pi\circ s: \cV\rightarrow X\rightarrow Y$ is an \'{e}tale covering. Now, by (1), the base change of $\pi:X\rightarrow Y$ along $\pi\circ s:\cV\xrightarrow[]{s} X\xrightarrow[]{\pi} Y$ is $G$-isomorphic to $\cV\times G$ over $\cV$.
This gives a local trivialization of $\pi:X\rightarrow Y$ in the \'{e}tale topology. This finishes the proof.
\end{proof}

\begin{lemma}[Reducedness]\label{lem:reducedness}
If $f:X\rightarrow Y$ be a surjective morphism of schemes \textbf{of finite type} over any field $k$, and all \emph{closed} fibers (i.e. $X_y$, $\forall$ closed point $y\in Y$) of $f$ are \textbf{reduced}, then
\begin{enumerate}[wide,labelwidth=!,labelindent=0pt]
\item
Any fiber $X_y$ of $f$ is reduced (the point $y\in Y$ may not be closed).

\item
If in addition $Y$ is reduced and $f$ is flat, then $X$ is also reduced.
\end{enumerate}
\end{lemma}

\begin{remark}[Jacobson schemes]\label{rem:Jacobson_schemes}
Let $Y$ be a scheme of finite type over any field $k$.
\begin{enumerate}[wide,labelwidth=!,labelindent=0pt]
\item
Recall by \cite[\href{https://stacks.math.columbia.edu/tag/02J1}{Definition 02J1}]{stacks-project} that, a point $y\in Y$ is a \textbf{finite type point} if the canonical morphism $\Spec~k(y)\rightarrow Y$ is of finite type. Equivalently by \cite[\href{https://stacks.math.columbia.edu/tag/01TA}{Lemma 01TA}]{stacks-project}, $y$ is a closed point in some affine open subset $U=\Spec~R$ of $Y$, and the field extension $k\hookrightarrow k(y)$ is finite. 

\item
By \cite[\href{https://stacks.math.columbia.edu/tag/02J6}{Lemma 02J6}]{stacks-project}, $Y$ is \textbf{Jacobson}: the closed points are dense in every closed subset \cite[\href{https://stacks.math.columbia.edu/tag/01P2}{Definition 01P2}]{stacks-project}. Equivalently, every nonempty locally closed subset contains a closed point.

\item
Now by \cite[\href{https://stacks.math.columbia.edu/tag/01TB}{Lemma 01TB}]{stacks-project}, the closed points in $Y$ are precisely the finite type points.
\end{enumerate}
In particular, if $k$ is an algebraically closed field, then the closed points of $Y$ are the $k$-points, and every nonempty locally closed subset contains a closed (i.e. $k$-) point.
\end{remark}

\begin{proof}[Proof of Lemma \ref{lem:reducedness}]

\noindent{}(1). Take any point $y\in Y$, define $Y':=\overline{\{y\}}\hookrightarrow Y$ equipped with the reduced close subscheme structure. Then $Y$ is integral and $y$ is a generic point of $Y'$. Let $f':X'=X\times_Y Y'\rightarrow Y'$ be the base change of $f$ along $Y'\hookrightarrow Y$, which is still a surjective morphism of schemes of finite type over $k$. 
By base change and our assumption, all the closed fibers of $f'$ are also reduced. Moreover, $X_y'=(f')^{-1}(y) = X_y$. 

If $X_y'$ is non-reduced, then by \cite[\href{https://stacks.math.columbia.edu/tag/0575}{Lemma 0575}]{stacks-project}, there exists a nonempty open subset $V\subset Y'$ such that, for all $v\in V$ the fiber $X_v'$ is non-reduced. 
However, by Remark \ref{rem:Jacobson_schemes}, $Y$ is Jacobson\footnote{It's this step that the finite type assumption in the lemma becomes essential.}, hence $V$ contains a closed point. This is a contradiction.

\noindent{}(2). This follows from (1) and \cite[Cor.3.3.5]{Gro65} (alternatively, see \cite[Thm.23.9, Cor.]{Mat89}):\\
\emph{Let $f:X\rightarrow Y$ be a flat morphism between two locally Noetherian schemes. 
If $Y$ is reduced at the \emph{points} of $f(X)$, and $f^{-1}(y)$ is a reduced $k(y)$-scheme, $\forall$ point $y\in f(X)$, then $X$ is reduced.}
\end{proof}

\begin{remark}\label{rem:fiber_bundle_in_smooth_topology}
As in the proof of Proposition \ref{prop:free_action_and_flat_orbit_map} (3), a similar argument also shows:\\
If $\pi:X\rightarrow Y$ is a morphism of $\field$-varieties and $F$ is a $\field$-variety, then $\pi$ is a fiber bundle with fiber $F$ in the \'{e}tale topology if and only if $\pi$ is a fiber bundle with fiber $F$ in the smooth topology.
\end{remark}

The flatness condition in Proposition \ref{prop:free_action_and_flat_orbit_map} can be relaxed when the base is normal:
\begin{proposition}\label{prop:free_action_and_orbit_map_with_normal_target}
If $G$ acts \textbf{freely} on a \textbf{pure dimensional} variety $X$ over $\field$ (of char. $0$), and $\pi:X\rightarrow Y$ is an orbit map, with $Y$ a \textbf{normal} \textbf{pure dimensional} $\field$-variety, then:
\begin{enumerate}[wide,labelwidth=!,labelindent=0pt]
\item
$\pi$ is flat. So Proposition \ref{prop:free_action_and_flat_orbit_map} applies: $\pi$ is smooth, affine, and a principal $G$-bundle, etc.

\item
$X$ is normal.
\end{enumerate}
\end{proposition}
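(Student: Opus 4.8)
The plan is to prove (1) first and then read off (2) essentially for free. Indeed, once $\pi$ is known to be flat, Proposition~\ref{prop:free_action_and_flat_orbit_map} shows that $\pi$ is smooth (it is flat with every fibre isomorphic to $G$, which is regular since $\mathrm{char}\,\field=0$) and a principal $G$-bundle. A smooth morphism onto a normal base has normal total space — by Serre's criterion a smooth map is flat with geometrically regular fibres, and both conditions $R_1$ and $S_2$ ascend along such a morphism from the base — and $X$ is reduced by hypothesis, so $X$ is normal. Thus the whole content is the flatness statement (1).

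For (1) the key step is to realise the orbit map as a torsor over $Y$ by passing through the quotient algebraic space. Since the action is free, $G\times X\to X\times X,\ (g,x)\mapsto(x,gx)$, is a closed immersion, so $R:=G\times X\rightrightarrows X$ is a flat, finite-type, closed equivalence relation; by the theory of quotients by flat equivalence relations the quotient $q:=X/G$ exists as a separated algebraic space of finite type over $\field$ and $X\to q$ is an fppf $G$-torsor, in particular faithfully flat and smooth. The universal property yields a morphism $\phi\colon q\to Y$ compatible with $\pi$. Now $\phi$ is bijective (both $q$ and $Y$ have the $G$-orbits of $X$ as fibres, and $\field$ is algebraically closed), and it restricts to an isomorphism over the dense open $V\subseteq Y$ on which $\pi$ is flat: there $\pi$ is a $G$-torsor by Proposition~\ref{prop:free_action_and_flat_orbit_map}, so $\pi^{-1}(V)/G=V$ and $\phi|_V=\id_V$. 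Here $q$ is reduced (as $X\to q$ is faithfully flat and $X$ is reduced), and $Y$ is normal; by Zariski's Main Theorem a quasi-finite separated morphism is an open immersion followed by a finite morphism, and a finite birational morphism onto a normal scheme from a reduced source is an isomorphism, so the bijective, generically-iso morphism $\phi$ must be an isomorphism $q\xrightarrow{\ \sim\ }Y$. Hence $\pi$ is identified with the $G$-torsor $X\to q\cong Y$; in particular $\pi$ is flat, which is (1), and then (2) follows as above.

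It is worth noting where normality of $Y$ really enters. Flatness over the dense open $V$ is free (generic flatness), and one can even push down to codimension one by hand: the flat locus of $\pi$ is open, nonempty, and $G$-stable (each translation $g\colon X\to X$ is an automorphism over $Y$ because $\pi\circ g=\pi$, and $Y$-automorphisms preserve flatness), its complement $N$ is a closed $G$-stable union of fibres, and at a codimension-one point $\cO_{Y,y}$ is a discrete valuation ring since $Y$ is normal, over which flatness is torsion-freeness; reducedness of $X$ together with the fact — forced by pure-dimensionality and the fibre-dimension count $\dim X=\dim Y+\dim G$ — that every irreducible component of $X$ dominates $Y$ shows the pullback of a uniformiser is a non-zero-divisor, so $N$ has image of codimension $\ge 2$ in $Y$. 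But passing from codimension $\ge 2$ to everything cannot be done naively: ``miracle flatness'' wants $Y$ regular or $X$ Cohen--Macaulay (neither is available), and a $G$-torsor over the complement of a codimension-$\ge 2$ subset of a merely normal $Y$ need not extend. The honest way across that last gap is exactly the global quotient construction above, in which normality of $Y$ is used through Zariski's Main Theorem.

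The main obstacle, then, is this ``generic $\Rightarrow$ everywhere'' upgrade of flatness and the bookkeeping it entails: verifying that $q=X/G$ genuinely is a separated finite-type algebraic space and $X\to q$ a $G$-torsor for the free action of a (possibly disconnected) affine algebraic group, and that $\phi\colon q\to Y$ meets the hypotheses of Zariski's Main Theorem (quasi-finiteness is bijectivity, separatedness comes from the action map being a closed immersion, and the ``isomorphism over a dense open'' has to be read off carefully when $X$, $Y$, or $G$ are disconnected). Once these are in place the remaining steps are routine.
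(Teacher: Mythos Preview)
Your approach via the quotient algebraic space and Zariski's Main Theorem is correct in outline but genuinely different from the paper's, which is a one-line local argument: it cites a variant of miracle flatness due to Schoutens (stated as Lemma~\ref{lem:variation_of_miracle_flatness}), asserting that a local homomorphism $R\to S$ of Noetherian local rings with $R$ an excellent \emph{normal} local domain with perfect residue field, and closed fibre regular of dimension $\dim S-\dim R$, is faithfully flat. Since the orbits are isomorphic to $G$ (free action), hence smooth in characteristic~$0$, and pure-dimensionality gives the dimension count, this applies at every point of $X$. So your remark that ``miracle flatness wants $Y$ regular or $X$ Cohen--Macaulay'' is precisely what this variant relaxes; part~(2) then follows by smooth descent of normality, as you say.

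Your ZMT step has a gap: you apply ``finite birational onto a normal scheme from a reduced source is an isomorphism'' to $\bar q\to Y$, but you have not checked birationality of $\bar q\to Y$. Knowing $\phi\colon q\to Y$ is an isomorphism over a dense open $V\subset Y$ only gives $\phi^{-1}(V)\cong V$; for $\bar q\to Y$ to be birational you need $\phi^{-1}(V)$ dense in $q$ (hence in $\bar q$), i.e.\ every irreducible component of $q$ must dominate $Y$. Without pure-dimensionality this fails: take $G=\{1\}$, $X=\{0\}\sqcup(\mathbb{A}^1\setminus\{0\})\to Y=\mathbb{A}^1$; here $q=X$ and ZMT produces $\bar q=\{0\}\sqcup\mathbb{A}^1$, which is finite over $Y$, reduced, and an isomorphism over $\mathbb{A}^1\setminus\{0\}$, yet not an isomorphism. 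The fix is exactly the fact you state in your third paragraph (for the abandoned approach): pure-dimensionality of $X$ makes $q$ pure-dimensional of dimension $\dim Y$ (as $X\to q$ is a $G$-torsor), so no component of $q$ can land in the lower-dimensional $Y\setminus V$. With this inserted your argument goes through, yielding an alternative proof via standard tools (algebraic spaces, ZMT) in place of the less well-known Schoutens lemma.
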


\begin{proof}
\noindent{}(1). By Lemma \ref{lem:variation_of_miracle_flatness}, $\pi$ is flat, so Proposition \ref{prop:free_action_and_flat_orbit_map} applies.

\noindent{}(2). This follows from (1) and \cite[\href{https://stacks.math.columbia.edu/tag/034F}{Lemma 034F}]{stacks-project}: \emph{a morphism being normal is local in the smooth topology}: If $S'\rightarrow S$ is a smooth morphism, then $S$ is normal $\Rightarrow$ so is $S'$. If $S'\rightarrow S$ is smooth surjective, then $S'$ is normal $\Rightarrow$ so is $S$. 
\end{proof}

\begin{lemma}[{\textbf{A variant of miracle flatness}. \cite[Thm.3.3.27]{Sch10}}]\label{lem:variation_of_miracle_flatness}
If $R\rightarrow S$ be a local morphism of Noetherian local rings, $R$ is an excellent normal local domain with perfect residue field, and the closed fiber is regular of dimension $\dim S-\dim R$, 
then $R\rightarrow S$ is faithfully flat. 
\end{lemma}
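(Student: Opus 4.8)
\textbf{Proof plan for Lemma \ref{lem:variation_of_miracle_flatness}.} The goal is to show that the local homomorphism $R \to S$ is faithfully flat under the hypotheses: $R$ is an excellent normal local domain with perfect residue field $k = R/\fm_R$, and the closed fiber $S \otimes_R k$ is regular of dimension $\dim S - \dim R$. I would first reduce faithful flatness to flatness: since $R \to S$ is local (so $\fm_R S \subset \fm_S$ is proper, hence $S/\fm_R S \neq 0$), flatness automatically upgrades to faithful flatness. So the content is flatness.

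The plan is to apply the local criterion for flatness via regular sequences, following the strategy of ordinary ``miracle flatness'' but replacing the regularity hypothesis on $R$ by normality plus excellence. First I would complete: by \cite[\href{https://stacks.math.columbia.edu/tag/00MC}{EGA/Stacks}]{stacks-project}-style faithfully flat descent of flatness along $R \to \hat R$ and $S \to \hat S$, and since the closed fiber is unchanged under completion, I may assume $R$ and $S$ are complete Noetherian local rings; now $\hat R$ is still a normal domain (here excellence of $R$ is exactly what guarantees $\hat R$ remains normal — this is the reason for the excellence hypothesis), and the residue field is still perfect. Next I would invoke Cohen's structure theorem: a complete normal local domain with perfect residue field $k$ of dimension $d = \dim R$ admits a finite injective local homomorphism $A = k[[t_1,\dots,t_d]] \to R$ (a Noether normalization), and $R$ is a torsion-free, hence by normality a reflexive, finitely generated $A$-module. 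The key point is that such an $R$ is a \emph{Cohen--Macaulay} $A$-module? — no, that is not automatic in general, so instead I would argue directly: choose a regular system of parameters $t_1, \dots, t_d$ for $R$ out of a maximal $R$-regular sequence; since $R$ is normal of dimension $d \geq 2$ it satisfies Serre's condition $(S_2)$, and the parameters $t_1,t_2$ form an $R$-regular sequence, but for $d \geq 3$ one cannot in general continue.

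The cleaner route, which I would actually carry out, is the following inductive descent on $\dim R$. If $\dim R = 0$, then $R$ is a field ($R$ is a normal Artinian local domain, hence reduced Artinian local, hence a field), and $R \to S$ is automatically flat; here also the hypothesis forces $S$ regular. If $\dim R \geq 1$, pick a nonzerodivisor $t \in \fm_R \setminus \fm_R^2$ that avoids all minimal primes of the non-normal locus; since $R$ is a normal domain, any $0 \neq t \in \fm_R$ is a nonzerodivisor. Now $t$ is also a nonzerodivisor on $S$: this is where I would use that the closed fiber $S/\fm_R S$ is regular, in particular $S/\fm_R S$ has depth $\dim S - \dim R$, and a depth/dimension count (combined with $t$ being part of a system of parameters downstairs whose image is part of a regular system of parameters on the fiber) shows $t$ is $S$-regular — more precisely I would show $\operatorname{depth}_S(\fm_R, S) \geq 1$ using that $t \in \fm_R$ has $S/\fm_R S$-regular-sequence-friendly behavior, and excellence is not needed for this part. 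Set $\bar R = R/tR$ and $\bar S = S/tS$; then $\bar R$ is an excellent reduced local ring but possibly no longer normal — this is the genuine obstacle. To fix it I would instead descend not by a single $t$ but argue via the \emph{associated graded} or via a flatness criterion of the form: $R \to S$ is flat iff $\operatorname{Tor}_1^R(k, S) = 0$, and compute this $\operatorname{Tor}$ using the Koszul complex of a system of parameters $\underline{t}$ of $R$ — the point being that over a normal domain $R$ the Koszul homology $H_i(\underline{t}; R)$ for $i > 0$ is supported on the non-CM locus, which by excellence has codimension $\geq ?$; combined with the regular fiber hypothesis (which makes $\underline{t}$ act as a regular sequence on $S \otimes_R$ each graded piece) one forces $H_i(\underline{t}; S) = 0$ for $i > 0$, giving flatness by the local criterion.

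The main obstacle I anticipate is precisely controlling the failure of $R$ to be Cohen--Macaulay: the classical miracle flatness proof uses $R$ regular so that every system of parameters is a regular sequence and the local criterion applies immediately, whereas here normality only gives $(R_1) + (S_2)$. The role of \emph{excellence} is to ensure the singular/non-Cohen--Macaulay locus of $R$ is closed and of the expected codimension after completion, and the role of the \emph{perfect residue field} is to make Cohen structure theory and the Noether normalization $k[[\underline t]] \hookrightarrow R$ available with $R$ module-finite and generically \'etale. The cleanest packaging, which I would ultimately use, is to reduce to the module-finite extension $A = k[[\underline t]] \hookrightarrow R$, note $A \to S$ is then a composite we want flat, observe $A$ is regular so $R$ is automatically a (not necessarily CM) finite $A$-module, and then show: since $R$ is $A$-torsion-free of rank $n$ and the fiber $S/\fm_A S$ is regular, a Hilbert-function / local-criterion comparison forces $S$ to be $A$-flat and $R \to S$ to inherit flatness by faithfully flat base change along $A \to R$ after checking the relevant $\operatorname{Tor}$ vanishing on the regular base $A$. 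I expect the write-up to be short once the reduction to the regular base $A$ is in place; the delicate line is verifying $t$ (resp.\ a full system of parameters) is $S$-regular, which I would extract cleanly from ``the closed fiber is regular of dimension $\dim S - \dim R$'' together with the standard fact that a system of parameters of $R$ mapping to a regular system of parameters on $S/\fm_R S$ is part of a system of parameters of $S$.
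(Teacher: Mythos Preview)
The paper does not prove this lemma at all: it is stated with a citation to \cite[Thm.3.3.27]{Sch10} (Schoutens, \emph{The use of ultraproducts in commutative algebra}) and used as a black box. So there is no ``paper's own proof'' to compare against; the actual proof in the reference relies on ultraproduct/non-standard techniques, which is a different circle of ideas from anything in your plan.

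As for your plan itself, it contains a genuine gap. Your final ``clean'' route factors $R \to S$ through a Noether normalization $A = k[[t_1,\dots,t_d]] \hookrightarrow R$ with $A$ regular, aims to show $A \to S$ is flat, and then says $R \to S$ ``inherits flatness by faithfully flat base change along $A \to R$''. But $A \to R$ is only finite and injective; it is faithfully flat precisely when $R$ is a free $A$-module, i.e.\ when $R$ is Cohen--Macaulay, which is exactly the hypothesis you are trying to avoid. So that descent step fails, and knowing $A \to S$ flat does not by itself give $R \to S$ flat. You correctly diagnosed the same obstruction earlier in the inductive approach (after dividing by $t$, $R/tR$ need not be normal), and the Koszul/Tor sketch has the same core difficulty: controlling the non-CM locus of $R$ is the whole point, and normality alone (giving only $(R_1)+(S_2)$) is not enough to push a direct regular-sequence argument through. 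This is why the result in \cite{Sch10} uses a genuinely different method; your reductions to the complete case and the observation that excellence preserves normality under completion are correct and are the right first steps, but the remaining step needs an input (ultraproducts, or some other CM-approximation technique) that your plan does not supply.
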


As a moral converse to Proposition \ref{prop:free_action_and_flat_orbit_map}, we have
\begin{lemma}\label{lem:principal_bundle_in_fpqc_topology}
If $\pi:X\rightarrow Y$ is a principal $G$-bundle in \textbf{fpqc topology}, with $X,Y$ $\field$-varieties, then:
\begin{enumerate}[wide,labelwidth=!,labelindent=0pt]
\item
$\pi$ is smooth and affine.

\item
We have a natural $G$-isomorphism $c:X\times G\xrightarrow[]{\simeq} X\times_Y X: (x,g)\mapsto (x,xg)$, i.e. the cartesian diagram (\ref{eqn:fpqc_trivialization_for_free_action_and_flat_orbit_map}) holds. In addition, the $G$-action on $X$ is free. 

\item
$\pi:X\rightarrow Y$ is an orbit map and a principal $G$-bundle in the \'{e}tale topology.
\end{enumerate}
\end{lemma}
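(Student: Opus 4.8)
The plan is to deduce (1) and the central isomorphism of (2) from the two descent facts already invoked in the proof of Proposition \ref{prop:free_action_and_flat_orbit_map} — that being smooth and being affine are fpqc-local on the target — together with faithfully flat descent of the property of being an isomorphism, and then to obtain (3) by replaying verbatim the slicing argument of that same proof. For (1): by hypothesis each $y\in Y(\field)$ admits an fpqc neighborhood $\iota_y\colon U_y\to Y$ over which $\pi$ is identified with the projection $U_y\times G\to U_y$, which is smooth and affine because $G$ is; since $\iota_y$ is flat of finite type over the Noetherian scheme $Y$ its image is open, and these images cover $Y$ (as $Y$ is Jacobson and they contain all $\field$-points). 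Because smoothness and affineness are fpqc-local on the target and affineness may be tested over an open cover, $\pi$ is smooth and affine; and it is surjective since each $U_y\times G\to U_y$ is.

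For (2), I would consider the natural $G$-equivariant morphism $c\colon X\times G\to X\times_Y X$, $(x,g)\mapsto (x,xg)$, noting $X\times_Y X$ is separated of finite type over $\field$. Using that $Y$ is quasi-compact, choose a single faithfully flat quasi-compact $\iota\colon U\to Y$ (a finite disjoint union of the $U_y$) trivializing $\pi$ via a $G$-isomorphism $\phi\colon U\times_Y X\isomorphic U\times G$ over $U$. The base change of $c$ along $\iota$, transported through $\phi$, becomes the shear map $(u,h,g)\mapsto (u,h,hg)$ on $U\times G\times G$, an isomorphism with inverse $(u,h_1,h_2)\mapsto (u,h_1,h_1^{-1}h_2)$; hence, by faithfully flat descent, $c$ is an isomorphism, so in particular $X\times_Y X$ is reduced, hence a variety, and the diagram (\ref{eqn:fpqc_trivialization_for_free_action_and_flat_orbit_map}) holds. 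Finally, the composite $X\times G\xrightarrow{c}X\times_Y X\hookrightarrow X\times X$ is the action-graph map $(x,g)\mapsto (x,xg)$; since $Y$ is separated the second arrow is a closed immersion (exactly as in the proof of Proposition \ref{prop:free_action_and_flat_orbit_map}) and $c$ is an isomorphism, so the composite is a closed immersion, i.e. the $G$-action on $X$ is free.

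For (3): from (1), $\pi$ is smooth and surjective, so $\{X\xrightarrow{\pi}Y\}$ is a smooth covering, and by (2) there is a $G$-isomorphism $X\times_Y X\isomorphic X\times G$ over $X$. Given a $\field$-point $x_0\in\pi^{-1}(y)$ — nonempty by surjectivity — base-changing this isomorphism along $x_0\colon\Spec\field\to X$ gives $\pi^{-1}(y)\isomorphic\{x_0\}\times G\isomorphic G$ compatibly with the $G$-action, so each fiber is a single orbit and $\pi$ is an orbit map. For étale-local triviality, precisely as in the proof of Proposition \ref{prop:free_action_and_flat_orbit_map}(3), \cite[\href{https://stacks.math.columbia.edu/tag/055V}{Lemma 055V}]{stacks-project} produces $s\colon\cV\to X$ with $\pi\circ s\colon\cV\to Y$ an étale covering, and $(\pi\circ s)^{*}X=\cV\times_X(X\times_Y X)\isomorphic\cV\times_X(X\times G)=\cV\times G$ as $G$-schemes over $\cV$, which is the desired local trivialization in the étale topology.

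The hard part is really only the bookkeeping in (2): one must match the left/right $G$-action conventions on $U\times G$ (and on $X\times_Y X$) with the chosen convention on $X$, and keep careful track of which projections define the relevant fiber products, in order to recognize the base change of $c$ as the shear isomorphism. Everything else is a direct application of fpqc and smooth descent and of the slicing technique already established in this appendix.
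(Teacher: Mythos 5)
Your proof is correct and follows essentially the same route as the paper's: fpqc-locality of smoothness and affineness for (1), fpqc descent of the property of being an isomorphism applied to $c$ (the paper base-changes along a trivializing cover of $X$ for $p_1\colon X\times_YX\to X$ rather than of $Y$, but this is cosmetic), freeness via the closed embedding $X\times_YX\hookrightarrow X\times X$, and the slicing lemma for (3). The only imprecision is your claim that the image of $\iota_y$ is open because $\iota_y$ is "of finite type" — fpqc neighborhoods need not be of finite type, but the needed conclusion (the images jointly cover $Y$) still holds since generizations lift along flat morphisms and every point of $Y$ specializes to a $\field$-point.
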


\begin{proof}
\noindent{}(1). The proof is similar to that of Proposition \ref{prop:free_action_and_flat_orbit_map} (2).

\noindent{}(2). As in the proof of Proposition \ref{prop:free_action_and_flat_orbit_map} (1), we get a $G$-morphism $c$ in a commutative diagram:
\[
\begin{tikzcd}[row sep=1pc]
X\times G\arrow[r,dashed,rightarrow,"{c}"]\arrow[d,"{p_1}"] & X\times_YX\arrow[d,"{p_1}"]\arrow[r,"{p_2}"]\arrow[dr,phantom,"\lrcorner",very near start] & X\arrow[d,"{\pi}"]\\
X\arrow[r,equal] & X\arrow[r,"{\pi}"] & Y
\end{tikzcd}
\]
and a closed embedding $X\times_Y X\hookrightarrow X\times X$ ($Y$ is \emph{separated}). It remains to show $c$ is an isomorphism.

By our assumption and base change, $p_1:X\times_Y X\rightarrow X$ is a principal $G$-bundle in the fpqc topology. So, there exists a fpqc covering $\tau:\cV\rightarrow X$ and a $G$-isomorphism $\phi_{\tau}:\cV\times_Y X\xrightarrow[]{\simeq} \cV\times G$ over $\cV$. 
In other words, we obtain the following commutative diagram:
\[
\begin{tikzcd}[row sep=1pc, column sep=4pc]
& \cV\times G\arrow[d,dashed,"{c_{\tau}}"]\arrow[r,"{\tau\times\id_G}"] \arrow[dl,dashed,swap,"{\tilde{c}_{\tau}}"]\arrow[dr,phantom,"\lrcorner",very near start] & X\times G\arrow[d,dashed,"{c}"]\\
\cV\times G\arrow[dr,swap,"{p_1}"] & \cV\times_Y X\arrow{l}{\simeq}[swap]{\phi_{\tau}} \arrow[d,"{p_1}"]\arrow[r]\arrow[dr,phantom,"\lrcorner",very near start] & X\times_Y X\arrow[d,"{p_1}"]\\ 
& \cV\arrow[r,"{\tau}"] & X
\end{tikzcd}
\]
As a $G$-morphism over $\cV$, $\tilde{c}_{\tau}:=\phi_{\tau}\circ c_{\tau}:\cV\times G\rightarrow \cV\times G$ is then a $G$-isomorphism over $\cV$. 
Thus, so is $c_{\tau}$.
Now, by \cite[\href{https://stacks.math.columbia.edu/tag/02L4}{Lemma 02L4}]{stacks-project}, \emph{a morphism being an isomorphism is fpqc local on the target}. It follows that $c:X\times G \rightarrow X\times_Y X$ is a $G$-isomorphism over $X$. We're done.

\noindent{}(3). As $\pi:X\rightarrow Y$ is smooth surjective, by base change, the closed fibers of $\pi:X\rightarrow Y$ are the same as those of $p_1:X\times_Y X\rightarrow X$, which are isomorphic to $G$ by (2). 
Thus, $\pi:X\rightarrow Y$ is an orbit map. 
Now, the result follows from Proposition \ref{prop:free_action_and_flat_orbit_map} (3).
\end{proof}

\subsection{Associated fiber bundles}\label{subsec:associated_fiber_bundles}

A useful reference is \cite{Kra15}.
Recall \textbf{Convention} \ref{convention:linear_algebraic_groups}.
As promised in Remark \ref{rem:geometric_quotient_from_free_action_and_flat_orbit_map}, we complement Proposition \ref{prop:free_action_and_flat_orbit_map}, Proposition \ref{prop:free_action_and_orbit_map_with_normal_target}, and Lemma \ref{lem:principal_bundle_in_fpqc_topology}. 

\begin{proposition}[Associated fiber bundles]\label{prop:associated_fiber_bundles}
Let $F$ be an \textbf{affine} $H$-variety over $\field$, and $\fp:P\rightarrow B$ be an (\'{e}tale) principal $H$-bundle\footnote{By Proposition \ref{prop:free_action_and_flat_orbit_map}, it suffices to assume: $\fp$ is a flat orbit map of $\field$-varieties for a free action $H\acts P$.}. 
$H$ acts on $P\times F$ diagonally: $h\cdot(a,z):=(ah^{-1},hz)$. Then:
\begin{enumerate}[wide,labelwidth=!,labelindent=0pt]
\item
The action $H\acts P\times F$ admits a \emph{geometric quotient}
$\pi:P\times F \rightarrow P\times^H F=(P\times F)/H$,
with $P\times^H F$ is a $\field$-variety.
In particular, $\fp:P\rightarrow B$ is a geometric quotient, with $F=\Spec~k$.

\item
The canonical map $\pi:P\times F \rightarrow P\times^H F$ is a principal $H$-bundle.

\item
The induced map $q:P\times^H F\rightarrow B: [a,z]\mapsto \fp(a)$ is an (\'{e}tale) \emph{fiber bundle with fiber $F$}. 

\item
We have a fiber product diagram
l\begin{equation}\label{eqn:fiber_product_for_associated_fiber_bundles}
\begin{tikzcd}[row sep=1pc]
P\times F\arrow[r,"{p_1}"]\arrow[d,"{\pi}"]\arrow[dr,phantom,"\lrcorner",very near start] & P\arrow[d,"{\fp}"]\\
P\times^H F\arrow[r,"{q}"] & B
\end{tikzcd}
\end{equation}
Here, $p_i$ always stands for the projection to the $i$-th factor.
\end{enumerate}
\end{proposition}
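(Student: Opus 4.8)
The plan is to reduce everything to the étale-local triviality of $\fp:P\rightarrow B$ and then glue. Fix an étale cover $\{\iota_i:U_i\rightarrow B\}$ together with $H$-equivariant trivializations $U_i\times_B P\isomorphic U_i\times H$. Over $U_i$ the diagonal $H$-variety $(U_i\times_B P)\times F$ becomes $U_i\times H\times F$ with $h\cdot(u,h_0,z)=(u,h_0h^{-1},hz)$; the $H$-equivariant isomorphism $(u,h_0,z)\mapsto(u,h_0,h_0z)$ conjugates this action to the action on the middle factor alone, so the projection $(u,h_0,w)\mapsto(u,w)$ exhibits $U_i\times F$ as a geometric quotient of $(U_i\times_B P)\times F$, and simultaneously exhibits $(U_i\times_B P)\times F\rightarrow U_i\times F$ as a trivial principal $H$-bundle. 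In particular, taking $F=\Spec\field$ recovers the claim that $\fp$ itself is a geometric quotient.

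First I would glue the local quotients $U_i\times F$. On $U_{ij}:=U_i\times_B U_j$ the two trivializations differ by a transition morphism $g_{ij}:U_{ij}\rightarrow H$ satisfying the cocycle identity, and under the identifications above the induced gluing isomorphism of $U_{ij}\times F$ with itself is $(u,w)\mapsto(u,g_{ij}(u)w)$, which is algebraic and inherits the cocycle identity. Since $F$ is affine, the map $U_i\times F\rightarrow U_i$ is affine; by fpqc descent for affine morphisms these charts glue to a $\field$-variety $P\times^H F$ equipped with an affine morphism $q:P\times^H F\rightarrow B$, and by construction $q$ is a fiber bundle with fiber $F$ trivialized over $\{U_i\rightarrow B\}$ — this is $(3)$. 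The variety $P\times^H F$ and the map $q$ are independent of all choices because geometric quotients, when they exist, are unique (Lemma~\ref{lem:geometric_quotient}).

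Next I would verify the fiber-product square $(4)$ by writing down the $H$-equivariant isomorphism $P\times F\isomorphic(P\times^H F)\times_B P$ over $P\times^H F$ explicitly. On points, $(b,w)\mapsto([b,w],b)$ with inverse $([a,z],a')\mapsto(a',h^{-1}z)$, where $h\in H$ is the unique element with $a'=ah$; one checks this is well defined, $H$-equivariant for the diagonal action on the left and for the action through the second ($P$-) factor on the right, and compatible with the projections to $P\times^H F$ and to $P$. (The same identity can instead be checked over each $U_i$ and glued, if one prefers to avoid reasoning with $\field$-points.) Consequently $\pi:P\times F\rightarrow P\times^H F$ is the base change of the principal $H$-bundle $\fp:P\rightarrow B$ along $q$, hence itself a principal $H$-bundle — this is $(2)$. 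Finally $(1)$ follows: a principal $H$-bundle is in particular a flat orbit map for the free diagonal action (Proposition~\ref{prop:free_action_and_flat_orbit_map}, Lemma~\ref{lem:principal_bundle_in_fpqc_topology}), and since it restricts over each chart $U_i\times F\hookrightarrow P\times^H F$ to the trivial geometric quotient $(U_i\times_B P)\times F\rightarrow U_i\times F$, locality of geometric quotients on the target (Lemma~\ref{lem:geometric_quotient}) upgrades $\pi$ to a genuine geometric quotient, so $P\times^H F=(P\times F)/H$.

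The main obstacle I anticipate is the gluing step: one must ensure that assembling the charts $U_i\times F$ produces an honest scheme of finite type over $\field$ rather than merely an algebraic space. The key point is that $q$ is affine étale-locally, so the descent datum is effective; alternatively one can glue by appealing to the uniqueness of good/geometric quotients in Lemma~\ref{lem:geometric_quotient}. A secondary source of friction is purely bookkeeping — tracking which twist ($h$ versus $h^{-1}$, left versus right translation) appears at each stage so that all the isomorphisms in the étale-local picture and in $(4)$ are genuinely $H$-equivariant. Everything else — affineness and smoothness of $q$ and of $\pi$, and the fiber-product identity — is then routine.
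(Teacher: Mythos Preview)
Your argument is correct and follows a genuinely different route from the paper. The paper first reduces to the case where $B$ is affine (so $P$ and $P\times F$ are affine), defines $Y:=\Spec\,\cO(P\times F)^H$ directly, and then proves an algebraic lemma showing that taking $H$-invariants commutes with the flat base change $\cO(B)\to\cO(U)$ along the \'etale trivializing cover; this lemma (that $S\otimes_R T^H\to (S\otimes_R T)^H$ is an isomorphism under flatness) is the technical heart and yields the Cartesian diagram~\eqref{eqn:fiber_product_for_associated_fiber_bundles} at once. You instead build $P\times^H F$ by gluing the local models $U_i\times F$ via the cocycle $g_{ij}$ and invoke effective fpqc descent for affine morphisms to get a scheme over $B$. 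Your route is more geometric and sidesteps the invariants-versus-tensor-product lemma entirely; the paper's route is more intrinsic (the quotient is manifestly $\Spec$ of the invariant ring) but pays for it with that lemma. Both rely on $F$ being affine at exactly the same place: you need it for effectiveness of descent, the paper needs it so that $P\times F$ is affine over $B$.
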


\begin{proof}
This is similar to \cite[Lem.3.4.1]{Kra15}.
\end{proof}

\begin{example}[{Homogeneous spaces. \cite[II.Thm.6.8]{Bor12}}]
Let $H \subset G$ be a closed subgroup. Then $q:G\rightarrow G/H=G\geoquotient H$ is a principal $H$-bundle over a smooth quasi-projective $\field$-variety.
\end{example}

\subsection{Functorial properties and applications}\label{subsec:functorial_properties_and_applications}

Finally, we give some applications.

\begin{corollary}[base change]\label{cor:base_change_of_principal_bundles}
If $\fp:P\rightarrow B$ is a principal $H$-bundle, with $P'\subset P$ a locally closed $H$-subvariety, then $B':=\fp(P')\subset B$ is a locally closed subvariety, and we get a cartesian square
\[
\begin{tikzcd}[row sep=1pc]
P'\arrow[r,hookrightarrow]\arrow[d,swap,"{\fp'=\fp|_{P'}}"]\arrow[dr,phantom,"\lrcorner",very near start] & P\arrow[d,"{\fp}"]\\
B'\arrow[r,hookrightarrow] & B
\end{tikzcd}
\]
In particular, $\fp':P'\rightarrow B'$ is a principal $H$-bundle as well as a geometric quotient.
\end{corollary}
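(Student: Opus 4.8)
The plan is to exploit that $\fp\colon P\to B$, being a principal $H$-bundle, is smooth, affine, surjective, flat, and an orbit map for a free $H$-action (Proposition~\ref{prop:free_action_and_flat_orbit_map}, Lemma~\ref{lem:principal_bundle_in_fpqc_topology}); in particular $\fp$ is open, being flat and locally of finite presentation. Since $P'\subset P$ is $H$-invariant and every closed fibre of $\fp$ is a single $H$-orbit, the subset $P'$ is \emph{saturated}: for $x\in P'$ one has $\fp^{-1}(\fp(x))=H\cdot x\subset P'$, hence $\fp^{-1}(\fp(P'))=P'$. Thus $P'=\fp^{-1}(B')$ as subsets of $P$, where $B':=\fp(P')$.

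First I would show that $B'$ is locally closed in $B$. The relevant topological fact is that for an open surjective continuous map $f$ one has $\overline{f^{-1}(S)}=f^{-1}(\overline{S})$ for every subset $S$ of the target; both inclusions follow from a short neighbourhood argument using openness of $f$. Applied to $\fp$ and $S=B'$ this gives $\overline{P'}=\fp^{-1}(\overline{B'})$. The restriction $\fp|\colon\overline{P'}=\fp^{-1}(\overline{B'})\to\overline{B'}$ is the base change of $\fp$ along the closed immersion $\overline{B'}\hookrightarrow B$, hence still flat, locally of finite presentation, and surjective, therefore open. Since $P'$ is locally closed it is open in $\overline{P'}$; pushing forward by the open map $\fp|$ shows that $B'=\fp(P')$ is open in $\overline{B'}$, i.e.\ $B'$ is locally closed in $B$. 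Endow $B'$ with the reduced induced subscheme structure, so that it is a $\field$-subvariety.

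Next I would check that the square is cartesian. The scheme-theoretic fibre product $P\times_B B'$ is the base change of the smooth morphism $\fp$ along $B'\hookrightarrow B$, so $P\times_B B'\to B'$ is smooth; as $B'$ is reduced, $P\times_B B'$ is reduced (a smooth scheme over a reduced base is reduced, by the argument of Lemma~\ref{lem:reducedness} together with \cite[Cor.~3.3.5]{Gro65}). Both $P'$ and $P\times_B B'$ are locally closed subschemes of $P$ with the same underlying set $\fp^{-1}(B')$, and both are reduced, hence they coincide; this is precisely the assertion that the square is cartesian.

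Finally, $\fp'=\fp|_{P'}\colon P'\to B'$ is obtained from the principal $H$-bundle $\fp$ by base change along $B'\hookrightarrow B$, and principal $H$-bundles (in the \'etale topology, with their free $H$-action) are stable under base change; hence $\fp'$ is again a principal $H$-bundle. By Proposition~\ref{prop:associated_fiber_bundles}(1) applied with $F=\Spec\field$ (equivalently, Lemma~\ref{lem:principal_bundle_in_fpqc_topology} together with Remark~\ref{rem:geometric_quotient_from_free_action_and_flat_orbit_map}), every principal $H$-bundle is a geometric quotient, so $\fp'$ is a geometric quotient as well. The only genuinely delicate point is the first step, that $B'=\fp(P')$ is locally closed; it rests entirely on the openness of $\fp$ and the saturation $P'=\fp^{-1}(B')$, after which everything is formal.
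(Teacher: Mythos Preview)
Your proof is correct and follows essentially the same approach as the paper: both use the saturation $P'=\fp^{-1}(B')$, the openness of $\fp$, and the reducedness of the fibre product (via Lemma~\ref{lem:reducedness}) to identify $P'$ with $P\times_B B'$. The only cosmetic difference is that the paper first reduces to the open and closed cases separately via the factorisation $P'\hookrightarrow\overline{P'}\hookrightarrow P$, whereas you handle the locally closed case in one pass through the closure identity $\overline{\fp^{-1}(B')}=\fp^{-1}(\overline{B'})$.
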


\begin{proof}
By the composition $P'\hookrightarrow \overline{P}'\hookrightarrow P$, it suffices to consider the case when $P'\subset P$ is an open or a closed $H$-subvariety.
By Proposition \ref{prop:associated_fiber_bundles}, $\fp:P\rightarrow B$ is a geometric quotient. Then as an $H$-invariant subset,
$P'\subset P$ is open (resp. closed) if and only if $B'\subset B$ is open (resp. closed), and $P'=\fp^{-1}(B')$. 
It remains to show that the obviously commutative diagram is cartesian.

The open case is trivial. For the closed case, take $P'':=B'\times_B P$ and $\fp'':=\fp|_{P''}:P''\rightarrow B'$ in the category of schemes. By base change, $\fp''$ is a principal $H$-bundle. In particular, $\fp''$ is flat, and its closed fibers are isomorphic to $H$, hence reduced. Of course, $B'$ is reduced, then so is  $P''$ by Lemma \ref{lem:reducedness}. On the closed subset $P'=\fp^{-1}(B')$, there is a unique reduced close subscheme structure. Thus, the natural morphism $P'\rightarrow P''$ is an isomorphism.
\end{proof}

\begin{proposition}[Reduction of principal bundles]\label{prop:reduction_of_principal_bundles}
Let $H\subset G$ be a closed subgroup, $X$ be a $G$-variety, and $i:Z\hookrightarrow X$ be a closed $H$-subvariety. So $H\acts G\times Z: h\cdot(g,z)=(gh^{-1},h\cdot z)$.
If:
\begin{enumerate}
\item
$\pi:X\rightarrow B$ be a principal $G$-bundle over a $\field$-variety $B$. 

\item
$\tilde{a}:G\times Z\rightarrow X:(g,z)\mapsto gz$ is a principal $H$-bundle.
\end{enumerate}
Then $\fr:=\pi\circ i: Z\rightarrow B$ is a principal $H$-bundle, and we have a cartesian diagram:
\begin{equation}\label{eqn:reduction_of_principal_bundle}
\begin{tikzcd}[row sep=1pc]
H\times Z\arrow[r,hookrightarrow]\arrow[d,"{a_Z}"]\arrow[dr,phantom,"\lrcorner",very near start] & G\times Z\arrow[r,"{p_2}"]\arrow[d,"{\tilde{a}}"]\arrow[dr,phantom,"\lrcorner",very near start] & Z\arrow[d,"{\fr}"]\\
Z\arrow[r,hookrightarrow,"{i}"] & X\arrow[r,"{\pi}"] & B
\end{tikzcd}
\quad (\Rightarrow Z/H=Z\geoquotient H \xrightarrow[]{\simeq} X/G=X\geoquotient G \xrightarrow[]{\simeq} B).
\end{equation}
\end{proposition}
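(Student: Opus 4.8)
The plan is to verify each of the two claimed cartesian squares in \eqref{eqn:reduction_of_principal_bundle} and then to read off that $\fr$ is a principal $H$-bundle (hence a geometric quotient) by fpqc descent along $\pi$. Throughout one uses that $Z$ is an $H$-invariant closed subvariety, so $i\colon Z\hookrightarrow X$ is $H$-equivariant, and that $\tilde a$ is genuinely $H$-invariant ($\tilde a(gh^{-1},hz)=gz=\tilde a(g,z)$), so that hypothesis (2) makes sense.

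First I would treat the left square, i.e. show $\tilde a^{-1}(Z)=H\times Z$ as closed subschemes of $G\times Z$. The inclusion $H\times Z\subseteq\tilde a^{-1}(Z)$ is immediate since $Z$ is $H$-invariant. Conversely, if $(g,z)\in G\times Z$ with $gz\in Z$, then $(e,gz)$ lies in $G\times Z$ and $\tilde a(e,gz)=gz=\tilde a(g,z)$, so $(g,z)$ and $(e,gz)$ lie in one fibre of $\tilde a$, which is a single $H$-orbit because $\tilde a$ is an orbit map; hence $(g,z)=h\cdot(e,gz)=(h^{-1},h\,gz)$ for some $h\in H$, forcing $g=h^{-1}\in H$. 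So the two closed subsets coincide; to upgrade this to an equality of subschemes, note that $\tilde a$ is flat (a principal $H$-bundle is smooth, by Lemma \ref{lem:principal_bundle_in_fpqc_topology}), that the closed fibres of $\tilde a^{-1}(Z)\to Z$ are $H$-torsors, hence reduced, and that $Z$ is reduced, so $\tilde a^{-1}(Z)$ is reduced by Lemma \ref{lem:reducedness}; since $H\times Z$ is also reduced, $\tilde a^{-1}(Z)=H\times Z$. This is exactly the assertion that the left square is cartesian, the vertical arrows being $a_Z=\tilde a|_{H\times Z}$ (which lands in $Z=HZ$) and $i$.

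Next I would treat the right square. By the defining property of the principal $G$-bundle $\pi$ (Lemma \ref{lem:principal_bundle_in_fpqc_topology}(2), in its left-action form), the map $G\times X\to X\times_BX$, $(g,x)\mapsto(gx,x)$, is an isomorphism compatible with the projections to the $x$-factor. Pulling this back along $i\colon Z\to X$ produces an isomorphism $\Psi\colon G\times Z\xrightarrow{\sim}X\times_BZ$, $(g,z)\mapsto(gz,z)$, over $B$ via $\fr=\pi\circ i$ (its inverse sends $(x,z)$ to $(g,z)$ where $g\in G$ is the unique element with $x=g\cdot i(z)$, which exists by freeness of the $G$-action and is algebraic as it is the torsor difference in $X\times_BX\isomorphic G\times X$); under $\Psi$ the structure map $X\times_BZ\to X$ becomes $\tilde a$ and $X\times_BZ\to Z$ becomes $p_2\colon G\times Z\to Z$. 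This shows the right square is cartesian, and one checks $\Psi$ is $H$-equivariant for $H$ acting on $X\times_BZ$ through the $Z$-factor only. Along the way this already records: $\fr$ is surjective (from $\pi\circ\tilde a=\fr\circ p_2$); $H$ acts freely on $Z$, being the restriction of the free $G$-action on $X$; and the fibre of $\fr$ over a $\field$-point $b=\pi(x)$ is $H$-equivariantly isomorphic to $\tilde a^{-1}(x)$, a single $H$-orbit — so $\fr$ is an orbit map.

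Finally, to conclude that $\fr\colon Z\to B$ is a principal $H$-bundle: since $\tilde a$ is a principal $H$-bundle, there is an \'etale (hence fpqc) cover $\cV\to X$ over which $\tilde a$ trivializes $H$-equivariantly, $(G\times Z)\times_X\cV\isomorphic H\times\cV$ over $\cV$; and since $\pi\colon X\to B$ is smooth surjective (Lemma \ref{lem:principal_bundle_in_fpqc_topology}(1)), the composite $\cV\to X\to B$ is an fpqc cover of $B$. Using the cartesian right square, $Z\times_B\cV\isomorphic(X\times_BZ)\times_X\cV\isomorphic(G\times Z)\times_X\cV\isomorphic H\times\cV$ as $H$-schemes over $\cV$, so $\fr$ is a principal $H$-bundle in the fpqc topology; by Lemma \ref{lem:principal_bundle_in_fpqc_topology} it is then a principal $H$-bundle in the \'etale topology, and by Proposition \ref{prop:associated_fiber_bundles} (applied with $F=\Spec\field$, or directly from Proposition \ref{prop:free_action_and_flat_orbit_map} once flatness of $\fr$ is known by descent) it is a geometric quotient, so $Z\geoquotient H$ makes sense and equals $B$; combining with $X\geoquotient G=B$ yields the asserted chain $Z\geoquotient H\isomorphic X\geoquotient G\isomorphic B$. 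The hard part will be this last step — making the fpqc‑descent of ``being a principal $H$-bundle'' along $\pi$ precise — together with the care in Step~1 needed to pass from the set‑theoretic identity $\tilde a^{-1}(Z)=H\times Z$ to an identity of closed subschemes.
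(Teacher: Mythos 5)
Your proposal is correct and follows essentially the same route as the paper's proof: both hinge on pulling back the torsor isomorphism $G\times X\xrightarrow{\simeq}X\times_BX$ along $i$ to get the cartesian right square $G\times Z\cong X\times_BZ$, then descending the principal $H$-bundle structure of $\tilde a$ along the smooth cover $\pi$ and invoking Lemma \ref{lem:principal_bundle_in_fpqc_topology} and Proposition \ref{prop:associated_fiber_bundles}. The only (harmless) deviation is that you establish the left square directly at the outset via the orbit-map property of $\tilde a$ plus the reducedness argument of Lemma \ref{lem:reducedness}, whereas the paper deduces it formally from the cartesianness of the outer and right squares once $\fr$ is known to be a principal $H$-bundle.
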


\begin{proof}
Clearly, $\fr:Z\rightarrow B$ is an $H$-invariant morphism. We have the following cartesian diagram:
\[
\begin{tikzcd}[row sep=1.2pc, column sep=4pc]
G\times Z\arrow[d,hookrightarrow,"{\id_G\times i}"]\arrow[rr,"{p_2}"]\arrow[drr,phantom,"\lrcorner",very near start] & & Z\arrow[d,hookrightarrow,"{i}"]\\
G\times X\arrow[d,"{a}"]\arrow{r}{(g,x)\mapsto (gx,x)}[swap]{\simeq} & X\times_B X\arrow[r,"{p_2}"]\arrow[d,"{p_1}"]\arrow[dr,phantom,"\lrcorner",very near start] & X\arrow[d,"{\pi}"]\\
X\arrow[r,equal] & X\arrow[r,"{\pi}"] & B
\end{tikzcd}
\]
Here, $p_i$ denotes the projection to the $i$-th factor. $a:G\times X\rightarrow X: (g,x)\mapsto gx$ is the action map. So, $\tilde{a}=a\circ(\id_G\times i)$. The isomorphism $G\times X\xrightarrow[]{\simeq} X\times_B X$ follows from Lemma \ref{lem:principal_bundle_in_fpqc_topology}. Then by assumption and base change, $\fr:Z\rightarrow B$ is a principal $H$-bundle in the smooth topology, hence in the \'{e}tale topology by Lemma \ref{lem:principal_bundle_in_fpqc_topology}. It remains to show (\ref{eqn:reduction_of_principal_bundle}): the right square is cartesian by above; By Lemma \ref{lem:principal_bundle_in_fpqc_topology}, so is the outer square. The rest is due to Proposition \ref{prop:associated_fiber_bundles}.
\end{proof}

\begin{remark}\label{rem:reduction_of_principal_bundles}
In above, by Propositions \ref{prop:free_action_and_flat_orbit_map}, \ref{prop:free_action_and_orbit_map_with_normal_target}, the conditions (1)-(2) can be relaxed to: 
\begin{itemize}[wide,labelwidth=!,labelindent=0pt]
\item
The $G$-action on $X$ is free.
 
\item
$\pi:X\rightarrow B$ and $\tilde{a}:G\times Z\rightarrow X$ are either \emph{flat} orbit maps, or orbit maps between \emph{pure dimensional} varieties with $B$ \emph{normal}.
\end{itemize}
\end{remark}

\begin{proposition}[Quotient of a principal bundle by a subgroup]\label{prop:subgroup_action_on_a_principal_bundle}
Let $H\subset G$ be a closed subgroup such that $G/H$ is \textbf{affine}. Let $\fp:P\rightarrow B=P/G$ be a principal $G$-bundle, then:
\begin{enumerate}
\item
There exists a geometric quotient $\fp_H:P\rightarrow P/H$, and $\fp_H$ is a principal $H$-bundle.

\item
The natural map $q_H:P/H\rightarrow B$ is a fiber bundle with fiber $G/H$ in the \'{e}tale topology.

\item
If in addition, $H\triangleleft G$ is a normal subgroup, then $q_H:P/H\rightarrow B$ is a principal $G/H$-bundle.
\end{enumerate}
\end{proposition}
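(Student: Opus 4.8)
The plan is to reduce the whole statement to Proposition \ref{prop:associated_fiber_bundles} applied to the principal $G$-bundle $\fp:P\to B$ together with the affine $G$-variety $F=G/H$, on which $G$ acts by left translation (this is an affine $\field$-variety by hypothesis, and the left-translation action makes it a $G$-variety). Letting $G$ act on $P\times(G/H)$ diagonally by $g\cdot(a,z)=(ag^{-1},gz)$, Proposition \ref{prop:associated_fiber_bundles} produces a geometric quotient $\pi:P\times(G/H)\to Y:=P\times^G(G/H)$ which is a principal $G$-bundle, a map $q:Y\to B$, $[a,z]\mapsto\fp(a)$, which is an \'etale fiber bundle with fiber $G/H$, and the cartesian square
\[
\begin{tikzcd}[row sep=1pc]
P\times(G/H)\arrow[r,"{p_1}"]\arrow[d,"{\pi}"]\arrow[dr,phantom,"\lrcorner",very near start] & P\arrow[d,"{\fp}"]\\
Y\arrow[r,"{q}"] & B
\end{tikzcd}
\]
The crux will be to identify $Y$ with a quotient of $P$ by $H$. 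For this, consider the morphism $\sigma:P\to Y$, $p\mapsto\pi(p,eH)$; it is $H$-invariant because $H\subset G$ fixes the base point $eH\in G/H$. Granting that $\sigma$ is a principal $H$-bundle, it is in particular an orbit map and a geometric quotient (Lemma \ref{lem:principal_bundle_in_fpqc_topology}, and Proposition \ref{prop:associated_fiber_bundles} with $F$ a point), so one may \emph{define} $\fp_H:=\sigma$ and $P/H:=Y$. Since $\fp=q\circ\sigma$ and $\fp$ is $H$-invariant, the natural map $P/H\to B$ is exactly $q_H=q$, an \'etale fiber bundle with fiber $G/H$; this gives $(1)$ and $(2)$ simultaneously.

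To prove that $\sigma$ is a principal $H$-bundle I would check \'etale-local triviality over $B$. Choose an \'etale cover $\eta:U\to B$ trivializing $\fp$, so $P|_U\isomorphic U\times G$ as a $G$-variety. Base-changing the cartesian square along $\eta$ and using the isomorphism $(G\times G/H)/G\isomorphic G/H$, $[(g',z)]\mapsto g'z$ (left translation, which is $G$-invariant for the diagonal $G$-action), one obtains $Y|_U\isomorphic U\times(G/H)$, under which $\sigma|_U$ becomes $\id_U\times(G\to G/H):U\times G\to U\times(G/H)$. The latter is a principal $H$-bundle since $G\to G/H$ is (the homogeneous-space example, using that $G/H$ is in particular a $\field$-variety). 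Hence $\sigma$ is a principal $H$-bundle, as required. Alternatively, one can get the same identification from Proposition \ref{prop:reduction_of_principal_bundles} applied to the $G$-variety $X=P\times(G/H)$, the closed $H$-subvariety $Z=P\times\{eH\}\cong P$, and the principal $H$-bundle $\tilde a:G\times Z\to X$, $(g,p)\mapsto(pg^{-1},gH)$; this last map is a principal $H$-bundle because, via the isomorphism $G\times P\isomorphic P\times G$, $(g,p)\mapsto(pg^{-1},g)$, it becomes $\id_P\times(G\to G/H)$.

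For $(3)$, assume $H\triangleleft G$. Then $G/H$ is an affine algebraic group acting on $Y=P/H$ by $[g]\cdot[p]=[gp]$ (well defined by normality), and $q_H$ is $(G/H)$-invariant. Keeping the trivialization $P|_U\isomorphic U\times G$ above, normality of $H$ yields $P/H|_U\isomorphic (U\times G)/H\isomorphic U\times(G/H)$ as a $(G/H)$-variety over $U$, the residual $G/H$-action being translation on the second factor; this exhibits $q_H|_U$ as the projection $U\times(G/H)\to U$, i.e.\ the trivial principal $G/H$-bundle (in particular the $G/H$-action is free). By Corollary \ref{cor:base_change_of_principal_bundles} (or by gluing these compatible local trivializations) $q_H:P/H\to B$ is a principal $G/H$-bundle. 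The main obstacle throughout is the identification $P/H\isomorphic P\times^G(G/H)$ and the careful bookkeeping of left- versus right-actions — in particular checking that the \'etale-local trivializations in $(3)$ are $(G/H)$-equivariant — but once the conventions are pinned down each verification is routine.
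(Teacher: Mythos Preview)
Your proposal is correct and follows essentially the same strategy as the paper: apply Proposition~\ref{prop:associated_fiber_bundles} with $F=G/H$ to produce $Y=P\times^G(G/H)$ and the fiber bundle $q:Y\to B$, then identify the map $\sigma=\fp_H:P\to Y$, $p\mapsto[p,eH]$, as a principal $H$-bundle. The paper establishes the latter by assembling a two-square cartesian diagram (the right square from Proposition~\ref{prop:associated_fiber_bundles}(4), the outer rectangle from $P\times G\cong P\times_BP$) and pasting to get the left square cartesian, then descending the principal $H$-bundle $\id_P\times(G\to G/H)$ along the smooth cover $\pi$; your Option~B via Proposition~\ref{prop:reduction_of_principal_bundles} is exactly this argument, packaged. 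Your Option~A and your treatment of (3) replace smooth descent along $\pi$ by \'etale descent along a trivializing cover of $B$, which is an equally valid and perhaps more transparent route. One nit: Corollary~\ref{cor:base_change_of_principal_bundles} concerns restriction to a locally closed $H$-subvariety of $P$, not base change along an \'etale cover, so it is not the right reference in (3); your parenthetical gluing argument is the correct one.
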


\begin{proof}
Let $F:=G/H$ and $X:=P\times F$. So, $G$ acts diagonally on $X$: $g\cdot (p,zH):= (pg^{-1},gzH)$.
As $F=G/H$ is \textbf{affine}, by Proposition \ref{prop:associated_fiber_bundles}, we conclude that $G\acts X$ admits a geometric quotient
$\pi:X=P\times F \rightarrow Y:= P\times^G F$,
which is a principal $G$-bundle, and the induced map $q_H=q:Y=P\times^G F \rightarrow B$ is a fiber bundle with fiber $F$ in the \'{e}tale topology. Moreover, we obtain the following cartesian diagram
\[
\begin{tikzcd}[row sep=1.2pc, column sep=6pc]
P\times G\arrow{r}{\id_P\times\overline{\pi}}[swap]{(p,g)\mapsto (p,gH)}\arrow{d}{(p,g)\mapsto pg}[swap]{a}\arrow[dr,phantom,"\lrcorner",very near start] & X=P\times G/H\arrow[r,"{p_1}"]\arrow[d,"{\pi}"]\arrow[dr,phantom,"\lrcorner",very near start] & P\arrow[d,"{\fp}"]\\
P=P\times^G G\arrow{r}{\fp_H}[swap]{p\mapsto [p,H]} & Y=P\times^GG/H\arrow{r}{q_H}[swap]{[p,gH]\mapsto \fp(p)} & B
\end{tikzcd}
\]

\noindent{}(1). By the left cartesian square, $\fp_H:P\rightarrow Y=P\times^G G/H$ is a principal $H$-bundle.
By Proposition \ref{prop:associated_fiber_bundles} (1), $\fp_H$ is also a geometric quotient. So we can write $P/H=Y=P\times^G G/H$.

\noindent{}(2).
By the right cartesian square above, $q_H:P/H=Y\rightarrow B$ is a fiber bundle with fiber $G/H$ in the smooth topology, hence in the \'{e}tale topology by Remark \ref{rem:fiber_bundle_in_smooth_topology}. 

\noindent{}(3).
The action map $a:P\times G\rightarrow P$ induces an action
$P/H\times G/H\rightarrow P/H: (pH,gH)\mapsto pHgH=pgH$.
Then by the right cartesian square above $q_H:Y=P/H\rightarrow B$ is a principal $G/H$-bundle in the smooth topology, hence in the \'{e}tale topology by Lemma \ref{lem:principal_bundle_in_fpqc_topology}.
\end{proof}

\begin{proposition}\label{prop:principal_bundle_for_unipotent_algebraic_group_over_affine_variety_is_trivial}
Let $G$ be a \textbf{unipotent} algebraic group and $B$ be an \textbf{affine} variety over $\field$. Then any principal $G$-bundle $\fp:P\rightarrow B$ is trivial.
\end{proposition}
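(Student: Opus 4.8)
The plan is to reduce to the case $G=\mathbb{G}_a$ by induction on $\dim G$, using the structure of unipotent groups as iterated extensions of $\mathbb{G}_a$ together with Proposition \ref{prop:subgroup_action_on_a_principal_bundle}. First I would dispose of the base case: \emph{a principal $\mathbb{G}_a$-bundle over an affine variety $B$ is trivial}. Since $\mathbb{G}_a$ is smooth, a principal $\mathbb{G}_a$-bundle in the paper's sense is the same as an fppf (equivalently étale) $\mathbb{G}_a$-torsor, and such torsors on $B$ are classified by $H^1_{\et}(B,\mathbb{G}_a)$; as $\mathbb{G}_a$ corresponds to the quasi-coherent sheaf $\mathcal{O}_B$, the comparison map $H^1_{\Zar}(B,\mathcal{O}_B)\to H^1_{\et}(B,\mathbb{G}_a)$ is an isomorphism, and $H^1_{\Zar}(B,\mathcal{O}_B)=0$ because $B$ is affine. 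Hence the bundle has a section, and a section $s$ of a principal bundle trivializes it: the map $B\times\mathbb{G}_a\to P$, $(b,t)\mapsto t\cdot s(b)$, is a $\mathbb{G}_a$-equivariant isomorphism, as one checks on an étale-local trivialization. (Alternatively, one may use that $\mathbb{G}_a$ is a special group, so the bundle is Zariski-locally trivial, and then that a \v{C}ech $1$-cocycle valued in a quasi-coherent sheaf on an affine scheme is a coboundary.)

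For the inductive step, let $\dim G=d\geq 1$, assume the statement for unipotent groups of dimension $<d$, and let $\fp:P\to B$ be a principal $G$-bundle over the affine variety $B$; by Proposition \ref{prop:associated_fiber_bundles} (or Lemma \ref{lem:principal_bundle_in_fpqc_topology}), $\fp$ is a geometric quotient, so $B=P/G$. Since $G$ is a nontrivial unipotent group in characteristic $0$, it admits a central subgroup $Z\cong\mathbb{G}_a$ (take a one-dimensional subgroup inside the last nontrivial term of a central series of $G$, which is commutative unipotent, hence a vector group). Then $Z$ is closed and normal in $G$, the quotient $G/Z$ is again a unipotent algebraic group of dimension $d-1$, and $G/Z$ is affine. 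Applying Proposition \ref{prop:subgroup_action_on_a_principal_bundle} with $H=Z$ (legitimate since $G/Z$ is affine and $Z\triangleleft G$) yields a geometric quotient $\fp_Z:P\to P/Z$ that is a principal $Z$-bundle, together with a principal $(G/Z)$-bundle $q_Z:P/Z\to B$ satisfying $q_Z\circ\fp_Z=\fp$.

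By the induction hypothesis the $(G/Z)$-bundle $q_Z:P/Z\to B$ is trivial; in particular $P/Z$ is an affine variety and $q_Z$ admits a section $\sigma:B\to P/Z$. Pulling back the principal $\mathbb{G}_a$-bundle $\fp_Z:P\to P/Z$ along $\sigma$ gives a principal $\mathbb{G}_a$-bundle $\sigma^{*}P\to B$ over the affine variety $B$, which is trivial by the base case; choosing a section $\tau:B\to\sigma^{*}P$ and composing with the projection $\sigma^{*}P=B\times_{P/Z}P\to P$ produces a morphism $\tilde{\tau}:B\to P$ with $\fp\circ\tilde{\tau}=q_Z\circ\fp_Z\circ\tilde{\tau}=q_Z\circ\sigma=\mathrm{id}_B$, i.e.\ a section of $\fp$. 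As in the base case a section trivializes $\fp$, which completes the induction; the case $\dim G=0$ is immediate, since then $G=\{1\}$ and $\fp$ is an isomorphism.

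I expect the only genuine obstacle to be the base case: one must be careful that a principal $\mathbb{G}_a$-bundle in the paper's étale-local sense is honestly classified by $\check{H}^{1}_{\et}(B,\mathbb{G}_a)$ and that this group coincides with the vanishing Zariski cohomology $H^{1}_{\Zar}(B,\mathcal{O}_B)$ of the structure sheaf of an affine variety; everything else is bookkeeping with the quotient constructions of Appendix \ref{sec:quotients_of_varieties} (existence of the geometric quotients $P\to P/Z$ and $P/Z\to B$, and stability of principal bundles under base change along $\sigma$).
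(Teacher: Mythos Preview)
Your proof is correct and follows essentially the same route as the paper's: reduce to the commutative case via cohomology vanishing $H^1_{\et}(B,\mathbb{G}_a)\cong H^1(B,\cO_B)=0$, then induct on $\dim G$ by quotienting out a central subgroup using Proposition~\ref{prop:subgroup_action_on_a_principal_bundle}, pulling back along the section over $B$, and lifting. The only cosmetic difference is that the paper takes $H=Z(G)$ (the full center, isomorphic to some $\mathbb{G}_a^m$) at each step, whereas you peel off a single $\mathbb{G}_a$; both work equally well.
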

\noindent{}\textbf{Note}: $G$ is connected: $\pi_0(G)=G/G^0$ is a finite unipotent algebraic group, which must be trivial.

\begin{proof}
First, assume $G$ is commutative. 
Then, $G=G_a^m$ for some $m\geq 0$, as its (abelian) Lie algebra is a $\field$-vector space with trivial Lie bracket.
Now, the principal $G$-bundles over $B$ are classified by $H_{\et}^1(B,G_a^m)$ (see \cite[\href{https://stacks.math.columbia.edu/tag/03F7}{Lemma 03F7}]{stacks-project}). By \cite[\href{https://stacks.math.columbia.edu/tag/03DW}{Proposition 03DW}]{stacks-project} and the affine vanishing property, $H_{\et}^1(B,G_a^m)\isomorphic H^1(B,\cO_B^m)=0$. Thus, $\fp$ is a trivial principal $G$-bundle.

In general, we prove by induction on $\dim G$. Say, $n:=\dim G>0$. 
Let $H:=Z(G)$ be the center of $G$. Then, $H$ is a commutative unipotent algebraic group of positive dimension, and $\overline{G}:=G/H$ is a unipotent algebraic group of dimension $<n$.
By Proposition \ref{prop:subgroup_action_on_a_principal_bundle}, there exists a $\field$-variety $\overline{P}:=P/H$ such that,
$\fp_H:P\rightarrow \overline{P}$ is a principal $H$-bundle, and $q_H:\overline{P}\rightarrow B$ is a principal $\overline{G}$-bundle.
By induction, $q_H$ is trivial, hence admits a section $s:B\rightarrow \overline{P}$. We then obtain the following commutative diagram in which the square is cartesian:
\[
\begin{tikzcd}[row sep=1pc,column sep=3pc]
s^{-1}P\arrow[r,"{\tilde{s}}"]\arrow[d,"{\tilde{\fp}_H}"]\arrow[dr,phantom,"\lrcorner",very near start] & P\arrow[d,"{\fp_H}"]\arrow[dd,bend left=60,"{\fp}"]\\
B\arrow[r,"{s}"]\arrow[dr,equal]\arrow[u,dashed,bend left=30,"{s'}"] & \overline{P}\arrow[d,"{q_H}"]\\
& B
\end{tikzcd}
\]
By base change, $\tilde{p}_H:s^{-1}P\rightarrow B$ is a principal $H$-bundle. As $H$ is a commutative unipotent algebraic group, $\tilde{p}_H$ is trivial by above, hence admits a section $s':B\rightarrow s^{-1}P$. Then, $\tilde{s}\circ s':B\rightarrow P$ defines a section of $\fp:P\rightarrow B$. This implies that $\fp$ is a trivial principal $G$-bundle.
\end{proof}

\subsection*{Acknowledgements}
\addtocontents{toc}{\protect\setcounter{tocdepth}{1}}

The author would like to thank his Ph.D advisor, Vivek Shende, for the interest and the suggestion to work along the direction of this article. 
He thanks Chenglong Yu, Penghui Li, Mingchen Xia, Yu Zhao, Mirko Mauri, Junliang Shen, Changjian Su, and Jia-Choon Lee for helpful discussions and comments.
Furthermore, the author is very grateful to David Nadler, Lenhard Ng, Shing-Tung Yau, and Shaoyuan Zheng, for their valuable support during his job search.
The initial version was done when the author was a postdoc at IMS, CUHK.
During the revision, the author benefited a lot from a visit invited by Changjian Su at YMSC, and a visit invited by Song-Yan Xie at AMSS, CAS.

\addtocontents{toc}{\protect\setcounter{tocdepth}{2}}


\begin{thebibliography}{CK}


\bibitem{AHE72} S. S. Abhyankar, W. Heinzer and P. Eakin,
{\em On the uniqueness of the coefficient ring in a polynomial ring},
J. Alg. {\bf 23} (1972), 310--342.



\bibitem{BB04} O. Biquard and P. Boalch,
{\em Wild non-abelian Hodge theory on curves},
Compos. Math. {\bf 140} (2004), 179--204.



\bibitem{BBDG18} A. Beilinson, J. Bernstein, P. Deligne and O. Gabber,
{\em Faisceaux pervers},
Soci{\'e}t{\'e} math{\'e}matique de France Paris, 2018.



\bibitem{Boa14} P. P. Boalch,
{\em Geometry and braiding of Stokes data; fission and wild character varieties},
Annals of math. (2014), 301--365.




\bibitem{Bor12} A. Borel,
{\em Linear algebraic groups},
Springer Science \& Business Media, 2012.



\bibitem{BY15} P. Boalch and D. Yamakawa,
{\em Twisted wild character varieties},
arXiv:1512.08091.


\bibitem{CD19} D.-C. Cisinski and F. D\'{e}glise,
{\em Triangulated categories of mixed motives},
Springer, 2019.





\bibitem{CMS22} M. de Cataldo, D. Maulik and J. Shen,
{\em Hitchin fibrations, abelian surfaces, and the P= W conjecture},
J. Amer. Math. Soc. {\bf 35} (2022), 911--953.



\bibitem{Cor88} K. Corlette,
{\em Flat {$G$}-bundles with canonical metrics},
J. Diff. Geom. {\bf 28} (1988), 361-382.



\bibitem{Dan75} V. I. Danilov,
{\em Polyhedra of schemes and algebraic varieties},
Math. of the USSR-Sbornik {\bf 26} (1975), 137.



\bibitem{Dav23} B. Davison,
{\em Nonabelian Hodge theory for stacks and a stacky {P}= {W} conjecture},
Adv. Math. {\bf 415} (2023), 108889.



\bibitem{dCHM12} M. de Cataldo, T. Hausel and L. Migliorini,
{\em Topology of {H}itchin systems and {H}odge theory of character varieties: the case {A}1},
Annals of Math. (2012), 1329--1407.



\bibitem{dCM20} M. de Cataldo and D. Maulik,
{\em The perverse filtration for the {H}itchin fibration is locally constant},
Pure Appl. Math. Q. {\bf 16} (2020), 1444--1464.



\bibitem{dFKX17} T. de Fernex, J. Koll{\'a}r and C. Xu,
{\em The dual complex of singularities},
Higher dimensional algebraic geometry—in honour of Professor Yujiro Kawamata’s sixtieth birthday {\bf 74} (2017), 103--129.



\bibitem{DU95} G. D. Daskalopoulos and K. K. Uhlenbeck,
{\em An application of transversality to the topology of the moduli space of stable bundles},
Topology {\bf 34} (1995), 203--215.



\bibitem{ES22} J. N. Eberhardt and C. Stroppel,
{\em Motivic {S}pringer theory},
Indag. Math. {\bf 33} (2022), 190--217.


\bibitem{FF23} M. Farajzadeh-Tehrani and C. Frohman,
{\em On compactifications of the {SL} (2, {C}) character varieties of punctured surfaces},
arXiv:2305.12306.




\bibitem{FK65} R. Fricke and F. Klein,
{\em Vorlesungen {\"u}ber die {T}heorie der automorphen {F}unktionen. {B}and 1: {D}ie gruppentheoretischen {G}rundlagen. {B}and II: {D}ie funktionentheoretischen {A}usf{\"u}hrungen und die {A}ndwendungen},
Bibliotheca Mathematica Teubneriana, B{\"a}nde {\bf 3} (1965).



\bibitem{FM22} C. Felisetti and M. Mauri,
{\em {P} = {W} conjectures for character varieties with symplectic resolution},
J. Ec. Polytech. - Math. {\bf 9} (2022), 853--905.



\bibitem{Fuj79} T. Fujita,
{\em On {Z}ariski problem},
1979.



\bibitem{GN05} W. M. Goldman and W. D. Neumann,
{\em Homological action of the modular group on some cubic moduli spaces},
Math. Res. Lett. {\bf 12} (2005), 575--591.



\bibitem{Gro65} A. Grothendieck,
{\em {\'E}l{\'e}ments de g{\'e}om{\'e}trie alg{\'e}brique: {IV}. {\'E}tude locale des sch{\'e}mas et des morphismes de sch{\'e}mas, {S}econde partie},
Publ. Math. IHES {\bf 24} (1965), 5--231.



\bibitem{GS96} H. Gillet and C. Soul{\'e},
{\em Descent, motives and {K}-theory},
Walter de Gruyter, Berlin/New York (1996).



\bibitem{Gup14a} N. Gupta,
{\em On the cancellation problem for the affine space in characteristic p},
Invent. Math. {\bf 195} (2014), 279--288.



\bibitem{Gup14b} N. Gupta,
{\em On {Z}ariski's cancellation problem in positive characteristic},
Adv. Math. {\bf 264} (2014), 296--307.






\bibitem{HKSZ22} P. Huang, G. Kydonakis, H. Sun and L. Zhao,
{\em Tame parahoric nonabelian {H}odge correspondence on curves},
arXiv:2205.15475.



\bibitem{HLRV11} T. Hausel, E. Letellier and F. Rodriguez-Villegas,
{\em Arithmetic harmonic analysis on character and quiver varieties},
Duke Math. J. {\bf 160} (2011), 323--400.



\bibitem{HLRV13} T. Hausel, E. Letellier and F. Rodriguez-Villegas,
{\em Arithmetic harmonic analysis on character and quiver varieties {II}},
Adv. Math. {\bf 234} (2013), 85--128.



\bibitem{HMMS22} T. Hausel, A. Mellit, A. Minets and O. Schiffmann,
{\em {$P= W$} via {$H_2$}},
arXiv:2209.05429.



\bibitem{Hos16} V. Hoskins,
{\em Moduli problems and geometric invariant theory},
Lecture notes 2016.



\bibitem{HS22} P. Huang and H. Sun,
{\em Meromorphic parahoric {H}iggs torsors and filtered {S}tokes {G}-local systems on curves},
arXiv:2212.04939.



\bibitem{KNPS15} L. Katzarkov, A. Noll, P. Pandit and C. Simpson,
{\em Harmonic maps to buildings and singular perturbation theory},
Commun. Math. Phys. {\bf 336} (2015), 853--903.



\bibitem{Kom15} A. Komyo,
{\em On compactifications of character varieties of $n$-punctured projective line},
Ann. Inst. Fourier {\bf 65} (2015), 1493--1523.



\bibitem{Kon93} H. Konno,
{\em Construction of the moduli space of stable parabolic {H}iggs bundles on a {R}iemann surface},
J. Math. Soc. Japan {\bf 45} (1993), 253--276.



\bibitem{Kra15} H. Kraft,
{\em Fiber bundles, slice theorem and applications},
(online) Lecture notes, 2015.



\bibitem{KS14} M. Kontsevich and Y. Soibelman,
{\em Wall-crossing structures in {D}onaldson--{T}homas invariants, integrable systems and mirror symmetry},
Homological mirror symmetry and tropical geometry (2014), 197--308.



\bibitem{KX16} J. Koll{\'a}r and C. Xu,
{\em The dual complex of {C}alabi--{Y}au pairs},
Invent. Math. {\bf 205} (2016), 527--557.



\bibitem{Mat89} H. Matsumura,
{\em Commutative ring theory},
Cambridge university press, 1989.



\bibitem{Mau21} M. Mauri,
{\em Intersection cohomology of rank 2 character varieties of surface groups},
J. Inst. Math. Jussieu (2021), 1--40.



\bibitem{Mel19} A. Mellit,
{\em Cell decompositions of character varieties},
arXiv:1905.10685.



\bibitem{MMS22} M. Mauri, E. Mazzon and M. Stevenson,
{\em On the geometric {P}= {W} conjecture},
Selecta Math. (N.S.) {\bf 28} (2022).



\bibitem{Moc11} T. Mochizuki,
{\em Wild harmonic bundles and wild pure twistor {D}-modules},
Asterisque (2011).



\bibitem{Moc21} T. Mochizuki,
{\em Good wild harmonic bundles and good filtered {H}iggs bundles},
SIGMA {\bf 17} (2021), 068.



\bibitem{MS80} M. Miyanishi and T. Sugie,
{\em Affine surfaces containing cylinderlike open sets},
Kyoto J. Math. {\bf 20} (1980), 11--42.



\bibitem{MS22} D. Maulik and J. Shen,
{\em The {$P= W$} conjecture for {$GL_n$}},
arXiv:2209.02568.



\bibitem{MSY23} D. Maulik, J. Shen and Q. Yin,
{\em Perverse filtrations and {F}ourier transforms},
arXiv:2308.13160.



\bibitem{NS22} A. N{\'e}methi and S. Szab{\'o},
{\em The geometric {P}={W} conjecture in the {P}ainlev{\'e} cases via plumbing calculus},
IMRN {\bf 2022} (2022), 3201--3218.



\bibitem{Pay13} S. Payne,
{\em Boundary complexes and weight filtrations},
Mich. Math. J. {\bf 62} (2013), 293--322.



\bibitem{Rus81} P. Russell,
{\em On affine-ruled rational surfaces},
Math. Ann. {\bf 255} (1981), 287--302.



\bibitem{Sch10} H. Schoutens,
{\em The use of ultraproducts in commutative algebra},
(Springer-Verlag, 2010).



\bibitem{Sim90} C. T. Simpson,
{\em Harmonic bundles on noncompact curves},
J. Am. Math. Soc. {\bf 3} (1990), 713--770.



\bibitem{Sim92} C. T. Simpson,
{\em Higgs bundles and local systems},
Publ. Math. IHES {\bf 75} (1992), 5--95.



\bibitem{Sim94} C. T. Simpson,
{\em Moduli of representations of the fundamental group of a smooth projective variety {I}},
Publ. Math. IHES {\bf 79} (1994), 47--129.



\bibitem{Sim95} C. T. Simpson,
{\em Moduli of representations of the fundamental group of a smooth projective variety {II}},
Publ. Math. IHES {\bf 80} (1995), 5--79.



\bibitem{Sim16} C. T. Simpson,
{\em The dual boundary complex of the {$SL_2$} character variety of a punctured sphere},
Ann. Fac. Sci. Toulouse {\bf 25} (2016), 317--361.


\bibitem{Spi18} M. Spitzweck,
{\em A commutative $\mathbb{P}^1$-spectrum representing motivic cohomology over {D}edekind domains},
M\'{e}m. Soc. Math. Fr. {\bf 157} (2018), 1--114.


\bibitem{stacks-project} The {Stacks project authors},
{\em The Stacks project},
\url{https://stacks.math.columbia.edu},
2023.



\bibitem{Su21} T. Su,
{\em Dual boundary complexes of {B}etti moduli spaces over the two sphere with one irregular singularity},
arXiv:2109.01645.



\bibitem{Su24} T. Su,
{\em Integral cohomology of dual boundary complexes is motivic}, arXiv:???.



\bibitem{Sza19} S. Szabo,
{\em Simpson's geometric {P}={W} conjecture in the {P}ainlev{\'{e}} {VI} case via abelianization},
arXiv:1906.01856.



\bibitem{Sza21} S. Szabo,
{\em Perversity equals weight for {P}ainlev{\'e} spaces},
 Adv. Math. {\bf 383} (2021), 107667.
 


\bibitem{Wha20} J. P. Whang,
{\em Global geometry on moduli of local systems for surfaces with boundary},
Compos. Math. {\bf 156} (2020), 1517--1559.








\end{thebibliography}
\end{document}